\theoremstyle{plain}
\newtheorem{theorem}{Theorem}[section]
\newtheorem{proposition}[theorem]{Proposition}
\newtheorem{conjecture}[theorem]{Conjecture}
\newtheorem{corollary}[theorem]{Corollary}
\newtheorem{lemma}[theorem]{Lemma}
\theoremstyle{definition}
\newtheorem{definition}[theorem]{Definition}
\newcommand{\N}{\mathbb{N}}
\newcommand{\Z}{\mathbb{Z}}
\newcommand{\R}{\mathbb{R}}
\newcommand{\C}{\mathbb{C}}
\newcommand{\e}{\varepsilon}
\newcommand{\supp}{\mathrm{supp}}
\newcommand{\s}{\psi}
\renewcommand{\a}{\alpha}
\newcommand{\mc}{\mathcal}
\newcommand{\mb}{\mathbb}
\def\Xint#1{\mathchoice
{\XXint\displaystyle\textstyle{#1}}%
{\XXint\textstyle\scriptstyle{#1}}%
{\XXint\scriptstyle\scriptscriptstyle{#1}}%
{\XXint\scriptscriptstyle\scriptscriptstyle{#1}}%
\!\int}
\def\XXint#1#2#3{{\setbox0=\hbox{$#1{#2#3}{\int}$ }
\vcenter{\hbox{$#2#3$ }}\kern-.6\wd0}}
\def\dashint{\Xint-}
\newcommand\widecheck[1]{%
\savestack{\tmpbox}{\stretchto{%
  \scaleto{%
    \scalerel*[\widthof{\ensuremath{#1}}]{\kern-.6pt\bigwedge\kern-.6pt}%
    {\rule[-\textheight/2]{1ex}{\textheight}}
  }{\textheight}%
}{0.5ex}}%
\stackon[1pt]{#1}{\scalebox{-1}{\tmpbox}}%
}
\newcommand{\stkout}[1]{\ifmmode\text{\sout{\ensuremath{#1}}}\else\sout{#1}\fi}
\title{Decoupling inequalities for short generalized Dirichlet sequences}
\date{}
\author{Yuqiu Fu}
\address{Department of Mathematics, MIT,
Cambridge, MA 02139}
\email{yuqiufu@mit.edu}
\author{Larry Guth}
\address{Department of Mathematics, MIT,
Cambridge, MA 02139}
\email{lguth@math.mit.edu}
\author{Dominique Maldague}
\address{Department of Mathematics, MIT,
Cambridge, MA 02139}
\email{dmal@mit.edu}
\begin{document}

\begin{abstract}
  We study decoupling theory for functions on $\R$ with Fourier transform supported in a neighborhood of short Dirichlet sequences $\{\log n\}_{n=N+1}^{N+N^{1/2}}$, as well as sequences with similar convexity properties. We utilize the wave packet structure of functions with frequency support near an arithmetic progression. 
\end{abstract}

\maketitle

\section{Introduction}

In this paper, we study decoupling theory for functions $f:\R\to\C$ with Fourier support near certain convex sequences. As a model case of decoupling, consider the truncated parabola $\mb{P}^1=\{(t,t^2):|t|\le 1\}$. Let $R\ge 1$ be a large parameter and write $\mc{N}_{R^{-1}}(\mb{P}^1)$ as a disjoint union of caps $\theta=\mc{N}_{R^{-1}}(\mb{P}^1)\cap (I\times \R)$, where $I$ is an $R^{-1/2}$-interval. 
The decoupling inequality of Bourgain and Demeter \cite{bourgain2015proof} says that if $2\le p\le 6$, then for any $\e>0$, there exists $C_\e$ such that
\[ \|\sum_\theta f_\theta\|_{L^p(\R^2)}\le C_\e R^\e(\sum_\theta\|f_\theta\|_{L^p(\R^2)}^2)^{1/2}     \]
whenever $f_\theta:\R^2\to\C$ are Schwartz functions satisfying $\supp\widehat{f}_\theta\subset\theta$.

This paper explores analogues between decoupling for $\mathbb{P}^1$ and short Dirichlet sequences $\{\log n\}_{n=N+1}^{N+N^{1/2}}$, as well as sequences with similar convexity properties described in the following definition. 

\begin{definition} \label{defnDirichlet_intro}
Let $N \geq 2.$  We call $\{a_n\}_{n=1}^{N}$ a generalized Dirichlet sequence (with parameter $N$) if it satisfies the property
\begin{equation}\label{defa_n_intro}
  a_{2}-a_1 \in [\frac{1}{4N},\frac{4}{N}], \qquad (a_{i+2}-a_{i+1})-(a_{i+1}-a_i) \in [\frac{1}{4N^2},\frac{4}{N^2}].
\end{equation}
We will call $\{a_n\}_{n=1}^{N^{1/2}}$ satisfying \eqref{defa_n_intro} an $N^{1/2}$-
short generalized Dirichlet sequence.
\end{definition}
For simplicity, we say short (generalized) Dirichlet sequence to mean $N^{1/2}$-short (generalized) Dirichlet sequence, unless otherwise specified. 
Note that the reflected short Dirichlet sequence, $\{-\log (N+N^{-1/2}-n+1)\}_{n=1}^{N^{1/2}},$ satisfies \eqref{defa_n_intro}. 

Now we describe our decoupling set-up. From now on $C, c>0$ will denote absolute  constants that may vary from line to line. For the convenience of reading we may regard $C,c$ as $1$. 
For $1\le L\le c N^{1/2}$ and each $j=1,\ldots,\frac{N^{1/2}}{L}$, define
\[ I_j=\bigcup_{i=(j-1)L+1}^{jL} B_{L^2/N^2}(a_i)
, \]
where $B_{L^2/N^2}(a_i)$ means the $L^2/N^2$ interval centered at $a_i$. Let $\Omega$ be the $L^2/N^2$-neighborhood of $\{a_n\}_{n=1}^{N^{1/2}}.$ We consider the partition
\begin{equation}\label{decomL_intro}
\Omega=
\bigsqcup_{j}I_j. 
\end{equation} 
We choose the $L^2/N^2$-neighborhood of $\{a_n\}_{n=1}^{N^{1/2}}$ because every $I_j$ is essentially an $L^2/N^2-$neighborhood of an arithmetic progression, which we call a fat AP. To see this we calculate for $1\leq n \leq N^{1/2}-L,$
\[a_{n+L}-a_{n}-L(a_{n+1}-a_n) = \sum_{m=1}^L (a_{n+m}-a_{n+m-1}-(a_{n+1}-a_n)) \sim \sum_{m=1}^L (m-1)/ N^{2} \sim L^2 / N^{2}. \]
So indeed $I_j$ lies in a $CL^2/N^2$-neighborhood of an $L$-term AP with common difference $a_{(j-1)L+1}-a_{(j-1)L}$ and starting point $a_{(j-1)L}.$
Also, note that the common differences for distinct $I_j$ are $cL/N^2$-separated.



%

We denote the partition $\{I_j\}_{j=1}^{N^{1/2}/L}$ by $\mathcal{I}.$
The first main result of this paper is the following decoupling theorem for $\Omega=\bigsqcup_{I\in \mathcal{I}} I$.
\begin{theorem}\label{decouplingthm_intro}
  Let $\Omega$ and $\mathcal{I}$ be defined as in the last paragraphs. Then
  for $2\leq p\leq 6$ and every $\e>0$ we have
  \begin{equation}\label{decoupling_intro}
    \|\sum_{I\in \mathcal{I}} f_{I}\|_{L^p(\R)} \lesssim_\e N^\e \left( \sum_{I\in \mathcal{I}} \|f_{I}\|_{L^p(\R)}^2 \right)^{1/2}
  \end{equation}
  for functions $f_{I}$ with $\supp \widehat{f_{I}}\subset I.$
\end{theorem}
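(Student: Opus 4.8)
The plan is to reduce the decoupling problem for $\Omega$ to the Bourgain–Demeter decoupling for the parabola by exploiting the ``fat AP'' structure established in the introduction. The key point is that after a suitable affine rescaling, each $I_j$ looks like a neighborhood of a short arc of a parabola, and the collection $\{I_j\}$ sits inside a single such neighborhood at a coarser scale. So I would run a two-scale (or iterated) argument: first decouple $\Omega$ into ``blocks'' each of which is comparable to a fat AP $I_j$ (this is a parabola-type decoupling at scale $\sim N^{1/2}/L$ pieces inside a window of length $\sim N^{-1}$), and then, if one wants finer information, decouple each fat AP further. For the statement as given we only need the first step, i.e. decoupling $\Omega$ into the $I_j$.

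\smallskip

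The first step is the affine reduction. The sequence $\{a_n\}$ satisfies $a_{n+1}-a_n = (a_2-a_1) + \sum_{i=1}^{n-1}\big((a_{i+1}-a_i)-(a_i-a_{i-1})\big)$, so $a_n$ is, up to an affine function of $n$, a function whose second difference lies in $[\frac1{4N^2},\frac4{N^2}]$; thus $a_n = A + Bn + \phi(n)$ where $\phi$ has second differences comparable to $N^{-2}$. Rescaling the frequency variable by the affine map that sends the window of length $\sim N^{-1}$ containing $\Omega$ to an interval of length $\sim 1$, the points $a_n$ go to points $\xi_n$ with $\xi_n \approx (n/N^{1/2})^2$ up to a bounded multiplicative error in the second difference, i.e. $\{\xi_n\}$ lies in the $O((N^{1/2}/L)^{-2})$-neighborhood... wait — more precisely, after rescaling so that consecutive gaps become $\sim 1$, the points $\xi_j := $ (left endpoint of rescaled $I_j$) have first differences $\sim j$ and hence lie within $O(1)$ of the parabola $\{(t,t^2)\}$ sampled at the integers $t=1,\dots,N^{1/2}/L$, and each rescaled $I_j$ is contained in a $O(1)$-ball around such a point. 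Since rescaled caps of the parabola at scale $(N^{1/2}/L)^{-1/2}$... actually the cleanest route: the $I_j$, after affine rescaling, are contained in the standard caps $\theta$ of $\mc{N}_{M^{-1}}(\mb{P}^1)$ with $M = (N^{1/2}/L)^2$ (so there are $M^{1/2}=N^{1/2}/L$ caps), up to finitely many translates; the common-difference separation $cL/N^2$ guarantees distinct $I_j$ land in distinct (or boundedly overlapping) caps. Affine invariance of $L^p$ decoupling (the inequality \eqref{decoupling_intro} is invariant under affine changes of the frequency variable, which act by modulation and dilation on the physical side, preserving all $L^p$ norms up to the Jacobian which cancels on both sides) then lets us import Bourgain–Demeter directly.

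\smallskip

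The remaining technical issue — and the step I expect to be the main obstacle — is that the $I_j$ are not literally caps of a parabola: they are thickened arithmetic progressions (``fat APs''), and the neighborhood $\Omega$ is only the $L^2/N^2$-neighborhood of the discrete set, not of a curve. Two things must be handled. First, one must verify that after rescaling the $L^2/N^2$ thickening of $\Omega$ is comparable to the $M^{-1}$ thickening of the parabola at the coarse scale, so that $\supp\widehat{f_I}\subset I$ implies $\widehat{f_I}$ is supported in one rescaled cap $\theta$; this is exactly the computation $a_{n+L}-a_n - L(a_{n+1}-a_n)\sim L^2/N^2$ done in the introduction, which says the AP structure within each block is self-consistent with the thickening. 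Second, and more delicate, is that $\Omega$ is a union of $N^{1/2}$ tiny intervals rather than a continuum, but this only helps: decoupling into the $I_j$ follows from decoupling the full $M^{-1}$-neighborhood of the parabola into its caps and then restricting the functions. I would also need the constant to be $\lesssim_\e N^\e$ rather than $\lesssim_\e M^\e$; since $M\le N$ this is immediate. One should double-check the endpoint bookkeeping when $L$ is close to $cN^{1/2}$ (few blocks, trivial) and when $L=1$ (each block a single interval, again within the parabola cap framework), but these are degenerate cases covered by the same argument or trivially by Cauchy–Schwarz.
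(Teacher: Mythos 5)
There is a genuine gap, and it is the central one: your reduction to Bourgain--Demeter is a dimensional mismatch. The $I_j$ are subsets of $\R$ and the $f_I$ are functions of one variable, while the caps $\theta$ of $\mc{N}_{R^{-1}}(\mb{P}^1)$ are subsets of $\R^2$ and parabola decoupling is a statement about functions on $\R^2$. A one-dimensional affine change of the frequency variable can only translate and dilate; it cannot ``straighten'' the convex sequence $\{a_n\}$ onto a parabola, because the parabola lives in the plane. So the step ``the $I_j$, after affine rescaling, are contained in the standard caps $\theta$ of $\mc{N}_{M^{-1}}(\mb{P}^1)$'' has no meaning as stated, and affine invariance of decoupling (which holds within a fixed dimension) cannot supply the missing curvature. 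If you try to repair this by lifting to two dimensions, e.g.\ replacing $f_I(y)$ by $e^{ixc_I}\phi(x)f_I(y)$ so that the tensorized function has Fourier support in genuine parabola caps, the upper bound from Bourgain--Demeter is fine but the lower bound $\|F\|_{L^p(\R^2)}\gtrsim\|\sum_I f_I\|_{L^p(\R)}$ fails: the extra modulations $e^{ixc_I}$ only equal $1+O(\e)$ on an $x$-interval whose length must shrink with $N$ to control the error $\sum_I|f_I|$, and this costs a power of $N$, not $N^\e$. The only transference to the parabola that the paper validates (Section \ref{appsec}) exploits the fact that the linear parts of the frequencies lie exactly in $\tfrac{1}{N}\Z$, splits the physical variable as $t=t_1+t_2$ with $t_1\in 2\pi N\Z$, and uses periodicity plus Abel summation; this yields the exponential-sum estimate of Corollary \ref{discretesmallcor_intro} (coefficients $b_ne^{ita_n}$, $\ell^p$ norms, integration over long intervals $B_T$), not the $\ell^2L^p$ decoupling \eqref{decoupling_intro} for arbitrary $f_I$ with $\supp\widehat{f_I}\subset I$.

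For contrast, the paper does not deduce Theorem \ref{decouplingthm_intro} from the parabola at all: the parabola serves only as an analogy guiding the definitions (fat APs $P_I$ as ``tubes'', $P(L)$ as the ``ball''). The actual proof is self-contained in one dimension: a wave packet decomposition adapted to fat APs, bilinear Kakeya- and restriction-type estimates (Propositions \ref{BKprop} and \ref{bilresprop}), a high--low decomposition of square functions at a ladder of scales $L_m$, and an induction in which a rescaled segment of $\{a_n\}$ is again a generalized Dirichlet sequence with parameters $(N/K^2,\theta'/K^2)$ --- which is why the more general class with the convexity parameter $\theta$ is introduced. Your ``two-scale'' intuition is closer in spirit to this broad--narrow/induction-on-scales structure than to an application of Bourgain--Demeter, but as written the proposal does not prove the theorem.
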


The range of $p$ is sharp in the sense that \eqref{decoupling_intro} cannot hold for $p>6,$ which can be seen by taking $\widehat{f_I}$ to be a smooth bump with height $1$ adapted to $I$ for every $I.$ Indeed for this choice of $f_I,$ we have $|\sum_I f_I| \sim \frac{L^2}{N^2} N^{1/2} $ on $B_{cN^{1/2}}(0)$, and $\|f_I\|_{L^p(\R)}\sim \|\widehat{f_I}\|_{L^{p'}(\R)}\sim (L\frac{L^2}{N^2})^{1/p'}$ where $1/p+1/p'=1.$ So
$$\|\sum_{I\in \mc{I}}f_I\|_{L^p(\R)}\gtrsim \frac{L^2}{N^2} N^{1/2} (N^{1/2})^{1/p},\quad \left( \sum_{I\in \mathcal{I}} \|f_{I}\|_{L^p(\R)}^2 \right)^{1/2}\sim \left(\frac{N^{1/2}}{L}\right)^{1/2}\left(\frac{L^3}{N^2}\right)^{1-1/p}.$$
Then \eqref{decoupling_intro} would imply
$$\left({N^{1/2}}/{L} \right)^{1/2-3/p} \lesssim_\e N^\e,$$
and hence $p\leq 6.$ We shall compare Theorem \ref{decouplingthm_intro} with the $\ell^2L^p$ decoupling inequality of the parabola in \cite{bourgain2015proof}, which has the same critical exponent $6.$ Indeed we will see many similarities between short generalized Dirichlet sequences and $\mb{P}^1$ from a Fourier analytic point of view.

The notion of strict convexity of a sequence $\{a_n\}$ in $\R$ will parallel the role of curvature of the parabola in decoupling. Some key geometric aspects in the proof of decoupling for $\mathbb{P}^1$ are: (1) identifying caps $\theta$ as approximate $R^{-1/2}\times R^{-1}$ rectangles, which give rise to dual tubes $\theta^*$ of dimension $R^{1/2}\times R$, and (2) noting that $\theta$ are separated in angle and so are $\theta^*$. The $|f_\theta|$ are roughly constant on translates of $\theta^*$. 

In the $\{a_n\}_{n=1}^{N^{1/2}}$ setting, corresponding to $f_\theta$ we have $f_{I_j}$ which are functions $f_{I_j}:\R\to\C$ satisfying $\supp\widehat{f}_{I_j}\subset I_j$. We may (1) identify the $\frac{L^2}{N^2}$-neighborhood of $I$ as approximately an $\frac{L^2}{N^2}$-neighborhood of an arithmetic progressions (called a fat AP), giving rise to dual $I^*$ defined in Definition \ref{dualI}, which are also fat APs, and (2) note that distinct $I$ are separated in step-size of the corresponding arithmetic progressions (and the same for $I^*$). The $|f_{I}|$ are also roughly constant on translates of $I^*$  \cite{bourgain1991remarks,bourgain1993distribution}. 


Bourgain made use of this locally constant property to connect a conjecture of Montgomery with the Kakeya conjecture \cite{bourgain1991remarks,bourgain1993distribution}.   
To prove a decoupling inequality we need to identify another geometric analogy, the ``ball'', which is roughly the smallest set restricting to which in the physical space essentially preserves the frequency support.


 For the $R^{-1}$-neighborhood of the parabola, the ``ball'' is a ball $B_R$ of radius of $R.$ We will define the the ``ball'' $P(L)$ in the short generalized Dirichlet sequence setting  in Section \ref{comp}. $P(L)$ will be a fat AP which sometimes degenerates to  a Euclidean ball. With these notions of caps, tubes, and balls in the short generalized Dirichlet sequence setting, we are able to exploit the wave packet structure of a function with frequency support on $I\in \mc{I},$ and prove a bilinear Kakeya-type estimate (Proposition \ref{BKprop}) and a bilinear restriction-type estimate (Proposition \ref{bilresprop}) that look almost identical to those in the parabola setting. The choice of $N^{1/2}$ plays an important role in making this resemblance possible, which we will discuss at the end of Section \ref{longsec}. 

The proof of Theorem \ref{decouplingthm_intro} is based on the high-low decomposition method in \cite{guth2020improved}. We do not intend to get a logarithmic decoupling constant as in \cite{guth2020improved}, but we want to prove a refined decoupling inequality as in \cite{guth2020falconer} which creates some technical differences.

The partition $\Omega=\bigsqcup_{I\in\mc{I}}I$ is maximal in the sense that if $\Omega=\bigsqcup I'$ where $I'$ is the union of more than $CL$ many adjacent intervals, then $I'$ is no longer essentially a fat AP. 
Because of this, we will call $\Omega=\bigsqcup_{I\in \mc{I}}I$ the canonical partition and refer to Theorem \ref{decouplingthm_intro} as decoupling for the canonical partition, or simply decoupling.
In the spirit of small cap decoupling as in \cite{demeter2020small}, we may also consider the ``small cap'' decoupling for short generalized Dirichlet sequences. 
Now we let $L_1\in [1,L]$ be an integer, and we partition $\Omega$ into $L_1$ consecutive intervals $J_j$:
\begin{equation}\label{decomL_1_intro}
    \Omega=\bigsqcup_{j=1}^{N^{1/2}/L_1} J_j=\bigsqcup_{j=1}^{N^{1/2}/L_1} \left(\bigcup_{i=(j-1)L_1+1}^{jL_1} B_{L^2/N^2}(a_i)\right).
\end{equation}
We let $\mc{J}$ denote the partition $\{J_j\}_{j=1}^{N^{1/2}/L_1}.$
The next decoupling result in this paper is small-cap type decoupling inequalities. 

\begin{theorem}\label{smallcapthm_intro} 
  Let $1\leq L_1\leq L \leq N^{1/2},$ and $\{J\}_{J\in \mc{J}}$ be defined as in the paragraph above. 
  Suppose $p\geq 4.$
  Then for every $\e>0,$
  \begin{equation}\label{s1_intro}
    \|\sum_{J\in \mc{J}}f_J\|_{L^p(\R)} \lesssim_\e N^\e  \left( \frac{N^{\frac{1}{2}-\frac{2}{p}}L^{\frac{2}{p}}}{L_1^{1-\frac{2}{p}}} +\left( \frac{N^{1/2}}{L_1} \right)^{\frac{1}{2}-\frac{1}{p}} \right)\left( \sum_{J\in \mc{J}} \|f_J\|_{L^p(\R)}^p \right)^{1/p}
  \end{equation}
  for function $f_J:\R\rightarrow \C$ with $\supp \widehat{f_J}\subset J.$
\end{theorem}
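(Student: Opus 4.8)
The plan is to deduce the small-cap estimate from the canonical decoupling Theorem \ref{decouplingthm_intro} by interpolating it (via H\"older) with the two trivial endpoints, exactly as in the parabola small-cap theory of \cite{demeter2020small}. First I would record the $\ell^2 L^p \to \ell^2 L^p$ input: since each $J \in \mathcal{J}$ is a union of $L_1/L$ (or a constant number, if $L_1 < L$) many canonical intervals $I \in \mathcal{I}$ — more precisely, passing through the canonical partition at scale $L$ and then grouping — Theorem \ref{decouplingthm_intro} gives
\[
\Bigl\| \sum_{J \in \mathcal{J}} f_J \Bigr\|_{L^p(\R)} \lesssim_\e N^\e \Bigl( \sum_{I \in \mathcal{I}} \|f_I\|_{L^p(\R)}^2 \Bigr)^{1/2},
\]
and then flat $\ell^2$-to-$\ell^p$ summation (Cauchy--Schwarz over the $\sim L/L_1$ intervals $I$ inside each $J$, trivial when $L_1 \ge L$ so that $J$ is a single canonical cap) converts this to an $\ell^p$-sum over $J$ with a loss of $(L/L_1)^{1/2 - 1/p}$. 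This accounts for the second term $(N^{1/2}/L_1)^{1/2 - 1/p}$ after one also uses the locally-constant/flat-decoupling bound $(\#\mathcal{I})^{1/2-1/p} = (N^{1/2}/L)^{1/2-1/p}$ against the $\ell^1$ triangle inequality — I would assemble the precise numerology here.

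The first term comes from the $L^\infty$–type (constant-function) example computation already carried out in the paper for the sharpness discussion: testing against $\widehat{f_J}$ a bump of height $1$ on $J$ shows the operator norm is at least $\sim N^{1/2-2/p} L^{2/p} / L_1^{1-2/p}$, and one expects the matching upper bound by the standard small-cap argument. Concretely I would interpolate between the $L^4$ canonical decoupling (which at $p=4$ already has the right shape because $6 \ge 4$, so Theorem \ref{decouplingthm_intro} applies and yields the $\ell^2$ bound that upgrades to $\ell^4$ with the stated constants) and the trivial $L^\infty$ bound $\|\sum_J f_J\|_\infty \le \sum_J \|f_J\|_\infty \lesssim (\#\mathcal{J}) \max_J \|f_J\|_\infty$, using the localization that each $f_J$ is roughly constant on translates of the dual fat AP $J^*$ and hence $\|f_J\|_\infty \lesssim |J^*|^{-1/p}\|f_J\|_p$ up to $N^\e$. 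Combining the $L^4$ and $L^\infty$ estimates by H\"older at the desired exponent $p \ge 4$ produces exactly the sum of the two displayed terms; the first term dominates when $L_1$ is small (large caps few in number regime) and the second when $L_1$ is close to $L$.

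The step I expect to be the main obstacle is making the ``$f_J$ is locally constant on $J^*$'' heuristic quantitatively rigorous enough to run the $L^\infty$-endpoint and to control the overlap of the dual fat APs $J^*$ — in the parabola case these are honest rectangles with clean geometry, whereas here $J^*$ is a fat AP that may degenerate toward a Euclidean ball, so one must invoke the wave-packet description from Section \ref{comp} and the separation of step-sizes of the underlying arithmetic progressions (the $cL_1/N^2$-separation of common differences, analogous to the $cL/N^2$ remark in the excerpt) to bound how many $J^*$ can pile up at a point. A secondary technical point is that when $L_1 < L$ the intervals $J$ are \emph{finer} than the canonical caps, so $J$ is not itself a fat AP and one cannot decouple it further; but this is exactly the regime where the flat triangle-inequality bound over the $\le L/L_1$ sub-intervals inside a canonical cap, combined with the locally-constant structure, gives the first term, so no genuine decoupling beyond scale $L$ is needed.
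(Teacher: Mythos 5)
There is a genuine gap: your core strategy (canonical $\ell^2 L^p$ decoupling at scale $L$, then flat decoupling/Cauchy--Schwarz from $L$ down to $L_1$, then H\"older to pass to $\ell^p$) cannot produce the stated constant. Tracking the exponents, that chain gives
\[
\Bigl\|\sum_{J}f_J\Bigr\|_{L^p(\R)} \lessapprox \Bigl(\tfrac{L}{L_1}\Bigr)^{\frac12-\frac1p}\Bigl(\tfrac{N^{1/2}}{L_1}\Bigr)^{\frac12-\frac1p}\Bigl(\sum_J\|f_J\|_{L^p}^p\Bigr)^{1/p}
=(LN^{1/2})^{\frac12-\frac1p}L_1^{\frac2p-1}\Bigl(\sum_J\|f_J\|_{L^p}^p\Bigr)^{1/p},
\]
and this exceeds the first term $N^{\frac12-\frac2p}L^{\frac2p}L_1^{\frac2p-1}$ of \eqref{s1_intro} by the factor $(N^{1/2}/L)^{\frac3p-\frac12}$, which is a positive power of $N$ whenever $4\le p<6$ and $L<N^{1/2}$; it also exceeds the second term by $(L/L_1)^{\frac12-\frac1p}$. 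For instance with $p=4$, $L=N^{1/4}$, $L_1=1$ your bound is $N^{3/16}$ while the theorem asserts $N^{1/8+\e}$. In particular your premise that the $p=4$ case ``already has the right shape'' as a consequence of Theorem \ref{decouplingthm_intro} is false: the sharp small-cap estimate is strictly stronger than canonical decoupling composed with flat decoupling, exactly as in the parabola setting, where the critical small-cap result of \cite{demeter2020small} is not a corollary of Bourgain--Demeter plus flat decoupling. The proposed repair by interpolating with a trivial $L^\infty$ bound does not close this gap either: decoupling inequalities do not interpolate directly (one must pigeonhole wave packets of comparable size), and even granting that, interpolating a non-sharp $p=4$ input with $L^\infty$ cannot recover the max-of-two-terms shape of \eqref{s1_intro}; the paper only uses such interpolation in the regime $p\ge 6$, where the first term dominates and the endpoint is the triangle inequality.

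What the paper actually does (Section \ref{smallcapsec}) is the three-ingredient small-cap scheme of \cite{demeter2020small} transported to fat APs: (1) the \emph{refined} decoupling Theorem \ref{refineddecthm}, which retains wave-packet/rich-point information rather than just the $\ell^2 L^p$ inequality; (2) a \emph{refined} flat decoupling (Proposition \ref{flatprop}) that gains a factor $M^{\frac1p-\frac12}$ in the number $M$ of wave packets per $P_J$; and (3) an incidence estimate for the dual fat APs $P_I$ inside $P(L)$ (Proposition \ref{incidenceprop}). These are assembled in the bilinear Theorem \ref{smallcapbilthm} after dyadic pigeonholing of wave packets, interpolating the $L^6$ refined decoupling against the bilinear restriction estimate (Proposition \ref{bilresprop}) on the set of $(r_1,r_2)$-rich points, and then a broad--narrow iteration recovers the linear statement. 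Your concerns about making the locally-constant property and the overlap of dual fat APs rigorous are legitimate but secondary; the missing idea is this refined/incidence structure, without which the intermediate regime of \eqref{s1_intro} is out of reach.
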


Inequality \eqref{s1_intro} is sharp up to $C_\e N^\e$ for every fixed $p,L,L_1$ satisfying the condition in Theorem \ref{smallcapthm_intro}. The first factor in front of $( \sum_{J\in \mc{J}} \|f_J\|_{L^p(\R)}^p)^{1/p}$ is sharp because of the example $\widehat{f_J}$ equals to a smooth bump adapted to $J$ with height $1$ for every $J\in \mc{J}.$ The calculation is similar to the one in the paragraph below Theorem \ref{decouplingthm_intro}. The second factor is sharp because of the example $\widehat{f_J}$ equals to a random sign times a smooth bump adapted to a ball of radius $ L^2/N^2$ inside $J$ with height $1$ for every $J\in \mc{J},$ where the random signs are chosen so that $\int_{\R}|\sum_{J}f_J|^p\sim \int_{\R} (\sum_{J}|f_J|^2)^{p/2}$ by Khintchine's inequality. 

The structure of the proof of Theorem \ref{smallcapthm_intro} is similar to that of Theorem 3.1 in \cite{demeter2020small},  consisting of three ingredients: refined decoupling for the canonical partition, refined flat decoupling, and an incidence estimate. Refined decoupling for the canonical partition is a refined version of Theorem \ref{decouplingthm_intro} which we will prove in Sections \ref{decsec}, \ref{highlowsec}, and \ref{inductionsec} in order to derive Theorem \ref{decouplingthm_intro}.  We show the other two counterparts in Section \ref{smallcapsec}. 

\subsection{$L^p$ estimates for short generalized Dirichlet ploynomials}



A straight corollary of Theorem \ref{smallcapthm_intro} is essentially sharp $L^p$ estimates for short generalized  Dirichlet polynomials $\sum_{n=1}^{N^{1/2}}b_n e^{ita_n}.$
\begin{corollary}\label{discretesmallcor_intro}
  Let $\{a_n\}_{n=1}^{N^{1/2}}$ be a short generalized  Dirichlet sequence.
  Suppose $p\geq 4$ and $N\leq T \leq N^2.$ We have for every $\e>0,$ 
  \begin{equation}\label{s120_intro}
    \|\sum_{n=1}^{N^{1/2}}b_n e^{ita_n}\|_{L^p(B_T)} \lesssim_\e  N^{\e}   \left(N^{\frac{1}{2}} +T^{\frac{1}{p}} N^{\frac{1}{4}-\frac{1}{2p}} \right) \|b_n\|_{\ell^p}.
  \end{equation}
  for every $B_T,$ and every $\{b_n\}_{n=1}^{N^{1/2}}\subset \C,$ 
\end{corollary}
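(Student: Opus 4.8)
The plan is to deduce the $L^p$ estimate \eqref{s120_intro} from the small-cap decoupling inequality \eqref{s1_intro} by choosing the finest useful partition, namely $L_1 = 1$, so that each $J \in \mathcal{J}$ is a single ball $B_{L^2/N^2}(a_n)$, and then exploiting the fact that a Dirichlet polynomial with Fourier support at $a_n$ is \emph{nearly constant at the scale $N^2/L^2$}. First I would fix the free parameter $L$: since $\|\sum_n b_n e^{ita_n}\|_{L^p(B_T)}$ is essentially independent of $L$ (it only governs the width of the neighborhood $\Omega$, which does not appear on the discrete side once we are allowed to work on a ball $B_T$ with $T \ge N$), I would choose $L$ so that the two terms in the prefactor of \eqref{s1_intro} are balanced against the constraint $T \le N^2$; concretely, taking $L^2/N^2 \sim T^{-1}$, i.e.\ $L \sim N/T^{1/2}$ (legal since $N \le T \le N^2$ forces $1 \le L \le N^{1/2}$), makes each ball $B_{L^2/N^2}(a_n)$ have radius $\sim T^{-1}$, which is exactly the reciprocal of the spatial scale on which we integrate.

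Next I would set up the transference. Let $g = \sum_n b_n e^{ita_n}$ and let $f_J(t) = b_n e^{ita_n}\phi(t)$ where $\phi$ is a fixed Schwartz function with $\widehat{\phi}$ a bump adapted to $B_{L^2/N^2}(0)$ and $\phi \sim 1$ on $B_T(0)$ (this uses $L^2/N^2 \sim T^{-1}$ so the uncertainty principle is saturated). Then $\supp\widehat{f_J} \subset J$, $\sum_J f_J = g\,\phi$ agrees with $g$ on $B_T$, and $\|f_J\|_{L^p(\R)} \sim |b_n| T^{1/p}$ since $|f_J|$ is $\sim |b_n|$ on $B_T$ and decays rapidly off it. Plugging into \eqref{s1_intro} with $L_1 = 1$ gives
\begin{equation*}
  \|g\|_{L^p(B_T)} \lesssim_\e N^\e\left( N^{\frac12 - \frac2p} L^{\frac2p} + \left(N^{1/2}\right)^{\frac12-\frac1p}\right) T^{1/p} \|b_n\|_{\ell^p}.
\end{equation*}
Substituting $L \sim N T^{-1/2}$ turns the first term into $N^{\frac12-\frac2p}(NT^{-1/2})^{2/p} T^{1/p} = N^{1/2}$ and the second into $N^{\frac14 - \frac1{2p}} T^{1/p}$, which is exactly the claimed bound $N^{1/2} + T^{1/p} N^{\frac14 - \frac1{2p}}$.

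The only genuinely delicate point is the transference step — making sure that a decoupling inequality for $L^p(\R)$ functions yields a clean estimate on a \emph{finite} ball $B_T$ for the genuinely periodic/almost-periodic object $g$. The standard fix is to localize $g$ with a weight $\phi_{B_T}$ whose Fourier transform is supported in a ball of radius $\ll L^2/N^2$ (so that $\supp\widehat{f_J}$ still sits inside $J$, using that the common differences of distinct $a_n$ are $\gtrsim 1/N^2 \gg L^2/N^2$ only when $L \ll 1$ — so one should instead just choose the localization scale comfortably smaller than the gaps, which is fine because the $a_n$ are $\gtrsim 1/N^2$-separated while $L^2/N^2 \le 1/N \cdot L^2/N \le 1/N$, so individual balls $B_{L^2/N^2}(a_n)$ are disjoint and genuinely constitute the partition $\mathcal{J}$ with $L_1=1$). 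After this bookkeeping, the replacement of $L^p(\R)$ norms of $f_J$ by $|b_n| T^{1/p}$ and of $\|\sum_J f_J\|_{L^p(\R)}$ by $\|g\|_{L^p(B_T)}$ is routine, and the optimization over $L$ is the one-line computation above. I would also remark that the restriction $T \le N^2$ is what guarantees $L \ge 1$, and $T \ge N$ is what guarantees $L \le N^{1/2}$, so the hypotheses of the corollary are precisely those under which the parameter choice is admissible; for $T$ outside this range the estimate either follows trivially (small $T$, by Hölder/flat bounds) or is governed by the periodicity of the exponentials.
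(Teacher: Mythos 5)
Your proposal is correct and follows essentially the same route as the paper: choose $L$ with $N^2/L^2\sim T$ (admissible precisely because $N\le T\le N^2$), take $L_1=1$, apply Theorem \ref{smallcapthm_intro} to $f_J(t)=b_ne^{ita_n}\phi(t)$ with $\phi$ Schwartz, $|\phi|\gtrsim 1$ on $B_T$ and $\supp\widehat{\phi}\subset B_{T^{-1}}(0)$, and the prefactor arithmetic gives exactly $N^{1/2}+T^{1/p}N^{\frac14-\frac1{2p}}$. The only comment is that your worry in the last paragraph about gap sizes is unnecessary: since $T^{-1}=L^2/N^2$, the Fourier support of each $f_J$ lands inside $J=B_{L^2/N^2}(a_n)$ by construction, which is all the theorem requires.
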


If we let $L\in [1,N^{1/2}]$ be the integer such that $N^2/L^2=T,$ then Corollary \ref{discretesmallcor_intro} follows from Theorem \ref{smallcapthm_intro} with that $L,$ and $L_1=1,$ applied to functions $f_J(t)=b_ne^{ita_n} \phi(t)$ for every $J,$ where $\phi$ is a Schwartz function adapted to $B_T$ with Fourier support inside $B_{T^{-1}}(0).$

The inequality \eqref{s120_intro} is sharp up to $C_\e N^\e$. This is from discrete versions of the examples described below Theorem \ref{smallcapthm_intro}, taken with $L_1=1$: $b_n=1$ for every $n,$ and $b_n$ equal to random signs.

We will in fact prove a more general version of Theorem \ref{smallcapthm_intro} which allows us to get essentially sharp $(\ell^q,L^p)$ estimates for $\sum_{n=1}^{N^{1/2}}b_n e^{ita_n}$ in the range $p\geq 4, \frac{1}{p}+\frac{3}{q}\leq 1.$ See Theorem \ref{smallcapbilthm} and Corollary \ref{discretesmallcor}.


After this work was done we learned from James Maynard a general transference method, which can in particular transfer the $L^p$ estimate on a short generalized Dirichlet polynomial to a $2$-dimensional $L^p$ estimate on an exponential sum with frequency support near a convex curve in $\R^2.$ This allows us to derive Corollary \ref{discretesmallcor_intro} directly from the small cap decoupling inequalities for the parabola in \cite{demeter2020small}. We provide that particular argument in detail in Section \ref{appsec}. 

The starting point of this paper was to see whether decoupling methods could be used to make progress on Montgomery's conjecture on Dirichlet polynomials  \cite{montgomery1971topics,montgomery1994ten}.
Our investigation led us in a different direction, proving decoupling inequalities for short generalized Dirichlet sequences. 
\begin{conjecture}[Montgomery's Conjecture]\label{montconjmean}
  For every $p\geq 2$ and every $\e>0$ we have
  \begin{equation}\label{montconjmeaneqn}
      \| \sum_{n=N+1}^{2N} b_n n^{it} \|_{L^p(B_T)} \leq C_\e T^\e N^{1/2}(N^{p/2}+T)^{1/p} \|b_n\|_{\ell^\infty}
  \end{equation}
  for every ball $B_T$ of radius $T,$ and every $\{b_n\}_{n=N+1}^{2N}\subset \C.$
\end{conjecture}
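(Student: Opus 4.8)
Since the final statement is Montgomery's conjecture itself --- an open problem that, as the text above makes clear, this paper does not resolve --- what follows is a plan of attack rather than a proof, with the point where it stalls identified.

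The plan is to reduce the full $N$-term sum to short pieces. Partition $[N+1,2N]$ into $M:=N^{1/2}$ blocks $B_k$ of $N^{1/2}$ consecutive integers and write $\sum_{n=N+1}^{2N}b_n n^{it}=\sum_{k=1}^{M}g_k(t)$ with $g_k(t)=\sum_{n\in B_k}b_n n^{it}$. Since the first differences of $\log n$ are $\asymp N^{-1}$ and the second differences are $\asymp N^{-2}$, each sequence $\{\log n\}_{n\in B_k}$ is, after reflection (cf.\ the remark following Definition \ref{defnDirichlet_intro}), a short generalized Dirichlet sequence with parameter $N$; hence Corollary \ref{discretesmallcor_intro}, or its $(\ell^q,L^p)$ refinement (Corollary \ref{discretesmallcor}, Theorem \ref{smallcapbilthm}), controls each $\|g_k\|_{L^p(B_T)}$ for $N\le T\le N^2$. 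What remains is to decouple the blocks: to bound $\|\sum_k g_k\|_{L^p(B_T)}$ by an $\ell^q$ aggregate of the $\|g_k\|_{L^p(B_T)}$ with loss only $N^{o(1)}$ --- or at worst a small power of $M$ --- after which summing over the $\asymp T/N$ translates of $B_N$ covering $B_T$ and optimizing in $p$ would yield \eqref{montconjmeaneqn}. The $p=2$ case of this scheme is exactly the classical Montgomery--Vaughan mean value theorem and serves as a consistency check.

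The structure one would try to exploit at the coarse scale is that the blocks inherit the same convexity one step up: each $\{\log n\}_{n\in B_k}$ lies in an $N^{-1}$-neighbourhood of an arithmetic progression of length $\asymp N^{-1/2}$ with common difference $\asymp 1/(N+kN^{1/2})$, and these common differences are $\asymp N^{-3/2}$-separated for distinct $k$ --- precisely the ``separation in step-size'' that powers Theorem \ref{decouplingthm_intro}. So the coarse step should be a decoupling inequality in the spirit of Theorem \ref{decouplingthm_intro}, but for a generalized Dirichlet sequence with $N$ terms partitioned into $N^{1/2}$ fat APs, rather than $N^{1/2}$ terms partitioned into $\le cN^{1/2}$ fat APs. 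Given such an estimate with an $N^\e$ constant and the correct exponent $q$, the reduction above closes.

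The obstruction is this coarse step, and I do not expect a short path around it. The wave-packet calculus developed here --- the dual fat APs $I^*$, the ``ball'' $P(L)$, and the bilinear Kakeya and restriction estimates (Propositions \ref{BKprop}, \ref{bilresprop}) --- is tuned to $N^{1/2}$ terms, and the paper explicitly attributes the resemblance to the parabola to that choice; extending the machinery to $N$-term sequences is the heart of the difficulty and is essentially the content of the conjecture. Settling instead for flat decoupling of the $N^{1/2}$ blocks (the only estimate available with no further input, with loss $(N^{1/2})^{1-2/p}$), combined with Corollary \ref{discretesmallcor_intro} applied to each $g_k$, yields a valid bound that nonetheless misses \eqref{montconjmeaneqn} by a power $N^{c(p)}$ for every $p>2$. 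Furthermore, a clean $\ell^2$ coarse decoupling would, in the spirit of Bourgain--Demeter \cite{bourgain2015proof}, only be expected for $p\le 6$, while \eqref{montconjmeaneqn} is asserted for all $p\ge 2$; reaching $p>6$ would in addition require an $\ell^p$ small-cap version at the coarse scale with sharp constants, in the spirit of \cite{demeter2020small}. The realistic outcome of this line of attack is therefore a partial result --- an improved exponent, or \eqref{montconjmeaneqn} in a restricted range of $p$ and $T$ --- pending a genuinely new decoupling input for long Dirichlet sequences.
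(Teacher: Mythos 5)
This statement is Montgomery's Conjecture, which the paper records only as motivation and explicitly describes as wide open; there is no proof in the paper to compare against, and you correctly decline to claim one. Your identification of the obstruction --- the absence of a coarse-scale decoupling for the $N$-term sequence with only $N^{o(1)}$ loss, and the fact that the paper's wave-packet machinery (dual fat APs, the ball $P(L)$, Propositions \ref{BKprop} and \ref{bilresprop}) is tuned to $N^{1/2}$ terms --- matches the paper's own discussion at the end of Section \ref{longsec} and in Section \ref{appsec}.

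One point worth sharpening: your hoped-for coarse step, as described, relies only on the convexity and step-size separation of the blocks, i.e.\ it would apply to every generalized Dirichlet sequence. The paper shows this cannot work. By Lemma \ref{longexample} there is a generalized Dirichlet sequence containing an AP of $\asymp N^{1/2}$ terms with common difference $N^{-1/2}$, and the sharpness part of Theorem \ref{longdecthm} shows that for this sequence the flat-decoupling loss $N^{1/4-1/(2p)}$ at the coarse scale is unavoidable; moreover the periodic example $f=\sum_{n=1}^N e^{it(N+n)/N^2}$ violates the generalized conjecture \eqref{MCgeneral} for $p>4$ and $T>N^{2+\e_0}$. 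So the missing ingredient is not merely a technical extension of the $N^{1/2}$-scale machinery to $N$ terms: any successful coarse step must use arithmetic input specific to $\{\log n\}_{n=N+1}^{2N}$ --- e.g.\ the unique-factorization argument behind \eqref{2000}, or the fact that $\{\log n\}$ contains no such approximate AP --- and not just the spacing hypotheses \eqref{defa_n_intro}. With that caveat, your assessment that this line of attack can at best yield partial results without a genuinely new idea is consistent with the paper's stated position.
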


Conjecture \ref{montconjmean} is widely open. In fact it has significant implications which are also hard conjectures. It is shown in \cite{montgomery1971topics} that
Conjecture \ref{montconjmean} implies the density conjecture for the Riemann zeta function.  Bourgain observed in \cite{bourgain1991remarks,bourgain1993distribution} that a stronger version of Conjecture \ref{montconjmean} on large value estimate of Dirichlet polynomials implies the Kakeya maximal operator conjecture in all dimensions. Conjecture \ref{montconjmean} itself also implies a weaker statement that a Kakeya set has full Minkowski dimension (see \cite{green2003restriction}).

Our Corollary \ref{discretesmallcor_intro} proves some $L^p$ estimates for ``short" Dirichlet polynomials  which do not directly connect to Montgomery's conjecture. In fact we believe to make progress on Montgomery's conjecture significant new ideas are needed. 

On the other hand, combining Theorem \ref{decouplingthm_intro} with flat decoupling we obtain $\ell^2 L^p$ decoupling inequalities for generalized Dirichlet sequences (with $N$ many terms instead of $N^{1/2}$), and the decoupling inequalities we get are essentially sharp for the class of generalized Dirichlet sequences. As a corollary we have essentially sharp $(\ell^2,L^p)$ estimates on generalized Dirichlet polynomials, but the Dirichlet polynomial $\sum_{n=N+1}^{2N} b_n e^{it\log n},$ has more structure and admits better estimates. This has to do with examples of generalized Dirichlet sequences containing a $cN^{1/2}$-term AP with common difference $CN^{-1/2},$ which $\{\log n\}_{n=N+1}^{2N}$ cannot contain by a number theory argument.  We discuss these in detail in Section \ref{longsec}.

The paper is structured as follows. In Section \ref{localconstsec} we will illustrate the wave packet structure of functions with frequency support in a fat AP. In Section \ref{BKBRsec} we  prove a bilinear Kakeya-type estimate and a bilinear restriction-type estimate for functions with frequency support in a neighborhood of a short generalized Dirichlet sequence $\{a_n\}_{n=1}^{N^{1/2}}.$ 
Section \ref{decsec}, \ref{highlowsec}, and \ref{inductionsec} are dedicated to proving Theorem \ref{decouplingthm_intro}.
Section \ref{decsec} introduces a refined decoupling inequality for the canonical partition (Theorem \ref{refineddecthm}), which implies Theorem \ref{decouplingthm_intro}, and which we will actually prove.
Section \ref{highlowsec} sets up a high-low frequency decomposition for square functions at different scales, and in Section \ref{inductionsec} we finish the proof of Theorem \ref{refineddecthm}. Section \ref{longsec} discusses the decoupling problem for ($N$-term) generalized Dirichlet sequences. In Section \ref{smallcapsec} we prove Theorem \ref{smallcapthm_intro}. Section \ref{appsec} is about the transference method for one-dimensional exponential sum estimates like \eqref{s120_intro}.

\vspace{5mm}
\noindent
{\bf Notation.} $C$ will denote a positive absolute constant that may vary from lines to lines, and it may be either small or large. $A\lesssim B$ means $A\leq CB.$ $A\sim B$ means $A\lesssim B$ and $B\lesssim A.$ We will also use $\mc{O}(A)$ to denote a quantity that is less than or equal to $CA.$ $A\lesssim_q B$ will mean $A\leq C_q B$ for some constant depending on $q.$ Similarly $\mc{O}_q(A)$ denotes a quantity that is less than or equal to $C_qA.$ There will be a parameter $N$ and $A\lessapprox B$ denotes $A\lesssim_\e N^\e B$ for every $\e>0.$

\vspace{5mm}
\noindent
{\bf Acknowledgements.}  We would like to thank James Maynard for thoughtful discussions related to this paper. In particular we learned from him the transference method described in Section \ref{appsec}.
LG is supported by a Simons Investigator grant.

\section{Locally constant property} \label{localconstsec}
We set up some notations and describe the locally constant property related to fat APs in this section.

\begin{definition} \label{dualI}
We let $P^{\delta}_{v}(a)$ denote the $\delta$-neighborhood of the arithmetic progression on $\R$ which contains $a$ and has common difference $v.$ We call $P^{\delta}_v(x_0)\cap B_{R}(x_0),$ or simply $P^{\delta}_v\cap B_{R},$ a fat AP with thickness $\delta,$ common difference $v,$ and diameter $R.$ We will call $P^{R^{-1}}_{v^{-1}}\cap B_{\delta^{-1}}$ a fat AP dual to $P^{\delta}_v\cap B_{R}.$ 
\end{definition}

To exploit the locally constant property of a function with frequency support in a fat AP, we first construct a family of functions $\psi_k:\R\rightarrow \C$ adapted to a fat AP (in the frequency space).
\begin{lemma} \label{psilem}
  For every $x_0\in \R,$ $\delta\leq v/2,$ $M\geq 1,$ and $k\geq 1$ there exists a function $\psi_k:\R \rightarrow \C$ with the property
\begin{equation}\label{supppsi}
  \widehat{\psi_k}(\xi)=1 \text{ on } P_{v}^{\delta}(x_0) \cap B_{Mv}(x_0), \qquad
  \supp \widehat{\psi_k} \subset P_v^{2\delta}(x_0) \cap B_{8^kMv}(x_0),
\end{equation}
and $\psi_k$ decays at order $k$ outside of the dual fat AP $P_{v^{-1}}^{(Mv)^{-1}}(0) \cap B_{\delta^{-1}}(0):$
\begin{equation}\label{decaypsi}
   (M\delta) 1_{P_{v^{-1}}^{(Mv)^{-1}}(0) \cap B_{\delta^{-1}}(0)}\lesssim_k |\psi_k(x)| \lesssim_{k}  \frac{M\delta}{\left(1+\frac{d(x,v^{-1}\Z)}{(Mv)^{-1}}\right)^k \left( 1+\frac{d(x,B_{\delta^{-1}}(0))}{\delta^{-1}} \right)^{k}}.
\end{equation}
\end{lemma}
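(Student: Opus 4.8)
The plan is to construct $\psi_k$ explicitly as an inverse Fourier transform of a smoothed indicator of the fat AP $P_v^\delta(x_0)\cap B_{Mv}(x_0)$, built by tensoring two one-dimensional bump constructions: one governing the ``thickness direction'' (the $\delta$-neighborhoods of the individual points of the AP) and one governing the ``diameter direction'' (the ball $B_{Mv}(x_0)$ of radius $Mv$). First I would reduce to the model case $x_0 = 0$ and $v\in\Z$ after an affine change of variables, noting that replacing $\xi\mapsto \xi - x_0$ in frequency multiplies $\psi_k$ by a unimodular factor and translates its support, neither of which affects \eqref{decaypsi}; a rescaling reduces the common difference to $1$, so it suffices to treat $P_1^{\delta}(0)\cap B_M(0)$ with $\delta \le 1/2$.

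Next I would carry out the periodic construction. For the thickness direction, pick a Schwartz bump $\chi$ on $\R$ with $\widehat{\chi}=1$ on $B_\delta(0)$ and $\supp\widehat{\chi}\subset B_{2\delta}(0)$; its periodization $\sum_{n\in\Z}\widehat{\chi}(\xi - n)$ has Fourier series with coefficients $\chi(2\pi m)$ supported at the dual lattice $\Z$ and — by the rapid decay of $\chi$ — concentrated in $B_{(2\pi)^{-1}}$-type neighborhoods of that lattice, which after undoing the scaling gives the dual spacing $(Mv)^{-1}$ once we bring in the $M$ factor below. For the diameter direction, I would take $\eta$ Schwartz with $\widehat{\eta}=1$ on $B_M(0)$ and $\supp\widehat{\eta}\subset B_{2M}(0)$, and then \emph{iterate convolution $k$ times}: replacing $\widehat\eta$ by a $k$-fold ``nested'' version whose support grows by a factor $8$ at each stage (so the final support lies in $B_{8^kM}(0)$) while $\eta$ itself — now a $k$-fold product/convolution of bumps at scale $M$ — decays at order $k$ outside $B_{M^{-1}}(0)$. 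Forming $\widehat{\psi_k}(\xi) = \big(\text{periodized thickness bump}\big)(\xi)\cdot \widehat{\eta}(\xi)$ and tracking supports verifies \eqref{supppsi}: the product is $1$ on $P_1^\delta\cap B_M$ and supported in $P_1^{2\delta}\cap B_{8^kM}$. On the physical side $\psi_k$ is a convolution of the two pieces, and the normalization constant $M\delta$ comes out as the product of the $L^\infty$ masses ($\delta$ from the thickness bump's Fourier coefficient $\widehat{\chi}(0)\sim\delta$ after rescaling, $M$ from $\eta$).

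The decay estimate \eqref{decaypsi} is then a matter of combining the two one-dimensional decays. The thickness factor contributes $M\delta\,(1 + d(x,\Z)/\!\sim\! 1)^{-k}$-type decay away from the dual lattice — after reinstating $v$ this is the factor $(1 + d(x, v^{-1}\Z)/(Mv)^{-1})^{-k}$; the diameter factor contributes $(1 + d(x, B_{\delta^{-1}})/\delta^{-1})^{-k}$ away from the dual ball. The lower bound $M\delta\, 1_{P_{v^{-1}}^{(Mv)^{-1}}\cap B_{\delta^{-1}}}\lesssim_k |\psi_k|$ requires an argument that $\psi_k$ does not vanish on the dual fat AP: here I would use that each one-dimensional piece can be chosen real and nonnegative near its dual core (e.g.\ taking $\widehat\chi$ and $\widehat\eta$ of the form $|\rho|^2\ast$ something, or simply a Gaussian-type bump whose inverse transform is positive), so the product/convolution stays bounded below on the intersection of the two cores. \textbf{The main obstacle} I anticipate is the interplay between the $k$-fold iteration needed to upgrade the polynomial decay to order $k$ and the requirement that the frequency support only dilate by the fixed factor $8^k$ (not something worse) while keeping the periodic structure $P_v^{2\delta}$ intact — one must be careful that convolving the diameter bump with itself does not smear the thickness-direction support beyond $2\delta$, which is why the construction should be a genuine tensor product in suitably chosen coordinates rather than a single convolution in $\R$. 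Handling the dependence of all implied constants on $k$ (but not on $x_0, v, \delta, M$) is then bookkeeping.
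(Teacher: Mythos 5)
Your construction is correct in its essentials, but it takes a genuinely different route from the paper. You build $\widehat{\psi_k}$ as a frequency-side \emph{product}: the periodization $\sum_n\widehat{\chi}(\xi-x_0-nv)$ of a bump at scale $\delta$ over the lattice $x_0+v\Z$, multiplied by a smooth cutoff $\widehat{\eta}$ equal to $1$ on $B_{Mv}(x_0)$ and supported in $B_{2Mv}(x_0)$. On the physical side this is (up to a modulation and normalization) $\psi_k(x)=v^{-1}\sum_{m\in\Z}\chi(m/v)\,\eta(x-m/v)$, and \eqref{supppsi} is immediate because multiplication only intersects supports, while the two factors in the upper bound of \eqref{decaypsi} come from the rapid decay of $\chi$ outside $B_{\delta^{-1}}$ and of $\eta$ outside $B_{(Mv)^{-1}}$. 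The paper instead realizes $\widehat{\psi_k}$ as a frequency-side \emph{convolution} of the $\delta$-bump with a finite, tapered Dirac comb, the taper being manufactured by multiplying Dirichlet kernels $D_M$, $D_{8^{s}M/2}$ in physical space; each extra Dirichlet factor raises the decay order near $v^{-1}\Z$ by one but lengthens the comb, which is precisely why their frequency support must grow to $B_{8^kMv}$. Your route is cleaner: a single compactly supported smooth cutoff already yields decay of every order with frequency support only $B_{2Mv}$, so the $k$-fold iteration of the "diameter bump" (and the $8^k$ growth) that you import from nowhere in particular is simply unnecessary in your setup.

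Two caveats. First, the "main obstacle" you anticipate is not an obstacle for your own construction, and the proposed remedy is confused: everything here lives in $\R^1$, so there is no pair of coordinates in which to form a tensor product; you are multiplying two functions of the same frequency variable, and a product cannot smear the $2\delta$-thickness (smearing would occur only under a frequency-side convolution of the two pieces, which you are not performing). Second, the lower bound in \eqref{decaypsi} on the full dual fat AP is more delicate than your positivity remark suggests: one cannot have $\widehat{\eta}\equiv 1$ on $B_{Mv}$ together with $\eta\ge 0$ (and a Gaussian lacks compact Fourier support), so at the endpoints of the $(Mv)^{-1}$- and $\delta^{-1}$-scales one only gets the lower bound after shrinking the dual fat AP by a constant factor. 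This is not really a defect of your argument relative to the paper — the paper's $\tilde D_k$ literally vanishes at points of the claimed $(Mv)^{-1}$-neighborhood of $v^{-1}\Z$ — and a lower bound on a comparable, slightly smaller core is all that is ever used, but it is worth stating the lower bound in that corrected form rather than relying on the positivity trick as written.
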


We say such a $\psi_k$ is adapted to the fat AP $P^{\delta}_v(x_0) \cap B_{Mv}(x_0)$ in the frequency space with order of decay $k.$
\begin{proof}
Since translation in frequency space corresponds to modulation in the physical space, we may assume $x_0=0.$

We start with the Dirichlet kernel
$$D_M(x)=\sum_{|j|\leq M} e^{2\pi i jx}=\frac{\sin((2M+1)\pi x)}{\sin(\pi x)}.$$
We define $\tilde{D}_1(x)=D_M(x).$ Then we define $\tilde{D}_k(x)$ inductively by
$$\tilde{D}_k(x)=d_k^{-1}\tilde{D}_{k-1}(x) D_{8^{k-1}M/2}(x),$$
where $d_k=\|\widehat{D_{8^{k-1}M/2}}\|_{L^1(\R)}$ is the total measure of the measure $\widehat{D_{8^{k-1}M/2}}.$
Equivalently we can define $\tilde{D}_k$ explicitly as
$$\tilde{D}_k=\tilde{d}_k D_M\prod_{1\leq s \leq k-2}D_{8^s M/2}$$
for some suitable constant $\tilde{d}_k>0.$

Since $\tilde{D}_1=D_M$ has the property that
$$ \widehat{\tilde{D}_1}(\xi) = \sum_{|j|\leq M} \delta_0(\xi-j),  $$
by induction we can show that
$$ \widehat{\tilde{D}_k} (\xi) =\sum_{|j|\leq M} \delta_0(\xi-j)+\sum_{M<|j|\leq 8^{k}M/4} b_{j,k} \delta_0 (\xi-j) $$
for some $0\leq b_{j,k} \leq 1.$ From the explicit expression of the Dirichlet kernel we see that $\tilde{D}_1$ decays at order $1$ outside of $P^{M^{-1}}_{1}(0):$
$$|\tilde{D}_0(x)|=|D_M(x)| \lesssim \frac{M}{1+\frac{d(x,\Z)}{M^{-1}}}.$$
By induction on $k$ we obtain $\tilde{D}_k$ decays at order $k$ outside of $P^{M^{-1}}_{1}(0):$
\begin{equation}\label{decaytildeD}
  |\tilde{D}_k(x)|\lesssim_{k} \frac{M}{\left(1+\frac{d(x,\Z)}{M^{-1}}\right)^k}.
\end{equation}

Now let $\phi(x)$ be a Schwartz function such that $\hat{\phi}$ is a smooth bump adapted to $B_1(0)$
$$ \hat{\phi}(\xi)=1 \text{ on } B_1(0), \qquad \supp \hat{\phi} \subset B_2(0).$$
Let $\phi_{\delta^{-1}}(x)$ be the function $\phi(\delta x).$ Note that $\phi_{\delta^{-1}}$ decays rapidly outside of $B_{\delta^{-1}}(0).$ We let $\psi_k$ be given by
$$\widehat{\psi_k}:= \widehat{\phi_{\delta^{-1}}} * \widehat{\tilde{D}_k}(v^{-1}\xi) /v= \sum_{|j|\leq M} \widehat{\phi_{\delta^{-1}}}(\xi-jv)+\sum_{M<|j|\leq 8^{k}M/4} b_{j,k} \widehat{\phi_{\delta^{-1}}}(\xi-jv).$$
From this definition we immediately see property \eqref{supppsi} holds. Writing $\psi_k$ as
$$\psi_k(x)= \phi_{\delta^{-1}}(x) {\tilde{D}_k}(vx)$$
we observe from \eqref{decaytildeD} and the rapid decay of $\phi_{\delta^{-1}} $ outside $B_{\delta^{-1}}(0)$ that \eqref{decaypsi} holds.

\end{proof}

For every fat AP $P=P^{(Mv)^{-1}}_{v^{-1}}(x_0)\cap B_{\delta^{-1}}(x_0)$ with $\delta\leq v,$ and every $k\geq 100,$ let $W_{P,k}$ be the weight function
$$W_{P,k}(x)=\frac{1}{\left(1+\frac{d(x,x_0+v^{-1}\Z)}{(Mv)^{-1}}\right)^k \left( 1+\frac{d(x,B_{\delta^{-1}}(x_0))}{\delta^{-1}} \right)^{k}}.$$
We will use the following notation
$$\int_{W_{P,k}} f(x) dx:=\int_{\R} f(x)W_{P,k}(x) dx, \quad \dashint_{W_{P,k}} f(x) dx:=\frac{1}{\|W_{P,k}\|_{L^1(\R)}} \int_{\R} f(x)W_{P,k}(x) dx,$$
$$\|f\|_{\stkout{L}^p(W_{P,k})}:=\left(\dashint_{W_{P,k}} |f|^p(x) dx\right)^{1/p}.$$
For measurable sets $E\subset \R$ we use similar notations for average integrals and $L^p$ norms:
$$\dashint_{E} f(x) dx:=\frac{1}{|E|} \int_{E} f(x) dx, \quad
\|f\|_{\stkout{L}^p(E)}:=\left(\dashint_{E} |f|^p(x) dx\right)^{1/p}.$$

For a fat AP $P$ and translated copies of a smaller fat AP $P',$ we have the following pointwise inequality
\begin{equation}\label{weighttransitive}
  1_{P}(x)\lesssim_k \sum_{P'\subset P} W_{P',k}(x)\lesssim_k W_{P,k}(x).
\end{equation}
Here $\sum_{P'\subset P}$ means summing over a tiling (with $\mc{O}(1)$ overlap) of $P$ by $P'.$

If we look at  translated copies $P''$ of $P,$ we have
\begin{equation}\label{weighttransitive2}
  \sum_{P''\subset \R} W_{P'',k}(x) W_{P,k}(P'') \lesssim_{k} W_{P,k}(x).
\end{equation}
Here $\sum_{P''\subset \R}$ means summing over a tiling (with $\mc{O}(1)$ overlap) of $\R$ by $P'',$ and $W_{P,k}(P'')$ is defined to be $W_{P,k}(\sup P''),$ which is comparable to $W_{P,k}(x)$ for any $x\in P''.$

\begin{proposition}[locally constant property] \label{locconstprop}
  Suppose $f$ satisfies $\supp \hat{f}\subset P_{v}^\delta \cap B_{Mv}.$ Then for every dual fat AP $P=P^{(Mv)^{-1}}_{v^{-1}} \cap B_{\delta^{-1}}$ and every $1\leq q<p< \infty$ we have
  $$\|f\|_{\stkout{L}^p(W_{P,k})}\lesssim_{p,q,k} \|f\|_{\stkout{L}^q(W_{P,\frac{qk}{p}})}, \qquad \text{if} \qquad \frac{qk}{p}\geq 100,$$
  $$\|f\|_{L^\infty(P)}\lesssim_{k}\|f\|_{\stkout{L}^1(W_{P,k})}.$$
\end{proposition}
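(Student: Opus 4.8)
The plan is to deduce the locally constant property from the existence of the functions $\psi_k$ constructed in Lemma \ref{psilem}, via the standard "reproducing formula + weighted Young/Hölder" argument. First I would fix $f$ with $\supp\widehat f\subset P_v^\delta\cap B_{Mv}$ and let $\psi_k$ be adapted to the fat AP $P_v^\delta\cap B_{Mv}$ with order of decay $k$, as in Lemma \ref{psilem}. Since $\widehat{\psi_k}=1$ on $\supp\widehat f$, we have the exact reproducing identity $f=f*\psi_k$. The lower bound in \eqref{decaypsi} shows $|\psi_k|\gtrsim_k (M\delta)1_{P^{(Mv)^{-1}}_{v^{-1}}(0)\cap B_{\delta^{-1}}(0)}$, and the upper bound shows $|\psi_k(x)|\lesssim_k (M\delta)\,W_{P_0,k}(x)$ where $P_0=P^{(Mv)^{-1}}_{v^{-1}}(0)\cap B_{\delta^{-1}}(0)$ is the dual fat AP centered at $0$; note $\|W_{P_0,k}\|_{L^1}\sim |P_0|\sim (M\delta)^{-1}$, so $M\delta\,W_{P_0,k}$ is essentially an $L^1$-normalized bump. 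Thus $|f|\le |f|*(|\psi_k|)\lesssim_k |f|*(M\delta\,W_{P_0,k})$.

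For the $L^\infty$ bound: for $x\in P$ (a translate of $P_0$), $|f(x)|\lesssim_k \int_\R |f(y)|\,(M\delta)W_{P_0,k}(x-y)\,dy$. The weight $W_{P_0,k}(x-y)$ as a function of $y$ is, up to the translation by $x$, comparable to $W_{P,k}(y)$ after absorbing the shift — more precisely one checks $W_{P_0,k}(x-y)\lesssim_k W_{P,k}(y)$ for $x$ in the base translate $P$ (the common difference $v^{-1}$ and diameter $\delta^{-1}$ are the same, only the centering differs, and $x$ varies over a single period/diameter). Hence $|f(x)|\lesssim_k (M\delta)\int_\R |f(y)|W_{P,k}(y)\,dy = \|f\|_{\stkout L^1(W_{P,k})}$, using $\|W_{P,k}\|_{L^1}\sim (M\delta)^{-1}$. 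Taking the sup over $x\in P$ gives the second inequality.

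For the $L^p$–$L^q$ bound: since $q<p$, write $|f|^q = |f|^q$ and apply the pointwise bound to each factor. The clean route is: $|f(x)|^p = |f(x)|^{p-q}|f(x)|^q \lesssim_{p,q,k}\|f\|_{L^\infty(P)}^{p-q}\cdot |f(x)|^q$ is too lossy; instead I would run the convolution estimate at the right exponent. Write $|f|\lesssim_k |f|*(M\delta\,W_{P_0,k})$ and raise to the $p$-th power; by Hölder's inequality in the convolution (with the $L^1$-normalized weight $(M\delta)W_{P_0,k}$ acting as a probability-like density of total mass $\sim 1$), $\big(|f|*(M\delta W_{P_0,k})\big)^p\lesssim_{p,k} |f|^p*(M\delta W_{P_0,k})$, but this only gives the $L^p$–$L^p$ statement. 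To gain the improvement from $q$ to $p$ one exploits that the weight has polynomial tails of order $k$ and that passing from exponent $q$ to $p$ costs only a factor $k/p$ in the decay: decompose $\R$ into translates $P''$ of $P_0$, on each of which $f$ satisfies the same frequency support hypothesis, use the $L^\infty$–$L^1$ inequality locally to get $\|f\|_{L^\infty(P'')}\lesssim_k \|f\|_{\stkout L^1(W_{P'',k})}$, hence $\|f\|_{\stkout L^p(W_{P,k})}^p\lesssim_k \sum_{P''}\|f\|_{L^\infty(P'')}^p\,W_{P,k}(P'')\,|P''|/\|W_{P,k}\|_{L^1}\lesssim_k \sum_{P''}\|f\|_{\stkout L^q(W_{P'',qk/p})}^p W_{P,k}(P'')|P''|/\|W_{P,k}\|_{L^1}$, then bound $\|f\|^p_{\stkout L^q}\le \|f\|^{p-q}_{L^\infty}\|f\|^q_{\stkout L^q}$ is again circular — so the correct bookkeeping is to note $\|f\|_{\stkout L^q(W_{P'',qk/p})}^q = \dashint_{W_{P'',qk/p}}|f|^q$ and sum these averaged quantities against the outer weight $W_{P,k}$, using the weight-transitivity \eqref{weighttransitive2} in the form $\sum_{P''}W_{P'',qk/p}(y)\,W_{P,k}(P'')\lesssim_k W_{P,k}(y)$ (valid since $qk/p\ge 100$) to collapse the double average back to $\dashint_{W_{P,k}}|f|^q$, and finally $\big(\dashint_{W_{P,k}}|f|^q\big)^{1/q}$ matches the right-hand side.

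The main obstacle will be the last step — organizing the two-parameter weighted average so that the passage from the local $L^q$ averages to the global $L^p$ average does not lose powers of $M$, $\delta$, or a large constant in $k$. The key technical inputs are (i) the pointwise inequality \eqref{weighttransitive2} to transit the weight $W_{P'',qk/p}$ up to $W_{P,k}$ while summing over the tiling $\{P''\}$, which crucially uses that $qk/p\ge 100$ so the tails are still summable, and (ii) the elementary comparison $W_{P_0,k}(x-y)\lesssim_k W_{P,k}(y)$ for $x$ in a fixed translate, which makes the convolution with $\psi_k$ behave like local averaging against $W_{P,k}$. Everything else — the reproducing formula $f=f*\psi_k$, the mass normalizations $\|W_{P,k}\|_{L^1}\sim (M\delta)^{-1}$, and Hölder — is routine.
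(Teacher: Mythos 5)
Your route is the same as the paper's: reproduce $f=f*\psi_k$ with the function from Lemma \ref{psilem} to get the $L^\infty$--$\stkout{L}^1$ bound on $P$ (that part of your argument is fine), then tile $\R$ by translates $P''$, apply the local bound on each $P''$ at the lowered decay $qk/p$, and resum via weight transitivity. The problem is that the final resummation, as you state it, does not close. First, the inequality you invoke, $\sum_{P''}W_{P'',\frac{qk}{p}}(y)\,W_{P,k}(P'')\lesssim_k W_{P,k}(y)$, is false: take $y$ on the arithmetic progression of $P$ at distance $R\gg\delta^{-1}$ from $B_{\delta^{-1}}$ and let $P''$ be the translate sitting at $P$; that single term is already $\sim (R\delta)^{-qk/p}$, which dominates $W_{P,k}(y)\sim(R\delta)^{-k}$ because $qk/p<k$. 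The sum can only be collapsed to $W_{P,\frac{qk}{p}}(y)$, not to $W_{P,k}(y)$. Second, your display carries the local averages to the power $p$, i.e.\ you must control $\sum_{P''}\bigl(\dashint_{W_{P'',\frac{qk}{p}}}|f|^q\bigr)^{p/q}W_{P,k}(P'')$, and "summing the averaged quantities against the outer weight" does not account for the exponent $p/q>1$.

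Both defects are repaired by exactly the bookkeeping the paper uses. Since the weight at decay $k$ raised to the power $q/p$ is the weight at decay $qk/p$, write $W_{P,k}(P'')=\bigl(W_{P,\frac{qk}{p}}(P'')\bigr)^{p/q}$, and use $\sum_j a_j^{p/q}\le\bigl(\sum_j a_j\bigr)^{p/q}$ for nonnegative $a_j$ and $p/q\ge1$ to get
$$\sum_{P''}\Bigl(\dashint_{W_{P'',\frac{qk}{p}}}|f|^q\Bigr)^{p/q}W_{P,k}(P'')\le\Bigl(\sum_{P''}W_{P,\frac{qk}{p}}(P'')\dashint_{W_{P'',\frac{qk}{p}}}|f|^q\Bigr)^{p/q};$$
now \eqref{weighttransitive2}, applied at decay $qk/p\ge100$, collapses the inner sum and yields $\lesssim_{p,q,k}\bigl(\dashint_{W_{P,\frac{qk}{p}}}|f|^q\bigr)^{p/q}$, the normalizations $\|W_{P'',\frac{qk}{p}}\|_{L^1}\sim\|W_{P,k}\|_{L^1}\sim|P|$ absorbing the constants. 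Note the outcome is the $\stkout{L}^q(W_{P,\frac{qk}{p}})$ norm, which is precisely the right-hand side of the proposition: you neither need, nor can in general obtain, the stronger $W_{P,k}$-average you aimed for in the last line.
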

\begin{proof}
  We first prove the second inequality. Fix $k\geq 100.$ From \eqref{supppsi} we have
  $$f(x)=f*\psi_k(x)=\int_{\R} f(y)\psi_{k}(x-y)dy,$$
  where $\psi_k$ is the function in Lemma \ref{psilem} adapted to $P_{v}^\delta \cap B_{Mv}$ in the frequency space with order of decay $k.$
  Therefore for $x\in P$ we have
  \begin{align*}
    |f(x)| & \leq \int_{\R} |f(y)||\psi_{k}(x-y)|dy \\
     & \leq \int_{\R} |f(y)| \sup_{x\in P} |\psi_{k}(x-y)| dy \\
     & \lesssim_k \delta M \int_{\R} |f(y)| W_{P,k}(y) dy \\
     & \sim_k \dashint_{W_{P,k}} |f(y)| dy.
  \end{align*}
  For the third inequality we used \eqref{decaypsi}.

  Now we prove the first inequality in the proposition. We claim that from \eqref{weighttransitive2} (applied with $k$ replaced by $\frac{qk}{p}$) and the assumption $q<p$ we only need to show
  \begin{equation}\label{3}
    \|f\|_{\stkout{L}^p(P)} \lesssim_{p,q,k} \|f\|_{\stkout{L}^{q}(W_{P,k})}.
  \end{equation}
  Indeed if \eqref{3} holds, then
  \begin{align*}
    \int_{W_{P,k}} |f|^p & \lesssim_k \sum_{P'\subset \R} \int_{P'} |f|^p W_{P,k}(P') \\
     & \lesssim_{p,q,k} |P|^{1-\frac{p}{q}} \sum_{P'\subset \R} W_{P,k}(P') ( \int_{W_{P',\frac{qk}{p}}} |f|^q  )^{p/q}\\
     & \leq |P|^{1-\frac{p}{q}} \left( \int_{\R} |f(x)|^q \sum_{P'\subset \R} W_{P,k}(P')^{q/p} W_{P',\frac{qk}{p}}(x) dx \right)^{p/q} \\
     & \lesssim_{p,q,k} |P|^{1-\frac{p}{q}} \left( \int |f(x)|^q \sum_{P'\subset \R} W_{P,\frac{qk}{p}}(P') W_{P',\frac{qk}{p}}(x) dx \right)^{p/q} \\
    (\text{by} \eqref{weighttransitive2}) \, & \lesssim_{p,q,k} |P|^{1-\frac{p}{q}} ( \int |f|^q W_{P,\frac{qk}{p}} )^{p/q},
  \end{align*}
  which is exactly the first inequality in the proposition.

  To show \eqref{3} we observe that the second inequality in the proposition together with H\"{o}lder's inequality implies that
  $$\|f\|_{\stkout{L}^p(P)}\leq \|f\|_{L^\infty (P)} \lesssim_{p,q,k} \|f\|_{\stkout{L}^q (W_{P,\frac{qk}{p}})},$$
  which is \eqref{3}.
\end{proof}

\section{Bilinear Kakeya-type and restriction-type estimates}\label{BKBRsec}

Kakeya and restirction-type estimates are closely related to decoupling, and we will use the bilinear version of them in the proof of Theorem \ref{decouplingthm_intro}, but first we need to introduce a more general decoupling set-up for the purpose of induction.


\subsection{General set-up}
To prove Theorem \ref{decouplingthm_intro} we will do a broad-narrow argument which involves re-scaling of a segment of $\{a_n\}_{n=1}^{N^{1/2}}.$ To properly set up our induction hypothesis we consider the following more general class of generalized Dirichlet sequences.
\begin{definition}[Generalized Dirichlet sequence] \label{defnDirichlet}
Let $\theta \in (0,1],$ and $N \geq 2.$  We call $\{a_n\}_{n=1}^{N}$ a generalized Dirichlet sequence (with parameters $N,\theta$) if it satisfies the property
\begin{equation}\label{defa_n}
  a_{2}-a_1 \in [\frac{1}{4N},\frac{4}{N}], \qquad (a_{i+2}-a_{i+1})-(a_{i+1}-a_i) \in [\frac{\theta}{4N^2},\frac{4\theta}{N^2}].
\end{equation}
We will call $\{a_n\}_{n=1}^{N^{1/2}}$ satisfying \eqref{defa_n_intro} a $N^{1/2}$-
short generalized Dirichlet sequence (with parameters $N,\theta$).
\end{definition}
As before we write ``short'' for ``$N^{1/2}$-short'' for simplicity. Comparing with Definition \ref{defa_n_intro} we see an extra parameter $\theta$ which measures the convexity of the sequence. From now on we use Definition \ref{defnDirichlet} for the definition of generalized Dirichlet sequence. 

We shall also incorporate $\theta$ in our decoupling set-up.
From the spacing property \eqref{defa_n} of $\{a_n\}_{n=1}^{N^{1/2}}$ we see that each $I\in \mathcal{I}$ is essentially contained in an $L^2\theta/N^2-$neighborhood of an arithmetic progression. Indeed if we define $v_j=a_{(j-1)L+2}-a_{(j-1)L+1},$ then $I_j$ is contained in the $CL^2\theta/N^2-$neighborhood of the arithmetic progression containing $a_{jL}$ with common difference $v_j,$
that is,
$$I_j\subset P^{CL^2\theta/N^2}_{v_j}(a_{jL}) \cap B_{CL/N}(a_{jL}).$$

We let 
$\Omega$ be the $\theta L^2/N^2$-neighborhood of $\{a_n\}_{n=1}^{N^{1/2}}.$
For $1\le L\le c N^{1/2}$ and each $j=1,\ldots,\frac{N^{1/2}}{L}$, define
\[ I_j=\bigcup_{i=(j-1)L+1}^{jL} B_{\theta L^2/N^2}(a_i)
.\]
We denote the collection of $I_j$ by $\mc{I},$ and consider the partition
\[\Omega=\bigsqcup_{I \in \mc{I}} I.\]
This will be our new decoupling set-up for the canonical partition, and from now on the notation here supersedes that in the Introduction. For small-cap type decoupling we postpone the description of the corresponding general set-up to Section \ref{smallcapsec}.

\subsection{Analogies between $\{a_n\}_{n=1}^{N^{1/2}}$ and $\mathbb{P}^1$} \label{comp}

For $I=I_j\in \mc{I},$ we let
$$
 \tilde{I}_j:= P^{CL^2\theta/N^2}_{v_j}(a_{jL}) \cap B_{CL/N}(a_{jL})
$$
with $C$ large enough so that
$$I=I_j\subset \tilde{I}_j=\tilde{I}.$$
Here $v_j=a_{(j-1)L+2}-a_{(j-1)L+1}$ and $v_j \sim N^{-1}.$

For each $I\in \mathcal{I},$ we denote by $P_I(x)$ the fat AP dual to $\tilde{I}$ and centered at $x,$ that is,
\begin{equation}\label{1}
  P_I(x):=P_{v_j^{-1}}^{CN/L}(x) \cap B_{CN^2/(L^2\theta)}(x)
\end{equation}
if $I=I_j,$ and we simply write $P_I$ if stressing the center $x$ is unnecessary.
We let $P(L,y)$ denote a larger fat AP
\begin{equation}\label{defnLball}
  P(L,y):= P_{v_1^{-1}}^{CN^{3/2}/L^2}(y) \cap B_{CN^2/(L^2\theta)}(y),
\end{equation}
and we simply write $P(L)$ if stressing the center $y$ is unnecessary. If $L\lesssim N^{1/4}$ we have $N^{3/2}/L^2 \geq N$ and in that case $P(L)$ is a ball $B_{CN^2/(L^2\theta)}.$

The starting point of this paper is to make use of an analogy between the extension operator on $\{a_n\}_{n=1}^{N^{1/2}}$
$$\{b_n\}_{n=1}^{N^{1/2}} \mapsto \sum_{n=1}^{N^{1/2}} b_n e^{it a_n}$$ 
and the extension operator on the truncated parabola $\mathbb{P}^1$
$$f\mapsto \int_{[-1,1]} f(\xi)e^{i(x\xi+t\xi^2)}d\xi.$$
We list the correspondence between objects in this paper and in the parabola setting. For simplicity we assume $\theta=1$ in the following list.
\begin{enumerate}
    \item The parameter $L\in [1,N^{1/2}]$ is the length of the ``cap'' that we are looking at, and that determines a canonical neighborhood $\Omega$ with width $L^2/N^2.$ The corresponding parameter in the parabola setting is $R,$ which determines the length ($R^{-1/2}$) of the cap and a canonical neighborhood with width $R^{-1}.$
    \item The $\tilde{I},P_I$ defined above is analogous to the cap and tube in the context of parabola decoupling. Let $\Theta$ be a partition of $\mc{N}_{R^{-1}}(\mathbb{P}^1)$, the $R^{-1}$-neighborhood of the truncated parabola $\mathbb{P}^1$ (over $[-1,1]$), into $R^{-1/2}\times R^{-1}$ caps $\theta$. The dual object of $\theta$ is a tube $T$ of dimension $R^{1/2}\times R.$
    \item $P(L)$ is defined to be the smallest fat AP with the property that, for a function $F$ with frequency support on $\Omega,$ ``restricting'' $F$ in the physical space to $P(L)$ will essentially preserve its frequency support. The corresponding object for the parabola is $B_R,$ a ball of radius $R.$ 
\end{enumerate}

\begin{figure}
\begin{tabular}{ cc }
\scalebox{0.7}{
\begin{tikzpicture}[x=0.75pt,y=0.75pt,yscale=-1,xscale=1]

\draw  [color={rgb, 255:red, 65; green, 117; blue, 5 }  ,draw opacity=1 ][line width=3.75]  (134.58,144.44) .. controls (134.58,73.75) and (191.88,16.44) .. (262.58,16.44) .. controls (333.27,16.44) and (390.58,73.75) .. (390.58,144.44) .. controls (390.58,215.13) and (333.27,272.44) .. (262.58,272.44) .. controls (191.88,272.44) and (134.58,215.13) .. (134.58,144.44) -- cycle ;
\draw   (272.5,17) -- (270.9,272) -- (252.65,271.88) -- (254.25,16.89) -- cycle ;
\draw   (390.21,152.61) -- (135.24,156.75) -- (134.94,138.51) -- (389.91,134.37) -- cycle ;
\draw   (293.08,21.36) -- (250.55,272.79) -- (232.56,269.75) -- (275.09,18.32) -- cycle ;
\draw  [draw opacity=0][dash pattern={on 2.53pt off 3.02pt}][line width=2.25]  (286.1,83.37) .. controls (305.63,94.04) and (320.09,112.52) .. (325.04,134.49) -- (247.29,151.27) -- cycle ; \draw  [dash pattern={on 2.53pt off 3.02pt}][line width=2.25]  (286.1,83.37) .. controls (305.63,94.04) and (320.09,112.52) .. (325.04,134.49) ;

\draw (107,89.4) node [anchor=north west][inner sep=0.75pt]  [font=\large]  {$\textcolor[rgb]{0.25,0.46,0.02}{B}\textcolor[rgb]{0.25,0.46,0.02}{_{R}}$};
\draw (233,39.4) node [anchor=north west][inner sep=0.75pt]    {$T_{1}$};
\draw (292,40.4) node [anchor=north west][inner sep=0.75pt]    {$T_{2}$};
\draw (346,154.4) node [anchor=north west][inner sep=0.75pt]    {$T_{R^{1/2}}$};
\end{tikzpicture}} & 
\scalebox{0.75}{

\tikzset{every picture/.style={line width=0.75pt}} 

\begin{tikzpicture}[x=0.75pt,y=1.15pt,yscale=-1,xscale=1]

\draw    (91.5,229.33) -- (111.5,229.33) ;
\draw    (149.5,229.33) -- (169.5,229.33) ;
\draw    (210.5,230.33) -- (230.5,230.33) ;
\draw    (270.5,230.33) -- (290.5,230.33) ;
\draw    (328.5,230.33) -- (348.5,230.33) ;
\draw    (386.5,230.33) -- (406.5,230.33) ;
\draw    (91.5,209.33) -- (111.5,209.33) ;
\draw    (152.5,209.33) -- (172.5,209.33) ;
\draw    (214.5,209.33) -- (234.5,209.33) ;
\draw    (275.5,209.33) -- (295.5,209.33) ;
\draw    (335.5,209.33) -- (355.5,209.33) ;
\draw    (396.5,209.33) -- (416.5,209.33) ;
\draw    (90.5,131.33) -- (110.5,131.33) ;
\draw    (160.5,131.33) -- (180.5,131.33) ;
\draw    (228.5,130.33) -- (248.5,130.33) ;
\draw    (297.5,130.33) -- (317.5,130.33) ;
\draw    (351.5,130.33) -- (371.5,130.33) ;
\draw    (416.5,130.33) -- (436.5,130.33) ;
\draw [color={rgb, 255:red, 65; green, 117; blue, 5 }  ,draw opacity=1 ][line width=3.75]    (90.5,87.33) -- (139.6,87.33) ;
\draw [color={rgb, 255:red, 65; green, 117; blue, 5 }  ,draw opacity=1 ][line width=3.75]    (148.85,87.33) -- (197.95,87.33) ;
\draw [color={rgb, 255:red, 65; green, 117; blue, 5 }  ,draw opacity=1 ][line width=3.75]    (208.67,87.33) -- (257.77,87.33) ;
\draw [color={rgb, 255:red, 65; green, 117; blue, 5 }  ,draw opacity=1 ][line width=3.75]    (269.4,87.33) -- (318.5,87.33) ;
\draw [color={rgb, 255:red, 65; green, 117; blue, 5 }  ,draw opacity=1 ][line width=3.75]    (329.5,87.33) -- (378.6,87.33) ;
\draw [color={rgb, 255:red, 65; green, 117; blue, 5 }  ,draw opacity=1 ][line width=3.75]    (387.85,87.33) -- (436.95,87.33) ;
\draw [line width=3]  [dash pattern={on 3.38pt off 3.27pt}]  (130.5,144) -- (130.5,193) ;

\draw (37,219.4) node [anchor=north west][inner sep=0.75pt]    {$P_{I_{1}}$};
\draw (37,198.4) node [anchor=north west][inner sep=0.75pt]    {$P_{I_{2}}$};
\draw (35,122.4) node [anchor=north west][inner sep=0.75pt]    {$P_{I_{N^{1/2} /L}}$};
\draw (35,80.4) node [anchor=north west][inner sep=0.75pt]    {$\textcolor[rgb]{0.25,0.46,0.02}{P}\textcolor[rgb]{0.25,0.46,0.02}{(}\textcolor[rgb]{0.25,0.46,0.02}{L}\textcolor[rgb]{0.25,0.46,0.02}{)}$};

\end{tikzpicture}} \\
\end{tabular}
    \caption{The ball $B_R\subset\R^2$ contains the union of tubes $T_i$ having the same center, each which is dual to $\theta_i$, where $\underset{i}{\sqcup}\theta_i$ partitions $\mc{N}_{R^{-1}}(\mathbb{P}^1)$. On the right, we see analogous dual fat APs, one $P_{I_i}$ per $I_i$ which partition $\Omega$ into $L$ consecutive intervals. We see that $P(L)$ contains the union of the $P_{I_i}$ which have the same starting point. }
    \label{fig:balls}
\end{figure}
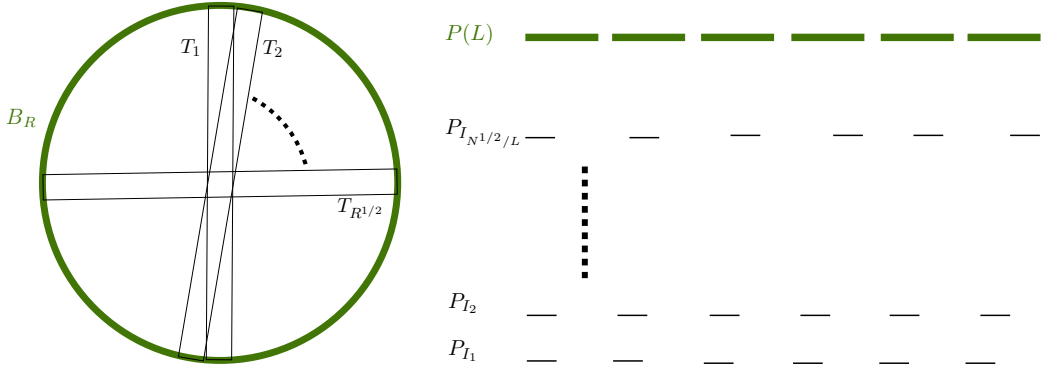

See Figure \ref{fig:balls} which illustrates the analogous properties of tubes $T$ with the ball $B_R$ and fat APs $P_I$ with $P(L)$. Bourgain made use of the first two analogies in \cite{bourgain1991remarks,bourgain1993distribution}. The new ingredient we need is the third analogy, which gives an appropriate notion of ball in the short generalized Dirichlet sequence setting.
It is very important that we define $P(L)$ to be the smallest fat AP with such a property. If we naively use $B_{N^2/L^2}$ as the ball $P(L),$ the whole argument that follows will break down.

To make the third point precise, we prove the following lemma. We introduce one more notation. For a general fat AP $P=P^\delta_v(x_0)\cap B_{Mv}(x_0)$ and $s>0,$ $sP$ will denote the fat AP $P_v^{s\delta}(x_0)\cap B_{sMv}(x_0).$

\begin{lemma}\label{balllem}
   Fix a $P(L).$ For every $I\in \mathcal{I}$ and every $P_I$ with $P_I\cap P(L)\neq \emptyset,$ $P_I$ is contained in $2P(L).$
\end{lemma}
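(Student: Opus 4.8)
The plan is to compare the two fat APs directly through their defining data: common difference, thickness, and diameter. Recall $P_I(x) = P^{CN/L}_{v_j^{-1}}(x)\cap B_{CN^2/(L^2\theta)}(x)$ and $P(L,y) = P^{CN^{3/2}/L^2}_{v_1^{-1}}(y)\cap B_{CN^2/(L^2\theta)}(y)$. The diameters already agree up to constants, so the content of the lemma is really about the lattice parts. First I would record the two relevant quantitative facts coming from the spacing condition \eqref{defa_n}: (i) the common differences satisfy $v_j^{-1} = v_1^{-1} + \mathcal{O}(N\cdot (jL)\cdot \theta N^{-2}) \cdot (\text{rescaling})$; more precisely, since $v_j - v_1 = \sum (a_{i+2}-a_{i+1}) - (a_{i+1}-a_i) \sim jL\theta/N^2$ and $v_1, v_j \sim N^{-1}$, we get $|v_j^{-1} - v_1^{-1}| \lesssim jL\theta/N^2 \cdot N^2 = jL\theta \lesssim N^{1/2}L\theta$; and (ii) the thickness of $P_I$, namely $CN/L$, is much smaller than the thickness $CN^{3/2}/L^2$ of $P(L)$ precisely when $L \lesssim N^{1/2}$, which holds by hypothesis.

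The key step is then the following observation about arithmetic progressions: if two APs with common differences $w_1, w_2$ pass within distance $d$ of each other (i.e.\ their thickened versions intersect), then over a window of length $M$ they stay within distance $\lesssim d + M|w_1 - w_2|/(w_1 w_2)$ of each other — this is just the statement that the "drift" between two APs with nearly equal steps accumulates linearly. Here I would apply this with $w_1 = v_j^{-1}$, $w_2 = v_1^{-1}$, $M = CN^2/(L^2\theta)$ the common diameter, and $d$ the thickness $CN^{3/2}/L^2$ of $P(L)$ (since $P_I \cap P(L) \neq \emptyset$ means some point of $P_I$ lies in $P(L)$, hence within the thickness of $P(L)$'s lattice). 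The accumulated drift is $\lesssim CN^{3/2}/L^2 + (N^2/(L^2\theta)) \cdot N^{-1} \cdot |v_j^{-1} - v_1^{-1}|$. Using $|v_j^{-1}-v_1^{-1}| \lesssim |v_j - v_1|/(v_j v_1) \sim (jL\theta/N^2)\cdot N^2 = jL\theta \le N^{1/2}L\theta$, the second term is $\lesssim N^{1/2}/L \cdot N^{1/2}\theta / \theta \cdot (1/N^{?})$... — I'd carefully bound it by $CN^{3/2}/L^2$, using $L \le N^{1/2}$. Then every point of $P_I$ lies within $\lesssim CN^{3/2}/L^2$ of the lattice $v_1^{-1}\Z + y$; together with the diameter bound, this places $P_I$ inside $P^{C'N^{3/2}/L^2}_{v_1^{-1}}(y) \cap B_{C'N^2/(L^2\theta)}(y)$, which is $2P(L)$ after adjusting the constant $C$ in the definition of $P(L)$ to absorb $C'$.

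The main obstacle I anticipate is purely bookkeeping: tracking the constant $C$ through the definitions so that "contained in $2P(L)$" is literally true rather than "contained in $C'P(L)$" — this is handled by choosing the absolute constant $C$ in \eqref{defnLball} large enough at the outset, which is legitimate since $C$ is allowed to vary. A secondary subtlety is that $P_I$ has its own common difference $v_j^{-1}$ rather than $v_1^{-1}$, so strictly $P_I$ is not a sub-fat-AP of $P(L)$ in the literal lattice sense; one must argue that a $CN/L$-thickened $v_j^{-1}$-progression of diameter $CN^2/(L^2\theta)$ is contained, as a \emph{set}, in a $CN^{3/2}/L^2$-thickened $v_1^{-1}$-progression of the same diameter. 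This again reduces to the linear-drift estimate above: the points $y + k v_j^{-1}$ for $|k| \lesssim N^2/(L^3\theta)\cdot L \sim N^{2}/(L^2\theta)\cdot (1/?)$ ranging over the diameter stay within $CN^{3/2}/L^2$ of the points $y + k v_1^{-1}$, and the $CN/L$-thickness of $P_I$ is dominated by $CN^{3/2}/L^2$ since $L\le N^{1/2}$, so the whole set $P_I$ fits.
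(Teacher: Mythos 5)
Your overall strategy is the same as the paper's: bound the discrepancy of the common differences $|v_j^{-1}-v_1^{-1}|$, let it accumulate linearly over the diameter $CN^2/(L^2\theta)$ of $P(L)$, check that the total drift is dominated by the thickness $CN^{3/2}/L^2$ of $P(L)$, and absorb everything (including the smaller thickness $CN/L$ of $P_I$, which indeed requires $L\lesssim N^{1/2}$) by taking the constant $C$ in \eqref{defnLball} large. The structural points you raise (that $P_I$ is not literally a sub-progression of $P(L)$, and that "$2P(L)$" is achieved by choosing $C$ large at the outset) are fine and match the paper.

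However, the decisive numerical step is exactly where your write-up goes wrong and is left unverified. You bound $|v_j^{-1}-v_1^{-1}|\lesssim jL\theta\lesssim N^{1/2}L\theta$, i.e. you use $j\lesssim N^{1/2}$; but $j$ only runs up to $N^{1/2}/L$, so $jL\le N^{1/2}$ and the correct bound is $|v_j-v_1|\lesssim N^{1/2}\cdot\theta/N^2=\theta N^{-3/2}$, hence $|v_j^{-1}-v_1^{-1}|\lesssim \theta N^{1/2}$ (this is the first line of the paper's proof). The lost factor of $L$ is fatal to the argument as you stated it: the accumulated drift over the window is $\frac{N^2/(L^2\theta)}{N}\cdot|v_j^{-1}-v_1^{-1}|$, which with the correct bound is $\sim N^{3/2}/L^2$, matching the thickness of $P(L)$, but with your bound $N^{1/2}L\theta$ it is $\sim N^{3/2}/L$, exceeding the thickness by a factor of $L$, so $P_I\subset 2P(L)$ would not follow. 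Your proposed rescue, "using $L\le N^{1/2}$," does not repair this (you would need $N^{3/2}/L\le N^{3/2}/L^2$, false for $L>1$); the repair is to use $jL\le N^{1/2}$, i.e. that the entire short sequence has only $N^{1/2}$ terms. The two spots where you write "$?$" and "I'd carefully bound it" are precisely this computation, which is the whole content of the lemma, so as written there is a genuine gap — though it is closed by this one-line correction, after which your argument coincides with the paper's.
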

\begin{proof}
  In fact since for every $j,$ $|v_j-v_1|\lesssim N^{-3/2}\theta,$ we have $|v_j^{-1}-v_1^{-1}|\lesssim N^{1/2}\theta.$ Therefore $P_I \cap P(L) \neq \emptyset$ implies
$$d(P_I,P(L))\lesssim (N^{1/2}\theta) \frac{N^2/(L^2\theta)}{N}=\frac{N^{3/2}}{L^2},$$
which implies $P_I \subset 2P(L)$ if $C$ is large enough in the definition of $P(L).$
\end{proof}

We note that the above Lemma holds if we replace $I,\mc{I},P_I,P(L)$ by $\theta,\Theta,T,B_R$ respectively.

\subsection{Transversality and Bilinear Kakeya-type estimate}

For $I\in \mathcal{I},$ let $v_I^{-1}$ denote the common difference of $P_I,$ that is, if $I=I_j$ then $v_I=v_j.$ We say $I,J\in \mc{I}$ are {transversal} if $|v_I^{-1}-v_J^{-1}|\gtrsim N^{1/2}\theta,$ or equivalently, if $d(I,J)\gtrsim N^{-1/2}$ on $\R.$ We now prove a bilinear Kakeya-type estimate for two transversal families of $P_I.$
\begin{proposition}[bilinear Kakeya-type estimate] \label{BKprop}
  Suppose $g_1=\sum_I a_I 1_{P_I}$ and $g_2=\sum_J b_J 1_{P_J}$ where $a_I, b_J$ are positive real numbers, $I,J\in \mc{I}$ and $P_I$ are transversal to $P_J.$ Then
  \begin{equation}\label{BK}
    \dashint_{P(L)} g_1 g_2 \lesssim \dashint_{2P(L)} g_1 \dashint_{2P(L)} g_2.
  \end{equation}
\end{proposition}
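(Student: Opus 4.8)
The plan is to reduce \eqref{BK} to a genuinely two-dimensional bilinear Kakeya estimate for tubes in the plane, exploiting the fact that a fat AP is, up to affine coordinate changes, a ``grid of tubes'' and that the product structure on the right-hand side is exactly what planar bilinear Kakeya provides. More concretely, fix $P(L)$ and recall $P_I = P^{CN/L}_{v_I^{-1}}(x) \cap B_{CN^2/(L^2\theta)}$. First I would observe that each $P_I$ meeting $P(L)$ is (by Lemma \ref{balllem}) contained in $2P(L)$, so all relevant geometry happens inside $2P(L)$, a fat AP with thickness $\sim N^{3/2}/L^2$, common difference $\sim N$, and diameter $\sim N^2/(L^2\theta)$; it contains $\sim (N^{1/2}/L)/\theta$ many ``rungs'' (translated copies of an interval of length $\sim N/L$) spaced $\sim N$ apart. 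The key point is that $P_I$ restricted to one rung of $2P(L)$ looks like a single interval, and as the rung index varies the position of that interval inside the rung drifts linearly with slope $v_I^{-1} \bmod N$, i.e. $P_I$ is the trace on the rungs of an honest line (tube) of ``slope'' $v_I^{-1}$ in a $2$-parameter model where one coordinate indexes the rung and the other is position-within-rung.

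The second step is to set up this model precisely. I would let $u$ range over the $\sim (N^{1/2}/L)/\theta$ rungs and $w\in[0,CN/L]$ be the coordinate within a rung, so that $2P(L)$ is modeled by the product $\{1,\dots,M\}\times[0,CN/L]$ with $M\sim (N^{1/2}/L)/\theta$, and the normalized integral $\dashint_{2P(L)}$ becomes (comparably) the normalized integral over this product. Under this identification, $1_{P_I}$ becomes (comparably, after accounting for the $\mc O(1)$ wraparound/overlap of the AP structure, which only costs absolute constants) the indicator of a tube $T_I$ of width $\sim N/L$ — the thickness of $P_I$ divided by the rung spacing, times $CN/L$ — wait, more carefully: $P_I$ has thickness $CN/L$ as a subset of $\R$ and the rungs have length $CN/L$, so within a single rung $P_I$ occupies a $w$-interval of length $\sim 1$ relative to rescaled units, or rather one keeps $w\in[0,CN/L]$ and $P_I\cap(\text{rung }u)$ is an interval of length $\sim CN/L$, i.e.\ a constant fraction. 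The transversality hypothesis $|v_I^{-1}-v_J^{-1}|\gtrsim N^{1/2}\theta$ translates, after dividing by the rung spacing $\sim N$ and multiplying by the number of rungs $M\sim (N^{1/2}/L)/\theta$, into the statement that the $w$-position of $P_I$ and $P_J$ drift apart by at least a full rung-length over the extent of $2P(L)$ — precisely the transversality needed so that $T_I$ and $T_J$ meet in a set of measure $\lesssim (N/L)^2 / N = N/L^2$ per... Rather than belabor this, the clean way: after rescaling the $w$-axis by $L/N$ so rungs have unit length, $T_I$ is a unit-width tube of slope $\mu_I := v_I^{-1}/N \bmod 1$ scaled so transversal tubes have $|\mu_I-\mu_J|\gtrsim 1/M$, living in a box of dimensions $M\times 1$; two transversal such tubes intersect in area $\lesssim 1$, which is exactly $\frac{1}{M}\cdot(\text{area of box})\cdot\frac{1}{1}$, the sharp bilinear Kakeya bound.

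The third step is to invoke (a rescaled form of) the classical planar bilinear Kakeya / bush estimate: for $g_1=\sum_I a_I 1_{T_I}$, $g_2=\sum_J b_J 1_{T_J}$ with all $T_I$ transversal to all $T_J$ inside a box $Q$, one has $\int_Q g_1 g_2 \lesssim \frac{1}{|Q|}\int_Q g_1 \int_Q g_2$ when the $T_I$ and $T_J$ each have bounded overlap within their own families (which holds here, as distinct $I$ give distinct rescaled slopes that are $\gtrsim 1/M$-separated, so at most $\mc O(1)$ tubes of each family pass through a given unit box). This is proved by the standard two-line computation: $\int g_1 g_2 = \sum_{I,J} a_I b_J |T_I\cap T_J| \lesssim \sum_{I,J} a_I b_J \cdot 1 = (\sum_I a_I)(\sum_J b_J)$, and separately $\int_Q g_1 \gtrsim \sum_I a_I \cdot(\text{area of }T_I\cap Q)\gtrsim (\sum_I a_I)\cdot(\text{unit})$ using bounded overlap — matching up the normalizations gives \eqref{BK}, with the $2P(L)$ on the right absorbing the passage from $P(L)$ to the box containing all relevant tubes.

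\textbf{Main obstacle.} The genuine technical work — and the step I expect to be fussiest — is the first-to-second step: showing rigorously that a fat AP with the given parameters is comparable, for the purpose of these normalized integrals, to the clean rectangular grid-of-tubes model, keeping careful track of (i) the $\mc O(1)$ overlap inherent in tilings of APs, (ii) the ``wraparound'' when $v_I^{-1}$ is not an exact sub-multiple of the rung spacing so that $P_I$'s within-rung position is only defined mod the rung length, and (iii) verifying the arithmetic that transversality $|v_I^{-1}-v_J^{-1}|\gtrsim N^{1/2}\theta$ is exactly calibrated to the number of rungs $\sim (N^{1/2}/L)/\theta$ so that transversal tubes in the model genuinely have bounded intersection. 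Everything after that is the textbook bilinear Kakeya estimate. I would also double check the degenerate regime $L\lesssim N^{1/4}$ where $P(L)$ is an honest Euclidean ball $B_{CN^2/(L^2\theta)}$ and $P_I$ may or may not still be a genuine fat AP — but in that regime the claim should follow even more directly, possibly from a one-dimensional argument or directly from $P_I\subset 2P(L)$ having comparable measure to $P(L)$.
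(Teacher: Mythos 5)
Your proposal ultimately rests on the same mechanism as the paper's proof: after expanding the product, everything reduces to the pairwise bound $|P_I\cap P_J|\lesssim |P_I|^2/|P(L)|$ for transversal $I,J$ (inequality \eqref{2} in the paper), proved by tracking the relative drift of the two APs; the grid/tube dictionary is a repackaging of that computation, not a way around it. But since the proposition \emph{is} this bookkeeping, the errors in your model matter. The rungs of $2P(L)$ have length $\sim N^{3/2}/L^2$ (the thickness of $P(L)$), not $\sim N/L$ (the thickness of $P_I$), and there are $\sim N/(L^2\theta)$ of them, not $\sim N^{1/2}/(L\theta)$ — your rung count times spacing does not even reproduce the diameter you state; $N^{1/2}/(L\theta)$ is instead the number of rungs over which two transversal $P_I,P_J$ stay overlapping. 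In your ``clean'' normalization (unit-width tubes, slope separation $\gtrsim 1/M$, box of length $M$) two transversal tubes intersect in area $\sim M$, not $\lesssim 1$; the inequality you actually need, $|T_I\cap T_J|\lesssim |T_I||T_J|/|Q|$, does survive (both sides are $\sim M$), but with the corrected parameters the real calibration is that transversality makes the total relative drift across $P(L)$ comparable to the thickness $N^{3/2}/L^2$ of $P(L)$, giving $|P_I\cap P_J|\lesssim \frac{N^{1/2}}{L\theta}\cdot\frac{N}{L}=\frac{N^{3/2}}{L^2\theta}=\frac{|P_I|^2}{|P(L)|}$ — and this is only valid when no wraparound occurs, i.e.\ when $L\gtrsim N^{1/4}$.

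The genuine gap is the regime $L\lesssim N^{1/4}$, which you defer and then dispose of incorrectly. There $P(L)$ degenerates to the ball $B_{CN^2/(L^2\theta)}$, your tube model breaks down (a single ``rung'' contains many intervals of $P_I$), and your fallback — that $P_I\subset 2P(L)$ has measure comparable to $P(L)$ — is false: $|P_I|/|P(L)|\sim 1/L$. This is exactly the paper's Case 2: the relative drift wraps around $\sim N^{1/2}/L^2$ times within the diameter, and one must bound the overlap by $\lesssim N^{3/2}/(L^2\theta)$ per wraparound period and sum over the periods to get $N^2/(L^4\theta)=|P_I|^2/|P(L)|$. The required gain over the trivial bound $|P_I\cap P_J|\le |P_I|$ is a full factor of $L$, so this case is not ``more direct''; the wraparound issue you flag as the main obstacle is not a constant-tracking nuisance but is where half of Proposition \ref{BKprop} lives, and your write-up contains no argument for it.
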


For comparison we state the bilinear Kakeya-type estimates for $R^{1/2}$ by $R$ tubes in $\R^2.$ 

\begin{proposition}
  Suppose $g_1=\sum_i a_i 1_{T_i}$ and $g_2=\sum_j b_j 1_{T_j}$ where $a_i, b_j$ are positive real numbers, $T_i,T_j$ are $R^{1/2}$ by $R$ tubes and every  $T_i$ is transversal to every $T_j$ (in the sense that the angle between $T_i,T_j$ is $\gtrsim 1$). Then
  \begin{equation*}
    \dashint_{B_R} g_1 g_2 \lesssim \dashint_{2B_R} g_1 \dashint_{2B_R} g_2.
  \end{equation*}
\end{proposition}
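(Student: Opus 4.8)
The plan is to prove the pointwise-free version of bilinear Kakeya directly, exploiting that two transversal tubes intersect in a set of controlled measure. First I would reduce to the case where all $a_i = b_j = 1$, since the general case follows by writing $g_1 g_2 = \sum_{i,j} a_i b_j 1_{T_i} 1_{T_j}$ and estimating each product term; more precisely, it suffices to prove the ``dual'' form
\[
\int_{B_R} 1_{T_i} 1_{T_j} \lesssim \frac{|T_i \cap 2B_R|\,|T_j \cap 2B_R|}{|2B_R|}
\]
for a single transversal pair $T_i, T_j$, and then sum against $a_i b_j$. The right-hand side here is $\sim \frac{(R^{3/2})(R^{3/2})}{R^2} = R$ whenever $T_i, T_j$ genuinely meet $B_R$ (each $T_i \cap 2B_R$ has measure $\sim R^{3/2}$ since the tube has dimensions $R^{1/2} \times R$ and $2B_R$ has radius $2R$). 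So the task is: two transversal $R^{1/2} \times R$ tubes intersect in a region of area $\lesssim R$.

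The geometric heart is the transversality computation. Two tubes whose core axes make an angle $\gtrsim 1$ and which each have width $R^{1/2}$ intersect in a parallelogram (or a region contained in one) whose two sides have length $\sim R^{1/2}/\sin(\text{angle}) \lesssim R^{1/2}$; hence its area is $\lesssim R^{1/2} \cdot R^{1/2} = R$. I would phrase this cleanly: after an affine change of coordinates placing one tube as $\{|x_2| \le R^{1/2}\}$ and noting the other tube is $\{|x_1 \cos\alpha + x_2 \sin\alpha - c| \le R^{1/2}\}$ with $|\sin\alpha| \gtrsim 1$, the intersection forces $x_1$ into an interval of length $\lesssim R^{1/2}$ once $x_2$ is pinned to an interval of length $\lesssim R^{1/2}$, giving the bound $|T_i \cap T_j| \lesssim R$. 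Then
\[
\int_{B_R} g_1 g_2 = \sum_{i,j} a_i b_j |T_i \cap T_j \cap B_R| \lesssim \sum_{i,j} a_i b_j\, R = \frac{R}{|2B_R|^2}\Bigl(\sum_i a_i |2B_R|\Bigr)\Bigl(\sum_j b_j |2B_R|\Bigr),
\]
and since $|T_i \cap 2B_R| \sim R^{3/2}$ and $|2B_R| \sim R^2$ we have $R \cdot |2B_R| \sim R^3 \sim |T_i \cap 2B_R|\,|T_j \cap 2B_R|$, so the last expression is $\lesssim |2B_R|\,(\dashint_{2B_R} g_1)(\dashint_{2B_R} g_2)$; dividing by $|B_R| \sim |2B_R|$ gives \eqref{BK} for tubes.

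\textbf{Main obstacle.} The only subtlety is bookkeeping at the boundary: a tube $T_i$ may clip $B_R$ in a set much smaller than $R^{3/2}$, in which case $|T_i \cap 2B_R|$ could in principle also be small and the inequality $R \lesssim |T_i \cap 2B_R|\,|T_j \cap 2B_R| / |2B_R|$ must still be justified. This is handled by the standard observation that any tube meeting $B_R$ meets $2B_R$ in measure $\gtrsim R \cdot R^{1/2} = R^{3/2}$ (the tube, being a line segment of length $R$ through $B_R$ fattened by $R^{1/2}$, has a full-length chord inside $2B_R$), so we only ever sum over such ``admissible'' tubes and the factor $R$ on each term is always absorbed. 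A secondary point is that $T_i$ and $T_j$ need not be single tubes of a partition — the same proof works verbatim as long as each family has bounded overlap, which is implicit in the notation $g_1 = \sum_i a_i 1_{T_i}$. I would remark that this is the exact analogue, via the dictionary of Section \ref{comp}, of Proposition \ref{BKprop}, whose proof reduces the fat-AP intersection bound to precisely this transversality computation with $N^{1/2}\theta$ playing the role of the angular separation.
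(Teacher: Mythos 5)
The paper states this tube proposition without proof (it appears only for comparison), and your argument is correct: it is exactly the tube analogue of the paper's proof of Proposition \ref{BKprop}, reducing everything to the single-pair transversality bound $|T_i\cap T_j|\lesssim R\sim |T_i|^2/|B_R|$ together with the fact that any tube meeting $B_R$ satisfies $|T_i\cap 2B_R|\gtrsim R^{3/2}$. The only superfluous element is the closing caveat about bounded overlap of the tube families, which your computation never uses since $\int_{B_R}g_1g_2=\sum_{i,j}a_ib_j|T_i\cap T_j\cap B_R|$ holds for arbitrary positive weights and arbitrary overlaps.
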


\begin{proof}[Proof of Proposition \ref{BKprop}]
  For simplicity of notation we assume $C=1$ in \eqref{1}, \eqref{defnLball}. For general $C$ the argument works the same way.

  Since
  $$\dashint_{P(L)} g_1 g_2 \leq \sum_{I,J:\, P_I\cap P(L)\neq \emptyset, \, P_J\cap P(L)\neq \emptyset} a_Ib_J |P(L)|^{-1}|P_I\cap P_J| $$
  it suffices to show that for $I,J$ transversal we have
  \begin{equation}\label{2}
    |P_I\cap P_J| \lesssim \frac{|P_I|^2}{|P(L)|}.
  \end{equation}
  We consider two cases $L\geq C_1N^{1/4}$ and $L\leq C_1N^{1/4}$ separately, where $C_1$ is a sufficiently large constant that will be chosen.

  \vspace{3mm}
  {\bf Case 1: $L\geq C_1N^{1/4}.$}
  Without loss of generality we assume $P_I, P_J$ both start at the origin (meaning that the first term of the underlying AP is $0$). Let $P_{I,k}$ denote the $k-$th interval in $P_I.$ If $V_I, V_J$ are the common difference of $P_I,P_J$ respectively, then from transversality assumption we have $|V_I-V_J|\sim N^{1/2}\theta.$ So for some integer $K\sim \frac{N/L}{N^{1/2}\theta}=\frac{N^{1/2}}{L\theta}$ we have
  $$d(P_{I,k},P_{J,k})\leq N/L \quad \text{  if  } \quad 1\leq k\leq K$$
  and
  $$d(P_{I,k},P_{J,k}) \in [N/L,N] \quad \text{  if  }  \quad K \leq  k \lesssim \frac{N}{N^{1/2}\theta}=\frac{N^{1/2}}{\theta}. $$
  Since $L\geq C_1N^{1/4}$ we know that if $C_1$ is sufficiently large than $\frac{N^{1/2}}{\theta}N=\frac{N^{3/2}}{\theta}$ is larger than $N^2/(L^2\theta),$ which is the diameter of $P_I.$ Therefore we have
  $$|P_I \cap P_J|\lesssim \frac{N^{1/2}}{L\theta} \frac{N}{L}= \frac{N^{3/2}}{L^2\theta}=\frac{|P_I|^2}{|P(L)|}.$$

  \vspace{3mm}
  {\bf Case 2: $L\leq C_1N^{1/4}.$} From the first case we know that
  $$|P_I \cap P_J \cap B_{CN^{3/2}/\theta}|\lesssim N^{3/2}/(L^2\theta).$$
  Therefore by the triangle inequality we have
  $$|P_I \cap P_J|\lesssim  \frac{N^{3/2}}{L^2\theta} \frac{N^2/(L^2\theta)}{N^{3/2}/\theta}=\frac{N^2}{L^4\theta}=
  \frac{|P_I|^2}{|P(L)|}.$$
  Here we recall that $P(L)$ degenerates to the Euclidean ball $B_{N^2/(L^2\theta)}$ if $L\leq N^{1/4}.$

  So we have shown \eqref{2} and hence \eqref{BK}.
\end{proof}

\subsection{Bilinear restriction-type estimate}

To prove a bilinear restriction estimate, we will use the above bilinear Kakeya estimate and induction on $L.$
First we identify where the (square of the) square function $\sum_{I\in \mathcal{I}} |f_I|^2$ is locally constant on. Note that $\supp \widehat{f_I} \subset I-I \subset P_{v_I}^{CL^2\theta/N^2}(0)\cap B_{CL/N}(0).$ Since $|v_I-v_1|\lesssim N^{-3/2}\theta$ for every $I\in \mc{I},$ we have
$$\bigcup_{I\in \mathcal{I}} (I-I) \subset P^{CL\theta/N^{3/2}}_{v_1} \cap B_{CL/N}.$$
Therefore $\sum_I |f_I|^2$ is locally constant on dual fat AP of the form $P_{v_1^{-1}}^{CN/L} \cap B_{CN^{3/2}/(L\theta)}.$ Observe that if we define $L_1=(N^{1/2}L)^{1/2},$ then
$$P_{v_1^{-1}}^{CN/L} \cap B_{CN^{3/2}/(L\theta)} = P_{v_1^{-1}}^{C N^{3/2}/L_1^2} \cap B_{CN^2/(L_1^2\theta)}=CP(L_1).$$

Now suppose $I', I''$ are unions of $I$ in $\mathcal{I},$ and $I',I''$ are transversal in the sense that $d(I',I'') \gtrsim N^{-1/2}$ on $\R.$ Then we have the following bilinear restriction estimate. The proof closely resembles the multilinear Kakeya implies multilinear restriction proof in \cite{bennett2006multilinear}.
\begin{proposition}[bilinear restriction-type estimate] \label{bilresprop}
  Suppose $\supp \widehat{F_1} \subset I'$ and $\supp \widehat{F_2} \subset I''.$ Then we have
  \begin{equation}\label{BR}
    \dashint_{P(L)} |F_1|^2|F_2|^2 \lesssim_\e N^\e |P(L)|^{-2 }\int_{\R} |F_1|^2 \int_{\R} |F_2|^2.
  \end{equation}
\end{proposition}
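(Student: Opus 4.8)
\textbf{Proof proposal for Proposition \ref{bilresprop}.}

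The plan is to adapt the Bennett--Carbery--Tao style argument deriving multilinear restriction from multilinear Kakeya \cite{bennett2006multilinear}, using Proposition \ref{BKprop} as the Kakeya input and running an induction on the scale parameter $L$. First I would reduce \eqref{BR} to a statement at a single scale by decomposing $F_1$ and $F_2$ into wave packets. Since $\supp \widehat{F_1}\subset I'$ and $I'$ is a union of caps $I\in\mc{I}$, each with $\tilde I=P^{CL^2\theta/N^2}_{v_I}(a_{jL})\cap B_{CL/N}$, I can write $F_1=\sum_{I\subset I'}\sum_{P} F_{1,I,P}$ where $P$ ranges over a tiling of $\R$ by translates of the dual fat AP $P_I$ and each $F_{1,I,P}$ is essentially supported (with rapidly decaying tails, controlled by the weights $W_{P,k}$) on $P$, with $\widehat{F_{1,I,P}}$ supported in $\tilde I$; similarly for $F_2$. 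The locally constant property (Proposition \ref{locconstprop}) says $|F_{1,I,P}|$ is roughly constant on $P$, so $|F_{1,I,P}|\approx c_{1,I,P}1_P$ up to the weight tails. This is the standard wave-packet decomposition attached to the cap/tube dictionary set up in Section \ref{comp}.

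Next I would exploit the key geometric observation already made before the proposition: $\sum_{I\in\mc{I}}|f_I|^2$ is locally constant on fat APs of the form $CP(L_1)$ with $L_1=(N^{1/2}L)^{1/2}$, which is a fat AP \emph{larger} than $P(L)$ (since $L_1\ge L$ forces $N^{3/2}/L_1^2\le N^{3/2}/L^2$ and $N^2/(L_1^2\theta)\le N^2/(L^2\theta)$, so $P(L_1)\subset P(L)$ — wait, more precisely $P(L)$ sits inside $P(L_1)$ exactly when the latter has larger diameter; the relevant point is that on the scale of $P(L)$ the square functions behave like constants). Pairing this with the wave packet expansion, on $P(L)$ we have $|F_1|^2\lesssim (\sum_{I\subset I'}|f_I|^2$-type sums that are morally constant$)$, and I can bound $\dashint_{P(L)}|F_1|^2|F_2|^2$ by an average of products $g_1g_2$ where $g_1=\sum_{I,P}c_{1,I,P}^2 1_P$ and $g_2=\sum_{J,Q}c_{2,J,Q}^2 1_Q$ are exactly of the form appearing in Proposition \ref{BKprop}. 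Transversality of $I'$ and $I''$ (i.e.\ $d(I',I'')\gtrsim N^{-1/2}$) passes to transversality of the constituent $P_I$ and $P_J$, so Proposition \ref{BKprop} applies and gives $\dashint_{P(L)}g_1g_2\lesssim \dashint_{2P(L)}g_1\,\dashint_{2P(L)}g_2$. Finally, $\dashint_{2P(L)}g_1\lesssim |P(L)|^{-1}\int_\R|F_1|^2$ by Plancherel/almost-orthogonality of the wave packets (each $\int_\R c_{1,I,P}^2|P|\sim \int_P|F_{1,I,P}|^2$ and the $P$'s tile $\R$), and likewise for $g_2$, yielding the $|P(L)|^{-2}\int|F_1|^2\int|F_2|^2$ on the right-hand side.

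The induction on $L$ enters because the naive application of bilinear Kakeya loses a factor that must be absorbed: the locally-constant reduction is only exact up to scale $P(L_1)$, not $P(L)$, so one tiles $2P(L)$ into translates of $P(L_1)$, applies the above bilinear estimate on each $P(L_1)$, sums, and recovers the same inequality at scale $L_1<N^{1/2}$ — this is where an $N^\e$-type loss accumulates over $\log N$ many dyadic scales, explaining the $N^\e$ in \eqref{BR}. The base case is when $L$ is comparable to $N^{1/2}$ (or when $P(L)$ degenerates appropriately), where the estimate is immediate. I would organize this as: (i) wave-packet decomposition and reduction to the constant-coefficient bilinear sum; (ii) one clean application of Proposition \ref{BKprop} plus Plancherel giving the estimate at scale $L$ in terms of scale $L_1$; (iii) iterate $O(\log N)$ times. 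The main obstacle I anticipate is bookkeeping the weight tails $W_{P,k}$ through the wave-packet decomposition and the rescaling between scales $L$ and $L_1$ — making sure the "morally constant" and "essentially supported" heuristics are quantified so that the errors are genuinely negligible (summable geometric tails with $k$ large), rather than merely plausible; the transversality-to-Kakeya step and the Plancherel step are routine by comparison.
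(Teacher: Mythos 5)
Your overall skeleton is the paper's: a Bennett--Carbery--Tao style induction on scales $L\mapsto L_1=(LN^{1/2})^{1/2}$ with Proposition \ref{BKprop} as the Kakeya input and a base case once $L$ is within $N^{\e}$ of $N^{1/2}$. But the core reduction in your steps (i)--(ii) fails as written. You claim that the wave-packet decomposition plus the locally constant property let you bound $\dashint_{P(L)}|F_1|^2|F_2|^2$ by $\dashint g_1g_2$ with $g_1=\sum_{I,P}c_{1,I,P}^2 1_P$. Pointwise this is false: $|F_1|^2=|\sum_{I\subset I'}F_{1,I}|^2$ can exceed $\sum_{I\subset I'}|F_{1,I}|^2$ by a factor as large as the number of caps in $I'$ (up to $N^{1/2}/L$), and nothing in Proposition \ref{locconstprop} or the wave-packet structure removes that factor. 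The passage to square-function/Kakeya data is exactly where the induction hypothesis must act: in the paper one first averages over the intermediate balls $P(L_1,x)$ tiling $P(L)$, applies the scale-$L_1$ bilinear restriction constant $BR(L_1)$ there (transversality of $I',I''$ persists at the coarser cap scale), and only then uses local $L^2$ orthogonality (Lemma \ref{localL^2orth}) --- an $L^2$ statement, not a pointwise one --- to replace $\|F_i\|^2_{\stkout{L}^2(W_{P(L_1,x),200})}$ by $\sum_{I}\|F_{i,I}\|^2_{\stkout{L}^2(W_{P(L_1,x),200})}$. These locally averaged square functions are then converted, via $F_{1,I}=G_{1,I}*\phi_I$, Jensen, and $|\widehat{G_{1,I}}|\sim|\widehat{F_{1,I}}|$, into the convolution form \eqref{BKdist} that bilinear Kakeya accepts, giving $BR(L)\lesssim BR(L_1)BK(L)$. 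In your account the induction hypothesis is only invoked to ``patch the scale mismatch'' of the locally constant property; that role cannot recover the lost cap-count factor, so your step (ii) does not actually yield the recursion it announces.

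There is also a quantitative slip in step (iii): with $L_m=(L_{m-1}N^{1/2})^{1/2}$ the number of iterations needed to reach $L_M\gtrsim N^{1/2-\e}$ is $O_\e(1)$ (roughly $\log(1/\e)$), not $\log N$; if one genuinely iterated over $\log N$ dyadic scales with a constant loss per step, the accumulated factor would be $N^{O(1)}$ and \eqref{BR} would be lost. In the paper the per-step constant accumulates only to $C^{M}=O_\e(1)$, and the $N^{C\e}$ in \eqref{BR} comes from the base case: once $L_M\gtrsim N^{1/2-\e}$ there are at most $\sim N^{\e}$ caps left, so Cauchy--Schwarz together with the locally constant property gives $BR(L_M)\lesssim_\e N^{C\e}$. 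Your base-case intuition is right, but the bookkeeping of where the $N^{\e}$ arises needs this correction.
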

\begin{proof}
  We define $BR(L)$ to be the smallest constant such that
  $$\dashint_{P(L)} |F_1|^2|F_2|^2 \leq BR(L) |P(L)|^{-2} \int_{\R} |F_1|^2 \int_{\R} |F_2|^2$$
  holds for all $F_1,F_2$ with $\supp \widehat{F_1} \subset I'$ and $\supp \widehat{F_2} \subset I''.$ We let $BK(L)$ be the smallest constant such that
  $$\dashint_{P(L)} g_1 g_2 \leq BK(L) |P(L)|^{-2} \int_{\R} g_1 \int_{\R} g_2$$
  holds for all $g_1=\sum a_I P_I$ ,and $g_2=\sum b_J P_J$ where $a_I, b_J$ are positive real numbers and $I,J\in \mc{I}$ with $I\subset I',$ $J\subset I''.$ Note that with this definition of $BK(L)$ we also have
  \begin{equation}\label{BKdist}
    |P_I|^{-2}\dashint_{P(L)} (\sum_{I\subset I'} |g_{1,I}|*1_{P_I(0)})(\sum_{J\subset I''}|g_{2,J}|*1_{P_J(0)})\lesssim BK(L) |P(L)|^{-2} (\int_\R \sum_{I} |g_{1,I}|)(\int_\R \sum_{J} |g_{2,J}|)
  \end{equation}
  for all functions $g_{1,I},$ $g_{2,J}.$

  We have shown in Proposition \ref{BKprop} that
  $$BK(L)\lesssim 1.$$
  Now we want to show $BR(L)\lesssim_\e N^\e.$
  First we prove
  \begin{equation}\label{BRBK}
    BR(L) \lesssim BR(L_1) BK(L).
  \end{equation}
  From the definition of $BR$ and local $L^2$ orthogonality (Lemma \ref{localL^2orth} below) we have
  \begin{align*}
    \dashint_{P(L)} |F_1F_2|^2 & \lesssim \dashint_{P(L)} \|F_1F_2\|^2_{\stkout{L}^2 (P(L_1,x))} dx \\
     & \lesssim BR(L_1) \dashint_{P(L)} \|F_1\|_{\stkout{L}^2(W_{P(L_1,x),200})}^2 \|F_2\|_{\stkout{L}^2(W_{P(L_1,x),200})}^2\\
     & \lesssim BR(L_1) \dashint_{P(L)} \left( \sum_{I\subset I'} \|F_{1,I}\|^2_{\stkout{L}^2(W_{P(L_1,x),200})} \right)\left( \sum_{J\subset I''} \|F_{2,J}\|^2_{\stkout{L}^2(W_{P(L_1,x),200})} \right).
  \end{align*}
  We claim that
  \begin{equation}\label{10}
    \dashint_{P(L)} \sum_{I,J} \|F_{1,I}\|^2_{\stkout{L}^2(W_{P(L_1,x)},200)} \|F_{2,J}\|^2_{\stkout{L}^2(W_{P(L_1,x)},200)} \lesssim BK(L) |P(L)|^{-2} \|F_1\|^2_{L^2(\R)} \|F_2\|^2_{L^2(\R)},
  \end{equation}
  which together with previous argument will imply \eqref{BRBK}. Since $\sum_{P(L_1)\subset \R} W_{P(L_1,x),200}(P(L_1))\lesssim 1,$ it suffices to show that
  $$\dashint_{P(L)} \sum_{I,J} \|F_{1,I}\|^2_{\stkout{L}^2(P(L_1,x))} \|F_{2,J}\|^2_{\stkout{L}^2(P(L_1,x))} \lesssim_k BK(L) |P(L)|^{-2} \|F_1\|^2_{L^2(\R)} \|F_2\|^2_{L^2(\R)}.$$

  We choose $\psi_{I,200}$ adapted to $P_I(0)$ in the frequency space with order of decay $200$ as in Lemma \ref{psilem}. Let $\phi_I:=\widecheck{\psi_{I,200}}/|P_I|.$ If we define $G_{1,I}= (\widehat{F_{1,I}}/ \widehat{\phi_{I}})\check{\, },$ then due to the support property of $\widehat{F_{1,I}}$ we have pointwise
  \begin{equation}\label{11}
    |\widehat{G_{1,I}}|\sim |\widehat{F_{1,I}}|.
  \end{equation}
  Also by definition we have $F_{1,I}=G_{1,I}* \phi_I.$
  We define $G_{2,J}=(\widehat{F_{2,J}}/\widehat{\phi_J})\check{\, }$ for $F_{2,J}$ in the same way.

  Now for $y\in \R$ such that $x+y\in P(L_1,x),$ we have
  $$|F_{1,I}(x+y)|^2 =|(G_{1,I}*\phi_I)(x+y)|^2\lesssim (|G_{1,I}|^2*|\phi_I|)(x+y)\lesssim |G_{1,I}|^2*1_{CP_I}/|P_I|,$$
  where we used Jensen's inequality for the first inequality. Therefore we have
  $$\|F_{1,I}\|^2_{\stkout{L}^2(P(L_1,x))} \lesssim |G_{1,I}|^2*1_{CP_I}/|P_I|.$$
  and similarly
  $$\|F_{2,J}\|^2_{\stkout{L}^2(P(L_1,x))} \lesssim |G_{2,J}|^2*1_{CP_J}/|P_I|.$$
  Hence using \eqref{BKdist} we obtain
  \begin{align*}
    \dashint_{P(L)} \sum_{I,J} \|F_{1,I}\|^2_{\stkout{L}^2(P(L_1,x))} \|F_{2,J}\|^2_{\stkout{L}^2(P(L_1,x))} & \lesssim |P_I|^{-2}\sum_{I,J} \dashint_{P(L)} (|G_{1,I}|^2*1_{CP_I})(|G_{2,J}|^2*1_{CP_J}) \\
     & \lesssim BK(L) |P(L)|^{-2} (\int_\R \sum_I |G_{1,I}|^2)( \int_\R \sum_J |G_{2,J}|^2) \\
     & \lesssim BK(L) |P(L)|^{-2} (\int_\R \sum_I |F_{1,I}|^2)( \int_\R \sum_J |F_{2,J}|^2) \\
     & \lesssim BK(L) |P(L)|^{-2} (\int_\R |F_1|^2)(\int_\R |F_2|^2),
  \end{align*}
  where the second last inequality is due to \eqref{11}. So we have proved \eqref{10} and therefore \eqref{BRBK}.

  Now we prove $BR(L)\lesssim_\e N^\e .$   Define $L_m=(L_{m-1}N^{1/2})^{1/2}.$
  Fix an $\e>0.$ We define $M$ to be the smallest integer such that $L_M\gtrsim N^{1/2-\e}.$ So $M\lesssim_\e 1.$
  Plugging in $BK(L_m)\lesssim 1$ and applying \eqref{BRBK} repeatedly we get
  $$BR(L)\leq C^M BR(L_M).$$
   Since $BR(L_M)\lesssim_{\e} N^{C\e}$ for some universal constant $C$ (because of the locally constant property Proposition \ref{locconstprop}) we conclude $BR(L)\lesssim_{\e} N^{C\e},$ which is what we want.

\end{proof}

Now we give a proof of the local $L^2$ orthogonality used in the proof above. We denote $(LN^{1/2})^{1/2}$ by $L'.$ So $P(L')=P(L_1)=P_{v_1^{-1}}^{CN/L} \cap B_{CN^{3/2}/(L\theta)}.$
\begin{lemma}[local $L^2$ orthogonality]\label{localL^2orth}
  For every $f_I$ with $\supp \widehat{f_I} \subset I$ we have
  \begin{equation}\label{localL^2ortheqn}
    \|\sum_{I\in \mathcal{I}} f_I\|_{L^2(W_{P(L'),k})}^2 \lesssim_k \sum_{I \in \mathcal{I}} \|f_I\|_{L^2(W_{P(L'),k})}^2
  \end{equation}
\end{lemma}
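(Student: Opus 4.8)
The plan is to prove the local $L^2$ orthogonality by exploiting the fact that the frequency supports $I\in\mathcal{I}$ are separated, combined with the locally constant property with respect to the weight $W_{P(L'),k}$. The key observation is that each $I=I_j$ is contained in the fat AP $\tilde{I}_j$ with common difference $v_j\sim N^{-1}$, and the common differences $v_j$ of distinct $I_j$ are $\gtrsim L/N^2$-separated (equivalently, the intervals $I_j$ are $\gtrsim L/N^2$-separated as subsets of $\mathbb{R}$). Since $P(L')=P_{v_1^{-1}}^{CN/L}\cap B_{CN^{3/2}/(L\theta)}$, a function with frequency support of diameter $\lesssim L/N^2$ — more precisely in an interval of length comparable to the separation between consecutive $I_j$ — is essentially constant at scale $N^2/L$ on the physical side, but we need the right dual scale. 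The natural strategy is a Fourier-analytic almost-orthogonality argument: expand $\|\sum_I f_I\|^2_{L^2(W_{P(L'),k})}$ and show the cross terms are negligible.

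First I would write $\|\sum_I f_I\|^2_{L^2(W_{P(L'),k})}=\sum_{I,I'}\int \overline{f_I}f_{I'}W_{P(L'),k}$. For the cross terms $I\neq I'$, I would use that $\widehat{\overline{f_I}f_{I'}}$ is supported in $(I'-I)$, which is an interval of length $\lesssim L/N$ (the diameter of each $I$) centered at distance $\gtrsim |j-j'|\cdot(L/N^2)\cdot N \sim |j-j'| L/N$... actually more carefully, $I_j$ and $I_{j'}$ are separated by $\gtrsim |j-j'|L/N$ on $\mathbb{R}$ since consecutive $a_n$ are $\sim 1/N$ apart. Meanwhile $\widehat{W_{P(L'),k}}$ is morally supported (up to rapidly decaying tails) in the dual of $P(L')$'s diameter, i.e.\ in $B_{C L/N^{3/2}}(0)$ roughly — wait, $W_{P(L'),k}$ is the weight attached to the fat AP $P(L')$ which has diameter $CN^2/(L\theta)$, so its Fourier transform concentrates at scale $L\theta/N^2$, together with bumps at the lattice points $v_1\mathbb{Z}$ of spacing $v_1\sim N^{-1}$. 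The point is: since $|I'-I|\gtrsim L/N$ on $\mathbb{R}$ and the relevant Fourier concentration scale of $W_{P(L'),k}$ near each lattice point is $\ll L/N$, the overlap is controlled by the rapid ($k$-th order) decay of $\psi_k$-type kernels, giving a factor $\lesssim_k (|j-j'|)^{-k}$ or similar, which sums to $O_k(1)$.

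An alternative, cleaner route — which I suspect is what the authors intend — is to write $f_I = f_I * \psi_{I}$ where $\psi_I$ is adapted (in frequency) to the fat AP $\tilde I$ containing $I$, of the Lemma~\ref{psilem} type, and then apply the locally constant property of Proposition~\ref{locconstprop} to replace $\|\sum_I f_I\|_{L^2(W_{P(L'),k})}$ by a sum over a tiling of $\mathbb{R}$ by dual fat APs $P_{I}$, on each of which $|f_I|$ is roughly constant, reducing matters to honest $L^2$ orthogonality of the $\widehat{f_I}$ over a single piece plus the weight-transitivity inequalities \eqref{weighttransitive}, \eqref{weighttransitive2}. Concretely: $\|\sum_I f_I\|_{L^2(W_{P(L'),k})}^2 \lesssim_k \sum_{P\subset\R}\|\sum_I f_I\|_{L^2(P)}^2 W_{P(L'),k}(P)$ where $P$ ranges over a tiling by translates of $P(L')$... no — by translates of the smallest fat AP on which each $f_I$ is locally constant, which as computed in the text just before the lemma is exactly $P(L')$ itself. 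On each such $P$, since the $\widehat{f_I}$ have $\gtrsim L/N$-separated supports and $P$ has diameter $N^2/(L\theta) \gg N/L$ (the dual scale), Plancherel on $P$ with a smooth partition gives $\|\sum_I f_I\|_{L^2(P)}^2 \lesssim_k \sum_I \|f_I\|^2_{L^2(W_{P,k})}$, and then one reassembles via \eqref{weighttransitive2}.

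The main obstacle I anticipate is bookkeeping the separation correctly: one must verify that the physical-space scale $N^2/(L\theta)$ (the diameter of $P(L')$) is genuinely the reciprocal of the frequency separation $\sim L/N$ between the $I_j$'s up to the convexity factor $\theta$ — i.e.\ that $P(L')$ is large enough to "resolve" the separation of the frequency caps — and to keep track of the arithmetic-progression (lattice) structure in $W_{P(L'),k}$, since the weight is not radially decreasing but concentrated near a shifted lattice $v_1^{-1}\mathbb{Z}$. The lattice bumps of $W_{P(L'),k}$ are spaced $v_1^{-1}\sim N$ apart with width $N/L$; since $f_I$ lives at frequency scale $\lesssim L/N \ll v_1 \sim 1/N$... hmm, $L/N$ versus $1/N$: for $L\geq 1$ we have $L/N \geq 1/N = v_1$, so actually the caps are \emph{wider} than the lattice spacing of $\tilde I$ but the relevant comparison is with the lattice $v_1^{-1}\mathbb{Z}$ in physical space. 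I would handle this by noting that $\bigcup_I (I-I)\subset P^{CL\theta/N^{3/2}}_{v_1}\cap B_{CL/N}$ as the text records, so all the difference-frequencies live in a single fat AP whose dual is exactly $CP(L')$ — meaning $W_{P(L'),k}$'s lattice structure is precisely matched to the common frequency structure of all the $f_I$, and the genuine decoupling then happens only in the "diameter" direction where the separation $L/N$ pays off against the diameter $N^2/(L\theta)$, giving room $\gtrsim N^{1/2}$ which comfortably beats any polynomial loss. The routine but slightly delicate part is making the convolution/Jensen estimates respect the weight rather than a sharp cutoff; this is exactly the pattern already used in the proof of Proposition~\ref{bilresprop} above, so I would follow that template.
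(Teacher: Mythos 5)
Your second (``cleaner'') route is essentially the paper's proof --- reduce via \eqref{weighttransitive2} to a single tile $P(L')$, then prove orthogonality there with a cutoff matched to the common fat-AP structure of the frequencies --- but the justification you give for the single-tile step, which is the entire content of the lemma, rests on a false premise. The supports of the $\widehat{f_I}$ are \emph{not} $\gtrsim L/N$-separated: consecutive blocks $I_j,I_{j+1}$ are contiguous in $\Omega$, and the gaps between the constituent intervals $B_{\theta L^2/N^2}(a_i)$ are only $\sim 1/N$ (likewise, your earlier assertion that the $I_j$ are $\gtrsim L/N^2$-separated as subsets of $\R$ conflates separation of the common differences with spatial separation). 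In fact no separation is needed. The mechanism is: take $\psi_k$ as in Lemma \ref{psilem} adapted to $P(L')^*:=P_{v_1}^{CL\theta/N^{3/2}}(0)\cap B_{CL/N}(0)$, the fat AP dual to $P(L')$, so that $|\psi_k|\sim |P(L')|^{-1}$ on $P(L')$, $|\psi_k|\lesssim_k |P(L')|^{-1}W_{P(L'),k}$ everywhere, and $\supp\widehat{f_I\psi_k}\subset I+8^{k}P(L')^*$. The sets $\{I+8^{k}P(L')^*\}_{I\in\mc{I}}$ are $\mathcal{O}_k(1)$-overlapping simply because $8^{k}P(L')^*$ has diameter $\lesssim_k L/N$ while the blocks $I$ occupy essentially disjoint intervals of length $\sim L/N$; Plancherel applied to $\sum_I f_I\psi_k$ then gives $\|\sum_I f_I\|_{L^2(P(L'))}^2\lesssim_k \sum_I\|f_I\|_{L^2(W_{P(L'),k})}^2$ after the normalization above. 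The lattice structure of $\supp\widehat{\psi_k}$ is needed not for this overlap count but so that $\psi_k$ can be simultaneously bounded below on the whole fat AP $P(L')$ and dominated by its weight --- the point you correctly intuited when you matched $\bigcup_I(I-I)$ with the dual of $P(L')$.

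Two further cautions. Your first route (expanding the square and pairing cross terms against $\widehat{W_{P(L'),k}}$) does not directly prove \eqref{localL^2ortheqn}: estimating $\widehat{f_{I'}}\ast\widehat{\overline{f_I}}$ by global Plancherel produces $\|f_I\|_{L^2(\R)}\|f_{I'}\|_{L^2(\R)}$, which is not controlled by the weighted norms on the right-hand side, so one must localize first --- which is exactly what multiplying by $\psi_k$ accomplishes. And watch the scales: $P(L')$ has diameter $CN^{3/2}/(L\theta)$, not $CN^{2}/(L\theta)$, so $\widehat{W_{P(L'),k}}$ concentrates at scale $L\theta/N^{3/2}$ near the lattice $v_1\Z$ within an envelope of width $\sim L/N$; these slips do not affect the structure of the argument but would matter if you carried out the cross-term route quantitatively.
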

\begin{proof}
  Due to \eqref{weighttransitive2} it suffices to prove
  $$\|\sum_{I\in \mathcal{I}} f_I\|_{L^2(P(L'))}^2 \lesssim_k \sum_{I \in \mathcal{I}} \|f_I\|_{L^2(W_{P(L'),k})}^2.$$
  We choose $\psi_k$ adapted to $P(L')^*:=P_{v_1}^{CL\theta/N^{3/2}}(0) \cap B_{CL/N}(0)$ in the frequency space with order of decay $k$ as in Lemma \ref{psilem}.  Here $P(L')^*$ is dual to $P(L').$ Since $\supp \widehat{\psi_k} \subset 8^kP(L')^*,$ and $\{I+8^kP(L')^*\}_{I\in \mathcal{I}}$ is $\mathcal{O}_k(1)-$overlapping, we conclude
  \begin{align*}
    \|\sum_{I\in \mathcal{I}} f_I\|_{L^2(P(L'))}^2 & \lesssim_k |P(L')| \|\sum_{I\in \mathcal{I}} f_I \psi_k\|_{L^2(\R)}^2 \\
     & \lesssim_k |P(L')| \sum_{I \in \mathcal{I}} \|f_I \psi_k\|_{L^2(\R)}^2  \\
     & \lesssim_k \sum_{I \in \mathcal{I}} \|f_I\|_{L^2(W_{P(L'),k})}^2.
  \end{align*}
\end{proof}


\section{Decoupling for the canonical partition}\label{decsec}

We focus on proving Theorem \ref{decouplingthm_intro} in Section \ref{decsec}, \ref{highlowsec}, and \ref{inductionsec}, and in these three sections decoupling will refer to decoupling for the canonical partition.

We restate Theorem \ref{decouplingthm_intro} but for all short generalized Dirichlet sequences with $\theta\in (0,1].$ 

\begin{theorem}\label{decouplingthm}
  Let $\Omega$ and $\mathcal{I}$ be defined as in the last paragraphs. Then
  for $2\leq p\leq 6$ and every $\e>0$ we have
  \begin{equation}\label{decoupling}
    \|\sum_{I\in \mathcal{I}} f_{I}\|_{L^p(\R)} \lesssim_\e N^\e \log^C (\theta^{-1}+1)  \left( \sum_{I\in \mathcal{I}} \|f_{I}\|_{L^p(\R)}^2 \right)^{1/2}
  \end{equation}
  for functions $f_{I}$ with $\supp \widehat{f_{I}}\subset I.$
\end{theorem}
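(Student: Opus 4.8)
### Proof proposal

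The plan is to run the high-low argument of \cite{guth2020improved} in the short generalized Dirichlet setting, using the bilinear restriction estimate of Proposition~\ref{bilresprop} as the transversal input and induction on the scale parameter $L$ as the engine. As the introduction already explains, the right technical statement to prove is a \emph{refined} decoupling inequality for the canonical partition (the forthcoming Theorem~\ref{refineddecthm}), which implies Theorem~\ref{decouplingthm} by a standard pigeonholing of caps into those on which $\|f_I\|$ is dyadically comparable. So the first step is to reduce Theorem~\ref{decouplingthm} to its refined version, exactly as refined decoupling for $\mathbb{P}^1$ implies $\ell^2 L^6$ decoupling.

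The core of the argument is a broad-narrow decomposition at scale $L$. First I would set up the induction on $L$: the base case $L\sim 1$ (equivalently, $P(L)$ degenerating to a Euclidean ball, $L\lesssim N^{1/4}$, up to the endpoint) is handled by the locally constant property (Proposition~\ref{locconstprop}) and flat/trivial decoupling, which produces a loss that is only $N^{C\varepsilon}$. For the inductive step, fix $F=\sum_{I\in\mathcal{I}}f_I$ and, on the ball $P(L)$, pointwise bound $|F|$ either by a \emph{narrow} term — a single sub-collection of $CL'$ consecutive caps with $L'=(LN^{1/2})^{1/2}$, where one re-scales the segment of $\{a_n\}$ to a shorter generalized Dirichlet sequence with a new convexity parameter $\theta'$ and invokes the induction hypothesis — or a \emph{broad} term controlled by a bilinear expression $(\sum_{d(I',I'')\gtrsim N^{-1/2}} |F_{I'}|^2|F_{I''}|^2)^{1/2}$, to which Proposition~\ref{bilresprop} applies after a high-low frequency split of the square function $\sum_I|f_I|^2$. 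The high part is estimated by $L^2$ orthogonality combined with a local $L^6\to L^2$ bound (using $p\le 6$ and the Kakeya-type input), while the low part is genuinely locally constant on translates of $P(L')$ by the observation in Section~\ref{BKBRsec}, so it can be fed back into the induction at scale $L'$. Tracking the constants, each iteration costs a factor $C$ and a $\log(\theta^{-1}+1)$ from the worsening convexity parameter under re-scaling; since one reaches the top scale in $\mathcal{O}_\varepsilon(1)$ steps, the total is $\lesssim_\varepsilon N^{C\varepsilon}\log^C(\theta^{-1}+1)$, which after adjusting $\varepsilon$ gives \eqref{decoupling}.

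A few steps require care beyond what is in the $\mathbb{P}^1$ proof. The re-scaling in the narrow case must be checked to land inside Definition~\ref{defnDirichlet}: a length-$CL$ sub-block of a short generalized Dirichlet sequence with parameter $(N,\theta)$ should, after affine renormalization of both frequency and physical variables, become a short generalized Dirichlet sequence with some $(N',\theta')$, and one must verify that the canonical neighborhood $\Omega$, the fat APs $P_I$, and the ball $P(L)$ all transform consistently under this map — in particular that $P(L)$ for the big sequence is tiled by copies of $P(L')$ for the small ones, which is the content of the identity $P_{v_1^{-1}}^{CN/L}\cap B_{CN^{3/2}/(L\theta)}=CP(L_1)$ already recorded. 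The second delicate point is that all the inequalities must be stated with the weight functions $W_{P,k}$ rather than sharp cutoffs, so that the locally constant property and \eqref{weighttransitive}--\eqref{weighttransitive2} can be chained without losing powers of $N$; this is routine bookkeeping but pervasive.

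The main obstacle, I expect, is the high-low analysis itself: one must choose the high/low frequency threshold (a neighborhood of $\bigcup_I(I-I)$ of the right width) so that (i) the low-frequency part of $\sum_I|f_I|^2$ is locally constant at exactly scale $P(L')$, matching the output scale of the bilinear estimate, and (ii) the high-frequency part is controlled without circularity — the natural bound uses an $L^4$ (or $L^6$) estimate on the square function that itself looks like a weak form of decoupling, so one has to arrange the induction so that this is supplied either by the bilinear restriction estimate or by the strictly-smaller-scale hypothesis. Getting this balance right, together with verifying that the choice of $N^{1/2}$ as the length of the sequence is what makes the broad and narrow scales $L\mapsto L'=(LN^{1/2})^{1/2}$ interlock the same way $R\mapsto R^{1/2}$ does for the parabola, is where essentially all the real work lies; the remaining estimates are faithful analogues of \cite{guth2020improved,guth2020falconer}.
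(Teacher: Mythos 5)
Your outline assembles the right ingredients---reduction to a refined decoupling inequality (Theorem \ref{refineddecthm}), a broad-narrow decomposition, rescaling of narrow pieces within the class of generalized Dirichlet sequences with a new parameter $\theta'$, and a high-low analysis of square functions driven by the bilinear Kakeya/restriction estimates---and this is indeed the architecture of the paper's argument. But the induction scheme you propose has a concrete gap. Your base case is false: decoupling at cap scale $L\lesssim N^{1/4}$ (in particular $L\sim 1$) does not follow from the locally constant property plus flat or trivial decoupling with only an $N^{C\e}$ loss; those lose a power of $N$ of size $(N^{1/2}/L)^{1/2-1/p}$ or worse, and the case $L=1$, $p=6$ is in fact the strongest instance of the theorem (it is what yields Corollary \ref{numberslncor}). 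In the paper the induction quantity is the sequence parameter $N$, not the cap length $L$: the narrow pieces are $K\le N^{\e/2}$ blocks of $N^{1/2}/K$ consecutive intervals which, after rescaling by $K^2$, become short generalized Dirichlet sequences with parameters $(N/K^2,\theta'/K^2)$ and the \emph{same} cap length $L$ (Propositions \ref{induction} and \ref{narrowprop}); the recursion terminates when $N^{1/2}/K\le L$, where there are at most $K$ caps and the trivial bound suffices. Relatedly, your narrow blocks of $CL'$ consecutive caps with $L'=(LN^{1/2})^{1/2}$ would number about $N^{1/4}/L^{3/2}$, so the $K^{C}$ losses inherent in the broad-narrow pointwise inequality (Lemma \ref{BNtech}) and in the broad estimate (Proposition \ref{broadprop}) could not be absorbed; $L'$ is not the narrow-block scale but the intermediate scale at which $\sum_I|f_I|^2$ is locally constant, used inside the induction for the bilinear restriction estimate and in the high-low ladder of scales $L_m=N^{1/2}N^{-\e m}$.

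Your treatment of the broad/high part also omits the step that resolves the circularity you flag. The high lemma (Lemma \ref{hilem}) only bounds $\int|g_m^h|^2$ by $N^\e\int\sum_{I_m}|f_{m+1,I_m}|^4$, and converting this fourth-power expression into the desired $L^2$ quantity $\sum_I\|f_I\|_{L^2}^2$ requires first pruning all wave packets of height exceeding $\lambda\sim N^\e r/\alpha$ (Section \ref{pruning}), with Lemma \ref{fmcomparablelem} guaranteeing that the pruned function still has size $\sim\alpha$ on the relevant set $U_{\alpha,r}\cap\Omega_{m,\alpha,r}$. The constraint $2\le p\le 6$ is then used not in the high lemma but at the final summation over the pigeonholed parameters, via $\alpha\ge r^{1/2}$; without the pruning and this bookkeeping, the ``local $L^6\to L^2$ bound'' you invoke for the high part is exactly the kind of self-referential decoupling estimate the paper's construction is designed to avoid.
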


Recall that $\{a_n\}_{n=1}^{N^{1/2}}$  satisfies
\begin{equation}
  a_{i+1}-a_i\sim \frac{1}{N}, \qquad (a_{i+2}-a_{i+1})-(a_{i+1}-a_i) \sim \frac{\theta}{N^2}
\end{equation}
where here, the $\sim$ notation means within a factor of $4$. The parameter $\theta$ is in $(0,1].$ $\Omega$ is the $L^2\theta/N^2-$neighborhood of $\{a_n\}_{n=1}^{N^{1/2}},$ and
$$\Omega=\bigsqcup_{I\in \mathcal{I}} I,$$
where each $I$ {is an $L^2\theta/N^2$-neighborhood of $L$ consecutive terms in $\{a_n\}_{n=1}^{N^{1/2}}$. }

\subsection{Local decoupling and refined decoupling inequalities}

We first formulate a local decoupling inequality which implies (in fact is equivalent to) the global decoupling inequality \eqref{decoupling}.

\begin{proposition}\label{localdecsec} {Let $p\ge 2$. Suppose that for some $k\ge 100$, }
  \begin{equation}\label{localdecoupling}
    \|\sum_{I\in \mathcal{I}} f_{I}\|_{L^p(P(L))} \lesssim_\e  N^\e \log^C (\theta^{-1}+1)  \left( \sum_{I\in \mathcal{I}} \|f_{I}\|_{L^p(W_{P(L),k})}^2 \right)^{1/2}
  \end{equation}
{  holds for every $f_{I}$ with $\supp \widehat{f_{I}}\subset I.$ Then \eqref{decoupling} is true.
}
\end{proposition}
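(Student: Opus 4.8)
The plan is the standard passage from a local decoupling inequality to a global one, so the argument is essentially bookkeeping once the weight machinery of Section~\ref{localconstsec} is in place. I would first note that \eqref{localdecoupling} is translation invariant: the ball $P(L,y)$ and the weight $W_{P(L,y),k}$ depend on the center $y$ only through translation, and the hypothesis $\supp\widehat{f_I}\subset I$ is unchanged when every $f_I$ is translated in physical space (this only modulates $\widehat{f_I}$). Hence \eqref{localdecoupling} holds with $P(L)$ replaced by any translate $P$, with the same implicit constant. Next I would fix a covering $\{P\}$ of $\R$ by translates of $P(L)$ with $\mc{O}(1)$ overlap; since $P(L)$ is a fat AP of thickness $\sim N^{3/2}/L^2$, common difference $\sim N$, and diameter $\sim N^2/(L^2\theta)$ (degenerating to a Euclidean ball when $L\lesssim N^{1/4}$), such a covering exists: translating by the thickness in the arithmetic-progression direction tiles one period, and translating by the diameter in the ball direction tiles $\R$.

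With this covering in hand, the computation runs as follows. On one hand,
\[ \Big\|\sum_{I\in\mc{I}} f_I\Big\|_{L^p(\R)}^p \;\le\; \sum_P \Big\|\sum_{I\in\mc{I}} f_I\Big\|_{L^p(P)}^p \;\lesssim_\e\; \big(N^\e\log^C(\theta^{-1}+1)\big)^p\sum_P\Big(\sum_{I\in\mc{I}}\|f_I\|_{L^p(W_{P,k})}^2\Big)^{p/2}, \]
the second step being \eqref{localdecoupling} applied on each $P$. Since $p\ge 2$ we have $p/2\ge 1$, so Minkowski's inequality in $\ell^{p/2}$ (the triangle inequality for the vectors $\big(\|f_I\|_{L^p(W_{P,k})}^2\big)_P$, one per $I$) gives
\[ \sum_P\Big(\sum_{I}\|f_I\|_{L^p(W_{P,k})}^2\Big)^{p/2} \;\le\; \Big(\sum_{I}\big(\sum_P\|f_I\|_{L^p(W_{P,k})}^p\big)^{2/p}\Big)^{p/2}. \]
Finally $\sum_P\|f_I\|_{L^p(W_{P,k})}^p=\int_\R|f_I|^p\big(\sum_PW_{P,k}\big)\lesssim_k\int_\R|f_I|^p=\|f_I\|_{L^p(\R)}^p$, and combining the three displays yields \eqref{decoupling}. (If the weighted $L^p$-norms in \eqref{localdecoupling} are normalized averages rather than honest weighted norms, the powers of $|P(L)|$ simply cancel between the two sides and the same computation applies; and this is the easy direction of the claimed equivalence — the converse would require multiplying each $f_I$ by a bump adapted to $P(L)$ with small Fourier support, slightly fattening $I$.)

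The one step requiring genuine care is the pointwise bound $\sum_P W_{P,k}(x)\lesssim_k 1$ for the chosen covering, and this is precisely where the fat-AP geometry of $P(L)$, rather than that of a Euclidean ball, enters. It follows from the order-$k$ polynomial decay of $W_{P,k}$ in each of its two factors (cf. \eqref{decaypsi} and the mechanism behind \eqref{weighttransitive2}): for a fixed $x$ only $\mc{O}(1)$ translates $P$ satisfy $W_{P,k}(x)\sim 1$, and summing the decaying tails over the translates arranged along the arithmetic-progression direction, and separately along the ball direction, each contributes $\sum_{l\ge 0}(1+l)^{-k}\lesssim_k 1$ since $k\ge 100$. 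Everything else is routine, and the $\lesssim_k$ constants are harmless because $k$ is a fixed number, so $\lesssim_{\e,k}$ and $\lesssim_\e$ coincide.
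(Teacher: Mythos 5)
Your argument is correct and is essentially the paper's own proof: tile $\R$ by translates of $P(L)$, apply the local inequality on each tile, swap the sums over tiles and over $I$ via Minkowski's inequality in $\ell^{p/2}$ (using $p\ge 2$), and finish with the bounded-overlap bound $\sum_P W_{P,k}\lesssim_k 1$. The extra remarks on translation invariance and on verifying the weight-sum bound are fine but are exactly the weight machinery the paper invokes implicitly.
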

\begin{proof}
  Suppose \eqref{localdecoupling} holds for some $k\geq 100.$
  Since $\sum_{P(L)\subset \R} W_{P(L),k} \lesssim_k 1,$ and $p\geq 2,$ by Minkowski's inequality we have
  \begin{align*}
    \|\sum_{I} f_I\|_{L^p(\R)}^p & \leq \sum_{P(L)\subset \R} \int_{P(L)} |f|^p \\
     & \lesssim_\e N^\e \log^C (\theta^{-1}+1) \sum_{P(L)} (\sum_I \|f_I\|^{2}_{L^p(W_{P(L),k})} )^{p/2}\\
     & \lesssim N^\e \log^C (\theta^{-1}+1) (\sum_I \|f_I\|^{2}_{L^p(\sum_{P(L)} W_{P(L),k})} )^{p/2} \\
     & \lesssim N^\e \log^C (\theta^{-1}+1) (\sum_I \|f_I\|^{2}_{L^p(\R)} )^{p/2},
  \end{align*}
  which is \eqref{decoupling}.
\end{proof}

{The following local decoupling inequality will imply Theorem \ref{decouplingthm} by Proposition \ref{localdecsec}. }

\begin{theorem}[Local decoupling]\label{localdecouplingthm}
  Suppose $2\leq p\leq 6.$ Then
  \begin{equation}\label{localdecouplingeq}
    \|\sum_{I\in \mathcal{I}} f_{I}\|_{L^p(P(L))} \lesssim_{\e} N^\e \log^C (\theta^{-1}+1) \left( \sum_{I\in \mathcal{I}} \|f_{I}\|_{L^p(W_{P(L),100})}^2 \right)^{1/2}
  \end{equation}
  for $f_{I}$ with $\supp \widehat{f_{I}}\subset I.$
\end{theorem}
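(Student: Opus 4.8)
The strategy is to run the high--low frequency argument of Guth--Maldague--Wang from \cite{guth2020improved}, adapted to the fat AP setting, and to close an induction on the scale $L$. The key structural inputs available to us are the bilinear restriction-type estimate (Proposition \ref{bilresprop}), the locally constant property (Proposition \ref{locconstprop}), the local $L^2$ orthogonality (Lemma \ref{localL^2orth}), and the geometric fact that $P(L)$ is the correct notion of ``ball''. As usual, the first move is a standard broad--narrow dichotomy at some intermediate scale, say splitting $\sum_I f_I$ over $K$-blocks of consecutive $I$'s. In the narrow case, the frequency support of the dominant contribution lies in a single $K$-block, which after the rescaling maps of Section \ref{comp} is again a canonical-partition decoupling problem for a short generalized Dirichlet sequence (with a worse $\theta$), so one recurses and picks up only the $\log^C(\theta^{-1}+1)$ factor. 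In the broad case one has essentially two transversal pieces $F_1, F_2$ and $|\sum_I f_I|$ is controlled pointwise by $(|F_1||F_2|)^{1/2}$ up to constants, so the $L^p$ norm for $p \le 6$ is bounded by an interpolation between the trivial $L^2$ bound and the bilinear $L^3$-type bound coming from Proposition \ref{bilresprop} (via $L^4$ Cauchy--Schwarz in the two factors, i.e.\ $\int |F_1F_2|^2 \lesssim \ldots$). This is exactly the mechanism by which the critical exponent $6$ matches the parabola.

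The heart of the matter, and where the high--low decomposition enters, is to upgrade the bilinear $L^6$-type bound into the \emph{linear} $\ell^2 L^6$ decoupling with only an $N^\e$ loss --- equivalently, to prove the refined decoupling statement of Theorem \ref{refineddecthm} announced in Section \ref{decsec}. Here I would localize to $P(L)$, form the square function $g = \sum_I |f_I|^2$, and decompose it into high and low frequency parts $g = g_{\ell} + g_h$ according to a threshold dual to an intermediate fat AP scale $P(L_1)$ with $L_1 = (LN^{1/2})^{1/2}$ (the scale that already appeared in the proof of Proposition \ref{bilresprop}, where $\bigcup_I (I-I)$ sits inside the dual of $CP(L_1)$). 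On the low-frequency part $g_\ell$, one runs the broad--narrow/bilinear argument at scale $L_1$ and iterates the recursion; on the high-frequency part $g_h$, one uses the $L^2$ (Plancherel / local orthogonality) bound together with the bilinear Kakeya-type estimate (Proposition \ref{BKprop}) to control the incidence of the dual fat APs $P_I$ inside $P(L)$. Summing the geometrically decreasing contributions over the $\mathcal{O}_\e(1)$ many dyadic scales between $L$ and $N^{1/2-\e}$ produces the $N^\e$ constant. Throughout, the weights $W_{P(L),k}$ are handled exactly as in the proof of Lemma \ref{localL^2orth} and Proposition \ref{locconstprop}, using \eqref{weighttransitive} and \eqref{weighttransitive2} to pass between sharp cutoffs and weighted norms; these are routine and I would not belabor them.

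The main obstacle I anticipate is ensuring that the rescaling in the narrow case genuinely returns a problem in the same class. Unlike the parabola, where the caps are affinely equivalent, here a $K$-block of a generalized Dirichlet sequence with parameters $(N,\theta)$ rescales to a short generalized Dirichlet sequence with a \emph{different} convexity parameter --- roughly $\theta' \sim \theta K^2/N$ or similar --- and one must check that this stays in $(0,1]$ and that the induction on $L$ (or on the number of scales) actually terminates, with the accumulated constants collapsing to $N^\e \log^C(\theta^{-1}+1)$ rather than blowing up. Concretely this requires: (i) verifying that the fat AP $\tilde I$ really is the image of a canonical-partition interval under an affine map that is bounded with bounded inverse at the relevant scale, so that $P_I$, $P(L)$, and the weights transform correctly; and (ii) bookkeeping the $\log$ losses so that at most a bounded power of $\log(\theta^{-1}+1)$ is incurred over all recursion levels. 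A secondary technical point is the passage from the bilinear restriction estimate, which is stated with an $N^\e$ loss and over $P(L)$, to a clean bilinear \emph{decoupling} constant at the level of the square function; this is the standard ``bilinear-to-linear'' step but must be done compatibly with the weighted-$L^p$ formalism and the refined (broad-set) version of the statement that Section \ref{smallcapsec} will need.
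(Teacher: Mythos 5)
Your plan is essentially the paper's own route: Theorem \ref{localdecouplingthm} is deduced from the refined decoupling statement (Theorem \ref{refineddecthm}) via a wave-packet pigeonholing and H\"older step, and the refined statement is proved exactly as you outline --- the Guth--Maldague--Wang high--low square-function decomposition over $\mc{O}_\e(1)$ intermediate scales, a broad--narrow decomposition into $K$-blocks whose narrow part rescales to a short generalized Dirichlet sequence with parameters $(N/K^2,\theta'/K^2)$, $\theta'\in[\theta/4,\theta]$ (so the class is preserved and the $\log^C(\theta^{-1}+1)$ bookkeeping you flag is precisely what the induction in Proposition \ref{induction} tracks), and the bilinear Kakeya-type estimate feeding the bilinear restriction-type estimate on the broad part. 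The only detail-level corrections: the high-frequency piece of the square function is controlled by Plancherel together with the $\lesssim N^\e$ overlap of the sets $C(\tilde{I}_m-\tilde{I}_m)$ away from a neighborhood of the origin (Lemma \ref{hilem}), with bilinear Kakeya entering only through the bilinear restriction estimate used on the broad part, and one also needs the wave-packet pruning at height $\sim N^\e r/\alpha$ (Lemma \ref{fmcomparablelem}) plus the final dyadic pigeonholing to pass from the refined inequality to \eqref{localdecouplingeq} --- none of which changes your approach.
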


Theorem \ref{localdecouplingthm} is a consequence of the following refined decoupling theorem, which we focus on proving in the next two sections. The analogous result for the parabola can be found in \cite{guth2020falconer,demeter2020small}.

\begin{theorem}[Refined decoupling]\label{refineddecthm}
  Suppose $2\leq p\leq 6.$ For every $P(L)$ and every $X\subset P(L),$ we have
  \begin{equation}\label{refineddeceq}
    \|\sum_I f_I\|_{L^p(X)}\lesssim_\e N^\e \log^C(\theta^{-1}+1) (\sup_{x\in X}\sum_{I} \|f_I\|_{\stkout{L}^2(W_{P_{I}(x),100})}^2)^{1/2-1/p}(\sum_I \|f_I\|^2_{L^2(W_{P(L),100})})^{1/p}
  \end{equation}
  
  for $f_{I}$ with $\supp \widehat{f_{I}}\subset I.$
\end{theorem}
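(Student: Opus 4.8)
The plan is to prove Theorem \ref{refineddecthm} by induction on the scale parameter $L$, following the high-low decomposition strategy of \cite{guth2020improved} adapted to the refined setting of \cite{guth2020falconer}. The base case is $L \lesssim_\e N^\e$, where $P(L)$ is a Euclidean ball of size $\lesssim_\e N^{O(\e)}$ times the dual tube scale, and the estimate follows from the locally constant property (Proposition \ref{locconstprop}) together with trivial $\ell^1$-triangle-inequality and Cauchy--Schwarz bounds, at the cost of a $N^{O(\e)}$ factor. For the inductive step, I would fix $L$, assume the statement at all scales up to $\sqrt{LN^{1/2}} = L'$ (the natural intermediate scale at which local $L^2$-orthogonality holds, cf. Lemma \ref{localL^2orth}), and run a broad-narrow dichotomy at spatial scale $P(L')$: on each such intermediate fat AP, either one ``direction'' (i.e. one common-difference class of $P_I$) dominates the sum $|\sum_I f_I|$ — the \emph{narrow} case — or at least two transversal $I', I''$ (with $d(I',I'') \gtrsim N^{-1/2}$) each carry a comparable share — the \emph{broad} case.

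In the narrow case, on a cluster of $\lesssim L$ consecutive intervals that rescales (via the affine-type symmetry of generalized Dirichlet sequences, using the extra parameter $\theta$ introduced in Definition \ref{defnDirichlet}) to a genuine short generalized Dirichlet sequence at scale $L/L' = \sqrt{L/N^{1/2}}$ with a new convexity parameter, I apply the inductive hypothesis at that smaller scale; this is where the $\log^C(\theta^{-1}+1)$ factor accumulates, and one must check the geometry of the rescaling carefully so that $P(L/L')$ maps correctly to a sub-fat-AP of $P(L)$. In the broad case, I would introduce the high-low decomposition of the square function $g = \sum_I |f_I|^2$: writing $g = g_{\mathrm{low}} + g_{\mathrm{high}}$ where $g_{\mathrm{low}}$ has frequency support in a small ball (of radius comparable to the reciprocal of the $P(L')$-scale), the low part is essentially constant at scale $P(L')$ and is controlled by the bilinear restriction-type estimate (Proposition \ref{bilresprop}) applied to the transversal pair, while the high part is handled by an $L^2$ (Plancherel / pruning) argument exploiting that the high-frequency pieces live on separated frequency annuli, combined with the bilinear Kakeya-type estimate (Proposition \ref{BKprop}) to convert the spatial overlap of the dual fat APs into the right power loss. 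Keeping the estimate \emph{refined} — i.e.\ with the $\sup_{x \in X}$ factor and the sharp split of exponents $1/2 - 1/p$ and $1/p$ — forces one to prune wave packets and track which $P_I(x)$ are ``significant'' over $X$ at each stage, rather than summing crudely; this is the technical overhead relative to \cite{guth2020improved}.

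Assembling the two cases: interpolating/combining the broad and narrow bounds and iterating the broad-narrow step $O_\e(1)$ times (until the residual scale reaches the base case) yields \eqref{refineddeceq} with only an $N^{O(\e)} \log^C(\theta^{-1}+1)$ loss, after which one re-labels $O(\e)$ as $\e$. The main obstacle I anticipate is the \textbf{broad case high-part estimate}: one must show that the high-frequency portion of the square function, when integrated against the appropriate weight over $X \subset P(L)$, is genuinely lower order — this requires simultaneously (i) exploiting $L^2$-orthogonality of the high pieces at the $P(L')$ scale via Lemma \ref{localL^2orth}, (ii) feeding the spatial geometry into the bilinear Kakeya bound of Proposition \ref{BKprop} with the \emph{correct} normalization $|P_I|^2/|P(L)|$ so that no spurious powers of $N/L$ appear, and (iii) ensuring the pruning of wave packets is compatible with the refined $\sup_{x\in X}$ quantity — the choice $L' = \sqrt{LN^{1/2}}$, and ultimately the choice of $N^{1/2}$ as the length of the sequence, is exactly what makes these three constraints consistent, mirroring the role of $R$ in parabola decoupling, and getting the bookkeeping right there will be the crux.
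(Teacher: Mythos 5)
Your toolkit is the right one (broad--narrow reduction, rescaling that changes the convexity parameter $\theta$, multi-scale square functions with a high--low split, pruning, bilinear restriction/Kakeya), but the induction scheme you propose is not the one that works, and as written it would not assemble into a proof. The paper does \emph{not} induct on the cap length $L$: it fixes $p$ and $L$ and inducts on the sequence parameter $N$, via Proposition \ref{induction}, $Dec(N,\theta)\lesssim_\e \sup_{\theta'\in[\theta/4,\theta]}Dec(N/K^2,\theta'/K^2)+K^DN^\e\log^D(\theta^{-1}+1)$. The narrow case is a frequency block $I'$ consisting of $N^{1/2}/K$ \emph{consecutive} intervals, and after the affine rescaling $\xi\mapsto K^2\xi$ it becomes a full short generalized Dirichlet sequence with parameters $(N/K^2,\theta'/K^2)$ while the cap scale stays equal to $L$ (Proposition \ref{narrowprop}); your claim that the narrow case rescales to ``cap scale $L/L'=\sqrt{L/N^{1/2}}$'' is dimensionally meaningless (it is $\le 1$), and there is no rescaling symmetry that lowers $L$ with $N$ fixed. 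Relatedly, your dichotomy is run over individual caps (``one common-difference class of $P_I$''), i.e.\ over $N^{1/2}/L$ directions; the resulting broad/narrow loss $K^C$ would then be a power of $N^{1/2}/L$, which is fatal. The paper takes only $K\le N^{\e/2}$ coarse blocks precisely so that the $K^C$ loss is $\le N^{C\e}$, and ``transversal'' there means non-adjacent blocks, with the weaker separation absorbed into $K^C$ inside Proposition \ref{broadprop}.

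The second structural gap is the role of $L'=(LN^{1/2})^{1/2}$ and of the high--low machinery. $L'$ is not an induction threshold for the decoupling constant; it is the spatial scale $P(L')$ on which $\sum_I|f_I|^2$ is locally constant, used inside the proof of the bilinear restriction estimate (Proposition \ref{bilresprop}, which is itself proved from bilinear Kakeya, Proposition \ref{BKprop}, by an induction on that scale) and again in the broad part. The multi-scale structure in the broad part is the family $L_m=N^{1/2}N^{-\e m}$ with square functions $g_m$, a level-set pigeonholing of $P(L)$ into sets $U_{\alpha,r}$, the pruning threshold $\lambda\sim N^\e r/\alpha$, and the sets $\Omega_{m,\alpha,r}$ on which the low part dominates at all coarser scales and the high part first dominates at scale $m$. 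Your assignment of tools is inverted relative to this: bilinear restriction is applied exactly on the high-dominated broad set (on each $P(L_m')$), the High Lemma \ref{hilem} gives the $L^2$ bound on $g_m^h$ via the $O(N^\e)$-overlap of the sets $\tilde I_m-\tilde I_m$ away from the origin, and the pruning ($\|\phi_{P_{I_m}}f\|_\infty\le\lambda$) converts the resulting $L^4$ quantity into $(r/\alpha)^2\sum_I\|f_I\|_{L^2}^2$; the Low Lemma \ref{low} is what propagates $g_k\lesssim r$ from the coarsest scale down, which is needed for Lemma \ref{fmcomparablelem} to guarantee $|f_{m,\alpha,r}|\sim|f|\sim\alpha$ on $U_{\alpha,r}\cap\Omega_{m,\alpha,r}$ so that the pruned and unpruned functions are interchangeable. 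Without the $(\alpha,r)$ pigeonholing and this comparison lemma, you cannot recover the refined form of \eqref{refineddeceq} (the $\sup_{x\in X}$ factor with exponent $1/2-1/p$); ``track which $P_I(x)$ are significant'' is exactly the step your sketch leaves unspecified.
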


Now assuming Theorem \ref{refineddecthm} we show how it implies Theorem \ref{localdecouplingthm}.
\begin{proof}[Proof of Theorem \ref{localdecouplingthm} assuming Theorem \ref{refineddecthm}]
  Let $f=\sum_I f_I.$ Taking $X=P(L)$ in \eqref{refineddeceq} we see that
  $$\|f\|_{L^p(P(L))}\lesssim_\e N^\e \log^C(\theta^{-1}+1) (\sup_{x\in P(L)}\sum_{I} \|f_I\|_{\stkout{L}^2(W_{P_{I}(x),100})}^2)^{1/2-1/p}(\sum_I \|f_I\|^2_{L^2(W_{P(L),100})})^{1/p},$$
  To prove Theorem \ref{decouplingthm} we will do dyadic pigeonholing on the $L^2-$norm of wave packets of $f,$ using Proposition \ref{wpdecomprop}. More precisely we write
  $$f=\sum_I f_I=\sum_I \sum_{P_I} \phi_{P_{I}}f_I=\sum_{\lambda: \text{ dyadic}} \ \sum_{I,P_I:\|\phi_{P_I}f_I\|_{L^2(W_{P_I,100})}\in [\lambda/2,\lambda)} \phi_{P_I}f_I.$$
  Without loss of generality we assume $(\sum_I \|f_I\|_{L^p(W_{P(L),100})}^2)^{1/2}=1.$ Then
  $$\|\sum_{I,P_I: \|\phi_{P_I}f_I\|_{L^2(W_{P_I,100})}\notin [N^{-C}\theta^C , N^C\theta^{-C}] } \phi_{P_I} f_I \|_{L^p(P(L))} \lesssim 1$$
  for sufficiently large $C.$ Therefore there exists a $\lambda$ such that
  $$\|f\|_{L^{p}(P(L))}\lesssim C_\e N^{\e}{\log^C(\theta^{-1}+1)} \|\sum_{I,P_I: \|\phi_{P_I}f_I\|_{L^2(W_{P_I,100})}\in [\lambda/2,\lambda) } \phi_{P_I} f_I\|_{L^p(P(L))}+1.$$
  By a further dyadic pigeonholing argument on $I$, we may assume for every $I,$ either $\#\{P_I:\|\phi_{P_I}f_I\|_{L^2(W_{P(L),100})}\in [\lambda/2,\lambda)\}=0$ or $\#\{P_I:\|\phi_{P_I}f_I\|_{L^2(W_{P(L),100})}\in [\lambda/2,\lambda)\}\in [A/2, A)$ for some constant $A.$
  We denote by $\#I$ the number of $I$ such that $\#\{P_I:\|\phi_{P_I}f_I\|_{L^2(W_{P(L),100})}\in [\lambda/2,\lambda) \}\in [A/2, A).$
  For simplicity of notation we will also drop writing the condition $\|\phi_{P_I}f_I\|_{L^2(W_{P(L),100})}\in [\lambda/2,\lambda)$ in the summation.

  Now apply Theorem \ref{refineddecthm} to get
  \begin{multline} \label{889}
    \|\sum_{I,P_I} \phi_{P_I} f_I\|_{L^p(P(L))} \lesssim_\e \log^C(\theta^{-1}+1)N^\e (\sup_{x\in P(L)}\sum_{I}  \| \sum_{P_I} \phi_{P_I}f_I\|_{\stkout{L}^2(W_{P_{I}(x),100})}^2)^{1/2-1/p} \\
    (\sum_I \|\sum_{P_I} \phi_{P_I}f_I\|_{L^2(W_{P(L),100})}^2 )^{1/p}.
  \end{multline}
  To estimate the first factor on the right hand side of \eqref{889} we note that for every $x\in P(L),$
  $$\sum_{I}  \| \sum_{P_I} \phi_{P_I}f_I\|_{\stkout{L}^2(W_{P_{I}(x),100})}^2 \lesssim \sum_{I} \sum_{P_I} \|\phi_{P_I}f_I\|_{\stkout{L}^2(W_{P_{I}(x),100})}^2  \lesssim (\#I)\lambda^2|P_I|^{-1}$$
  because of $(\sum_{P_I} \phi_{P_I}(y))^2\lesssim \sup_{P_I}\phi_{P_I}^2(y)\leq \sum_{P_I} \phi_{P_I}^2(y)$ and \eqref{weighttransitive2}.
  Therefore
  $$\sup_{x\in P(L)} \sum_{I}  \|\sum_{P_I} \phi_{P_I}f_I\|_{\stkout{L}^2(W_{P_{I}(x),100})}^2  \lesssim (\#I)\lambda^2|P_I|^{-1}.$$
  To estimate the second factor on the right hand side of \eqref{889} we calculate
  $$\sum_I \|\sum_{P_I} \phi_{P_I}f_I\|_{L^2(W_{P(L),100})}^2  \lesssim \sum_I \sum_{P_I} \| \phi_{P_I}f_I\|_{L^2(W_{P(L),100})}^2 \lesssim (\#I) \lambda^2 A.$$
  To summarize, \eqref{889} implies that
  $$\|\sum_{I,P_I} \phi_{P_I} f_I\|_{L^p(P(L))} \lesssim_\e \log^C(\theta^{-1}+1)N^\e |P_I|^{1/p-1/2} (\#I)^{1/2}A^{1/p}\lambda.$$

  Now by H\"{o}lder's inequality we have
  \begin{align*}
    (\sum_I \|f_I\|_{L^p(W_{P(L),100})}^2)^{1/2} & \geq \left( \sum_I (\sum_{P_I} \|\phi_{P_I}^{1/2}f_I\|_{L^p(W_{P(L),100})}^p )^{2/p} \right)^{1/2} \\
     & \gtrsim \left( \sum_I (\sum_{P_I} \|\phi_{P_I}f_I\|_{L^2(W_{P(L),100})}^p |P_I|^{1-p/2} )^{2/p} \right)^{1/2} \\
     & \gtrsim |P_I|^{1/p-1/2} (\#I)^{1/2}A^{1/p} \lambda.
  \end{align*}

  Hence we have \eqref{localdecouplingeq}.
\end{proof}

\subsection{Induction scheme for proving Theorem \ref{refineddecthm}}

We fix $p,L$ and let $Dec(N,\theta)=Dec_{p}(N,L,\theta)$ denote the smallest constant such that
\begin{equation}\label{20000}
  \|\sum_I f_I\|_{L^p(X)}\leq Dec(N,\theta) (\sup_{x\in X}\sum_{I} \|f_I\|_{\stkout{L}^2(W_{P_{I}(x),100})}^2)^{1/2-1/p}(\sum_I \|f_I\|^2_{L^2(W_{P(L),100})})^{1/p}
\end{equation}
 holds for every sequence $\{a_n\}_{n=1}^{N^{1/2}}$ satisfying \eqref{defa_n}, every $P(L),$ every $X\subset P(L),$ and every $f_{I}$ with $\supp \widehat{f_{I}}\subset I.$ For a specific choice of the short generalized Dirichlet sequence $\{a_n\}_{n=1}^{N^{1/2}}$ satisfying \eqref{defa_n} we will call the smallest constant the refined decoupling constant of $\{a_n\}_{n=1}^{N^{1/2}}$ such that \eqref{20000} holds for every $X\subset P(L),$ and every $f_{I}$ with $\supp \widehat{f_{I}}\subset I.$
 Note that $Dec_p(N,L,\theta)$ is the supremum of all refined decoupling constants of sequences $\{a_n\}_{n=1}^{N^{1/2}}$ satisfying \eqref{defa_n}.

We will deduce Theorem \ref{refineddecthm}, which now is equivalent to $Dec(N,\theta)\lesssim_\e N^\e \log^C(\theta^{-1}+1),$ from the following main proposition.

\begin{proposition}\label{induction}
  For every $\e>0$ and every $1\leq K \leq N^{\e/2}$ satisfying $N^{1/2}/K\geq L,$ 
  \begin{equation}\label{inductioneqn}
    Dec(N,\theta) \lesssim_\e  \sup_{\theta'\in [\theta/4, \theta]}Dec(N/K^2,\theta'/K^2) + K^{D}N^\e \log^D(\theta^{-1}+1).
  \end{equation}
  Here $D$ is an absolute constant.
\end{proposition}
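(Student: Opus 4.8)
The plan is to run a broad--narrow dichotomy in the spirit of Bourgain--Guth, adapted to the fat-AP geometry. Fix $\e>0$ and $K$ as in the statement, and partition the index set $\mathcal{I}$ into $\sim K$ blocks $\mathcal{I}_1,\dots,\mathcal{I}_{\sim K}$ of $\sim N^{1/2}/K$ consecutive $I$'s, so that each block sits inside an arc of $\{a_n\}$ whose frequency projection has length $\sim N^{-1/2}/K\cdot\ldots$; more precisely each union $\Omega_k:=\bigsqcup_{I\in\mathcal{I}_k}I$ is essentially a fat AP corresponding to a shorter generalized Dirichlet sequence after rescaling. At each point $x\in X$ we compare the full sum $|\sum_I f_I(x)|$ against $\max_k|\sum_{I\in\mathcal{I}_k}f_I(x)|$. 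If $|\sum_I f_I(x)|\le CK\max_k|\sum_{I\in\mathcal{I}_k}f_I(x)|$ we declare $x$ \emph{narrow}; otherwise $x$ is \emph{broad}, meaning there exist two blocks $k\ne k'$ that are transversal (their common-difference indices differ by $\gtrsim N^{1/2}/K \cdot$ the appropriate scale, which after checking is $\gtrsim N^{1/2}\theta$ when $K$ separates them by at least a couple of blocks — here one throws away boundedly many neighboring pairs at the cost of a constant) with $|\sum_{I\in\mathcal{I}_k}f_I(x)|,|\sum_{I\in\mathcal{I}_{k'}}f_I(x)|\gtrsim K^{-1}|\sum_I f_I(x)|$.

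\textbf{Narrow contribution.} On the narrow set, $\|\sum_I f_I\|_{L^p(X)}^p \lesssim K^p\sum_k \|\sum_{I\in\mathcal{I}_k}f_I\|_{L^p(X)}^p$. For each fixed $k$, rescale: the block $\mathcal{I}_k$ is, up to an affine change of variables in frequency, the canonical partition of a short generalized Dirichlet sequence with $N$ replaced by $N/K^2$ and $\theta$ replaced by some $\theta'\in[\theta/4,\theta]$ divided by $K^2$ (this is exactly the content of the rescaling built into Definition \ref{defnDirichlet} with the extra parameter $\theta$; one must check that the new $P(L)$, $P_I$, and weights $W$ transform correctly under the affine map, which they do because affine maps send fat APs to fat APs and dual fat APs to dual fat APs). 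Applying the definition of $Dec(N/K^2,\theta'/K^2)$ to each block and summing, then using H\"older in $k$ over the $\sim K$ blocks to repackage the $\ell^p$ sum back into the two-factor form on the right of \eqref{20000}, we get the main term $\sup_{\theta'\in[\theta/4,\theta]}Dec(N/K^2,\theta'/K^2)$ times a harmless power of $K$ that is absorbed — or, to be careful, one keeps track and checks the $K$-power stays polynomial of bounded degree $D$; the clean way is to do the narrow step so it produces precisely $C_\e\sup_{\theta'}Dec(N/K^2,\theta'/K^2)$ with no extra $K$, by noting the narrow bound is really an $\ell^2$-type estimate after one more application of flat/trivial decoupling among the $K$ blocks at $L^p$, $p\le 6$. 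I would present it the first way and absorb.

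\textbf{Broad contribution.} On the broad set we use $|\sum_I f_I(x)|^p \lesssim K^{O(1)}\sum_{k\ne k',\,\mathrm{transversal}} |\sum_{I\in\mathcal{I}_k}f_I(x)|^{p/2}|\sum_{I\in\mathcal{I}_{k'}}f_I(x)|^{p/2}$, valid for $p$ in the stated range, and integrate over $X\cap P(L)$. Covering $P(L)$ by translates of the intermediate ball and applying the bilinear restriction-type estimate (Proposition \ref{bilresprop}) to each transversal pair $F_1=\sum_{I\in\mathcal{I}_k}f_I$, $F_2=\sum_{I\in\mathcal{I}_{k'}}f_I$ at scale... — here is the one technical point — Proposition \ref{bilresprop} as stated controls $L^4$ of the product, i.e.\ $p=4$; for general $p\in[2,6]$ one interpolates the bilinear $L^4$ estimate against the trivial bilinear $L^2$ bound (or against the $L^\infty$ bound from the locally constant property), exactly as in the passage from bilinear restriction to decoupling for $\mathbb{P}^1$. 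This yields, for the broad part, a bound of the form $\lesssim_\e N^\e K^{O(1)}$ times the two-factor right-hand side of \eqref{20000} (the $N^\e$ coming from Proposition \ref{bilresprop}, and the $\log^C(\theta^{-1}+1)$ being harmless), which is the error term $K^D N^\e\log^D(\theta^{-1}+1)$.

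\textbf{Main obstacle.} The delicate part is not the dichotomy itself but making sure the rescaling in the narrow case lands \emph{exactly} in the class of decoupling constants $Dec(N/K^2,\theta'/K^2)$ — one must verify that an $L$-cap of the original sequence, after the affine rescaling that expands the block $\mathcal{I}_k$ to full size, is still an $L$-cap (the cap length parameter $L$ must be preserved, not rescaled, which is why the neighborhood width $\theta L^2/N^2$ was chosen as it was and why $N\mapsto N/K^2$, $\theta\mapsto \theta'/K^2$ is the correct substitution that keeps $\theta L^2/N^2$... up to the factor $K^2/K^2=1$, invariant), and that the wave-packet weights $W_{P_I(x)}$ and $W_{P(L)}$ appearing in both sides of \eqref{20000} transform compatibly. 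This bookkeeping, together with controlling the interaction of the broad/narrow decomposition with the pigeonholed wave-packet structure so that the $\sup_{x\in X}$ factor behaves well, is where the real work lies; everything else is a routine transcription of the parabola argument using Propositions \ref{BKprop}, \ref{bilresprop}, and \ref{locconstprop}.
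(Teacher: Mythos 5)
Your narrow part is essentially the paper's Proposition \ref{narrowprop} (rescale each block $\mathcal{I}_k$, check that caps of length $L$ stay caps of length $L$ while $N\mapsto N/K^2$, $\theta\mapsto\theta'/K^2$), and that portion of the proposal is fine. The genuine gap is in the broad part. You propose to bound $\int_X |F_1F_2|^{p/2}$ for a transversal pair by a single application of the bilinear restriction estimate (Proposition \ref{bilresprop}, an $L^4$-type statement) interpolated against a trivial $L^2$ or $L^\infty$ bound, ``exactly as in the passage from bilinear restriction to decoupling for $\mathbb{P}^1$.'' No such one-step passage exists: if it did, critical $\ell^2L^6$ decoupling for the parabola would follow from C\'ordoba's $L^4$ biorthogonality, which it does not. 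Concretely, for $4<p\le 6$ the interpolation you describe gives
\begin{equation*}
\int_X |F_1F_2|^{p/2}\;\le\;\|F_1F_2\|_{L^\infty(X)}^{\frac{p-4}{2}}\int_X|F_1F_2|^2
\;\lesssim_\e\; N^\e\,(\#\mathcal{I})^{\frac{p-4}{2}}\Big(\sup_{x\in X}\sum_I\|f_I\|_{\stkout{L}^2(W_{P_I(x),100})}^2\Big)^{\frac{p}{2}-1}\sum_I\|f_I\|_{L^2(W_{P(L),100})}^2,
\end{equation*}
because the only available pointwise control of $|F_i|$ in terms of the square function costs a Cauchy--Schwarz factor $(\#\mathcal{I})^{1/2}=(N^{1/2}/L)^{1/2}$. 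The extra factor $(N^{1/2}/L)^{(p-4)/2}$ is a positive power of $N$ (it is $N^{1/2}/L$ at $p=6$), not $K^D N^\e$, so the broad contribution cannot be absorbed into the error term of \eqref{inductioneqn}. Interpolating with the bilinear $L^2$ bound instead only covers $p\le 4$.

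This is exactly the difficulty the paper's Sections \ref{highlowsec}--\ref{inductionsec} are built to overcome, and none of that machinery appears in your proposal: one first pigeonholes to level sets $U_{\alpha,r}$ (so that $|f|\sim\alpha$ and the broad term enters with the factor $\alpha^{p-4}$ rather than via interpolation), prunes wave packets of height exceeding $\lambda\sim N^\e r/\alpha$ at every intermediate scale $L_m$ (Lemma \ref{fmcomparablelem} guarantees the pruned $f_{m,\alpha,r}$ still approximates $f$ on $U_{\alpha,r}\cap\Omega_{m,\alpha,r}$), decomposes the square functions $g_m$ into high and low parts, and on $\Omega_{m,\alpha,r}$ uses the High Lemma \ref{hilem} (whose proof rests on the $\mathcal{O}(N^\e)$ overlap of the sets $\tilde I_m-\tilde I_m$ away from the origin) together with the height bound to convert $\int\sum|f_{m+1,I_m}|^4$ into $(r/\alpha)^2\sum_I\|f_I\|_{L^2(W_{P(L),100})}^2$. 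Only then does the combination $\alpha^{p-4}(r/\alpha)^2\le r^{p/2-1}$ (valid since $\alpha\ge r^{1/2}$ and $p\le 6$, with $r\lesssim\sup_x\sum_I\|f_I\|^2_{\stkout{L}^2(W_{P_I(x),100})}$) produce the refined right-hand side with only a $K^DN^\e\log^D(\theta^{-1}+1)$ loss. Your closing remark that handling the $\sup_{x\in X}$ factor and the wave-packet bookkeeping ``is where the real work lies'' acknowledges this, but the proposal does not supply that work, and the shortcut it offers in its place fails for $p$ near the critical exponent.
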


We postpone the proof of Proposition \ref{induction} to Section \ref{inductionsec}.
Here we show how it implies Theorem \ref{refineddecthm}.
\begin{proof}[Proof of Theorem \ref{refineddecthm} assuming Proposition \ref{induction}]
  For some sufficiently large $S_0$ we have $Dec(N,\theta)\leq C_s N^{s}\leq C_s N^s \log^D(\theta^{-1}+1)$ for $s\geq S_0.$
  Now suppose $Dec(N,\theta)\leq C_s N^s \log^D(\theta^{-1}+1) $ for some $s\leq S_0.$ Then from \eqref{inductioneqn} we have for every $\e>0$ and $K$ with $N^{1/2}/K\geq L,$
  \begin{align*}
    Dec(N,\theta) & \leq C_\e\left(\sup_{\theta'\in [\theta/4,\theta]} C_s (\frac{N}{K^2})^s \log^D({K^2}(\theta')^{-1}+1) + K^D N^\e \log^D(\theta^{-1}+1) \right). \\
     & \leq C_\e\left(CC_s (\frac{N}{K^2})^s \log^D({K^2}\theta^{-1}+1) + K^D N^\e \log^D(\theta^{-1}+1) \right) \\
     &
     \leq C_\e\left(CC_s (\frac{N}{K^2})^s (C\log^D(\theta^{-1}+1) + C\log^D(K^2)) + K^D N^\e \log^D(\theta^{-1}+1) \right).
  \end{align*}
  If we choose $\e$ to be $s/2$ and let $\frac{N^s}{K^{2s}}=K^DN^\e=K^DN^{s/2},$ that is, $K=N^{\frac{s}{2(2s+D)}},$ then for some constant $C'_s$ depending only on $s,$
  $$Dec(N,\theta) \leq C'_{s} N^{s\left(1- \frac{1}{2s+D}\right)} (\log^D(\theta^{-1}+1)+\log^D N )$$
  if $N^{1/2}N^{\frac{-s}{2(2s+D)}} \geq L.$ If $N^{1/2}N^{\frac{-s}{2(2s+D)}} \leq L,$ then $|\mc{I}|\lesssim N^{\frac{s}{2(2s+D)}}$ and by the triangle inequality and Cauchy-Schwarz inequality we have
  $$Dec(N,\theta)\lesssim N^{\frac{s}{2(2s+D)}}.$$
  We can assume that $D$ is large enough such that $\max\{2,S_0\}\leq D.$ Then $\frac{1}{2s+D}\sim D^{-1}$ and $K\leq N^{\e/2}$, so for some absolute constant $c>0$,
  $$Dec(N,\theta)\lesssim_s N^{s(1-c)} \log^D(\theta^{-1}+1). $$
  Conclude that
  $$Dec(N,\theta) \lesssim_\e N^{\e} \log^D(\theta^{-1}+1) $$
  for every $\e>0.$

\end{proof}

\subsection{Two applications}
Before ending this section, we record two applications of Theorem \ref{decouplingthm}. Technically these are corollaries of the $\ell^2 L^6$ decoupling inequality for the parabola in \cite{bourgain2015proof}, by deriving the corresponding $(\ell^2,L^6)$ estimate on short generalized Dirichlet polynomials using the method described in Section \ref{appsec}. 

First we may estimate approximate solutions to the equation $a_{n_1}+ a_{n_2}+a_{n_3}=a_{n_4}+a_{n_5}+a_{n_6}$ for a short generalized Dirichlet sequence $\{a_n\}_{n=1}^{N^{1/2}}.$ The number of exact solutions of such equations for general convex sequences was studied in \cite{iosevich2006combinatorial}.
\begin{corollary}\label{numberslncor}
  Let $\{a_n\}_{n=1}^{N^{1/2}}$ be a short generalized Dirichlet sequence with parameter $\theta.$ Then
  \begin{multline}\label{numbersln}
    \#\{(a_{n_1},\ldots,a_{n_6}): 1\leq n_i\leq N^{1/2}, |(a_{n_1}+ a_{n_2}+a_{n_3})-(a_{n_4}+a_{n_5}+a_{n_6})| \leq \theta/N^2 \} \\
     \lesssim_\e \log^C(\theta^{-1}+1) N^{3/2+\e}.
  \end{multline}
  This estimate is sharp up to $N^\e\log^C(\theta^{-1}+1)$ due to $N^{3/2}$ many diagonal solutions.
\end{corollary}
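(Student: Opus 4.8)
The plan is to derive Corollary \ref{numberslncor} from the $\ell^2L^6$ decoupling inequality of Theorem \ref{decouplingthm} applied at the finest scale $L=1$, exactly as one derives counting estimates for additive energy of the parabola from Bourgain--Demeter decoupling. First I would take $L=1$, so that the canonical partition $\Omega=\bigsqcup_{I\in\mc{I}}I$ consists of $N^{1/2}$ intervals $I_n=B_{\theta/N^2}(a_n)$, one around each point $a_n$; the ``ball'' $P(1)$ is then (up to constants) the Euclidean ball $B_{CN^2/\theta}$ since $L=1\lesssim N^{1/4}$. To each $n$ I associate $f_n(t)=e^{ita_n}\phi(t)$ where $\phi$ is a fixed Schwartz function with $\widehat{\phi}$ a bump adapted to $B_{\theta/(2N^2)}(0)$ and with $\widehat\phi\equiv 1$ on a smaller ball, normalized so that $|\phi|\gtrsim 1$ on $B_{cN^2/\theta}(0)$ and $\supp\widehat{f_n}\subset I_n$. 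Then $\sum_n f_n(t)=\phi(t)\sum_{n=1}^{N^{1/2}}e^{ita_n}$, and $\|f_n\|_{L^6(\R)}\sim (N^2/\theta)^{1/6}$ for every $n$.

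Next I would unfold the left side of \eqref{numbersln}. The key point is that $\widehat{|\phi|^6}$ is a Schwartz bump essentially supported in $B_{C\theta/N^2}(0)$ (being a six-fold convolution of bumps of that width), is nonnegative, and is $\gtrsim \theta^{-1}N^2$ near the origin; hence
\begin{align*}
  \int_{\R}\Big|\sum_{n=1}^{N^{1/2}}e^{ita_n}\Big|^6|\phi(t)|^6\,dt
  &=\sum_{n_1,\dots,n_6}\widehat{|\phi|^6}\big((a_{n_1}+a_{n_2}+a_{n_3})-(a_{n_4}+a_{n_5}+a_{n_6})\big)\\
  &\gtrsim \frac{N^2}{\theta}\,\#\{\vec n:\ |(a_{n_1}+a_{n_2}+a_{n_3})-(a_{n_4}+a_{n_5}+a_{n_6})|\le c\theta/N^2\},
\end{align*}
where I keep only the terms with nearly-vanishing phase difference, all of which contribute with nonnegative real part. (If the bump is chosen with $\widehat{|\phi|^6}\ge 0$ everywhere, which one can arrange by taking $\phi=|\eta|^2$ for a suitable Schwartz $\eta$, this is immediate; otherwise a standard majorant argument replaces $|\phi|^6$ by a Fejér-type kernel.) On the other hand, Theorem \ref{decouplingthm} with $p=6$ and $L=1$ gives
\[
  \Big\|\phi\sum_{n=1}^{N^{1/2}}e^{ita_n}\Big\|_{L^6(\R)}=\Big\|\sum_n f_n\Big\|_{L^6(\R)}\lesssim_\e N^\e\log^C(\theta^{-1}+1)\Big(\sum_n\|f_n\|_{L^6(\R)}^2\Big)^{1/2}\lesssim_\e N^\e\log^C(\theta^{-1}+1)\,N^{1/4}\big(N^2/\theta\big)^{1/6}.
\]
Raising to the sixth power, the left side is $\gtrsim (N^2/\theta)\cdot\#\{\dots\}$ and the right side is $\lesssim_\e N^\e\log^C(\theta^{-1}+1)\,N^{3/2}\cdot N^2/\theta$; cancelling $N^2/\theta$ yields the claimed bound $\#\{\dots\}\lesssim_\e\log^C(\theta^{-1}+1)N^{3/2+\e}$.

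For sharpness I would exhibit the $\sim N^{3/2}$ diagonal solutions: any tuple with $\{n_1,n_2,n_3\}=\{n_4,n_5,n_6\}$ as multisets makes the phase difference exactly $0$, and there are $\gtrsim (N^{1/2})^3=N^{3/2}$ such tuples, so $\#\{\dots\}\gtrsim N^{3/2}$, matching the upper bound up to the $N^\e\log^C(\theta^{-1}+1)$ factor. The main obstacle, and the only place requiring care, is the sign issue in the unfolding step: one must ensure that restricting the sixfold sum to the near-diagonal block really gives a lower bound for $\int|\phi\sum e^{ita_n}|^6$, which is why I would fix $\phi$ so that $|\phi|^6$ has a nonnegative (or suitably majorized) Fourier transform concentrated at scale $\theta/N^2$; after that the argument is a routine combination of Plancherel/Fourier expansion with the decoupling inequality. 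I also need the elementary fact that $|\phi|\gtrsim 1$ on a ball of radius $\sim N^2/\theta$ so that the physical-side integral genuinely sees the full mass — this follows from $\widehat\phi$ being identically $1$ near $0$ at scale $\theta/N^2$.
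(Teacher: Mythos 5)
Your proposal is correct and is essentially the paper's own proof: apply Theorem \ref{decouplingthm} with $p=6$, $L=1$ to $f_n(t)=e^{ita_n}\phi(t)$, expand the sixth power via Fourier inversion, and use pointwise nonnegativity of $\widehat{|\phi|^6}$ at scale $\theta/N^2$ to lower-bound the integral by the near-diagonal count; the paper secures the positivity exactly as you anticipate, by taking $\hat\phi$ nonnegative and even so that $\phi$ is real and $\widehat{|\phi|^6}=\hat\phi\ast\cdots\ast\hat\phi\ge 0$ (your parenthetical $\phi=|\eta|^2$ would likewise need $\hat\eta\ge 0$ and even to give this). The only blemishes are cosmetic: your description of $\phi$ mixes two normalizations (height-one $\hat\phi$ versus $|\phi|\gtrsim 1$ on $B_{cN^2/\theta}$), but the inequality is invariant under rescaling $\phi$, and the plateau of $\hat\phi$ should be taken wide enough that the sixfold convolution is bounded below on all of $B_{\theta/N^2}$, matching the threshold in the statement.
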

In particular if we take $a_n=\log (n+N+1)$ in the above corollary, then $\theta\sim 1$ and \eqref{numbersln} reads
\begin{equation}\label{tripleprod}
    \#\{(n_1,\ldots,n_6): N+1\leq n_i\leq N+ N^{1/2}, |n_1 n_2n_3-n_4n_5n_6| \lesssim N \}
     \lesssim_\e N^{3/2+\e}.
\end{equation}
We note that the triple products $n_1n_2n_3$ with $N+1 \leq n_1,n_2,n_3 \le N+N^{1/2}$ lies in the interval $[N^3,N^3+CN^{5/2}].$ So \eqref{tripleprod} impies that  the triple products $\{n_1n_2n_3:N+1 \leq n_1,n_2,n_3 \leq N+N^{1/2} \}$  are roughly evenly distributed in $[N^3,N^3+CN^{5/2}]$ with $cN$ separation. Indeed if we split the interval $[N^3,N^3+CN^{5/2}]$ into intervals of length $cN$ and let $E_\lambda$ denotes the number of $cN-$intervals which contains at least $\lambda$ many triple products $n_1n_2n_3,$ then \eqref{tripleprod} says that
\[\lambda^2 E_\lambda \leq C_\e N^{3/2+\e}.\]
Consequently if we choose $\lambda \geq 10 C_\e N^\e,$ then we have $\lambda E_\lambda \leq \frac{9}{10} N^{3/2}.$ $\lambda E_\lambda$ is the number of triple products $n_1n_2n_3$ that lie in a $cN-$interval which contains at least $\lambda$ many triple products. The total number of triple products is $N^{3/2}$ so we can conclude most of the triple products lie in $cN-$intervals each of which contains few triple products. 

\begin{proof}[Proof of Corollary \ref{numberslncor}]
  We let $\phi$ be a Schwartz function whose Fourier transform is given by a smooth bump function adapted to $B_{\theta/N^{2}}(0):$
  $$\hat{\phi}=1 \text{ on } B_{\theta/ N^{2}}(0), \quad \supp{\hat{\phi}}\subset B_{2\theta/ N^{2}}(0) \quad 0\leq \hat{\phi}\leq 1, \quad \hat{\phi} \text{ is even.}$$
  Applying Theorem \ref{decouplingthm} with $p=6,L=1$ we obtain
  \begin{align}
    \int_\R |\sum_{n=1}^{N^{1/2}} e^{ia_nx}\phi(x)|^6 & \lesssim_\e N^\e \log^C(\theta^{-1}+1) (\sum_{n=1}^{N^{1/2}}  \label{668} \|e^{ia_nx}\phi(x)\|_{L^6(\R)}^2 )^3 \\ \nonumber
     & \lesssim N^\e \log^C(\theta^{-1}+1) N^{3/2} \theta^5 N^{-10}.
  \end{align}
  We expand the left hand side of \eqref{668} as
  \begin{align*}
    \int_\R |\sum_{n=1}^{N^{1/2}} e^{ia_nx}\phi(x)|^6 dx & = \sum_{n_1,\ldots,n_6} \int_\R e^{i(a_{n_1}+ a_{n_2}+a_{n_3}-a_{n_4}-a_{n_5}-a_{n_6})x}|\phi|^6 dx\\
     & =\sum_{n_1,\ldots,n_6} \widehat{|\phi|^6}(a_{n_1}+ a_{n_2}+a_{n_3}-a_{n_4}-a_{n_5}-a_{n_6}).
  \end{align*}
  Since $\hat{\phi}$ is even we know that $\phi$ is real-valued and hence $\widehat{|\phi|^6}=\hat{\phi}*\cdots *\hat{\phi}$ is nonnegative and $\widehat{|\phi|^6} \gtrsim \theta^5 N^{-10}$ on $B_{c\theta /N^{2}}(0)$ for some small absolute constant $c>0$. Therefore 
  \begin{multline*}
    \int_\R |\sum_{n=1}^{N^{1/2}} e^{ia_nx}\phi(x)|^6 \gtrsim \theta^5 N^{-10}\#\{(a_{n_1},\ldots,a_{n_6}): 1\leq n_i\leq N^{1/2}, \\
    |(a_{n_1}+ a_{n_2}+a_{n_3})-(a_{n_4}+a_{n_5}+a_{n_6})|\leq \theta/N^2 \}.
  \end{multline*}
  Combining the above estimate and \eqref{668} we obtain \eqref{numbersln}.
\end{proof}

Another application of Theorem \ref{decouplingthm} is estimating the size of the intersection of an AP with a generalized Dirichlet sequence.
\begin{corollary}\label{APinconvex}
  Let $\{a_n\}_{n=1}^{N}$ be a generalized Dirichlet sequence with $\theta=1$ and let $a=N^{-\alpha}$ with $\alpha \in [0,2].$ Then 
  \[ |\{a_n\}_{n=1}^{n=N} \cap a\Z|
  \lesssim\begin{cases}
    N^{\alpha}  & \text{ if } \quad \alpha \in [0,\frac{1}{2}],  \\ 
   C_\e N^\e \log^C (\theta^{-1}+1) N^{1/3+\alpha/3}  & \text{ if } \quad \alpha \in [\frac{1}{2},2].
  \end{cases}
   \]
\end{corollary}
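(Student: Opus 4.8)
The plan is to estimate $|\{a_n\}_{n=1}^N \cap a\Z|$ by grouping the sequence into blocks of length $N^{1/2}$, using that each such block is a short generalized Dirichlet sequence (after rescaling of the parameter), and then counting how many terms of a short block can lie on $a\Z$ via a decoupling/$L^p$ argument. Concretely, write $\{a_n\}_{n=1}^N$ as a union of $N^{1/2}$ consecutive blocks $B_r$, each consisting of $N^{1/2}$ terms; by the spacing hypothesis \eqref{defa_n} with $\theta = 1$, each $B_r$ is (a translate of) a short generalized Dirichlet sequence with parameter $N$ and some $\theta_r \sim 1$. It therefore suffices to bound $M := \max_r |B_r \cap a\Z|$ and conclude $|\{a_n\}_{n=1}^N \cap a\Z| \le N^{1/2} M$, then optimize. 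For the trivial regime $\alpha \in [0,\tfrac12]$, note that the total length of the sequence is $a_N - a_1 \sim 1$, so two distinct terms of $\{a_n\}$ on $a\Z = N^{-\alpha}\Z$ are $\ge N^{-\alpha}$ apart, giving at most $O(N^\alpha)$ of them — this handles the first case directly and also shows $M \lesssim \min\{N^{1/2}, N^{\alpha - 1/2}\cdot(\text{block length in frequency})^{-1}\}$-type bounds are not yet enough, so the real work is in the second case.

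For $\alpha \in [\tfrac12, 2]$, fix a block $B = \{b_j\}_{j=1}^{N^{1/2}}$, a short generalized Dirichlet sequence, and let $S = \{j : b_j \in a\Z\}$, $|S| = m$. The idea is to run the Corollary \ref{numberslncor}-style argument but counting only terms in $S$: take $\phi$ a Schwartz function with $\widehat\phi$ a bump adapted to $B_{c\,a}(0)$ (so of width comparable to the spacing of $a\Z$), and consider $F(x) = \sum_{j \in S} e^{i b_j x}\phi(x)$. Applying Theorem \ref{decouplingthm} with $p = 6$, $L = 1$ (valid since each $B$ has $\theta \sim 1$) gives
\[
\int_\R |F|^6 \lesssim_\e N^\e \Big(\sum_{j\in S}\|e^{ib_j x}\phi\|_{L^6}^2\Big)^3 \lesssim_\e N^\e m^3 a^5.
\]
On the other hand, expanding $\int |F|^6$ as in the proof of Corollary \ref{numberslncor} yields $\sum_{j_1,\dots,j_6 \in S}\widehat{|\phi|^6}(b_{j_1}+b_{j_2}+b_{j_3}-b_{j_4}-b_{j_5}-b_{j_6})$; since all $b_j$ with $j\in S$ lie in $a\Z$, every difference $b_{j_1}+b_{j_2}+b_{j_3}-b_{j_4}-b_{j_5}-b_{j_6}$ is an integer multiple of $a$, so each such term where the argument vanishes contributes $\gtrsim a^5$, and we get a lower bound $\gtrsim a^5 \cdot E_6(S)$ where $E_6(S)$ is the number of solutions to $b_{j_1}+b_{j_2}+b_{j_3} = b_{j_4}+b_{j_5}+b_{j_6}$ with all indices in $S$. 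But by strict convexity of $\{b_j\}$, the only solutions to the $=$ equation are trivial (permutation) ones — here one uses that a strictly convex sequence has no nontrivial additive energy of this type, an elementary consequence of \eqref{defa_n} (the second differences are bounded below), analogous to the number-theoretic fact that $\{\log n\}$ contains no nontrivial such relations. Hence $E_6(S) \gtrsim m^3$. Wait — that only gives $m^3 a^5 \lesssim_\e N^\e m^3 a^5$, which is vacuous; so this naive bound is too weak and we instead need the \emph{refined} use: because the $b_j$ with $j \in S$ are not just convex but \emph{also} constrained to an arithmetic progression $a\Z$ of step $a = N^{-\alpha}$, the $m$ points $b_j$, $j\in S$, themselves form a set of $m$ integers (after dividing by $a$) contained in an interval of length $\sim a^{-1}\cdot(\text{length of }B) \sim a^{-1}N^{-1/2} = N^{\alpha - 1/2}$. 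So $S$ is identified with a set of $m$ integers in $[0, N^{\alpha-1/2}]$ that is the image under a strictly convex map of an index set.

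The key mechanism is then: combine the convexity-forces-small-energy input with the pigeonhole constraint that these $m$ integers live in an interval of length $N^{\alpha - 1/2}$. The sharp count comes from a \emph{lattice-point-on-convex-curve} bound: a strictly convex sequence with second differences $\gtrsim N^{-2}$ over a block of length $N^{1/2}$ has graph with curvature $\sim N^{-2}/(N^{-1})^2 = 1$ at scale $N^{-1/2}$ in the appropriate normalization, and a classical estimate (Jarník / Bombieri–Pila-type, or here more elementarily via Corollary \ref{numberslncor} applied to the \emph{sub}sequence $\{b_j\}_{j\in S}$ which is still convex) bounds the number of lattice points of a spacing-$a$ lattice on such a curve of length $\Delta$ by $\lesssim_\e N^\e (\Delta/a)^{1/3} + 1$. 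With $\Delta \sim N^{-1/2}$ (frequency extent of one block) this gives $m \lesssim_\e N^\e (N^{-1/2}/a)^{1/3} = N^\e(N^{\alpha - 1/2})^{1/3} = N^{\e}N^{(2\alpha-1)/6}$... and then summing over the $N^{1/2}$ blocks gives $|\{a_n\}\cap a\Z| \lesssim_\e N^{1/2 + \e}N^{(2\alpha-1)/6} = N^\e N^{1/3 + \alpha/3}$, matching the claimed bound. The honest way to prove the per-block lattice count $m \lesssim_\e N^\e(\Delta/a)^{1/3}+1$ within this paper's framework is again to run the $L^6$-decoupling energy argument of Corollary \ref{numberslncor} but now on the convex subsequence $\{b_j : j \in S\}$ at the rescaled parameters: its length is $m$, it sits in a frequency window of size $\Delta = N^{-1/2}$ with second differences $\gtrsim N^{-2}$, so one rescales so that it becomes a short generalized Dirichlet sequence with effective parameter $\tilde N$ where $\tilde N^{1/2} \sim m$ (this forces $\tilde N \sim m^2$) and effective $\tilde\theta$ determined by matching the ratio (second difference)$\times$(number of terms)$^2$ to the window, one applies Theorem \ref{decouplingthm} with $p = 6$, $L = 1$ at those parameters, and the constraint that all points lie on $a\Z$ (equivalently, the sextuple sums vanishing on $a\Z$) together with convexity (trivial energy) forces the inequality $m^3 a^5 \gtrsim (\text{diagonal lower bound}) \gtrsim a^5 m^3$ to be nontrivial only once one feeds in the quantitative window length; carefully tracking the $\tilde\theta$ powers yields the $\log^C$ loss and the exponent $1/3$.

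The main obstacle I expect is precisely this rescaling/bookkeeping step: extracting the sharp exponent $1/3$ requires correctly identifying the effective parameters $(\tilde N, \tilde\theta)$ of the convex subsequence $\{b_j\}_{j\in S}$ and making sure the window-length constraint $\Delta \sim N^{-1/2}$ (rather than a larger window) is genuinely used — a sloppier argument only loses a factor and gives $m \lesssim N^{1/2}$ trivially. One also has to check that passing to a subsequence of a convex sequence preserves the lower bound $\gtrsim N^{-2}$ on second differences (it does, since second differences of a subsequence are sums of consecutive second differences of the original, hence larger), which is what lets us apply the theory with the same $\theta$-type parameter. Finally, one should verify the endpoint $\alpha = 1/2$ is consistent (both cases give $N^{1/2}$ up to $N^\e\log^C$) and that for $\alpha$ near $2$ the bound $N^{1/3 + \alpha/3} \le N$ is not beaten by the trivial bound $|\{a_n\}_{n=1}^N| = N$ — indeed $N^{1/3+\alpha/3} \le N$ for $\alpha \le 2$, with equality at $\alpha = 2$, so the statement is non-vacuous on $[\tfrac12, 2)$ and tight at the endpoint, as expected.
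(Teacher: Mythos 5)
Your outer skeleton (split $\{a_n\}_{n=1}^N$ into $N^{1/2}$ blocks of length $N^{1/2}$, dispose of $\alpha\in[0,\tfrac12]$ trivially, prove a per-block bound $m\lesssim_\e N^{\alpha/3-1/6+\e}$ and sum) is the same reduction the paper makes, and the numerology is right; but the central step, the per-block count, is never actually proved. You correctly notice that running the Corollary \ref{numberslncor}-type energy argument on the subsequence $\{b_j\}_{j\in S}$ is vacuous, yet the replacement you offer does not close the gap: the subsequence $\{b_j\}_{j\in S}$ is in general \emph{not} a generalized Dirichlet sequence for any parameters $(\tilde N,\tilde\theta)$ (Definition \ref{defnDirichlet} requires two-sided control of the first gap and of all second differences, while a subsequence of a convex sequence only inherits lower bounds and can have wildly non-uniform gaps), so the rescaling ``$\tilde N^{1/2}\sim m$'' is not available; and, more importantly, your sketch never says how the hypothesis $b_j\in a\Z$ enters in a non-vacuous quantitative way --- ``the constraint \dots forces the inequality \dots to be nontrivial once one feeds in the window length'' is a placeholder, not an argument. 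The paper's actual mechanism is different and is missing from your proposal: with $f(t)=\sum_{a_n\in a\Z}e^{2\pi i t a_n}$, the fact that all frequencies lie in $a\Z$ makes $|f|\ge H/10$ on a $\sim N^{1/2}$-neighborhood of \emph{every} point of $a^{-1}\Z$, and since the spatial window $P(L)$ has length $N^2/(L^2\theta)\gg a^{-1}$ this yields a lower bound for $\|f\|_{L^6(P(L))}$ carrying a factor (number of periods in $P(L)$)$^{1/6}$; comparing with the local decoupling Theorem \ref{localdecouplingthm} at $p=6$, with $L=1$ for $\alpha\in[1,2]$ and $L=N^{1-\alpha}$ for $\alpha\in[\tfrac12,1]$, gives $H\lesssim_\e N^{\alpha/3-1/6+\e}$. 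It is this periodicity/large-value input, not additive energy of $S$, that breaks the vacuity, and nothing in your proposal plays its role.

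It is worth noting that the lattice-point bound you gesture at can be made rigorous by an elementary route, though not in the form you state it: the correct classical statement involves the index-length and the curvature, not the frequency extent $(\Delta/a)^{1/3}$ alone (a bound depending only on the lattice-normalized height of the box is false for general convex curves; here the exponent comes from the upper bound on second differences in \eqref{defa_n}). Concretely, if $j_1<j_2<j_3$ all have $b_{j_i}\in a\Z$, then $(j_i,b_{j_i}/a)$ are non-collinear lattice points, so the triangle they span has area $\ge\tfrac12$, while strict convexity plus the bound (second differences of $b_j/a$)$\lesssim N^{\alpha-2}$ gives area $\lesssim (j_3-j_1)^3N^{\alpha-2}$; hence any three such indices span $\gtrsim N^{(2-\alpha)/3}$, and applying this directly to the full $N$-term sequence gives $|\{a_n\}_{n=1}^N\cap a\Z|\lesssim N\cdot N^{(\alpha-2)/3}+2=N^{1/3+\alpha/3}+2$ with no $\e$-loss and no decoupling. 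Had you proved your per-block claim this way you would have a genuinely different (and simpler) proof than the paper's; as written, the claim is only asserted, and the decoupling-based justification you substitute for it does not work.
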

Corollary \ref{APinconvex} is sharp for $\alpha\in [0,\frac{1}{2}]$ (see Lemma \ref{longexample}), but it is likely not sharp for $\alpha \in [\frac{1}{2},2].$ Corollary \ref{APinconvex} has a slight connection to a conjecture of Rudin which states in a $N$-term AP we can find at most $\mathcal{O}(N^{1/2})$ many squares (numbers of the form $n^2$ for some $n\in \Z$). The best result so far seems to be in
\cite{bombieri2002note}, which proves at most $\mathcal{O}(N^{3/5}\log^{\mathcal{O}(1)}N)$ many squares can be found in a $N$-term AP.  We note that $\{\frac{n^2}{N^2}\}_{n=N+1}^{2N}$ is a generalized Dirichlet sequence. However we shall not expect to solve Rudin's conjecture exploiting only the convexity of the sequence $\{n^2:n\in \N\},$ as shown by the example  given in Lemma \ref{longexample}.

\begin{proof}[Proof of Corollary \ref{APinconvex}]

  The case $\alpha\in [0,1/2]$ is trivial as $\{a_n\}_{n=1}^{N}$ is contained in a ball of radius $\lesssim 1$ and $a\Z$ has at most $\lesssim a^{-1}=N^{\alpha}$ many terms in such a ball. 
  
  Now we suppose $\alpha\in [1/2,2].$ It suffices to show that for a short generalized Dirichlet sequence $\{a_n\}_{n=1}^{N^{1/2}},$  $H:=|\{n: 1\leq n \leq N^{1/2},\, a_n \in a\Z \}|
  $ satisfies 
  \[H \lesssim_\e C_\e  \log^C(\theta^{-1}+1) N^{\frac{\alpha}{3}-\frac{1}{6}+\e}.\] 
      We consider the function \[f(x)=\sum_{n:1\leq n \leq N^{1/2}, \, a_n\in a\Z} e^{{{2\pi}}ita_n}.\]
  
  {\bf Case 1. $\alpha\in [1,2].$} We apply Theorem \ref{localdecouplingthm} with $p=6,$ $L=1$ and $P(L)=P(L,0).$ Since $|f|\geq H/10$ on $\mathcal{N}_{cN^{1/2}} (a^{-1}\Z)$ with $c\gtrsim 1,$ we obtain
  \[H \left( \frac{N^2\theta^{-1}}{N^\alpha} N^{\frac{1}{2}}\right)^{\frac{1}{6}} \lesssim_\e N^\e \log^C(\theta^{-1}+1) H^{\frac{1}{2}} (N^2\theta^{-1})^{\frac{1}{6}}, \]
  {where we used that $P(L)$ is approximately an $N^2\theta^{-1}$ interval. Simplifying the above displayed math, we have}
  \[H \lesssim_\e C_\e  \log^C(\theta^{-1}+1) N^{\frac{\alpha}{3}-\frac{1}{6}+\e}.\]
  
  {\bf Case 2. $\alpha \in [1/2,1].$} We apply Theorem \ref{localdecouplingthm} with $p=6,$ $L=N^{1-\alpha}$ and $P(L)=P(L,0).$ Since $|f|\geq H/10$ on $\mathcal{N}_{cN^{1/2}} (a^{-1}\Z)$ with $c\gtrsim 1,$ we obtain
  \[H \left( \frac{N^{2\alpha}2\theta^{-1}}{N^\alpha} N^{\frac{1}{2}}\right)^{\frac{1}{6}} \lesssim_\e N^\e \log^C(\theta^{-1}+1) H^{\frac{1}{2}} (N^{2\alpha}2\theta^{-1})^{\frac{1}{6}}, \]
  that is,
  \[H \lesssim_\e C_\e  \log^C(\theta^{-1}+1) N^{\frac{\alpha}{3}-\frac{1}{6}+\e}.\]
\end{proof}

\section{High-low frequency decomposition for the square function} \label{highlowsec}

The proof of Proposition \ref{induction} is based on the method in \cite{guth2020improved}, which uses a high-low frequency decomposition for the square function. Such a decomposition is also used in \cite{guth2019incidence} to study incidence estimates for tubes.  We refer readers to  Section 2 of \cite{guth2020improved} for the intuition behind this method. We will set up the preliminaries in this section and prove Proposition \ref{induction} in Section \ref{inductionsec}.

\subsection{Wave-packet decomposition}

We start with a few definitions. Write $f=\sum\limits_{I\in \mc{I}} f_I,$ where $f_I$ will always denote a function with frequency support in $I.$

Fix $2\leq p\leq 6$ and $\e>0.$  For $m\in\N$, let $L_m=N^{1/2}N^{-\e m}.$ Without loss of generality we assume $L_M=L$ for some $M\in \N.$ So $M\lesssim_\e 1.$ For every $1\leq m\leq M$ we let $\mathcal{I}_m$ be the partition of $\Omega$ into $N^{1/2}/L_m$ many $I_m,$ each of which is the union of $L_m-$consecutive intervals in $\Omega.$ $L_m$ can be thought of as scales.

Note that
$$I_m\subset P_{v_m}^{CL_m^2\theta/N^2} \cap B_{CL_m/N}$$
{where $v_m\sim\frac{1}{N}$}. We denote the right hand side as $\tilde{I}_m:$
$$\tilde{I}_m:=P_{v_m}^{CL_m^2\theta/N^2} \cap B_{CL_m/N}.$$


Let $\mc{P}_{I_m}$ be a tiling of $\R$ by $P_{I_m}.$
For each $I_m$, we will now construct a partition of unity $\{\phi_{I_m}\}_{P_{I_m}\in\mc{P}_{I_m}}$  which will be used to perform the wave packet decomposition \[f_{I_m}=\sum_{P_{I_m}}\phi_{P_{I_m}}f_{I_m}. \]
We regard each summand $\phi_{P_{I_m}}f_{I_m}$ as a wave packet. Specifically,
we let $\psi_{I_m}$ be adapted to $\tilde{I}_m- \tilde{I}_m,$ which is of the form $P_{v_0}^{CL_m^2\theta/N^2}(0) \cap B_{CL_m/N}(0),$ in the frequency space as in Lemma \ref{psilem}, with order of decay $200$ outside of the dual fat AP $P_{I_m}.$
For each $P_{I_m}\in\mc{P}_{I_m}$, define
\begin{equation}\label{wpdef}
    \phi_{P_{I_m}}:=\|\s_{I_m}^2\|_{L^1(\R)}^{-1}\int_{P_{I_m}}|\s_{I_m}(x-y)|^2dy.
\end{equation}






\begin{proposition}[Wave-packet decomposition] \label{wpdecomprop}
   $\{\phi_{P_{I_m}}\}_{P_{I_m} \in \mc{P}_{L_m}}$ forms a partition of unity, that is, $\sum \phi_{P_{I_m}}=1,$ $\phi_{P_{I_m}}\geq 0.$ Each $\phi_{P_{I_m}}$ is a translated copy of the others,  and
  $$\supp \widehat{\phi_{P_{I_m}}} \subset 8^{400}( \tilde{I}_m-\tilde{I}_m) ,\qquad 1_{P_{I_m}} \lesssim {\phi_{P_{I_m}}} \lesssim W_{P_{I_m},200}.$$
\end{proposition}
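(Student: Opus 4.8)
The plan is to rewrite $\phi_{P_{I_m}}$ as a normalized convolution and then read off each assertion from the properties of $\psi_{I_m}$ recorded in Lemma \ref{psilem}. Since
$$\phi_{P_{I_m}}(x)=\|\psi_{I_m}^2\|_{L^1(\R)}^{-1}\bigl(|\psi_{I_m}|^2*1_{P_{I_m}}\bigr)(x),$$
nonnegativity is immediate, and summing over the tiling $\mc P_{I_m}$ of $\R$ gives $\sum_{P_{I_m}}\phi_{P_{I_m}}=\|\psi_{I_m}^2\|_{L^1}^{-1}\int_\R|\psi_{I_m}(x-y)|^2\bigl(\sum_{P_{I_m}}1_{P_{I_m}}(y)\bigr)\,dy=1$ if $\mc P_{I_m}$ is taken to be an honest partition (with the $\mathcal O(1)$-overlap convention one gets $\sum_{P_{I_m}}\phi_{P_{I_m}}\sim 1$, which is all that is used later). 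Because every $P_{I_m}\in\mc P_{I_m}$ is a translate of the single fat AP $P_{I_m}(0)$ and $\psi_{I_m}$ depends only on $I_m$, the substitution $y\mapsto y+z$ shows $\phi_{P_{I_m}(0)+z}(x)=\phi_{P_{I_m}(0)}(x-z)$, so all the $\phi_{P_{I_m}}$ are translates of one another. I will also record, using the two-sided bound \eqref{decaypsi} with the parameters $M\sim L_m$, $\delta\sim L_m^2\theta/N^2$, $v\sim N^{-1}$ attached to $\tilde I_m$, together with $\int_\R W_{P,k}\sim|P|$, that $M\delta\sim |P_{I_m}|^{-1}$ and $\|\psi_{I_m}^2\|_{L^1}\sim (M\delta)^2|P_{I_m}|\sim|P_{I_m}|^{-1}$.

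For the frequency support, $\widehat{\phi_{P_{I_m}}}=\|\psi_{I_m}^2\|_{L^1}^{-1}\,\widehat{|\psi_{I_m}|^2}\,\widehat{1_{P_{I_m}}}$, and $\widehat{|\psi_{I_m}|^2}=\widehat{\psi_{I_m}}*\widehat{\overline{\psi_{I_m}}}$ is supported in $\supp\widehat{\psi_{I_m}}-\supp\widehat{\psi_{I_m}}$; multiplying by $\widehat{1_{P_{I_m}}}$ does not enlarge this. By \eqref{supppsi} one has $\supp\widehat{\psi_{I_m}}\subset 8^{200}(\tilde I_m-\tilde I_m)$ (the thickness at worst doubles, the diameter grows by $8^{200}$), and since $P-P\subset P_{v}^{2\delta}(0)\cap B_{2R}(0)$ for any fat AP $P=P_v^{\delta}(0)\cap B_R(0)$, the difference set lies in $2\cdot 8^{200}(\tilde I_m-\tilde I_m)\subset 8^{400}(\tilde I_m-\tilde I_m)$, which is the claimed support statement.

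The upper bound $\phi_{P_{I_m}}\lesssim W_{P_{I_m},200}$ will follow by estimating the average by a supremum: $\phi_{P_{I_m}}(x)\le\|\psi_{I_m}^2\|_{L^1}^{-1}|P_{I_m}|\sup_{y\in P_{I_m}}|\psi_{I_m}(x-y)|^2\lesssim (M\delta)^{-2}|P_{I_m}|^{-1}(M\delta)^2\sup_{y\in P_{I_m}}W_{P_{I_m}(0),200}(x-y)^2$ by \eqref{decaypsi}. Writing $P_{I_m}=P_{I_m}(x_0)$ and using that $P_{I_m}(0)$ is symmetric (so $x-P_{I_m}(x_0)=P_{I_m}(x-x_0)$), a short computation tracking, for $z\in P_{I_m}(x-x_0)$, the distance of $z$ to $v_m^{-1}\Z$ and to the defining ball of $P_{I_m}(0)$ against the corresponding distances of $x$ to $x_0+v_m^{-1}\Z$ and $B(x_0)$ — they differ by at most one thickness, resp.\ one diameter — gives $W_{P_{I_m}(0),200}(z)\lesssim W_{P_{I_m}(x_0),200}(x)$. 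Combining this with $W^2\le W$ and $M\delta\sim|P_{I_m}|^{-1}$ yields $\phi_{P_{I_m}}\lesssim W_{P_{I_m},200}$.

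The lower bound $1_{P_{I_m}}\lesssim\phi_{P_{I_m}}$ is the main point. Fix $x\in P_{I_m}=P_{I_m}(x_0)$, and in the integral defining $\phi_{P_{I_m}}(x)$ restrict to $y\in P_{I_m}(x_0)\cap\bigl(x-P_{I_m}(0)\bigr)=P_{I_m}(x_0)\cap P_{I_m}(x)$, where $x-y\in P_{I_m}(0)$ so $|\psi_{I_m}(x-y)|\gtrsim M\delta$ by the lower bound in \eqref{decaypsi}. Since $x\in P_{I_m}(x_0)$ means $d(x,x_0+v_m^{-1}\Z)$ is at most the thickness and $|x-x_0|$ at most the diameter, the arithmetic progressions underlying $P_{I_m}(x_0)$ and $P_{I_m}(x)$ are within one thickness of each other and their defining balls overlap in a set of measure comparable to the diameter, so $|P_{I_m}(x_0)\cap P_{I_m}(x)|\gtrsim |P_{I_m}|$. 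Hence $\int_{P_{I_m}}|\psi_{I_m}(x-y)|^2\,dy\gtrsim (M\delta)^2|P_{I_m}|\sim\|\psi_{I_m}^2\|_{L^1}$, i.e.\ $\phi_{P_{I_m}}(x)\gtrsim 1$. The only real work throughout is bookkeeping the fat-AP geometry — symmetry of $P_{I_m}(0)$, and the relative sizes of the thickness, common difference, and diameter of $P_{I_m}$ — which is precisely where the normalizations fixed in Section \ref{comp} must be used consistently; nothing here is deep.
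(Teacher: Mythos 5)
Your proposal is correct and follows essentially the same route as the paper's proof: the convolution formula $\phi_{P_{I_m}}=\|\psi_{I_m}^2\|_{L^1}^{-1}|\psi_{I_m}|^2*1_{P_{I_m}}$ gives the partition-of-unity and translation properties, the Fourier support comes from $\widehat{\phi_{P_{I_m}}}=\|\psi_{I_m}^2\|_{L^1}^{-1}\widehat{|\psi_{I_m}|^2}\,\widehat{1_{P_{I_m}}}$ together with Lemma \ref{psilem}, and the two-sided bound $1_{P_{I_m}}\lesssim\phi_{P_{I_m}}\lesssim W_{P_{I_m},200}$ comes from the two-sided estimate \eqref{decaypsi}, exactly as in the paper. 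You simply spell out the fat-AP bookkeeping (overlap of $P_{I_m}(x_0)$ with $x-P_{I_m}(0)$, normalization $\|\psi_{I_m}^2\|_{L^1}\sim(M\delta)^2|P_{I_m}|$) that the paper leaves implicit, and your remark that an $\mathcal{O}(1)$-overlapping tiling gives $\sum\phi_{P_{I_m}}\sim1$ rather than exactly $1$ is a harmless refinement of the same argument.
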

\begin{proof}
  By definition we see that $\phi_{P_{I_m}}$ forms a partition of unity, and each $\phi_{P_{I_m}}$ is a translated copy of the others. Also it follows from the definition that
  $$1_{P_{I_m}} \lesssim |{\phi_{P_{I_m}}}|.$$

  Note that $\phi_{P(L_m)}$ equals to $\|\psi_{I_m}^2\|_{L^1(\R)}^{-1} |\psi_{I_m}|^2*1_{P_{I_m}}.$
  Therefore $\psi_{I_m}$ decays at order $200$ outside $P_{I_m}(0)$ implies that $\phi_{P(L_m)}$ decays at order $400$ outside $P_{I_m},$ and in particular
  $$|{\phi_{P_{I_m}}}|\lesssim W_{P_{I_m},200}.$$
  {The support property} $\text{supp }\widehat{\phi_{P_{I_m}}} \subset 8^{400} ( \tilde{I}_m-\tilde{I}_m )$ follows from the fact that
  $$\widehat{\phi_{P_{I_m}}}=\|\psi_{I_m}^2\|_{L^1(\R)}^{-1} \widehat{|\psi_{I_m}|^2} \widehat{1_{P_{I_m}}}$$
  {and from Lemma \ref{psilem}}. 
\end{proof}

\subsection{A pruning process and modified square functions \label{pruning}}

Now we define ``square functions'' (squared) at scales $L_m,$ which differ from the usual square functions by a pruning process of wave packets and taking spatial averages. The pruning process will depend on two parameters $\alpha$ and $r,$ which can be thought of as the values of $|f|$ and $\sum_{I_M}|f_{I_M}|^2=\sum_{I}|f_{I}|^2$ which dominate the $L^p$ norm of $f.$ We define
$\lambda=\lambda(\alpha,r)$ by
\begin{equation}\label{lambdadefn}
   \lambda=  \tilde{C}_\e N^{\e}\frac{r}{\alpha}
\end{equation}
where $\tilde{C}_\e$ is a sufficiently large constant depending on $\e$ which will be chosen later in the proof of Lemma \ref{fmcomparablelem}.

We first do the pruning process (with parameters $\alpha,r$), which inductively removes wave packets at each scale whose height exceeds $\lambda.$ As we shall see (Lemma \ref{fmcomparablelem}), those wave packets do not play a dominant role in the $L^p$ norm of $f.$ This process produces a family of functions $f_{m,I_m},f_{m,I_{m-1}},f_m$ that depend on $\alpha,r,$ which is implicit in the notation. We will write $f_{m,I_m,\alpha,r},f_{m,I_{m-1},\alpha,r},f_{m,\alpha,r}$ to emphasize such dependence when necessary.

Let $\mc{P}_{I_M,\lambda}=\{P_{I_M}\in\mc{P}_{L_M}:\|\phi_{P_{I_M}}f_{I_M}\|_{L^\infty(\R)}\le\lambda\},$
and define
\begin{align*} 
  f_{M,I_M}:=\sum_{P_{I_M}\in\mc{P}_{I_M,\lambda}}\phi_{P_{I_m}}f_{I_M}, \qquad
  f_M :=\sum_{I_M} f_{M,I_M}.
\end{align*} 
We let $f_{M,I_{M-1}}=\sum_{I_{M}\subset I_{M-1}}f_{M,I_M}.$
Now we define $f_m$ and $f_{m,I_m}$ inductively for $m=1,\ldots,M-1$ by
\begin{equation}\label{1333}
    f_{m,I_m}:=\sum_{P_{I_m}\in \mc{P}_{I_m,\lambda}}\phi_{P_{I_m}}f_{m+1,I_m}, \qquad 
f_m:=\sum_{I_m}f_{m,I_m}
\end{equation}
where $f_{m+1,I_{m}}= \sum_{I_{m+1}\subset I_{m}} f_{m+1,I_{m+1}}$ and $\mc{P}_{I_m,\lambda}=\{P_{I_m}\in \mc{P}_{I_m}:\|\phi_{P_{I_m}}f_{{m+1},I_m}\|_{L^\infty(\R)}\le\lambda\}.$ For notational convenience we also define $f_{M+1}=f$ and $f_{M+1,I_M}:=f_{I_M}=f_{I}.$

We note that
\begin{enumerate}[label=(\roman*)]
  \item \label{item1} $f_m=\sum_{I_m} f_{m,I_m}=\sum_{I_{m-1}} f_{m,I_{m-1}},$
  \item \label{item2} $\supp \widehat{f_{m,I_m}} \subset C \tilde{I}_m,$
  \item \label{item3} $\supp \widehat{f_{m,I_{m-1}}}\subset C \tilde{I}_{m-1},$
  \item \label{item5}  $|f_{m,I_m}|\leq |f_{m+1,I_{m}}|$ pointwise.
\end{enumerate}
\ref{item1} follows from the definitions. \ref{item5} holds because $\{\phi_{P_{I_m}}\}_{P_{I_m}}$ is a partition of unity. 
To see \ref{item2} and \ref{item3} we may induct on $m$ and note that
$$\bigcup_{I_m\subset I_{m-1}} C\tilde{I}_m \subset 2 \tilde{I}_{m-1} $$
when $N$ is sufficiently large depending on $\e.$ 

To define the ``square function'' $g_m$ at scale $L_m$ we introduce $\rho_{I_m},$ which is an $L^1-$normalized non-negative function adapted to $P_{I_m}(0)$ with decay order $100$
$$|P_{I_m}|^{-1}1_{P_{I_m(0)}}(x)\lesssim\rho_{I_m}(x)\lesssim
 \frac{W_{P_{I_m}(0),100}(x)}{\|W_{P_{I_m}(0),100}\|_{L^1(\R)}},$$
and $\supp \widehat{\rho_{I_m}}\subset C(\tilde{I}_m-\tilde{I}_m).$ Such a function can be constructed by taking  $|\psi|^2/\|\psi^2\|_{L^1}$ for $\psi$ adapted to $\tilde{I}_m$ with decay order $100$ as in Lemma \ref{psilem}.

Finally we define the ``square function'' by
$$g_m:=\sum_{I_m}|f_{m+1,I_m}|^2*\rho_{I_m},$$
for $1\leq m\leq M-1$ and for $m=M$
we define
$$ g_M:=\sum_{I_M}|f_{I_M}|^2*\rho_{I_M}.$$
We note here that $g_m$ for $1\leq m \leq M-1$ implicitly depends on $\alpha,r,$ and we will write $g_{m,\alpha,r}$ to emphasize such dependence when necessary. $g_M$ does not depend on $\alpha,r.$

\subsection{High-low decomposition}

To set up a high-low frequency decomposition for $g_m,$ we let $\eta_m(\xi)$ be an even smooth bump function that equals to $1$ on $B_{L_{m+1}/N}(0)$ and vanishes outside $B_{2L_{m+1}/N}(0),$ for every $1\leq m \leq M-1.$ We also assume that $\eta_m$ are rescalings of each other.

Define for $1\leq m\leq M-1,$
$$ g_m^{\ell}:=g_m*\widecheck{\eta_m}\quad\text{and}\quad g_m^h:=g_m-g_m^{\ell},$$
which are low and high frequency parts of $g_m.$ Both $g_m^\ell$ and $g_{m}^h$ satisfy some proprieties. We discuss them in the following two lemmas.

\begin{lemma}[Low lemma]\label{low}
For $1\leq m\leq M-1,$ we have the pointwise inequality
$$|g_m^{\ell}| \lesssim g_{m+1}.$$
\end{lemma}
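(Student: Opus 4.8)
The plan is to compare $g_m$ with $g_{m+1}$ by unpacking both as sums of averages of squared wave packets, and to argue that convolving against $\widecheck{\eta_m}$ only spreads mass on a scale governed by the dual fat AP $P(L_{m+1})$-type neighborhood, which is harmless because the finer square function $g_{m+1}$ already controls such local averages. Concretely, first I would recall the definitions: $g_m=\sum_{I_m}|f_{m+1,I_m}|^2*\rho_{I_m}$, where $f_{m+1,I_m}=\sum_{I_{m+1}\subset I_m}f_{m+1,I_{m+1}}$, and $g_{m+1}=\sum_{I_{m+1}}|f_{m+2,I_{m+1}}|^2*\rho_{I_{m+1}}$ (with the convention $f_{M+1,I_M}=f_{I_M}$ when $m+1=M$). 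The key observation is a Fourier-support computation: $\widehat{|f_{m+1,I_m}|^2}$ is supported in $C(\tilde I_m-\tilde I_m)$, which sits inside $B_{CL_m/N}(0)$, while $\widehat{g_m^\ell}=\widehat{g_m}\,\eta_m$ is supported in $B_{2L_{m+1}/N}(0)$. Since $L_{m+1}=N^{-\e}L_m\ll L_m$, the low-frequency cutoff kills everything \emph{except} the contribution where the two indices $I_{m+1},I'_{m+1}\subset I_m$ inside the square $|f_{m+1,I_m}|^2=\sum_{I_{m+1},I'_{m+1}\subset I_m}f_{m+1,I_{m+1}}\overline{f_{m+1,I'_{m+1}}}$ are \emph{close}, i.e. $d(I_{m+1},I'_{m+1})\lesssim L_{m+1}/N$; the cross terms with well-separated $I_{m+1},I'_{m+1}$ have Fourier support outside $B_{2L_{m+1}/N}(0)$ and hence are annihilated by the convolution with $\widecheck\eta_m$.

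Given that reduction, the remaining (nearly diagonal) cross terms can be absorbed as follows. For a fixed $I_m$, group the $I_{m+1}\subset I_m$ into $\mathcal O(1)$-overlapping clusters of boundedly many consecutive $I_{m+1}$'s; on each such cluster Cauchy–Schwarz gives $|\sum_{I_{m+1}\ \text{in cluster}}f_{m+1,I_{m+1}}|^2\lesssim\sum_{I_{m+1}\ \text{in cluster}}|f_{m+1,I_{m+1}}|^2$, so that the diagonal-plus-near-diagonal part of $|f_{m+1,I_m}|^2$ is $\lesssim\sum_{I_{m+1}\subset I_m}|f_{m+1,I_{m+1}}|^2$ pointwise. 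Summing over $I_m$ this gives $\sum_{I_m}(\text{near-diagonal part})\lesssim\sum_{I_{m+1}}|f_{m+1,I_{m+1}}|^2$. Now I would use $|f_{m+1,I_{m+1}}|\le|f_{m+2,I_{m+1}}|$ pointwise (property \ref{item5}) to replace $f_{m+1,I_{m+1}}$ by $f_{m+2,I_{m+1}}$, and then I need to reinstate the smoothing operators: convolving the resulting bound against $\rho_{I_{m+1}}$ (using the weight-transitivity inequalities \eqref{weighttransitive}, \eqref{weighttransitive2} to compare $\rho_{I_m}*\widecheck{\eta_m}$ with $\rho_{I_{m+1}}$, both being $L^1$-normalized bumps on the coarser/finer dual fat APs) produces exactly $\sum_{I_{m+1}}|f_{m+2,I_{m+1}}|^2*\rho_{I_{m+1}}=g_{m+1}$, up to the absolute constant. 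The point is that $\rho_{I_m}$ averages on $P_{I_m}(0)$, $\widecheck{\eta_m}$ averages on a dual ball of radius $\sim N/L_{m+1}$, which is comparable to the length scale of $P_{I_{m+1}}(0)$ in the relevant direction, and $\rho_{I_{m+1}}$ dominates the composition of these two averages by the locally-constant/weight estimates already established.

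I would organize the write-up as: (1) Fourier-support bookkeeping to isolate the near-diagonal terms; (2) the Cauchy–Schwarz clustering step to bound the near-diagonal square by the finer square function; (3) the passage $f_{m+1,I_{m+1}}\rightsquigarrow f_{m+2,I_{m+1}}$ via \ref{item5}; (4) the weight-function comparison $\rho_{I_m}*|\widecheck{\eta_m}|*1_{(\cdots)}\lesssim\rho_{I_{m+1}}$ reassembling $g_{m+1}$. The main obstacle I anticipate is step (4): one must be careful that $\widecheck\eta_m$ is not compactly supported, only rapidly decaying, so ``annihilation'' of far-off-diagonal terms in step (1) is really rapid decay rather than exact vanishing, and the tails have to be controlled by the weight functions $W_{P,k}$ and summed using \eqref{weighttransitive2}; keeping the decay orders ($100$ versus $200$) consistent through these convolutions, and verifying that the dual fat AP of $\tilde I_m-\tilde I_m$ truncated at frequency scale $L_{m+1}/N$ is comparable to $P_{I_{m+1}}(0)$ (so that the geometry genuinely telescopes from scale $m$ to scale $m+1$), is the delicate bookkeeping that makes the lemma work.
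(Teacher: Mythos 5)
Your steps (1)--(3) coincide with the paper's argument: Plancherel plus the fact that $\eta_m$ is supported in $B_{2L_{m+1}/N}(0)$ reduces $|f_{m+1,I_m}|^2*\widecheck{\eta_m}$ to the $\mathcal{O}(1)$-near pairs $I_{m+1}'\sim I_{m+1}$ (and this annihilation is \emph{exact}, since the support condition lives in frequency space where $\eta_m$ is compactly supported --- your worry about the non-compact support of $\widecheck{\eta_m}$ is irrelevant at this stage), then Cauchy--Schwarz inside the convolution and the monotonicity $|f_{m+1,I_{m+1}}|\le|f_{m+2,I_{m+1}}|$. The genuine gap is your step (4). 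The kernel comparison you rely on, $\rho_{I_m}*|\widecheck{\eta_m}|\lesssim\rho_{I_{m+1}}$, is false: both kernels are $L^1$-normalized, but the left one is concentrated on $B_{CN/L_{m+1}}(0)+P_{I_m}(0)$, a fat AP of thickness $\sim N/L_{m+1}$ and diameter only $\sim N^2/(L_m^2\theta)$, whereas $P_{I_{m+1}}(0)$ has diameter $N^2/(L_{m+1}^2\theta)$; consequently on its central teeth the left kernel has height $\sim L_m^2L_{m+1}\theta/N^2$, exceeding $\rho_{I_{m+1}}\sim L_{m+1}^3\theta/N^2$ by the factor $(L_m/L_{m+1})^2=N^{2\e}$. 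The inequalities \eqref{weighttransitive}, \eqref{weighttransitive2} compare $L^\infty$-normalized weights and cannot recover this factor. Nor is an $N^{2\e}$ loss harmless here: the Low Lemma must hold with an absolute constant because it is iterated through all $M\sim\tfrac{1}{2\e}$ scales (in Lemma \ref{fmcomparablelem} and Proposition \ref{broad0prop}), so a per-step loss $N^{2\e}$ can compound to a power of $N$ and destroy the induction.

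The missing idea --- the one extra move in the paper's proof --- is to use the frequency support of the squared wave packets \emph{before} comparing kernels. Since $\widehat{|f_{m+1,I_{m+1}}|^2}$ is supported in $C(\tilde{I}_{m+1}-\tilde{I}_{m+1})$, choosing $\psi_{I_{m+1}}$ as in Lemma \ref{psilem} adapted to this set gives the reproducing identity $|f_{m+1,I_{m+1}}|^2=|f_{m+1,I_{m+1}}|^2*\widecheck{\psi_{I_{m+1}}}\le|f_{m+1,I_{m+1}}|^2*|\widecheck{\psi_{I_{m+1}}}|$, which smears each squared wave packet over the full dual fat AP $P_{I_{m+1}}(0)$ at no cost; this is precisely the locally constant property at the finer scale, and it is what your pure kernel-domination argument omits. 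After this insertion the composite kernel does satisfy $|\widecheck{\eta_m}|*|\widecheck{\psi_{I_{m+1}}}|*\rho_{I_m}\lesssim\rho_{I_{m+1}}$, and $|g_m^\ell|\lesssim g_{m+1}$ follows with an absolute constant. Your unexplained factor ``$*1_{(\cdots)}$'' would have to be exactly this $P_{I_{m+1}}$-scale smoothing, and its insertion needs the reproducing-kernel justification above, which your outline does not supply.
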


\begin{proof}
  By definition
  $$g_m^\ell =(\sum_{I_m} |f_{m+1,I_m}|^2) * \rho_{I_m} * \widecheck{\eta_m}=(\sum_{I_m} |f_{m+1,I_m}|^2) *  \widecheck{\eta_m}*\rho_{I_m}.$$
  Using Plancherel's theorem,
  \begin{align}
    |f_{m+1,I_m}|^2*\widecheck{\eta_m}(x) & =\int |f_{m+1,I_m}(y)|^2 \widecheck{\eta_m}(x-y) dy \nonumber \\
     & =\int (\widehat{f_{m+1,I_m}} * \widehat{\overline{f_{m+1,I_m}}})(\xi) e^{2\pi ix\xi}\eta_m(\xi)d\xi \nonumber \\
     & =\sum_{I_{m+1},I'_{m+1} \subset I_m } \int (\widehat{f_{m+1,I_{m+1}}} * \widehat{\overline{f_{m+1,I'_{m+1}}}})(\xi) e^{2\pi ix\xi}\eta_m(\xi)d\xi. \label{110}
  \end{align}
  We note that $\widehat{f_{m+1,I_{m+1}}} * \widehat{\overline{f_{m+1,I_{m+1}'}}}$ is supported in $C\widetilde{I}_{m+1}-C\widetilde{I}_{m+1}'$ and $\widetilde{I}_{m+1}$ is of the form $P^{CL^2\theta/N^2}_{v_{I_{m+1}}}\cap B_{CL_{m+1}/N}.$
  Since $\eta_m$ is supported on $B_{2L_{m+1}/N}(0)$ we conclude that for every fixed $I_{m+1}$ there are only $\mathcal{O}(1)$ many $I'_{m+1}$ such that the integral in \eqref{110} is nonzero, and for those $I_{m+1}'$ we write $I_{m+1}'\sim I_{m+1}.$
  We let $\psi_{I_{m+1}}$ be adapted to $C(\tilde{I}_{m+1}-\tilde{I}_{m+1})$ as in Lemma \ref{psilem} with order of decay $200.$ Then, using Cauchy-Schwartz in the first two inequalities, we have
\begin{align*}
    ||f_{m+1,I_m}|^2*\widecheck{\eta_m}(x)| & =\sum_{I_{m+1}\subset I_m} \sum_{I_{m+1}'\sim I_{m+1}} f_{m+1,I_{m+1}}\overline{f_{m+1,I_{m+1}'}} * \widecheck{\eta_m} \\ & \le \sum_{I_{m+1}\subset I_m} \sum_{I_{m+1}'\sim I_{m+1}} (|f_{m+1,I_{m+1}}|^2*|\widecheck{\eta_m}|)^{1/2}(|f_{m+1,I_{m+1}'}|^2*|\widecheck{\eta_m}|)^{1/2}  \\
    & \lesssim  \sum_{I_{m+1}\subset I_m } |f_{m+1,I_{m+1}}|^2*|\widecheck{\eta_m}| \\
    & \lesssim \sum_{I_{m+1}\subset I_m } |f_{m+1,I_{m+1}}|^2*|\widecheck{\psi_{I_{m+1}}}|*|\widecheck{\eta_m}| \\
     & \leq \sum_{I_{m+1} \subset I_m}| f_{m+2,I_{m+1}}|^2*|\widecheck{\psi_{I_{m+1}}}|*|\widecheck{\eta_m}|
  \end{align*}   
  where the last inequality is because of $|f_{m+1,I_{m+1}}|\leq |f_{m+2,I_{m+1}}|$ pointwise.
  Now to finish the proof, it suffices to observe that
  $$ |\widecheck{\eta_m}|*|\widecheck{\psi_{I_{m+1}}}|*\rho_{I_m} \lesssim \rho_{I_{m+1}},$$
  since $|\widecheck{\eta_m}|$ decays rapidly outside $B_{N/L_{m+1}}(0),$  $|\widecheck{\psi_{I_{m+1}}}|$ decays at order $200$ outside $P_{I_{m+1}}(0),$ $\rho_{I_m}$ decays at order $100$ outside $P_{I_m}(0)$, and $B_{L_{m+1}/N}(0)+P_{I_m}(0)\subset CP_{I_{m+1}}(0).$
\end{proof}

Recall that
$$P(L_m)=P_{v_1^{-1}}^{CN^{3/2}/L_m^2} \cap B_{CN^2/(L_m^2\theta)}$$
(which degenerates to $B_{CN^2/(L_m^2\theta)}$ if $L_m\leq CN^{1/4}$) as defined in \eqref{defnLball}. Let $\phi_{P(L_M)}$ be a function such that
\begin{align*} 
{\text{supp }}\widehat{\phi_{P(L_M)}} &\subset P_{v_1}^{CL_M^2\theta/N^2}(0) \cap B_{CL_M^2/N^{3/2}}(0) \subset \bigcap_{I\in\mc{I}} (\tilde{I}-\tilde{I}), \qquad \\
&1_{P(L_M)} \lesssim |{\phi_{P(L_M)}}| \lesssim W_{P(L_M),200}.
\end{align*} 
To construct such a function we can take a $\psi$ in Lemma \ref{psilem} adapted to certain fat AP and apply a translation in the physical space to it.




\begin{lemma}[High lemma] \label{hilem}
For $1\leq m\leq M-1$ we have
$$\int |g_m^h|^2 W_{P(L_M),100} \lesssim N^\e \int \sum_{I_m}|f_{m+1,I_m}|^4 W_{P(L_M),100}.$$
\end{lemma}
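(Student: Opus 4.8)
The plan is to expand the high part as a sum over the caps $I_m\in\mc{I}_m$ and to exploit an almost-orthogonality coming from the separation of the frequency supports of the summands. Write $g_m^h=g_m-g_m*\widecheck{\eta_m}=\sum_{I_m}h_{I_m}$, where $h_{I_m}:=(|f_{m+1,I_m}|^2*\rho_{I_m})-(|f_{m+1,I_m}|^2*\rho_{I_m})*\widecheck{\eta_m}$, so that $\widehat{h_{I_m}}$ is supported in $A_{I_m}:=C(\tilde I_m-\tilde I_m)\cap\{|\xi|\gtrsim L_{m+1}/N\}$. Now $C(\tilde I_m-\tilde I_m)$ is a fat AP with common difference $v_{I_m}\sim N^{-1}$ and teeth of width $\sim L_m^2\theta/N^2$, and as $I_m$ ranges over $\mc{I}_m$ the numbers $v_{I_m}$ are monotone with consecutive ones $\sim L_m\theta/N^2$-separated. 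Hence a frequency $\xi$ with $|\xi|\sim\rho$ lies in at most $O(\max(1,L_m/(\rho N)))$ of the fat APs $C(\tilde I_m-\tilde I_m)$, and on the high regime $\rho\gtrsim L_{m+1}/N$ this is $O(L_m/L_{m+1})=O(N^\e)$. Thus the $\widehat{h_{I_m}}$ have $O(N^\e)$ overlap, and Cauchy--Schwarz in frequency gives the unweighted bound $\|g_m^h\|_{L^2(\R)}^2\lesssim N^\e\sum_{I_m}\|h_{I_m}\|_{L^2(\R)}^2$.

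To upgrade this to the weighted statement I would pair with $\omega:=\sqrt{W_{P(L_M),100}}$, whose Fourier transform is concentrated, with rapid decay, on the dual fat AP $P(L_M)^*=P^{CL_M^2\theta/N^2}_{v_1}(0)\cap B_{CL_M^2/N^{3/2}}(0)$; this has common difference $\sim N^{-1}$, the same scale as in the $A_{I_m}$, and diameter $\sim L_M^2/N^{3/2}$. The key inequality $L_M^2/N^{3/2}\le L_{m+1}/N$ holds because $L=L_M\le cN^{1/2}$, and it is exactly the sense in which $P(L)$ being the smallest frequency-preserving fat AP matters here: convolving each $\widehat{h_{I_m}}$ with $\widehat\omega$ keeps the support in $\{|\xi|\gtrsim L_{m+1}/N\}$, and — being convolution by a fat AP of the same common difference — preserves the $O(N^\e)$ overlap up to a constant. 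This yields $\int|g_m^h|^2 W_{P(L_M),100}\lesssim N^\e\sum_{I_m}\int|h_{I_m}|^2 W_{P(L_M),100}$, with the rapidly decaying tails of $\widehat\omega$ absorbed in the standard way. I expect this weighted bounded-overlap step to be the main obstacle; a clean way to organize it is to treat the range $L\lesssim N^{1/4}$ (where $P(L_M)^*$ collapses to a single interval of length $\lesssim L_M^2\theta/N^2$, no larger than one tooth of $A_{I_m}$, so the convolution cannot increase the overlap at all) separately from $L\gtrsim N^{1/4}$.

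It then remains to bound each $\int|h_{I_m}|^2 W_{P(L_M),100}$ by $\int|f_{m+1,I_m}|^4 W_{P(L_M),100}$. Since $\eta_m$ is a fixed $L^1$-normalized bump, $|h_{I_m}|\le |f_{m+1,I_m}|^2*\tilde\rho_{I_m}$ with $\tilde\rho_{I_m}:=\rho_{I_m}+\rho_{I_m}*|\widecheck{\eta_m}|$ non-negative, of bounded total mass, and concentrated on a fat AP contained in $CP(L_M)$ (using $N/L_{m+1}\le N^{3/2}/L_M^2$). Jensen's inequality gives $(|f_{m+1,I_m}|^2*\tilde\rho_{I_m})^2\lesssim|f_{m+1,I_m}|^4*\tilde\rho_{I_m}$, and transferring the convolution onto the weight — which costs no weight order by the nested-weight inequalities \eqref{weighttransitive}--\eqref{weighttransitive2}, since the fat AP supporting $\tilde\rho_{I_m}$ lies in $CP(L_M)$ — gives $\int|h_{I_m}|^2 W_{P(L_M),100}\lesssim\int|f_{m+1,I_m}|^4 W_{P(L_M),100}$. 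Summing over $I_m$ completes the proof.
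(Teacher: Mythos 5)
Your overall strategy is the same as the paper's: decompose $g_m^h$ into per-cap pieces whose Fourier supports lie in $C(\tilde I_m-\tilde I_m)$ minus a ball of radius $\sim L_{m+1}/N$, show these sets have $\mc{O}(N^\e)$ overlap using the $\sim\theta L_m/N^2$ separation of the common differences $v_{I_m}$ together with $L_m/L_{m+1}=N^\e$, use Cauchy--Schwarz/almost-orthogonality, then Jensen to pass from $\left||f_{m+1,I_m}|^2*\rho_{I_m}\right|^2$ to $|f_{m+1,I_m}|^4$ and transfer the convolution onto the weight. Your overlap count ($\mc{O}(\max(1,L_m/(\rho N)))$ caps at frequency magnitude $\rho$) is correct and is a slightly different packaging of the paper's pairwise separation argument, and the key numerical input you isolate, $L_M^2/N^{3/2}\le L_{m+1}/N$, is exactly the one the paper uses.

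The one step that does not work as written is the weighted almost-orthogonality via $\omega=\sqrt{W_{P(L_M),100}}$. The weight $W_{P(L_M),100}$ is built from distance functions and so is not smooth; its Fourier transform (and that of $W_{P(L_M),50}$) is neither compactly supported nor rapidly decaying -- it only decays polynomially of a fixed low order away from the dual fat AP. Consequently the assertion that convolving $\widehat{h_{I_m}}$ with $\widehat\omega$ ``keeps the support in $\{|\xi|\gtrsim L_{m+1}/N\}$'' is literally false, and absorbing the tails is precisely the difficulty you flag rather than a routine step: the smeared supports reach the low-frequency region where the overlap of the sets $C(\tilde I_m-\tilde I_m)$ can be as large as $N^{1/2}/L_m$, so one must genuinely quantify the tail contributions. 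The clean fix is the paper's: first use \eqref{weighttransitive2} to reduce to bounding $\int_{P(L_M)}|g_m^h|^2$ for a single $P(L_M)$, then insert the majorant $\phi_{P(L_M)}$ of $1_{P(L_M)}$ whose Fourier transform is \emph{exactly} supported in $P_{v_1}^{CL_M^2\theta/N^2}(0)\cap B_{CL_M^2/N^{3/2}}(0)$ (so the support and overlap bookkeeping is exact, using your inequality $\tfrac12 L_M^2/N^{3/2}\le L_{m+1}/(2N)$), and only at the very end return to the weight via $|\phi_{P(L_M)}|^2*\rho_{I_m}\lesssim W_{P(L_M),100}$. With that substitution your argument coincides with the paper's proof; the remaining Jensen and convolution-transfer steps you describe are fine.
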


\begin{proof} 
{Because of \eqref{weighttransitive2}, it suffices to show for every $P(L_M),$
\[\int_{P(L_M)} |g_m^h|^2 \lesssim N^\e \int \sum_{I_m}|f_{m+1,I_m}|^4 W_{P(L_M),100}.\]}

Calculate
  \begin{align*}
    \int |g_m^h|^2 W_{P(L_M),100} & \lesssim \int |g_m^h \phi_{P(L_M)}|^2  \\
    & = \int |\sum_{I_m} {(|f_{m+1,I_m}|^2)\widehat{\, } \, \, \widehat{\rho_{I_m}}}(1-\eta_m)*\widehat{\phi_{P(L_M)}}|^2.
  \end{align*}


  Note that
  $$\supp \left({(|f_{m+1,I_m}|^2)\widehat{\, } \, \,\widehat{\rho_{I_m}}}(1-\eta_m)*\widehat{\phi_{P(L_M)}} \right) \subset C(\tilde{I}_m-\tilde{I}_m) \setminus B_{L_{m+1}/{(2}N)}(0) .$$
  {Indeed, the high frequency cutoff $(1-\eta_m)$ removes the ball $B_{L_{m+1}/N}(0)$. The support of $\widehat{\phi_{P(L_M)}}$ is contained in a ball of radius $\le \frac{1}{2}L_M^2/N^{3/2}$ (if the $C$ in the definition of $P(L)$ as in \eqref{defnLball} is large enough), so convolution with $\widehat{\phi_{P(L_M)}}$ shrinks the high frequency cutoff by an amount smaller than $L_{m+1}/(2N)$. The structure of $\tilde{I}_m-\tilde{I}_m$ is unchanged by convolution by $\widehat{\phi_{P(L_M)}}$ because the thickness of $\tilde{I}_m$ is $\sim {L_m}/{N}$ and $\frac{1}{2}L_M^2/N^{3/2}\le L_{m+1}/(2N)\le N^{-\e} L_{m}/N$. }
  
  We claim that at every point on $\R,$ the collection of sets $\{C(\tilde{I}_m-\tilde{I}_m) \setminus B_{L_{m+1}/(2N)}(0)\}_{I_m}$ has at most $\mathcal{O}(N^\e)$ overlap. Assuming this claim, by the Cauchy-Schwarz inequality  we obtain
  \begin{align*}
    \int |g_m^h|^2 W_{P(L_M),100} & \lesssim N^\e \int \sum_{I_m}\left| (|f_{m+1,I_m}|^2)\widehat{\, } \, \,\widehat{\rho_{I_m}}(1-\eta_m)*\widehat{\phi_{P(L_M)}}\right|^2.
  \end{align*}
  So we have
  \begin{align*}
    \int |g_m^h|^2& W_{P(L_M),100}  \lesssim N^\e \sum_{I_m} \int \left| |f_{m+1,I_m}|^2*\rho_{I_m}*\widecheck{(1-\eta_m)} \right|^2 | \phi_{P(L_M)}|^2 \\
    & \lesssim N^\e \sum_{I_m} \left(\int \left| |f_{m+1,I_m}|^2*\rho_{I_m} \right|^2 | \phi_{P(L_M)}|^2 + \int \left| |f_{m+1,I_m}|^2*\rho_{I_m}*|\widecheck{\eta_m}| \right|^2 | \phi_{P(L_M)}|^2\right)  \\
     & \lesssim N^\e \sum_{I_m} \left(\int  |f_{m+1,I_m}|^4 (| \phi_{P(L_M)}|^2 *\rho_{I_m}) + \int |f_{m+1,I_m}|^4 (| \phi_{P(L_M)}|^2*\rho_{I_m}*|\widecheck{\eta_m}|) \right),
  \end{align*}
  where we{ used Cauchy-Schwartz and that $\rho_{I_m}$ and $\widecheck{\eta_m}$ have $L^1$ norms $\sim 1$ to justify
  \[ \left||f_{m+1,I_m}|^2*\rho_{I_m} \right|^2\lesssim |f_{m+1,I_m}|^4*\rho_{I_m} ,\quad \left||f_{m+1,I_m}|^2*\rho_{I_m}*|\widecheck{\eta_m}| \right|^2\lesssim |f_{m+1,I_m}|^4*\rho_{I_m}*|\widecheck{\eta_m}|.\]} Noting that
  $|\phi_{P(L_M)}|^2 *\rho_{I_m} \lesssim W_{P(L_M),100}$ and $|\phi_{P(L_M)}|^2*\rho_{I_m}*|\widecheck{\eta_m}| \lesssim W_{P(L_M),100},$ conclude 
  $$\int |g_m^h|^2 W_{P(L_M),100} \lesssim N^\e \sum_{I_m} \int |f_{m+1,I_m}|^4 W_{P(L_M),100}. $$

  Now we prove the claim. Recall that $\tilde{I}_m$ is a fat AP of the form $P^{CL^2\theta/N^2}_{v_{I_m}} \cap B_{CL_m/N}$ where $v_{I_m}\sim N^{-1}.$
  Suppose $x\in C(\tilde{I}_m-\tilde{I}_m) \setminus B_{L_{m+1}/(2N)}(0)$ and $x\in C(\tilde{I}'_{m}-\tilde{I}'_{m}) \setminus B_{L_{m+1}/N}(0)$ for distinct $\tilde{I}_m$ and $\tilde{I}'_m.$
  We denote the common difference of $\tilde{I}_m$ and $\tilde{I}_{m}'$ by $v$ and $v'$ respectively. Recalling that $v_{I_m}$ are $C\theta L_m/N^2$ separated, and the maximal separation is $C (N^{1/2}/L_m) (\theta L_m/N^2)=C\theta/N^{3/2},$ we have
  $$\theta L_m/N^2 \lesssim |v-v'| \lesssim \theta/ N^{3/2}.$$
  Suppose $x\in B_{CL_m^2\theta/N^2}(kv)$ and $x\in B_{CL_m^2\theta/N^2}(k'v')$ for some $k,k'\in \N.$ Then {since $x\not\in B_{L_{m+1}/(2N)}(0)$,} $L_{m+1} \lesssim k , k' \lesssim L_m.$ By definition $L_{m}= N^\e L_{m+1} \leq N^{1/2-\e},$ so we have
  $$L_{m+1} \frac{\theta L_m}{N^2} {\gtrsim} N^{-\e}\frac{\theta L_m^2}{N^2} , \qquad L_m \frac{\theta}{N^{3/2}} \leq \frac{\theta}{N^{1+\e}}\leq \frac{1}{N^{1+\e}}.$$
  It follows that $|k-k'|\lesssim 1$ and
  $$\text{either } |v-v'|\lesssim N^\e \theta L_m/N^2 \quad \text{ or } \quad |v-v'|\gtrsim 1/N^{3/2-\e}.$$
  The second case cannot happen if $N$ is sufficiently large (depending on $\e$).
  Since common differences $v$ are $\mathcal{O} (\theta L_m/N^2)-$separated, we conclude that there are at most $\mathcal{O}(N^\e)$ many $\tilde{I}_m'$ such that $x\in C(\tilde{I}'_{m}-\tilde{I}'_{m}) \setminus B_{L_{m+1}/(2N)}(0).$
\end{proof}

\subsection{The sets $\Omega_{m,\alpha,r}$ and $U_{\alpha,r}$}

The last part of our high-low decomposition set-up is to partition $P(L_M)$ into $\Omega_{m,\alpha,r},$ for a fixed pair $(\alpha,r).$
For $1\leq m\leq M-1$ we define $\Omega_{m,\alpha,r}$ to be
$$\Omega_{m,\alpha,r}:=\{x\in P(L_M): g_m(x) \leq 2|g_m^h(x)| , g_{m+1}(x)\leq 2|g_{m+1}^\ell(x)|,\ldots, g_{M-1}(x)\leq 2|g_{M-1}^\ell(x)| \}.$$
Here $g_{k}=g_{k,\alpha,r}.$
Also define $\Omega_{0,\alpha,r}$ to be
$$\Omega_{0,\alpha,r}:=\{x\in P(L_M): g_1(x) \leq 2|g_1^\ell(x)| , g_{2}(x)\leq 2|g_{2}^\ell(x)|,\ldots, g_{M-1}(x)\leq 2|g_{M-1}^\ell(x)| \}.$$

Clearly
$$P(L_M)=\bigcup_{0\leq m\leq M-1}\Omega_{m,\alpha,r}$$
for every $\alpha,r.$
For notational convenience we let $\Omega_{M,\alpha,r}=P(L_M).$

We define $U_{\alpha', r'}$ by
\begin{equation}\label{defnUalphar}
  U_{\alpha', r'} := \{x\in P(L_M):r'/2< g_{M}(x)\leq 2r',\quad \alpha'/2< |f(x)|\leq 2\alpha'\}.
\end{equation}
Recall that $g_M=\sum_{I_M}|f_{I_M}|^2*\rho_{I_M}$ is defined without the pruning process so in particular it does not depend on the pruning parameters $\alpha,r.$ 




We prove the following lemma, which shows that on $U_{\alpha,r}\cap \Omega_{m,\alpha,r},$ $|f_m-f_{m,\alpha,r}|$ is very small so that $|f_m|\sim |f_{m,\alpha,r}|.$  We define $f_0=f_1$ for notational convenience. Also recall we have defined $f_{M+1}=f$ and $f_{M+1,I_M}=f_{I_M}=f_{I}.$ 
\begin{lemma} \label{fmcomparablelem} If the constant $\tilde{C}_\e$ in the definition of $\lambda$ is large enough depending on $\e,$ then for every $\alpha, r,$ every  $1\leq m \leq M-1,$ and any subset $\mc{S}$ of the partition $\mc{I}_m=\{I_m\}$, we have
\[|\sum_{I_m\in\mc{S}}f_{I_m}-\sum_{I_m\in\mc{S}}f_{m,\a,r,I_m}|\le \frac{\a}{100}\]
on $U_{\alpha,r}\cap \Omega_{m,\alpha,r}$, and also on $U_{\alpha,r}\cap \Omega_{0,\alpha,r}$ if $m=1$. In particular if $\tilde{C}_\e$ in the definition of $\lambda$ is large enough depending on $\e,$ then for every $\alpha, r,$ every  $0\leq m \leq M-1,$
$$|f_{m,\alpha,r}|\in [\alpha/4,4\alpha],$$
on $U_{\alpha,r}\cap \Omega_{m,\alpha,r}$.
\end{lemma}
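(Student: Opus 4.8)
The plan is to induct downward on $m$, from $m=M$ down to $m=1$, estimating at each step the size of the wave packets that are removed in the pruning process. The key observation is that a wave packet $\phi_{P_{I_m}}f_{m+1,I_m}$ is discarded precisely when $\|\phi_{P_{I_m}}f_{m+1,I_m}\|_{L^\infty(\R)}>\lambda = \tilde{C}_\e N^\e r/\alpha$. Since $|\phi_{P_{I_m}}|\lesssim W_{P_{I_m},200}$ and $f_{m+1,I_m}$ has frequency support in $C\tilde I_m$, the locally constant property (Proposition \ref{locconstprop}) lets us compare the $L^\infty$ norm of such a wave packet to an $L^2$ average over $P_{I_m}$, and then further relate $\int_{W_{P_{I_m}}}|f_{m+1,I_m}|^2$ to the square function $g_M$ (or $g_{m+1}$) via the definition $g_M=\sum_{I_M}|f_{I_M}|^2*\rho_{I_M}$ and the nesting $|f_{m+1,I_m}|\le|f_{I}|$-type pointwise bounds together with local $L^2$ orthogonality. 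On $U_{\alpha,r}$ we have $g_M(x)\le 2r$, so on $U_{\alpha,r}$ the contribution of a single discarded wave packet at a point is controlled; the crucial point is that being on $\Omega_{m,\alpha,r}$ forces $g_{m+1}(x),\dots,g_{M-1}(x)$ to be dominated by their low-frequency parts, and by the Low lemma (Lemma \ref{low}) the low parts are in turn controlled by the next square function, so $g_{m+1}(x)\lesssim g_M(x)\lesssim r$ on $U_{\alpha,r}\cap\Omega_{m,\alpha,r}$ up to acceptable errors.

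Concretely, first I would prove a pointwise bound of the form: for $x\in U_{\alpha,r}\cap\Omega_{m,\alpha,r}$ and any $I_m$,
\[
\sum_{P_{I_m}\notin\mc{P}_{I_m,\lambda}}|\phi_{P_{I_m}}f_{m+1,I_m}(x)|\lesssim_k \lambda^{-1}\sum_{P_{I_m}\notin\mc{P}_{I_m,\lambda}}\|\phi_{P_{I_m}}f_{m+1,I_m}\|_{L^\infty}^2\cdot(\text{overlap factor})
\]
by using $|\phi_{P_{I_m}}f_{m+1,I_m}(x)|=|\phi_{P_{I_m}}f_{m+1,I_m}(x)|^2/|\phi_{P_{I_m}}f_{m+1,I_m}(x)|$ only on the tubes whose sup is attained near $x$, together with $\|\phi_{P_{I_m}}f_{m+1,I_m}\|_{L^\infty}^2\lesssim\|f_{m+1,I_m}\|_{\stkout{L}^2(W_{P_{I_m},100})}^2$ from the locally constant property. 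Summing over $I_m\in\mc{S}$ and using local $L^2$ orthogonality at scale $L_m$ produces a bound by $\lambda^{-1}\sum_{I_m}\|f_{m+1,I_m}\|^2_{\stkout{L}^2(W_{P_{I_m},\cdot})}$, which after controlling the square-function average by $g_{m+1}$ convolved with a bump, and then by the low-frequency chain on $\Omega_{m,\alpha,r}$, is $\lesssim\lambda^{-1}\cdot N^{O(\e)} r$. Plugging $\lambda=\tilde{C}_\e N^\e r/\alpha$ gives a bound $\lesssim \tilde{C}_\e^{-1}N^{O(\e)}\alpha$, which is $\le\alpha/100$ once $\tilde{C}_\e$ is chosen large enough relative to the implicit constants. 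The downward induction is needed because $f_{m,I_m}$ is built from $f_{m+1,I_m}$, not from $f_I$ directly, so the error at scale $m$ telescopes through the errors at scales $m+1,\dots,M$; each of those is handled by the same estimate, and summing the $M\lesssim_\e 1$ many contributions keeps the total below $\alpha/100$ after adjusting constants.

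For the final ``In particular'' statement: on $U_{\alpha,r}$ we have $|f(x)|\in[\alpha/2,2\alpha]$ by definition of $U_{\alpha,r}$; applying the displayed inequality with $\mc{S}=\mc{I}_m$ (the full partition) and noting $f=f_{M+1}=\sum_{I_m}f_{I_m}$, $f_m=\sum_{I_m}f_{m,\alpha,r,I_m}$, we telescope $|f-f_m|\le|f-f_{M}|+|f_M-f_{M-1}|+\cdots+|f_{m+1}-f_m|$, each term $\le\alpha/100$ (with the $m=M$ term coming from the analogous estimate for the first pruning step, which is the same argument with $g_M$ in place of $g_{m+1}$), so $|f-f_m|\le\alpha/10$ on $U_{\alpha,r}\cap\Omega_{m,\alpha,r}$, hence $|f_{m,\alpha,r}|\in[\alpha/4,4\alpha]$. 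The main obstacle I anticipate is bookkeeping the chain of inequalities that turns ``$x\in\Omega_{m,\alpha,r}$'' into ``$g_{m+1}(x)\lesssim g_M(x)$ up to $N^{O(\e)}$ and small additive error'': one must iterate the Low lemma carefully, keeping track that $|g_k^\ell|\lesssim g_{k+1}$ combined with $g_k\le 2|g_k^h|$ or $g_k\le 2|g_k^\ell|$ propagates correctly, and one must ensure the weights $W_{P_{I_m},100}$ and the convolutions with $\rho_{I_m}$, $\widecheck{\eta_m}$ compose to something still dominated by $W_{P(L_M),100}$-type weights so that the $L^2$ averages are genuinely local; the High lemma is not needed here but the structure of $\Omega_{m,\alpha,r}$ (high at scale $m$, low at all larger scales) is exactly what makes the chain terminate at $g_M$, which is the quantity pinned down by $U_{\alpha,r}$.
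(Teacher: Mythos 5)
Your proposal is correct and follows essentially the same route as the paper's proof: on each removed wave packet insert $1<\lambda^{-1}\|\phi_{P_{I_{m'}}}f_{m'+1,I_{m'}}\|_{L^\infty}$, use the locally constant property to convert the resulting sum into $\lambda^{-1}|f_{m'+1,I_{m'}}|^2*\rho_{I_{m'}}(x)=\lambda^{-1}$ times the scale-$m'$ square function, bound $g_{m'}(x)\lesssim_\e N^{\e}r$ on $U_{\alpha,r}\cap\Omega_{m,\alpha,r}$ via the definition of $\Omega_{m,\alpha,r}$, the Low lemma and one Cauchy--Schwarz step, then take $\tilde{C}_\e$ large and telescope over the $M\lesssim_\e 1$ scales (the $m=0$ case being the $m=1$ case since $f_0=f_1$). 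The only caveat is cosmetic: in your displayed intermediate inequality you should retain the pointwise factor $\phi_{P_{I_{m'}}}^{1/2}(x)$ rather than an unspecified global ``overlap factor,'' so that the sum over packets genuinely localizes at $x$ and collapses to the square function there; local $L^2$ orthogonality is not needed for that pointwise step.
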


\begin{proof} Fix $\alpha,r.$ In the following proof $g_{k}$ means $g_{k,\alpha,r},$ and $f_{k,I_k},f_{k,I_{k-1}},f_{k}$ means $f_{k,I_k,\alpha,r},f_{k,I_{k-1},\alpha,r},f_{k,\alpha,r}$ respectively. 
First suppose $1\leq m\leq M-1.$
 By definition of $\Omega_{m,\alpha,r}$ and Lemma \ref{low} we know that on $U_{\alpha,r} \cap \Omega_{m,\alpha,r},$
  $$g_{m+1}\lesssim g_{m+2} \lesssim \cdots \lesssim g_{M} \lesssim r.$$
  We also have by the Cauchy-Schwarz inequality $g_{m}\lesssim_\e N^{\e}g_{m+1}.$
  Recall that $M\lesssim_\e 1$ so we have for $m\leq k\leq M,$
  $$g_{k} \lesssim_\e N^{\e} r \text{ on } U_{\alpha,r} \cap \Omega_{m,\alpha,r}.$$
  Let $m'$ be an integer between $m$ and $M$ and let $I_{m'}\in \mc{I}_{m'}.$ By the definition of $f_{m',I_{m'}}$ and $f_{m'+1,I_{m'}}$ we have for $x\in U_{\alpha,r}\cap \Omega_{m,\alpha,r}$
  \begin{align*}
     |f_{m',I_{m'}}(x)-f_{m'+1,I_{m'}}(x)|& =|\sum_{P_{I_{m'}}\notin \mathcal{P}_{I_{m'},\lambda}}\phi_{P_{I_{m'}}}(x)f_{m'+1,I_{m'}}(x)| \\
     & \lesssim \sum_{P_{I_{m'}}\notin \mathcal{P}_{I_{m'},\lambda}} |\phi_{P_{I_{m'}}}^{1/2}(x)f_{m'+1,I_{m'}}(x)|\phi_{P_{I_{m'}}}^{1/2}(x) \\
     & \lesssim  \sum_{P_{I_{m'}}\notin \mathcal{P}_{I_{m'},\lambda}}  \lambda^{-1} \| \phi_{P_{I_{m'}}}f_{m'+1,I_{m'}} \|_{L^\infty(\R)} \| \phi_{P_{I_{m'}}}^{1/2}f_{m'+1,I_{m'}} \|_{L^\infty(\R)}  \phi_{P_{I_{m'}}}^{1/2}(x) \\
     & \lesssim \lambda^{-1}  \sum_{P_{I_{m'}}\notin \mathcal{P}_{I_{m'},\lambda}} \| \phi_{P_{I_{m'}}}^{1/2}f_{m'+1,I_{m'}} \|_{L^\infty(\R)}^2 \phi_{P_{I_{m'}}}^{1/2}(x) \\
     & \lesssim \lambda^{-1}  \sum_{P_{I_{m'}}\notin \mathcal{P}_{I_{m'},\lambda}}
      \sum_{\tilde{P}_{I_{m'}}} \| \phi_{P_{I_{m'}}}f^2_{m'+1,I_{m'}} \|_{L^\infty(\tilde{P}_{I_{m'}})} \phi_{P_{I_{m'}}}^{1/2}(x) \\
     & {\lesssim \lambda^{-1}  \sum_{P_{I_{m'}}}
      \sum_{\tilde{P}_{I_{m'}}} \| \phi_{P_{I_{m'}}}\|_{L^\infty(\tilde{P}_{I_{m'}})}\|f^2_{m'+1,I_{m'}} \|_{\stkout{L}^1(W_{\tilde{P}_{I_{m'}}})} \phi_{P_{I_{m'}}}^{1/2}(x) }\\
    \end{align*}
    where we used that $\phi_{P_{I_{m'}}}\lesssim \phi^{1/2}_{P_{I_{m'}}}$. We also used the locally constant property Proposition \ref{locconstprop} for the last inequality. If we use $\phi_{I_{m'}}(\tilde{P}_{I_{m'}})$ to denote $\phi_{I_{m'}}(\sup \tilde{P}_{I_{m'}}),$ which is comparable to $\phi_{I_{m'}}(y)$ for any $y\in \tilde{P}_{I_{m'}},$ then we have
   \begin{align*}
    |f_{m',I_{m'}}(x)-f_{m'+1,I_{m'}}(x)|  & \lesssim \lambda^{-1}|P_{I_{m'}}|^{-1}  \sum_{P_{I_{m'}}}
      \sum_{\tilde{P}_{I_{m'}}} (\int {W_{\tilde{P}_{I_{m'}}}} \phi_{P_{I_{m'}}}(\tilde{P}_{I_{m'}})|f_{m'+1,I_{m'}}|^2)  \phi_{P_{I_{m'}}}^{1/2}(x) \\
      & \lesssim \lambda^{-1}|P_{I_{m'}}|^{-1} 
      \sum_{\tilde{P}_{I_{m'}}} (\int {W_{\tilde{P}_{I_{m'}}}} |f_{m'+1,I_{m'}}|^2)  \phi_{\tilde{P}_{I_{m'}}}^{1/2}(P_{I_{m'}}(x)) \\
      & \lesssim \lambda^{-1}|P_{I_{m'}}|^{-1} 
      \int  |f_{m'+1,I_{m'}}|^2(y)\sum_{\tilde{P}_{I_{m'}}}  W_{\tilde{P}_{I_{m'}}}(y) \phi_{\tilde{P}_{I_{m'}}}^{1/2}(P_{I_{m'}}(x)) dy  \\
      & \lesssim \lambda^{-1}|P_{I_{m'}}|^{-1} 
       \int |f_{m'+1,I_{m'}}|^2(y) \phi_{P_{I_{m'}}(x)}^{1/2}(y)dy.
     \end{align*}
    Noting that $|P_{I_{m'}}|^{-1}\phi_{P_{I_{m'}}(x)}^{1/2}(y) \lesssim \rho_{I_{m'}}(x-y),$ we get
   $$
     |f_{m',I_{m'}}(x)-f_{m'+1,I_{m'}}(x)| \lesssim \lambda^{-1} |f_{m'+1,I_{m'}}|^2*\rho_{I_{m'}}(x).$$
   Summing the above over $I_{m'}\subset \bigcup_{I_m\in \mc{S}} I_m$ we conclude
    \begin{multline*}
    |\sum_{I_{m'}\subset \bigcup_{I_m\in \mc{S}}I_m} f_{m',I_{m'}}(x)-\sum_{I_{m'}\subset \bigcup_{I_m\in \mc{S}}I_m} f_{m'+1,I_{m'}}(x)| \leq \lambda^{-1} \sum_{I_{m'}\in \mc{I}_{m'}} |f_{m'+1,I_{m'}}|^2*\rho_{I_{m'}}(x) \\
     = \lambda^{-1} g_{m'}(x)
   \lesssim_\e N^{\e} \frac{r}{\lambda}.
  \end{multline*} 
  Therefore if we choose the constant $\tilde{C}_\e$ in the definition of $\lambda=\tilde{C}_\e N^\e \frac{r}{\alpha}$ to be large enough depending on $\e,$ then we have for $x\in U_{\alpha, r}\cap \Omega_{m,\alpha,r},$
  $$\sum_{m\leq m' \leq M}|\sum_{I_{m'}\subset \bigcup_{I_m\in \mc{S}}I_m} f_{m',I_{m'}}(x)-\sum_{I_{m'}\subset \bigcup_{I_m\in \mc{S}}I_m} f_{m'+1,I_{m'}}(x)| \leq \alpha/100.$$
  Since by definition  $\sum_{I_{m'}\subset \bigcup_{I_m\in \mc{S}}I_m} f_{m',I_{m'}}=\sum_{I_{m'-1}\subset \bigcup_{I_m\in \mc{S}}I_m} f_{m',I_{m'-1}},$
  we have by the triangle inequality that 
  \[|\sum_{I_m\in \mc{S}}f_{I_m}-\sum_{I_m\in \mc{S}}f_{m,I_m}|\leq \alpha/100.\]

  The case $m=0$ follows from the above argument for $m=1$ as by definition $ f_0=f_{1}.$
\end{proof}

From now on we will assume that $\tilde{C}_\e$ is chosen large enough such that the conclusion of Lemma \ref{fmcomparablelem} holds.

\section{Proof of Proposition \ref{induction}} \label{inductionsec}

We prove Proposition \ref{induction} in this section, and therefore Theorem \ref{refineddecthm} and Theorem \ref{decouplingthm}. Still fix $2\leq p\leq 6,$ $\e>0,$ and $P(L_M)\subset \R.$ 

Suppose $1\leq K\leq N^{\e/2}$ and $N^{1/2}/K \geq L.$ Let $\mc{I}'$ be a partition of $\mc{N}_{N^{-1}K^{-1}}(\{a_n\}_{n=1}^{N^{1/2}})$ into $K$ many $I',$ which is a union of $N^{1/2}/K$ consecutive intervals in $\mc{N}_{N^{-1}K^{-1}}(\{a_n\}_{n=1}^{N^{1/2}}).$ We call $I',I''\in \mc{I}'$ non-adjacent if there exist at least two other $I'''\in \mc{I}'$ between $I'$ and $I''$ on the real line. Alternatively, we can list $I'\in \mc{I}'$ as $I'_{j}$ so that $I'_{j+1}$ is on the right side of $I'_j$ on the real line for every $j.$ Then we define $I'_j,I_{j'}'$ to be non-adjacent if $|j-j'|\geq 3.$ In displayed math we write ``non-adj." as the shorthand for non-adjacent.

For $f$ with $\supp \hat{f}\subset \Omega,$ we let $f_{I'}$ denote the projection of $f$ to $I'$ in the frequency space. So $f_{I'}=\sum_{I_M\subset I'} f_{I_M}.$

\subsection{Broad-narrow decomposition}

The following lemma is a broad-narrow analysis on $f$ with some complication. For parameters $\a,r>0$ and $m$, $0\le m\le M-1$, define
\[ f_{m,\a,r,I'}:=\sum_{I_m\subset I'} f_{m,\a,r,I_m},\]
where we recall that $f_{m,\a,r,I_m}$ is defined in \eqref{1333}. 

\begin{lemma}\label{BN} For every $X\subset P(L_M),$ there exist some $\alpha, r$ with $\alpha \geq r^{1/2},$ and $0\leq m\leq M-1,$  such that
  \begin{multline}\label{BNeqn}
    \int_{X} |f|^p \lesssim_\e  \sum_{I'\in \mc{I}'} \int_{X} |f_{I'}|^p + (\log N \log(\theta^{-1}+1))^C \frac{K^C}{\alpha^{4-p}} \max_{\substack{I',I'' \\\text{ non-adj.}}}\int_{X\cap U_{\alpha,r}\cap \Omega_{m,\alpha,r}} |f_{m,\alpha,r,I'}|^{2}|f_{m,\alpha,r,I''}|^{2}\\ \nonumber
    +(\sup_{x\in X}\sum_{I} \|f_I\|_{\stkout{L}^2(W_{P_{I}(x),100})}^2)^{p/2-1}(\sum_I \|f_I\|^2_{L^2(W_{P(L),100})}).
  \end{multline}
\end{lemma}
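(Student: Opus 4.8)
The plan is to run a Bourgain--Guth broad--narrow analysis at exponent $p$, but carried out on the pruned function $f_{m,\a,r}$ at a well-chosen scale $m$ rather than on $f$ itself, after first pigeonholing on the dyadic sizes $\a\sim|f|$ and $r\sim g_M$. First discard the set where $|f|$ or $g_M$ is smaller than a fixed negative power of $N(\theta^{-1}+1)$ times its supremum on $P(L_M)$; its contribution is crudely at most the last term of \eqref{BNeqn} using $|f|^2\lesssim(\#\mc I)\,g_M$ pointwise (locally constant property, Proposition \ref{locconstprop}), $\|g_M\|_{L^\infty(P(L_M))}\lesssim\sup_{x}\sum_I\|f_I\|_{\stkout L^2(W_{P_I(x),100})}^2$, $\int_{P(L_M)}g_M\lesssim\sum_I\|f_I\|_{L^2(W_{P(L),100})}^2$, and $|P(L)|\lesssim N^2\theta^{-1}$. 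The remaining part of $P(L_M)$ is covered by $O\big((\log N+\log(\theta^{-1}+1))^2\big)$ of the disjoint sets $U_{\a,r}$ of \eqref{defnUalphar}.

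Split these dyadic pairs into those with $\a<r^{1/2}$ and those with $\a\ge r^{1/2}$. For $\a<r^{1/2}$ do \emph{not} pigeonhole: for any $x\in U_{\a,r}$ one has $r\lesssim g_M(x)\lesssim\sum_I\|f_I\|_{\stkout L^2(W_{P_I(x),100})}^2$ (the $L^1$-normalization of $\rho_I$), so $p\ge2$ gives $\a^{p-2}\le r^{p/2-1}\le(\sup_{x\in X}\sum_I\|f_I\|_{\stkout L^2(W_{P_I(x),100})}^2)^{p/2-1}$; combining with $\int_{X\cap U_{\a,r}}|f|^p\lesssim\a^p|X\cap U_{\a,r}|$ and $\sum_{\a,r}r|X\cap U_{\a,r}|\le\int_{P(L_M)}g_M\lesssim\sum_I\|f_I\|_{L^2(W_{P(L),100})}^2$ (disjointness of the $U_{\a,r}$, $\|\rho_I\|_{L^1}\sim1$) and summing over these pairs yields the last term of \eqref{BNeqn}. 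For $\a\ge r^{1/2}$, pigeonhole to a single pair $(\a,r)$ with $\a\ge r^{1/2}$ at the cost of $(\log N\log(\theta^{-1}+1))^C$ — this factor is henceforth attached only to the broad term. Then, using $P(L_M)=\bigcup_{m=0}^{M-1}\Omega_{m,\a,r}$ and $M\lesssim_\e1$, pigeonhole to a single $m\in\{0,\dots,M-1\}$, so $\int_{X\cap U_{\a,r}}|f|^p\lesssim_\e\int_{X\cap U_{\a,r}\cap\Omega_{m,\a,r}}|f|^p$.

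On $X\cap U_{\a,r}\cap\Omega_{m,\a,r}$, Lemma \ref{fmcomparablelem} gives $|f|\sim|f_{m,\a,r}|\sim\a$ pointwise and, applied with $\mc S=\{I_m\in\mc I_m:I_m\subset I'\}$ for each $I'\in\mc I'$, also $|f_{I'}-f_{m,\a,r,I'}|\le\a/100$. Run the standard broad--narrow dichotomy on $f_{m,\a,r}=\sum_{I'\in\mc I'}f_{m,\a,r,I'}$ ($K$ summands, each $I'$ adjacent to at most $5$ others): at each $x$, either $|f_{m,\a,r}(x)|\lesssim\max_{I'}|f_{m,\a,r,I'}(x)|$ (narrow), which via the comparability forces $\max_{I'}|f_{I'}(x)|\gtrsim\a\sim|f(x)|$ and hence $|f(x)|^p\lesssim\sum_{I'}|f_{I'}(x)|^p$; or $|f_{m,\a,r}(x)|^2\lesssim K^2|f_{m,\a,r,I'}(x)||f_{m,\a,r,I''}(x)|$ for non-adjacent $I',I''$ (broad), giving $|f(x)|^p\sim\a^{p-4}|f_{m,\a,r}(x)|^4\lesssim\a^{p-4}K^4|f_{m,\a,r,I'}(x)|^2|f_{m,\a,r,I''}(x)|^2$ (here $\a>0$ and $|f_{m,\a,r}(x)|\gtrsim\a$). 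Integrating over the narrow points, accumulated over all discarded scales by disjointness (no logarithmic loss), contributes $\sum_{I'}\int_X|f_{I'}|^p$; integrating over the broad points and pigeonholing over the $O(K^2)$ non-adjacent pairs $(I',I'')$ produces the middle term of \eqref{BNeqn}, with the powers of $K$ collected into $K^C$ and the $(\log N\log(\theta^{-1}+1))^C$ loss carried over.

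The conceptual crux is that broad--narrow is applied to $f_{m,\a,r}$ rather than $f$: this is legitimate only after $(\a,r)$ and $m$ have been fixed, so that Lemma \ref{fmcomparablelem} supplies $|f|\sim|f_{m,\a,r}|$ together with the per-piece comparison $|f_{I'}-f_{m,\a,r,I'}|\le\a/100$; getting these comparisons to coexist is exactly what pins down the order of the pigeonholing. The delicate bookkeeping is keeping the polylogarithmic factor off the narrow and trivial terms: the $\a<r^{1/2}$ contribution and the narrow contribution must be \emph{summed} over the dyadic parameters (using disjointness of the $U_{\a,r}$), so that only the single genuinely bilinear interaction inherits the $(\log N\log(\theta^{-1}+1))^C$ loss. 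A routine but tedious secondary point is the weight juggling relating the averaged weights $W_{P_I(x),100}$ and $W_{P(L),100}$ appearing in the last term of \eqref{BNeqn} to the convolution kernels $\rho_I$ defining $g_M$; this is handled by \eqref{weighttransitive}--\eqref{weighttransitive2}, $\|\rho_I\|_{L^1}\sim1$, and the inclusion $P_I\subset CP(L)$ from Lemma \ref{balllem}.
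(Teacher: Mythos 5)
Your argument is correct and is essentially the paper's own proof: dyadic pigeonholing over the level sets $U_{\alpha,r}$ of $(|f|,g_M)$, a trivial Hölder-type treatment of the $\alpha\lesssim r^{1/2}$ regime yielding the third term, passage to the pruned function via Lemma \ref{fmcomparablelem} on $U_{\alpha,r}\cap\Omega_{m,\alpha,r}$, the pointwise broad--narrow dichotomy of Lemma \ref{BNtech} with the $K^C/\alpha^{4-p}$ bookkeeping, narrow contributions summed over all dyadic parameters by disjointness, and only the bilinear term pigeonholed to a single $(\alpha,r,m)$ with the $(\log N\log(\theta^{-1}+1))^C$ loss. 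The only deviations (handling the extreme dyadic values by a crude bound instead of the paper's normalization, and summing $r|X\cap U_{\alpha,r}|$ against $\int g_M$ rather than a one-shot Hölder for $\alpha<r^{1/2}$) are cosmetic, and your closing paragraph correctly fixes the order of operations so the log factor stays off the narrow and trivial terms.
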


First we prove a technical lemma which is a pointwise broad-narrow analysis. 


By taking all parameters to have dyadic values, we may assume that for each $I_m$, $0\le m\le M$, and any $I'$, either $I_m\subset I'$ or $I_m\cap I'=\emptyset$. 

\begin{lemma}\label{BNtech} For every $\a,r>0$ and $0\le m\le M-1$, 
\[ |f_{m,\a,r}(x)|^2\lesssim \max_{I'}|f_{I'}(x)|^2+ K^C\max_{\substack{I',I''\\\text{non-adj.}}}|f_{m,\a,r,I'}(x)||f_{m,\a,r,I''}(x)| \]
for every $x\in X\cap U_{\a,r}\cap\Omega_{m,\a,r}$. 
\end{lemma}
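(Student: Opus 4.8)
The plan is to prove the displayed pointwise inequality by a standard broad--narrow dichotomy applied to the decomposition $f_{m,\a,r}=\sum_{I'\in\mc I'}f_{m,\a,r,I'}$ into the $K$ pieces indexed by $\mc I'$, using the comparability furnished by Lemma~\ref{fmcomparablelem} to trade the pruned narrow piece back for $f_{I'}$. Fix $\a,r$ and $1\le m\le M-1$ (the case $m=0$ is identical after reading $f_{0,\a,r}=f_{1,\a,r}$, $f_{0,\a,r,I'}=f_{1,\a,r,I'}$, and using that Lemma~\ref{fmcomparablelem} also covers $U_{\a,r}\cap\Omega_{0,\a,r}$).

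First I would record the two inputs we need on $X\cap U_{\a,r}\cap\Omega_{m,\a,r}$. By the reduction made just before the statement, each $I'\in\mc I'$ is exactly a union of the intervals $I_m$ it contains, so $f_{I'}=\sum_{I_m\subset I'}f_{I_m}$ and $f_{m,\a,r,I'}=\sum_{I_m\subset I'}f_{m,\a,r,I_m}$; applying Lemma~\ref{fmcomparablelem} with $\mc S=\{I_m:I_m\subset I'\}$ therefore gives
\[ |f_{I'}(x)-f_{m,\a,r,I'}(x)|\le \tfrac{\a}{100}\qquad\text{for every }I'\in\mc I'. \]
Lemma~\ref{fmcomparablelem} also gives $|f_{m,\a,r}(x)|\ge \a/4$ on this set. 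Combining the two, for every $I'$ we get $|f_{m,\a,r,I'}(x)|\le \max_{I'}|f_{I'}(x)|+\tfrac{1}{25}|f_{m,\a,r}(x)|$.

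Now the dichotomy. Fix $x$ in the relevant set, list $\mc I'=\{I'_j\}$ along $\R$ as in the set-up, and pick $j_1$ with $|f_{m,\a,r,I'_{j_1}}(x)|=\max_{I'}|f_{m,\a,r,I'}(x)|$. If $\big|\sum_{j:\,|j-j_1|\le 2}f_{m,\a,r,I'_j}(x)\big|\ge\tfrac12|f_{m,\a,r}(x)|$ (the \emph{narrow} case, which in particular always holds when $K\le 3$), then, since at most five indices satisfy $|j-j_1|\le 2$,
\[ |f_{m,\a,r}(x)|\le 10\,|f_{m,\a,r,I'_{j_1}}(x)|\le 10\max_{I'}|f_{I'}(x)|+\tfrac{10}{25}|f_{m,\a,r}(x)|, \]
and absorbing the last term yields $|f_{m,\a,r}(x)|\le 17\max_{I'}|f_{I'}(x)|$, which is even stronger than needed. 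Otherwise (the \emph{broad} case) $\sum_{j:\,|j-j_1|\ge 3}|f_{m,\a,r,I'_j}(x)|>\tfrac12|f_{m,\a,r}(x)|$, so there is $j_2$ with $|j_2-j_1|\ge 3$ — i.e. $I'_{j_1},I'_{j_2}$ non-adjacent — and $|f_{m,\a,r,I'_{j_2}}(x)|>\tfrac{1}{2K}|f_{m,\a,r}(x)|$; since $j_1$ is the global maximizer, also $|f_{m,\a,r,I'_{j_1}}(x)|>\tfrac{1}{2K}|f_{m,\a,r}(x)|$, and multiplying the two bounds gives
\[ |f_{m,\a,r}(x)|^2<4K^2\,|f_{m,\a,r,I'_{j_1}}(x)|\,|f_{m,\a,r,I'_{j_2}}(x)|\le 4K^2\max_{I',I''\text{ non-adj.}}|f_{m,\a,r,I'}(x)|\,|f_{m,\a,r,I''}(x)|. \]
In either case the asserted inequality holds (with $C=2$).

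The argument is routine broad--narrow bookkeeping; the only point that requires care is the absorption step in the narrow case, and it works precisely because the pruning error $\a/100$ coming from Lemma~\ref{fmcomparablelem} is comfortably smaller than the lower bound $|f_{m,\a,r}(x)|\gtrsim\a$ valid on $\Omega_{m,\a,r}$. No genuine obstacle arises beyond tracking that the narrow term on the right is the \emph{unpruned} $f_{I'}$ while the broad term retains the pruned $f_{m,\a,r,I'}$, which is exactly what the two inputs above deliver.
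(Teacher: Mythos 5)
Your proof is correct and follows essentially the same route as the paper: a broad--narrow dichotomy over the $K$ pieces $f_{m,\a,r,I'}$, with the non-adjacent product term (picking up a factor $K^C$) in the broad case, and Lemma \ref{fmcomparablelem} (the $\a/100$ pruning error together with $|f_{m,\a,r}|\gtrsim\a$ on $U_{\a,r}\cap\Omega_{m,\a,r}$) used to trade the pruned maximizing piece for the unpruned $f_{I'}$ in the narrow case. The only difference from the paper is cosmetic --- you split on whether the sum of the $\mathcal{O}(1)$ pieces adjacent to the maximizer dominates, while the paper splits on the existence of a non-adjacent pair both exceeding $\tfrac{1}{100K}|f_{m,\a,r}|$ --- and your constants work out.
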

\begin{proof} 
Let $x\in X\cap U_{\a,r}\cap\Omega_{m,\a,r}.$ If there exist $I',I''\in \mc{I}'$ non-adjacent such that $|f_{m,\alpha,r,I'}|,|f_{m,\alpha,r,I''}|\geq \frac{1}{100K}|f_{m,\alpha,r}(x)|,$ then we have
\begin{equation}\label{this}
  |f_{m,\a,r}(x)|^2\lesssim  K^2\max_{\substack{I',I''\\\text{non-adj.}}}|f_{m,\a,r,I'}(x)||f_{m,\a,r,I''}(x)|.  
\end{equation}

Now we assume there do not exist $I',I''\in \mc{I}'$ non-adjacent with $|f_{m,\alpha,r,I'}|,|f_{m,\alpha,r,I''}|\geq \frac{1}{100K}|f_{m,\alpha,r}(x)|.$
Note that $f_{m,\a,r}(x)=\sum_{I'}f_{m,\a,r,I'}(x)$ and the number of $I'$ is bounded by $K.$ So if we choose $I'''\in \mc{I}'$ with $|f_{m,\a,r,I'''}(x)|=\max_{I'\in \mc{I}'}|f_{m,\a,r,I'}(x)|,$ then 
\begin{equation}\label{this2}
   |f_{m,\a,r,I'''}(x)| \geq \frac{1}{2} |f_{m,\a,r}(x)|. 
\end{equation}

By Lemma \ref{fmcomparablelem} we have $|f_{m,\a,r}(x)|\in [\a/4,4\a],$ and
$|f_{m,\a,r,I'''}(x)-f_{I'''}(x)|\le \frac{\a}{100}.$ Therefore by the triangle inequality and \eqref{this2} we obtain 
\[|f_{I'''}(x)|\gtrsim \alpha \sim |f_{m,\a,r}(x)|.\] 
This combined with \eqref{this} proves the lemma. 


\end{proof}


\begin{proof}[Proof of Lemma \ref{BN}]

Since $P(L_M)=\bigsqcup\limits_{\alpha,r:\text{ dyadic}} U_{\alpha,r},$ we have
$$\int_{X} |f|^p \leq \sum_{\alpha, r:\text{ dyadic}} \int_{X\cap U_{\alpha,r}}|f|^p.$$
Without loss of generality we assume
\begin{equation}\label{normalize}
  (\sup_{x\in X}\sum_{I} \|f_I\|_{\stkout{L}^2(W_{P_{I}(x),100})}^2)^{1/2-1/p}(\sum_I \|f_I\|^2_{L^2(W_{P(L),100})})^{1/p}=1.
\end{equation}
Then  $X\cap U_{\alpha,r}=\emptyset$ if $\max\{ \alpha, r\} \geq C N^{C} \theta^{-C} $ for some sufficiently large constant $C.$ Also
$$( \int_{X\cap (\bigcup_{\min \{\alpha, r\}\leq C N^{-C}\theta^{C}}U_{\alpha,r})}|f|^p )^{1/p} \lesssim 1$$
if $C$ is sufficiently large.
So now we write
$$\int_{X} |f|^p \leq \sum_{\alpha, r} \int_{X\cap U_{\alpha,r} }|f|^p +C$$
where the number of $\alpha, r$ in the summation is $\mc{O}(\log N \log(\theta^{-1}+1)).$

We also observe that by H\"{o}lder's inequality and Fubini's theorem we have
$$\int_{X\cap \bigcup_{\alpha \leq r^{1/2}}U_{\alpha,r}}|f|^p\lesssim \int_{X}(\sum_{I}|f_I|^2*\rho_{I})^{p/2} \lesssim \|\sum_I |f_I|^2*\rho_{I}\|_{L^\infty(X)}^{p/2-1} ( \sum_{I} \|f_I\|^2_{L^2(W_{P(L),100})}).$$
Since
$$\|\sum_I |f_I|^2*\rho_{I}\|_{L^\infty(X)} \leq \sup_{x\in X} \sum_I |f_I|^2*\rho_{I}(x) \lesssim \sup_{x\in X} \sum_I \|f_I\|_{\stkout{L}^2(W_{P_I(x),100})}^2 $$
we obtain
$$\int_{X\cap \bigcup_{\alpha \leq r^{1/2}}U_{\alpha,r}}|f|^p\lesssim (\sup_{x\in X}\sum_{I} \|f_I\|_{\stkout{L}^2(W_{P_{I}(x),100})}^2)^{p/2-1}(\sum_I \|f_I\|^2_{L^2(W_{P(L),100})})=1.$$
So in summary
\begin{equation}\label{777}
  \int_{X}|f|^p\lesssim \sum_{\alpha ,r: \alpha \geq r^{1/2}} \int_{X\cap U_{\alpha,r}} |f|^p+ 1
\end{equation}

Next we further decompose $X\cap U_{\alpha,r}$ into $\bigcup_{m}(X\cap U_{\alpha,r}\cap \Omega_{m,\alpha,r}):$
$$\int_{X\cap U_{\alpha,r}} |f|^p \leq \sum_{m=0}^{M-1} \int_{X\cap U_{\alpha,r}\cap \Omega_{m,\alpha,r}} |f|^p.$$
By Lemma \ref{fmcomparablelem} we have for $0\leq m\leq M-1,$
$$\int_{X\cap U_{\alpha,r}\cap \Omega_{m,\alpha,r}}|f|^p \sim \int_{ X\cap U_{\alpha,r}\cap \Omega_{m,\alpha,r}}|f_{m,\alpha,r}|^p.$$
It then follows from Lemma \ref{BNtech} and Lemma \ref{fmcomparablelem} that

\begin{align*}
  \int_{X} |f|^p & \lesssim 1+ \sum_{\alpha, r:\alpha \geq r^{1/2}} \sum_{m=0}^{M-1}  \left(  \sum_{I'\in \mc{I}'} \int_{X\cap U_{\alpha,r}\cap \Omega_{m,\alpha,r}} |f_{I'}|^p \right.  \\
  & \qquad \qquad \qquad \qquad \left. + \frac{K^C}{\alpha^{4-p}} \max_{\substack{I', I''\\ \text{ non-adj.}}}  \int_{X\cap U_{\alpha,r}\cap \Omega_{m,\alpha,r}} |f_{m,\alpha,r,I'}|^{2}|f_{m,\alpha,r,I''}|^{2} \right)  \\
   & \lesssim 1+ C_\e   \sum_{I'\in \mc{I}'} \int_{X} |f_{I'}|^p + \sum_{\alpha, r:\alpha \geq r^{1/2}} \sum_{m} \frac{K^C}{\alpha^{4-p}} \max_{\substack{I', I''\\  \text{ non-adj.}}}  \int_{X\cap U_{\alpha,r}\cap \Omega_{
   m,\alpha,r}} |f_{m,\alpha,r,I'}|^{2}|f_{m,\alpha,r,I''}|^{2}
\end{align*}
where we used $M\lesssim_\e 1$ in the last inequality. By  pigeonholing we have
\begin{multline*}
  \sum_{\alpha, r: \alpha \geq r^{1/2}} \sum_{m} \frac{K^C}{\alpha^{4-p}} \max_{\substack{I', I''\\ \text{ non-adj.}}}  \int_{X\cap U_{\alpha,r}\cap \Omega_{m,\alpha,r}} |f_{m,\alpha,r,I'}|^{2}|f_{m,\alpha,r,I''}|^{2} \\
  \lesssim_\e (\log N \log(\theta^{-1}+1)) \frac{K^C}{\alpha^{4-p}}  \max_{\substack{I', I''\\ \text{ non-adj.}}} \int_{X\cap U_{\alpha,r}\cap \Omega_{m,\alpha,r}} |f_{m,\alpha,r,I'}|^{2}|f_{m,\alpha,r,I''}|^{2}
\end{multline*}
for some $\alpha, r$  with $\alpha \geq r^{1/2},$ $0\leq m\leq M-1$, which completes the proof. 

\end{proof}

Now fix $X\subset P(L_M).$ We have identified a pair $(\alpha,r)$ from Lemma \ref{BN}, and we fix that pair of $\alpha,r$ and suppress the dependence on $\alpha,r$ from now on in the notation. In particular write $g_m=g_{m,\alpha,r},$ $\Omega_{m}=\Omega_{m,\alpha,r},$ $f_{m,I'}=f_{m,\alpha,r,I'}$ and $f_{m,I_m}=f_{m,\alpha,r,I_m}$ where $\alpha,r$ are those chosen in Lemma \ref{BN}.

We estimate the broad and narrow parts separately, which together with Lemma \ref{BN} will imply Proposition \ref{induction}.

\subsection{Narrow part}


\begin{proposition} \label{narrowprop}
  For every $I'\in \mc{I}'$ we have
  \begin{equation}\label{narrowpropeqn}
      \int_{X} |f_{I'}|^p \lesssim \left(\sup_{\theta'\in [\theta/4,\theta]}Dec(N/K^2,\theta'/K^2)^p\right) (\sup_{x\in X}\sum_{I\subset I'} \|f_I\|_{\stkout{L}^2(W_{P_{I}(x),100})}^2)^{\frac{p}{2}-1}(\sum_{I\subset I'} \|f_I\|^2_{L^2(W_{P(L),100})}).
  \end{equation}
\end{proposition}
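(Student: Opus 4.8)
The plan is to reduce \eqref{narrowpropeqn} to the refined decoupling inequality \eqref{20000} at the smaller scale $N_1:=N/K^2$ by an affine rescaling. Fix $I'\in\mc I'$ and say it is (essentially) the $L^2\theta/N^2$-neighborhood of the block $\{a_n\}_{n=n_0+1}^{n_0+N^{1/2}/K}$. Set $b_i:=K^2 a_{n_0+i}$ for $1\le i\le N_1^{1/2}=N^{1/2}/K$. Since \eqref{defa_n} only involves first and second differences, reading it off shows $\{b_i\}$ is a short generalized Dirichlet sequence with parameters $(N/K^2,\theta/K^2)$; in particular its refined decoupling constant is $\le\sup_{\theta'\in[\theta/4,\theta]}Dec(N/K^2,\theta'/K^2)$. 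Its caps, taken with the same cap parameter $L$ (legitimate since $N_1^{1/2}=N^{1/2}/K\ge L$), correspond exactly to the caps $I\in\mc I$ with $I\subset I'$, and a direct check shows the dual fat APs $P_I$ and the ball $P(L)$ built from $\{b_i\}$ become, after undoing the rescaling $y\mapsto K^2y$, exactly the scale-$N$ objects $P_I$ and $P(L)$. First I would carry out this rescaling carefully: if $g(y):=f_{I'}(K^2y)$, then $\widehat g$ is supported in the scale-$N_1$ canonical neighborhood of $\{b_i\}$, $g$ decomposes into pieces $g_{\tilde I}(y)=f_I(K^2y)$ (one per cap $I\subset I'$), one has $\int_X|f_{I'}|^p=K^2\|g\|_{L^p(\tilde X)}^p$ with $\tilde X:=K^{-2}X$, and the change of variables $x=K^2y$ identifies $\|g_{\tilde I}\|_{\stkout{L}^2(W_{P_{\tilde I}(y_0),100})}=\|f_I\|_{\stkout{L}^2(W_{P_I(K^2y_0),100})}$ and $\|g_{\tilde I}\|_{L^2(W_{Q,100})}^2=K^{-2}\|f_I\|_{L^2(W_{K^2Q,100})}^2$ for any fat AP $Q$ (the weight $W_{\cdot,100}$ being invariant under simultaneous scaling of the fat AP and the argument).

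The one point needing care is that $\tilde X\subset K^{-2}P(L)$, and $K^{-2}P(L)$ (the image of the scale-$N$ ball) is a fat AP with the same common difference and diameter as the scale-$N_1$ ball for $\{b_i\}$ but thicker by a factor $\sim K$; moreover, because $n_0\lesssim N^{1/2}$, its AP direction drifts by $O(K)$ thicknesses relative to the natural direction of $\{b_i\}$ over the full diameter (here one uses $L^2\le N_1$). Hence I would cover $K^{-2}P(L)$ by $O(K)$ finitely-overlapping translates $P'_l$ of the scale-$N_1$ ball (each a legitimate ``$P(L)$'' for $\{b_i\}$), apply the definition \eqref{20000} of $Dec(N_1,\theta/K^2)$ on each $\tilde X\cap P'_l\subset P'_l$, and sum over $l$:
\[ \|g\|_{L^p(\tilde X)}^p\le Dec(N_1,\theta/K^2)^p\Big(\sup_{y_0\in\tilde X}\sum_{\tilde I}\|g_{\tilde I}\|_{\stkout{L}^2(W_{P_{\tilde I}(y_0),100})}^2\Big)^{p/2-1}\sum_l\sum_{\tilde I}\|g_{\tilde I}\|_{L^2(W_{P'_l,100})}^2. \]
The factor in parentheses is independent of $l$; for the remaining double sum, \eqref{weighttransitive} gives $\sum_l W_{P'_l,100}\lesssim W_{K^{-2}P(L),100}$, which after the change of variables yields $\sum_l\sum_{\tilde I}\|g_{\tilde I}\|_{L^2(W_{P'_l,100})}^2\lesssim K^{-2}\sum_{I\subset I'}\|f_I\|_{L^2(W_{P(L),100})}^2$. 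Undoing the rescaling — the $K^2$ from $\int_X|f_{I'}|^p=K^2\|g\|_{L^p(\tilde X)}^p$ cancels this $K^{-2}$ — and bounding $\sup_{y_0\in\tilde X}(\cdots)=\sup_{x\in X}\sum_{I\subset I'}\|f_I\|_{\stkout{L}^2(W_{P_I(x),100})}^2$ produces \eqref{narrowpropeqn} with an absolute implied constant. When $L$ is comparable to $N^{1/2}/K$ the balls degenerate to Euclidean balls and the estimate follows directly from the triangle and Cauchy--Schwarz inequalities, so one may assume $L\le cN^{1/2}/K$.

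The main obstacle is entirely bookkeeping: checking that the affinely rescaled $P_I$, $P(L)$, caps, and frequency supports match the scale-$N_1$ definitions on the nose up to absolute constants, and controlling the mismatch between $K^{-2}P(L)$ and the scale-$N_1$ ball (both the factor-$K$ discrepancy in thickness and the $O(K)$-thickness drift of the AP direction), which is what forces the $O(K)$-fold covering. Crucially this covering is quantitatively free: the weights $W_{P'_l,100}$ are summed rather than maximized, and $\sum_l W_{P'_l,100}\lesssim W_{K^{-2}P(L),100}$ carries only an absolute constant, so no power of $K$ survives into \eqref{narrowpropeqn}.
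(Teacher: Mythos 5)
Your route is essentially the paper's: rescale the block $I'$ so it becomes the canonical neighborhood of a short generalized Dirichlet sequence at scale $N/K^2$, check that the caps and the dual fat APs $P_I$ transform exactly onto their scale-$(N/K^2)$ counterparts, and invoke the definition \eqref{20000} of the refined decoupling constant. Your extra step — covering $K^{-2}P(L)$, which is thicker by a factor $\sim K$ than the scale-$(N/K^2)$ ball and whose AP direction drifts by $O(K)$ of its thicknesses, by $O(K)$ finitely overlapping translates of that ball, then summing the weights via \eqref{weighttransitive} so that no power of $K$ survives — is correct, and in fact more explicit than the paper's one-line treatment of the same point.

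There is, however, one genuine defect: the claim that $\{b_i\}=\{K^2a_{n_0+i}\}$ is a generalized Dirichlet sequence with parameters exactly $(N/K^2,\theta/K^2)$ is false for a general block. The gaps $a_{n+1}-a_n$ strictly increase along the sequence (each second difference is at least $\frac{\theta}{4N^2}$), so if $a_2-a_1$ lies at or near the upper endpoint $\frac{4}{N}$ permitted by \eqref{defa_n}, then for $n_0>0$ one gets $K^2(a_{n_0+2}-a_{n_0+1})>\frac{4}{\tilde N}$ with $\tilde N=N/K^2$, and the rescaled block falls outside the class over which $Dec(N/K^2,\theta/K^2)$ is defined; since that class is cut out by hard constants and $Dec$ is a supremum over it, "almost in the class" yields no bound. (For the first block, $n_0=0$, your claim is fine.) This is exactly the point where the paper replaces the uniform factor $K^2$ by a block-dependent factor $K_l\in[K/\sqrt{2},K]$ chosen so that $K_l^2(a_{l+1}-a_l)\in[\frac{1}{4\tilde N},\frac{4}{\tilde N}]$; the price is that the second-difference parameter becomes $\theta'/K^2$ for some $\theta'\in[\theta/4,\theta]$, which is precisely why the statement carries $\sup_{\theta'\in[\theta/4,\theta]}Dec(N/K^2,\theta'/K^2)$. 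You quote this supremum but never actually need it — a sign the renormalization step is missing. Once you substitute $K_l^2$ for $K^2$ (your bookkeeping and the $O(K)$ covering go through verbatim since $K_l\sim K$), the argument is complete.
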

\begin{proof}
  First prove \eqref{narrowpropeqn} for $I'=\mc{N}_{L^2\theta/N^2}(\{a_n\}_{n=1}^{N^{1/2}/K}).$
  Note that $K^2I'=\mc{N}_{K^2L^2\theta/N^2}(\{K^2a_n\})_{n=1}^{N^{1/2}/K},$ and if we let $\tilde{a_n}=K^2a_n,$ $\tilde{N}=N/K^2$ and $\tilde{\theta}=\theta/K^2,$ then 
  $$\tilde{a_2}-\tilde{a_1} \in [\frac{K^2}{4N}, \frac{4K^2}{N}] = [\frac{1}{4\tilde{N}},\frac{4}{\tilde{N}}], \quad (\tilde{a_{n+1}}-\tilde{a_n})-(\tilde{a_n}-\tilde{a_{n-1}}) \in [\frac{K^2\theta}{4N^2}, \frac{4K^2\theta}{N^2}] = [\frac{\tilde{\theta}}{4\tilde{N}^2}, \frac{4\tilde{\theta}}{\tilde{N}^2}].$$
  If we define $\tilde{P}(L),$ $\tilde{P}_I$ by \eqref{defnLball}, \eqref{1} respectively with $N,L,\theta,v_j$ replaced by $\tilde{N},L,\tilde{\theta},K^2v_j,$ then for any $x_0$ we have  $\tilde{P}(L,x_0)\subset K^{-2}P(L,x_0),$ and $\tilde{P}_I(x_0)=K^{-2}P_I(x_0).$ Therefore by the definition of the refined decoupling constant for  $\mc{N}_{L^2\tilde{\theta}/\tilde{N}^2}(\{\tilde{a}_n\}_{n=1}^{\tilde{N}}),$ and the spatial change-of-variables $x\mapsto K^{-2}x$, we have
  $$\int_{X} |f_{I'}|^p \lesssim Dec(N/K^2,\theta/K^2)^p (\sup_{x\in X}\sum_{I\subset I'} \|f_I\|_{\stkout{L}^2(W_{P_{I}(x),100})}^2)^{p/2-1}(\sum_{I\subset I'} \|f_I\|^2_{L^2(W_{P(L),100})}).$$

  Now consider a general $I'\in \mc{I}'.$ Suppose $a_{l}$ is the first term in $I'\cap \{a_n\}_{n=1}^{N^{1/2}},$ and let $v_{l}=a_{l+1}-a_{l}.$ Then because of \eqref{defa_n} we have
  $v_{l}\in [v_1,2v_1].$ So we may choose $K_{l}\in [K/\sqrt{2},K]$ such that
  $$K_l^2v_l \in [\frac{1}{4\tilde{N}},\frac{4}{\tilde{N}}].$$
  Then
  $$K_l^2\left((a_{n+1}-a_n)-(a_n-a_{n-1})\right)\in [\frac{\theta K_l^2}{4N^2},\frac{4\theta K_l^2}{N^2}]=[\frac{\tilde{\theta}' }{4\tilde{N}^2},\frac{4\tilde{\theta}_l }{\tilde{N}^2}]$$
  for some $\tilde{\theta}_l\in [\tilde{\theta}/4,\tilde{\theta}].$ Let $\theta_l=K^2\tilde{\theta}_l,$ which lies in $[\theta/4,4\theta].$ 
  So again by a change of variable argument we have
  $$\int_{X} |f_{I'}|^p \lesssim Dec(N/K^2,\theta_l/K^2)^p (\sup_{x\in X}\sum_{I\subset I'} \|f_I\|_{\stkout{L}^2(W_{P_{I}(x),100})}^2)^{p/2-1}(\sum_{I\subset I'} \|f_I\|^2_{L^2(W_{P(L),100})}).$$
  Therefore we have shown \eqref{narrowpropeqn} for every $I'\in \mc{I}'.$

\end{proof}

The proof of Proposition \ref{narrowprop} actually shows that \eqref{narrowpropeqn} holds for every $f$ with frequency support in $\Omega$ (not only alternately spaced $f$) and every $X\subset P(L).$

\subsection{Broad part}

\begin{proposition} \label{broadprop} For $1\leq m\leq M-1$ and $I',I''\in\mc{I}'$ non-adjacent we have
   \begin{equation}\label{bilineardeceqn}
  \int_{X\cap U_{\alpha,r}\cap \Omega_{m}} |f_{m,I'}|^{2}|f_{m,I''}|^{2} \lesssim_\e  N^{C\e} K^C \left( \frac{r}{\alpha} \right)^2 \left( \sum_{I\in \mc{I}} \|f_I\|_{L^2(W_{P(L),100})}^2  \right),
  \end{equation}
\end{proposition}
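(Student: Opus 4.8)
The plan is to adapt the broad-part argument of Guth's high--low method \cite{guth2020improved}, in the refined form used for the parabola in \cite{guth2020falconer,demeter2020small}. Since $I'$ and $I''$ are non-adjacent in $\mc{I}'$ they are $\gtrsim K^{-1}N^{-1/2}$-separated on $\R$; after the $K^2$-rescaling from the proof of Proposition \ref{narrowprop} (replace $a_n$ by $K^2a_n$, so that $N\mapsto\tilde N=N/K^2$ and $\theta\mapsto\tilde\theta\sim\theta/K^2$) the rescaled pieces become $\gtrsim\tilde N^{-1/2}$-separated, hence transversal in the sense of Proposition \ref{bilresprop}, and all the structure needed here (the wave-packet decomposition at every scale $L_j$, the square functions $g_m$ and $g_m^h$, the fat APs $P(L_j)$, the sets $\Omega_m$ and $U_{\alpha,r}$) transfers under the rescaling at the cost of factors of $K^C$. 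So it is enough to prove the bilinear $L^4$ bound once $I',I''$ are transversal.

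With that reduction, I would establish the proposition through the chain
\begin{align*}
\int_{X\cap U_{\alpha,r}\cap\Omega_m}|f_{m,I'}|^{2}|f_{m,I''}|^{2}
&\lesssim_\e N^{C\e}K^C\int|g_m^{h}|^{2}\,W_{P(L_M),100}\\
&\lesssim_\e N^{C\e}K^C\lambda^{2}\sum_{I\in\mc{I}}\|f_I\|^{2}_{L^2(W_{P(L),100})},
\end{align*}
which yields \eqref{bilineardeceqn} since $\lambda=\tilde{C}_\e N^{\e}r/\alpha$. For the first inequality I would work at the sub-scale $s:=(L_mN^{1/2})^{1/2}$ --- the scale at which $g_m$ and $g_m^h$ are essentially locally constant, $P(s)$ being the fat AP dual to the common frequency support $\bigcup_{I_m}(\tilde I_m-\tilde I_m)$ of the summands of $g_m$, as in the discussion before Lemma \ref{localL^2orth} --- and tile the relevant region by translates $B'$ of $P(s)$. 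On each $B'$ one combines the bilinear restriction estimate (Proposition \ref{bilresprop}), the locally constant property (Proposition \ref{locconstprop}), and local $L^2$ orthogonality of the $\tilde I_m$ at scale $L_m$ to bound $\int_{B'}|f_{m,I'}|^2|f_{m,I''}|^2$ by $\lesssim_\e N^{C\e}K^C|B'|\,g_m^{(I')}(x_{B'})\,g_m^{(I'')}(x_{B'})$, where $g_m^{(I')}:=\sum_{I_m\subset I'}|f_{m,I_m}|^2*\rho_{I_m}\le g_m$ pointwise (using $|f_{m,I_m}|\le|f_{m+1,I_m}|$); hence by $\lesssim_\e N^{C\e}K^C\int_{B'}g_m^{2}$. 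Summing over the $B'$ meeting $\Omega_m$ and using the defining property $g_m\le 2|g_m^h|$ there --- both $g_m$ and $g_m^h$ being roughly constant on $B'$ --- gives the first inequality. For the second inequality I would invoke the High lemma (Lemma \ref{hilem}) to pass to $N^\e\int\sum_{I_m}|f_{m+1,I_m}|^4\,W_{P(L_M),100}$; the pruning at scales $\geq m+1$ forces $f_{m+1,I_m}=\sum_{I_{m+1}\subset I_m}f_{m+1,I_{m+1}}$ to be a sum of $N^\e$ many functions, each built only from wave packets of height $\le\lambda$, so $\|f_{m+1,I_m}\|_{L^\infty}\lesssim_\e N^{C\e}\lambda$ in the locally-constant sense and therefore $\sum_{I_m}|f_{m+1,I_m}|^4\lesssim_\e N^{C\e}\lambda^2\sum_{I_m}|f_{m+1,I_m}|^2$; finally $\int\sum_{I_m}|f_{m+1,I_m}|^2\,W_{P(L_M),100}\lesssim_\e N^{C\e}\sum_{I\in\mc{I}}\|f_I\|^2_{L^2(W_{P(L),100})}$ by Cauchy--Schwarz (losing $N^\e$ at each of the $M\lesssim_\e 1$ scales) together with $|f_{k,I_k}|\le|f_{k+1,I_k}|$.

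The main obstacle is the first inequality in that chain: making rigorous the statement that transversality lets one replace the product of coarse pieces $|f_{m,I'}|^2|f_{m,I''}|^2$ by (essentially) $g_m^2$ with only a $K^C$ loss, rather than the trivial loss $(\#\{I_m\subset I'\})^2$ produced by Cauchy--Schwarz alone. This forces one to apply the bilinear restriction estimate at the correct sub-scale $s$, so that $g_m$, $g_m^h$ and the individual $|f_{m+1,I_m}|^2$ are all simultaneously (approximately) locally constant there, and to coordinate this with the weights $W_{P(L_M),100}$ (these appear because Proposition \ref{bilresprop} is stated with unweighted global $L^2$ norms) and with the rescaling bookkeeping that turns non-adjacency of $I',I''$ into genuine transversality. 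Everything else amounts to tracking the boundedly many $\lesssim_\e N^{C\e}$ losses, which is harmless since $M\lesssim_\e 1$.
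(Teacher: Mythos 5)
Your overall architecture coincides with the paper's proof of Proposition \ref{broadprop} essentially step for step: rescale by $K^2$ (as in the narrow part) so that non-adjacent $I',I''$ become transversal, apply the bilinear restriction estimate (Proposition \ref{bilresprop}) on translates of $P(L_m')$ with $L_m'=(L_mN^{1/2})^{1/2}$, use the locally constant property and local $L^2$ orthogonality to dominate by $\int g_m^2$, pass to $g_m^h$ via the defining property of $\Omega_m$, apply the High Lemma (Lemma \ref{hilem}), and then use the pruning height bound to trade two powers of $f_{m+1,I_m}$ for $\lambda^2\sim N^{2\e}(r/\alpha)^2$. All of this is the paper's argument.

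The one step that fails as justified is the final descent through the scales: you claim $\int\sum_{I_m}|f_{m+1,I_m}|^2W_{P(L_M),100}\lesssim_\e N^{C\e}\sum_{I\in\mc I}\|f_I\|^2_{L^2(W_{P(L),100})}$ ``by Cauchy--Schwarz, losing $N^\e$ at each of the $M\lesssim_\e 1$ scales.'' The number of scales between $m$ and $M$ is of order $1/\e$, so an $N^\e$ loss per scale accumulates to $N^{\e(M-m)}$, which can be a fixed power of $N$ (roughly $N^{1/2}$ when $m=1$ and $L=O(1)$); that is not $N^{C\e}$ with an absolute constant $C$, and such a loss makes the broad term useless in Proposition \ref{induction}, where the additive error must be of size $K^DN^\e$. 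The bound $M\lesssim_\e 1$ only licenses accumulating absolute-constant losses, $C^M\lesssim_\e 1$, not per-scale losses of size $N^\e$. The correct tool at this point is local $L^2$ orthogonality (Lemma \ref{localL^2orth}): for each $I_k$ the children $f_{k+1,I_{k+1}}$, $I_{k+1}\subset I_k$, have finitely overlapping frequency supports, so $\int|f_{k+1,I_k}|^2W\lesssim\sum_{I_{k+1}\subset I_k}\int|f_{k+1,I_{k+1}}|^2W$ with an absolute constant; combining this with the pointwise bounds $|f_{k,I_k}|\le|f_{k+1,I_k}|$ and iterating down to scale $M$ is exactly the paper's inequality \eqref{1122}, with total loss $\lesssim_\e 1$. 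Since you already invoke Lemma \ref{localL^2orth} in the per-block step, the repair is immediate, but as written the Cauchy--Schwarz justification does not yield the stated proposition.
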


\begin{proof}
  Fix a $P(L_m')$ such that $P(L_m')\cap X \cap U_{\alpha,r}\cap \Omega_m \neq \emptyset.$ Recall that $L_m'=(L_mN^{1/2})^{1/2}$ as defined in Section \ref{BKBRsec}. By Proposition \ref{bilresprop} (together with rescaling) and H\"{o}lder's inequality we have
  \begin{align*}
    \int_{P(L_m')} |f_{m,I'}|^{2}|f_{m,I''}|^{2} & \lesssim_\e N^\e K^{C}  |P(L_m')|^{-1}\int \sum_{I_m\subset I'} |f_{m,I_m}|^2  W_{P(L_m'),200} \int \sum_{I_m\subset I''} |f_{m,I_m}|^2 W_{P(L_m'),200} \\
     & \lesssim N^\e K^{C} \int (\sum_{I_m} |f_{m,I_m}|^2)^2  W_{P(L_m'),200},
  \end{align*}
  and due to $|f_{m,I_m}|\leq |f_{m+1,I_m}|$ we further have
  $$ \int_{P(L_m')} |f_{m,I'}|^{2}|f_{m,I''}|^{2} \lesssim_\e N^\e  K^{C} \int (\sum_{I_m} |f_{m+1,I_m}|^2)^2  W_{P(L_m'),200}.$$
  Now applying Proposition \ref{locconstprop} we obtain
  \begin{align*}
     \int_{P(L_m')} |f_{m,I'}|^{2}|f_{m,I''}|^{2} & \lesssim_\e N^\e K^{C} |P(L_m')|^{-1} \left(\int (\sum_{I_m} |f_{m+1,I_m}|^2)  W_{P(L_m'),100}\right)^2 \\
     & \lesssim N^\e K^{C} \int_{P(L_m')} g_m^2.
  \end{align*}
  Note that from the definition of $\Omega_m$ and the definition of $g_m:=\sum_{I_m}|f_{m+1,I_m}|^2*\rho_{I_m}$ we have $x\in \Omega_m$ implies $|g_m(x)|\sim \sup_{y\in P(L_m'(x))}|g_m(y)| \lesssim |g_m^h(x)|.$ Therefore we have (by Proposition \ref{locconstprop})
  $$\int_{P(L_m')} g_m^2 \lesssim |P(L_m')||g_m^h(x)|^2\lesssim \int|g_m^h|^2 W_{P(L_m'),100},$$
  where $x\in P(L_m')\cap \Omega_m.$
  Summing over disjoint $P(L_m')$ that intersect $X\cap U_{\alpha,r }\cap \Omega_m$ we obtain
  $$\int_{X\cap U_{\alpha,r}\cap \Omega_m} |f_{m,I'}|^{2}|f_{m,I''}|^{2} \lesssim_\e N^\e K^{C} \int |g_m^h|^2 W_{P(L_M),100} \lesssim N^{2\e} K^C \int \sum_{I_m} |f_{m+1,I_m}|^4 W_{P(L_M),100},$$
  where the last inequality is due to Lemma \ref{hilem}. By H\"{o}lder's inequality and the definition of $f_{m+1,I_{m+1}}$ we have
  \begin{align*}
    \int \sum_{I_m} |f_{m+1,I_m}|^4 W_{P(L_M),100} & \lesssim N^{C\e} \int \sum_{I_{m+1}} |f_{m+1,I_{m+1}}|^4 W_{P(L_M),100} \\
     & \lesssim N^{C\e} \left( \frac{r}{\alpha} \right)^2 \int \sum_{I_{m+1}} |f_{m+1,I_{m+1}}|^2 W_{P(L_M),100}.
  \end{align*}
  By the pointwise inequality $|f_{m+1,I_{m+1}}|\leq |f_{m+2,I_{m+1}}|$ and local $L^2$ orthogonality (Proposition \ref{localL^2orth}),
  \begin{align*}
    \int \sum_{I_{m+1}} |f_{m+1,I_{m+1}}|^2 W_{P(L_M),100} & \lesssim \int \sum_{I_{m+2}} |f_{m+2,I_{m+1}}|^2 W_{P(L_M),100} \\
     & \lesssim \int \sum_{I_{m+2}} |f_{m+2,I_{m+2}}|^2 W_{P(L_M),100}.
  \end{align*}
  Continuing this process we obtain
  \begin{equation}\label{1122}
    \int \sum_{I_{m+1}} |f_{m+1,I_{m+1}}|^2 W_{P(L_M),100} \lesssim_\e \int \sum_{I_{M}} |f_{M,I_{M}}|^2 W_{P(L_M),100}.
  \end{equation}
  Recalling that $|f_{M,I_M}| \leq |f_{I_M}|=|f_I|$ we conclude
  $$\int_{X\cap U_{\alpha,r}\cap \Omega_m} |f_{m,I'}|^{2}|f_{m,I''}|^{2} \lesssim_\e N^{C\e} K^C \left( \frac{r}{\alpha} \right)^2 \int \sum_{I} |f_{I}|^2 W_{P(L_M),100}.$$
\end{proof}

\begin{proposition}\label{broad0prop}
  For $I',I''\in\mc{I}'$ non-adjacent we have
   \begin{equation*}
  \int_{X\cap U_{\alpha,r}\cap \Omega_0} |f_{0,I'}|^{p/2}|f_{0,I''}|^{p/2} \lesssim_\e N^{\e} (\sup_{x\in X}\sum_{I} \|f_I\|_{\stkout{L}^2(W_{P_{I}(x),100})}^2)^{p/2-1}(\sum_I \|f_I\|^2_{L^2(W_{P(L),100})}).
  \end{equation*}
\end{proposition}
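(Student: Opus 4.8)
The plan is to exploit that $\Omega_0$ is the ``low at every intermediate scale'' region: iterating the Low lemma collapses all the square functions down to the finest one, $g_M$, which is $\lesssim r$ on $U_{\alpha,r}$, and this forces the narrow pieces $f_{0,I'}$ to be pointwise small.

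First, on $X\cap U_{\alpha,r}\cap\Omega_0$ we have, by the defining inequalities of $\Omega_0$ and Lemma \ref{low}, the chain
\[ g_1\le 2|g_1^\ell|\lesssim g_2\le 2|g_2^\ell|\lesssim\cdots\lesssim g_{M-1}\le 2|g_{M-1}^\ell|\lesssim g_M, \]
while $g_M\le 2r$ on $U_{\alpha,r}$ by \eqref{defnUalphar}; since $M\lesssim_\e 1$ the accumulated constant is $\lesssim_\e 1$, so $g_1\lesssim_\e r$ there. Now $f_{0,I'}=f_{1,I'}=\sum_{I_1\subset I'}f_{1,I_1}$, and since $L_1=N^{1/2-\e}$ there are only $N^{1/2}/L_1=N^\e$ caps $I_1\in\mc{I}_1$; by Cauchy--Schwarz and item \ref{item5},
\[ |f_{0,I'}(x)|^2\le N^\e\sum_{I_1\subset I'}|f_{1,I_1}(x)|^2\le N^\e\sum_{I_1}|f_{2,I_1}(x)|^2 .\]
Each $f_{2,I_1}$ has frequency support in $C\tilde I_1$, whose dual fat AP is (comparable to) $P_{I_1}$, so by the locally constant property (Proposition \ref{locconstprop}, applied to $f_{2,I_1}$ with $P=P_{I_1}(x)$ and a sufficiently large decay order, exactly as in the proof of Lemma \ref{fmcomparablelem}) one gets $|f_{2,I_1}(x)|^2\lesssim(|f_{2,I_1}|^2*\rho_{I_1})(x)$; summing over $I_1$ and using $g_1\lesssim_\e r$ yields the pointwise bound $|f_{0,I'}(x)|\lesssim_\e N^\e r^{1/2}$, and likewise for $I''$, on $X\cap U_{\alpha,r}\cap\Omega_0$.

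Since $p\ge 2$ we may factor $|f_{0,I'}|^{p/2}|f_{0,I''}|^{p/2}\lesssim_\e (N^\e r^{1/2})^{p-2}\,|f_{0,I'}|\,|f_{0,I''}|$ on $X\cap U_{\alpha,r}\cap\Omega_0$, so by Cauchy--Schwarz
\[ \int_{X\cap U_{\alpha,r}\cap\Omega_0}|f_{0,I'}|^{p/2}|f_{0,I''}|^{p/2}\lesssim_\e N^{C\e}r^{\frac p2-1}\Big(\int_X|f_{0,I'}|^2+\int_X|f_{0,I''}|^2\Big). \]
To control $\int_X|f_{0,I'}|^2$ we use $X\subset P(L)$ and $1_{P(L)}\lesssim W_{P(L),100}$ (by \eqref{weighttransitive}), then Cauchy--Schwarz over the $N^\e$ caps $I_1$ and item \ref{item5} to pass to $\sum_{I_1}|f_{2,I_1}|^2$, and finally the iterated local $L^2$-orthogonality chain already used in \eqref{1122} (applying Proposition \ref{localL^2orth} at each scale $L_1,\dots,L_M$, together with $|f_{M,I_M}|\le|f_I|$); this gives $\int_X|f_{0,I'}|^2\lesssim_\e N^\e\sum_I\|f_I\|^2_{L^2(W_{P(L),100})}$, and the same for $I''$.

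Finally, set $A:=\sup_{x\in X}\sum_I\|f_I\|_{\stkout{L}^2(W_{P_I(x),100})}^2$ and $B:=\sum_I\|f_I\|^2_{L^2(W_{P(L),100})}$. If $X\cap U_{\alpha,r}=\emptyset$ the claim is vacuous; otherwise pick $x_0\in X\cap U_{\alpha,r}$, so that $r/2<g_M(x_0)=\sum_I(|f_I|^2*\rho_I)(x_0)$, and since $\rho_I\lesssim W_{P_I(0),100}/\|W_{P_I(0),100}\|_{L^1(\R)}$ this is $\lesssim\sum_I\|f_I\|_{\stkout{L}^2(W_{P_I(x_0),100})}^2\le A$; hence $r\lesssim A$. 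As $\tfrac p2-1\ge0$, combining the last two displays gives
\[ \int_{X\cap U_{\alpha,r}\cap\Omega_0}|f_{0,I'}|^{p/2}|f_{0,I''}|^{p/2}\lesssim_\e N^{C\e}\,r^{\frac p2-1}B\lesssim_\e N^{C\e}\,A^{\frac p2-1}B, \]
which is the assertion after relabelling $\e$. The only genuinely delicate step is the pointwise transfer $|f_{2,I_1}(x)|^2\lesssim(|f_{2,I_1}|^2*\rho_{I_1})(x)$ from the value of the band-limited function at $x$ to a term of $g_1$ at the same point; this is a routine use of the locally constant property relying on the frequency support of $f_{2,I_1}$ being a fat AP dual to $P_{I_1}$, and everything else is bookkeeping built around the ``all scales low'' structure of $\Omega_0$.
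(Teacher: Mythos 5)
Your proof is correct and follows essentially the same route as the paper's: Cauchy--Schwarz over the $N^{\e}$ caps $I_1$, the $\Omega_0$/Low-lemma chain $g_1\lesssim_\e g_M$, the locally constant property to pass to $g_1$, and the iterated local $L^2$-orthogonality bound \eqref{1122}. The only (harmless) deviation is that you bound the pointwise factor by $r^{1/2}$ via $g_M\le 2r$ on $U_{\alpha,r}$ and then use $r\lesssim \sup_{x\in X}\sum_I\|f_I\|^2_{\stkout{L}^2(W_{P_I(x),100})}$, whereas the paper bounds $\sup_{X\cap\Omega_0}\sum_{I_1}|f_{1,I_1}|^2$ by that supremum directly, without reference to $r$.
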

\begin{proof}
  By the Cauchy-Schwarz inequality we have
  \begin{multline*}
    \int_{X\cap U_{\alpha,r}\cap \Omega_0} |f_{0,I'}|^{p/2}|f_{0,I''}|^{p/2} \lesssim N^\e \int_{X\cap U_{\alpha,r}\cap \Omega_0} (\sum_{I_1}|f_{1,I_1}|^2)^{p/2} \\
    \lesssim N^\e \sup_{x\in X\cap \Omega_0}(\sum_{I_1}|f_{1,I_1}|^2)^{p/2-1}\int \sum_{I_1}|f_{1,I_1}|^2 W_{P(L_M),100}.
  \end{multline*}
  We have shown in the proof of Proposition \ref{broadprop} (inequality \eqref{1122}) that
  $$\int \sum_{I_{1}} |f_{1,I_{1}}|^2 W_{P(L_M),100} \lesssim_\e \int \sum_{I} |f_{I}|^2 W_{P(L),100}.$$
  So it suffices to show
  \begin{equation}\label{1123}
    \sup_{x\in X\cap  \Omega_0}(\sum_{I_1}|f_{1,I_1}|^2) \lesssim_\e \sup_{x\in X}\sum_{I} \|f_I\|_{\stkout{L}^2(W_{P_{I}(x),100})}^2.
  \end{equation}
  From the locally constant property (Proposition \ref{locconstprop}) we have
  $$\sum_{I_1}|f_{1,I_1}|^2(x) \lesssim \sum_{I_1}|f_{1,I_1}|^2*\rho_{I_1}(x)\lesssim \sum_{I_1}|f_{2,I_1}|^2*\rho_{I_1}(x)=g_1(x),$$
  and by Lemma \ref{low} we have for $x\in X\cap \Omega_0,$
  $g_1(x) \lesssim_\e g_{M}(x).$ So we conclude
  $$\sup_{x\in X\cap  \Omega_0}\sum_{I_1}|f_{1,I_1}|^2(x) \lesssim_\e \sup_{x\in X\cap  \Omega_0}g_M(x) \lesssim \sup_{x\in X}\sum_{I} \|f_I\|_{\stkout{L}^2(W_{P_{I}(x),100})}^2.$$
\end{proof}

\subsection{Proof of Proposition \ref{induction}}


Let $X\subset P(L).$ We choose $\alpha,r$ as in Lemma \ref{BN}.  Note that $r\leq 2 \|\sum_{I}|f_I|^2\|_{L^\infty(X)}$ since otherwise $X\cap U_{\alpha, r}=\emptyset.$ So
$$r\leq 2\|\sum_{I}|f_I|^2\|_{L^\infty(X)} \lesssim \sup_{x\in X}\sum_{I\subset I'} \|f_I\|_{\stkout{L}^2(W_{P_{I}(x),100})}^2.$$
Also $\alpha \geq r^{1/2}$ implies that $r^{3-p/2}/\alpha^{6-p}\leq 1$ as  $p\leq 6.$ Therefore
combining Proposition \ref{narrowprop}, \ref{broadprop}, \ref{broad0prop} and Lemma \ref{BN} we obtain
\begin{multline}\label{10000}
  \int_{X}|f|^p\lesssim_\e \left( \sup_{\theta'\in [\theta/4,4\theta]}Dec(N/K^2,\theta'/K^2)^p
  +\log^C(\theta^{-1}+1) N^{C\e}K^C \right)  \\
  (\sup_{x\in X}\sum_{I} \|f_I\|_{\stkout{L}^2(W_{P_{I}(x),100})}^2)^{p/2-1}
  (\sum_I \|f_I\|^2_{L^2(W_{P(L),100})}).
\end{multline}


\section{A decoupling inequality for generalized Dirichlet sequences}\label{longsec}

In this section we focus only on generalized Dirichlet sequences with parameter $\theta=1$. That is, we say $\{a_n\}_{n=1}^N$ is a generalized Dirichlet sequence if it satisfies \eqref{defa_n} with $\theta=1.$ We will present a decoupling inequality for generalized Dirichlet sequences, by combining Theorem \ref{decouplingthm} and the flat decoupling (Proposition \ref{flatdec} below). Then we show that for certain choices of the generalized Dirichlet sequences  $\{a_n\}_{n=1}^{N}$ the decoupling inequality that we obtain in this way is sharp (up to $C_\e N^\e$). 

More precisely, for $1\leq L\leq N^{1/2},$ we let $\Omega'$ denote the $L^2/N^2-$neighborhood of $\{a_n\}_{n=1}^{N},$ and let $\{J\}_{J\in \mc{J}}$ be a partition of $\Omega'$ into $\Omega' \cap B_{N^{-1/2}}$ where $B_{N^{-1/2}}$ runs over a tiling of $\R$ by balls of radius $N^{-1/2}.$ So there are about $N^{1/2}$ many $J$ and each $J$ contains $\mc{O}(N^{1/2})$ many consecutive intervals in $\Omega'.$ For each $J$ we let $\mc{I}_J$ be the partition of $J$ into $I,$ which is a union of $L$ many consecutive intervals in $\Omega'.$

We have the following decoupling inequality for the partition $\Omega'= \bigsqcup_{J\in \mc{J}}\bigsqcup_{I\in \mc{I}_J} I.$
\begin{theorem}\label{longdecthm} For $2\leq p\leq 6,$ we have
  \begin{equation}\label{334}
    \|f\|_{L^p(\R)} \lesssim_\e N^{1/4-1/(2p)+\e} \left( \sum_{J\in \mc{J}} \sum_{I\in \mc{I}_J} \|f_I\|^2_{L^p(\R)} \right)^{1/2}
  \end{equation}
  for every $f:\R\rightarrow \C$ with $\supp \hat{f}\subset \Omega'.$ There exists a choice of $\{a_n\}_{n=1}^{N}$ (satisfying \eqref{defa_n} with $\theta=1$) such that the above estimate is sharp up to $N^\e$ factor.
\end{theorem}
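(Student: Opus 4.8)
The plan is to combine Theorem~\ref{decouplingthm} (decoupling for the canonical partition of a short generalized Dirichlet sequence) with flat decoupling, in two stages. First, I note that $\mc{J}$ is a partition of $\Omega'$ into pieces $J$, each of which is (essentially) the $L^2/N^2$-neighborhood of $N^{1/2}$ consecutive terms of $\{a_n\}_{n=1}^N$, and moreover $\{a_n\}$ restricted to such a block of $N^{1/2}$ consecutive indices is, after translation, a short generalized Dirichlet sequence in the sense of Definition~\ref{defnDirichlet} (with $\theta=1$, up to absolute constants in \eqref{defa_n}); here the width $L^2/N^2$ of $\Omega'$ matches the canonical neighborhood width for the parameter $L$ on each block. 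So Theorem~\ref{decouplingthm} applies on each fixed $J$ to decouple $f_J := \sum_{I\in\mc{I}_J} f_I$ into its pieces $f_I$, $I\in\mc{I}_J$:
\[
  \|f_J\|_{L^p(\R)} \lesssim_\e N^\e \Bigl( \sum_{I\in\mc{I}_J} \|f_I\|_{L^p(\R)}^2 \Bigr)^{1/2}, \qquad 2\le p\le 6.
\]
Second, I must pass from $f=\sum_{J\in\mc{J}} f_J$ to $(\sum_J \|f_J\|_{L^p}^2)^{1/2}$. The frequency intervals $J$ are the pieces of a partition of an interval of length $\sim N^{-1/2}$ (the full $\Omega'$ has diameter $\sim 1$, but the relevant geometry is that there are $\sim N^{1/2}$ of them, each of length $\sim N^{-1/2}$, and crucially they are \emph{intervals} with no convexity being exploited at this scale). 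Flat decoupling (Proposition~\ref{flatdec}) for $M$ congruent intervals costs $M^{1/2-1/p}$ in $\ell^2 L^p$; applied with $M\sim N^{1/2}$ this gives
\[
  \|f\|_{L^p(\R)} \lesssim_\e N^\e \, (N^{1/2})^{1/2-1/p} \Bigl( \sum_{J\in\mc{J}} \|f_J\|_{L^p(\R)}^2 \Bigr)^{1/2}
  = N^{\e}\, N^{1/4-1/(2p)} \Bigl( \sum_{J} \|f_J\|_{L^p(\R)}^2 \Bigr)^{1/2}.
\]
Chaining the two stages (using that $(\sum_J (\sum_{I\in\mc{I}_J}\|f_I\|_{L^p}^2)^{1/2\cdot 2})^{1/2} = (\sum_J \sum_{I\in\mc{I}_J}\|f_I\|_{L^p}^2)^{1/2}$) yields \eqref{334}. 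One bookkeeping point to check: the two applications must be compatible, i.e.\ flat decoupling should be applied first to the $J$'s and canonical decoupling applied on each $J$, or vice versa; either order works since the $J$'s are disjoint intervals and one can interpolate/iterate freely, but I would write it in the order above (canonical decoupling per block, then flat decoupling across blocks) since each $f_J$ genuinely has frequency support in the single interval $J$.

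For the sharpness statement, I need to exhibit a generalized Dirichlet sequence $\{a_n\}_{n=1}^N$ with $\theta=1$ for which \eqref{334} cannot be improved. The idea, flagged in the introduction and in the discussion around Corollary~\ref{APinconvex} and Lemma~\ref{longexample}, is to take a sequence that contains a genuine arithmetic progression: concretely, a sequence whose first $\sim N^{1/2}$ terms form an AP with common difference $\sim N^{-1}$ and whose successive second differences are $\sim N^{-2}$ outside that initial block (so that \eqref{defa_n} still holds globally with $\theta=1$). On such a sequence one block $J_0$ is a fat AP that genuinely \emph{degenerates}, for the extremal test function, to something on which flat decoupling at the $N^{1/2}$-scale is sharp (random signs on the $J$'s realize the $\ell^2\to L^p$ loss $M^{1/2-1/p}$ by Khintchine), while simultaneously the $\ell^2 L^6$-critical example (a smooth bump of height $1$ on each $I$) shows the per-block decoupling constant $N^\e$ is necessary. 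I would take $\widehat{f_I}$ to be a random sign times a smooth bump of height $1$ adapted to $I$, compute $\|\sum_I f_I\|_{L^p}$ on a ball of radius $\sim N^2/L^2$ (the ``$P(L)$'' ball, which for this sequence is Euclidean when $L\lesssim N^{1/4}$) using Khintchine and the locally constant property, compare to $(\sum_{I,J}\|f_I\|_{L^p}^2)^{1/2}$, and verify the ratio is $\gtrsim N^{1/4-1/(2p)}$; I would borrow the relevant calculation essentially verbatim from the sharpness example below Theorem~\ref{smallcapthm_intro} and from Lemma~\ref{longexample}.

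The main obstacle I expect is the sharpness half, specifically constructing the extremal sequence and pushing through the lower-bound computation so that \emph{both} factors $N^\e$ and $N^{1/4-1/(2p)}$ are simultaneously forced by a single $\{a_n\}$ (or, if that is not possible for a single test function, arguing that the product of the two losses is unavoidable by a suitable combination/superposition of the AP example and the constant-bump example). A secondary, more routine obstacle is confirming that a block of $N^{1/2}$ consecutive terms of a $\theta=1$ short... sequence really does satisfy Definition~\ref{defnDirichlet} after rescaling --- this is where one must be slightly careful that the constants $\tfrac14$ and $4$ in \eqref{defa_n} are stable under restriction to a sub-block (they are, since convexity is a local condition and the first-difference normalization $a_{i+1}-a_i\sim N^{-1}$ persists), and that the chosen neighborhood width $L^2/N^2$ is exactly the canonical one for parameter $L$ on each block, so Theorem~\ref{decouplingthm} applies as a black box with no loss.
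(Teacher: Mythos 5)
Your proof of the decoupling inequality \eqref{334} is exactly the paper's argument: apply Theorem \ref{decouplingthm} on each block $J$ (a block of $\sim N^{1/2}$ consecutive terms is again a short generalized Dirichlet sequence with $\theta=1$ up to harmless constants, and $L^2/N^2$ is the canonical width for parameter $L$), then apply Proposition \ref{flatdec} across the $\sim N^{1/2}$ blocks at cost $(N^{1/2})^{1/2-1/p}=N^{1/4-1/(2p)}$, and chain the two. No issues with that half.

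The sharpness half has a genuine gap, in two respects. First, your proposed extremal sequence --- one whose first $\sim N^{1/2}$ terms form an AP with common difference $\sim N^{-1}$ --- is not admissible: \eqref{defa_n} with $\theta=1$ forces every second difference to lie in $[\tfrac{1}{4N^2},\tfrac{4}{N^2}]$, so no three consecutive terms can be in arithmetic progression and a consecutive AP block is impossible. The AP that does the work in the paper has common difference $N^{-1/2}$, not $N^{-1}$, and occurs as a sparse subsequence $a_{n_k}=k/N^{1/2}$ with $n_k=kN^{1/2}-k^2$, roughly one AP element per block $J$, which is compatible with the convexity constraint; producing such a sequence is precisely the content of Lemma \ref{longexample} (the explicit $g(x)=\frac{4x+(N^{1/2}-\sqrt{N-4x})^2}{4N}$), and this construction is the part of the sharpness proof your sketch does not supply. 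Second, the random-sign/Khintchine mechanism you invoke cannot produce the lower bound for the $\ell^2$-normalized inequality \eqref{334}: with random signs $\|\sum_J \pm f_J\|_{L^p}\sim\|(\sum_J|f_J|^2)^{1/2}\|_{L^p}$, and for single-bump test functions this is comparable to $(\sum_J\|f_J\|_{L^p}^2)^{1/2}$, so the ratio is $O(1)$; random signs only saturate the $\ell^p$-normalized small cap bound of Theorem \ref{smallcapthm_intro}. The loss in \eqref{334} must come from constructive interference with constant coefficients on the embedded AP: taking $f=\phi_{N^2/L^2}(x)\sum_{k\le cN^{1/2}}e^{ixk/N^{1/2}}$ one has $|f|\sim N^{1/2}$ on the fat AP $P^{C}_{N^{1/2}}(0)\cap B_{CN^2/L^2}(0)$, of measure $\sim N^{3/2}/L^2$, so $\|f\|_{L^p}\gtrsim N^{1/2}(N^{3/2}/L^2)^{1/p}$, while $(\sum_{I}\|f_I\|_{L^p}^2)^{1/2}\sim N^{1/4}(N^2/L^2)^{1/p}$, giving exactly the ratio $N^{1/4-1/(2p)}$. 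Your worry about forcing both factors simultaneously is moot: sharpness is only claimed up to $N^\e$, so one only needs to realize $N^{1/4-1/(2p)}$, and for the extremal example the per-block decoupling is lossless anyway since each block carries a single active cap.
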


\subsection{Proof of \eqref{334}}
From Theorem \ref{decouplingthm} we have for every $J\in \mc{J}$ and $2\leq p\leq 6,$
\begin{equation}\label{333}
  \|f_J\|_{L^p(\R)} \lesssim_\e N^\e \left( \sum_{I\in \mc{I}_J} \|f_I\|^2_{L^p(\R)} \right)^{1/2}.
\end{equation}

Next we decouple $f_J$ into $f_I$ using the flat decoupling:
\begin{proposition}\label{flatdec}
  Let $\mc{U}$ denote the partition $[0,M)=\bigsqcup_{m=0}^{M-1}[m,m+1).$ Then for $p\geq 2$ we have
  $$\|f\|_{L^p(\R)} \lesssim_p M^{1/2-1/p} \left( \sum_{U\in \mc{U}} \|f_{U}\|_{L^p(\R)}^2 \right)^{1/2}$$
  for every $f:\R\rightarrow \C$ with $\supp \hat{f} \subset [0,M).$
\end{proposition}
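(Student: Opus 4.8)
The plan is to obtain Proposition~\ref{flatdec} by interpolating between its two endpoints. At $p=2$ the inequality is just Plancherel together with the disjointness of the Fourier supports of the pieces: $\|f\|_{L^2(\R)}^2=\sum_{U\in\mc U}\|f_U\|_{L^2(\R)}^2$, which matches the claim since $M^{1/2-1/p}=1$ there. At $p=\infty$ the triangle inequality followed by Cauchy--Schwarz over the $M=\#\mc U$ pieces gives $\|f\|_{L^\infty(\R)}\le\sum_U\|f_U\|_{L^\infty(\R)}\le M^{1/2}(\sum_U\|f_U\|_{L^\infty(\R)}^2)^{1/2}$, which is again the claim. Interpolating the two, with interpolation parameter $\vartheta$ fixed by $\tfrac1p=\tfrac{1-\vartheta}{2}$, i.e. $\vartheta=1-\tfrac2p$, produces exactly the exponent $M^{\vartheta/2}=M^{1/2-1/p}$ for every $2\le p\le\infty$.

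To run the interpolation cleanly I would recast it as a bound for one linear operator. For each $U=[m,m+1)\in\mc U$ let $\chi_U$ be a smooth bump, equal to $1$ on $U$ and supported in $[m-1,m+2)$, with all $\chi_U$ translates of a single bump, and let $\tilde P_U$ be the Fourier multiplier with symbol $\chi_U$, so $\tilde P_Ug=g*\widecheck{\chi_U}$ and $\|\widecheck{\chi_U}\|_{L^1(\R)}=A$ is independent of $m$. Define $S$ on finite sequences $(g_U)_{U\in\mc U}$ by $S((g_U)_U):=\sum_{U\in\mc U}\tilde P_Ug_U$. Because at every frequency at most three of the symbols $\chi_U$ are nonzero, $S$ maps $\ell^2_{\mc U}(L^2(\R))\to L^2(\R)$ with norm $\lesssim 1$ (Plancherel plus Cauchy--Schwarz over the $\le 3$ overlapping pieces); because there are only $M$ summands, Young's inequality gives $\|S((g_U)_U)\|_{L^\infty(\R)}\le A\sum_U\|g_U\|_{L^\infty(\R)}\le AM^{1/2}(\sum_U\|g_U\|_{L^\infty(\R)}^2)^{1/2}$, so $S$ maps $\ell^2_{\mc U}(L^\infty(\R))\to L^\infty(\R)$ with norm $\lesssim M^{1/2}$. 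Standard vector-valued complex interpolation of these two bounds then yields $\|S((g_U)_U)\|_{L^p(\R)}\lesssim_p M^{1/2-1/p}(\sum_U\|g_U\|_{L^p(\R)}^2)^{1/2}$ for $2\le p\le\infty$. Applying this with $g_U=f_U$, and noting that $\tilde P_Uf_U=f_U$ because $\supp\widehat{f_U}\subset U$ where $\chi_U\equiv 1$, we get $S((f_U)_U)=\sum_U f_U=f$, which is exactly the proposition. (Alternatively one could first prove the $\ell^{p'}L^p$ version of the inequality with an $O_p(1)$ constant by the same interpolation, using $\ell^1(L^\infty)$ at the second endpoint, and then pass to the $\ell^2$ norm by H\"older's inequality since $p'\le 2$.)

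There is no substantive difficulty here; the one point that needs care is that one should not try to interpolate the sharp Fourier restriction to $[m,m+1)$, which is unbounded on $L^\infty$. Smoothing it to $\tilde P_U$ repairs this --- the only cost is $O(1)$ frequency overlap on the $L^2$ side and an $L^1$ convolution kernel on the $L^\infty$ side --- while $\tilde P_U$ still acts as the identity on $f_U$, so no generality is lost and the constant remains $O_p(1)$ times $M^{1/2-1/p}$.
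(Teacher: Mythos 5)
Your proof is correct, but it runs along a different technical route than the paper. The paper proves the localized statement $\|f\|_{L^p(B_1)}\lesssim M^{1/2-1/p}\bigl(\sum_{U}\|f_U\|_{L^p(W_{B_1,100})}^2\bigr)^{1/2}$ directly by the pointwise H\"older bound $\|f\|_{L^p(B_1)}^p\le\|f\|_{L^\infty(B_1)}^{p-2}\|f\|_{L^2(B_1)}^2$, then controls the $L^\infty$ factor by the triangle inequality, Cauchy--Schwarz and the locally constant property (Proposition \ref{locconstprop}), controls the $L^2$ factor by local orthogonality with the weights $W_{B_1,100}$, and finally sums over unit balls; no interpolation-space theory is invoked, and the argument produces the weighted local form that meshes with the rest of the machinery (and with the refined flat decoupling of Proposition \ref{flatprop}). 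You instead globalize the same two endpoints: after replacing the sharp frequency cutoffs by smooth multipliers $\tilde P_U$ (correctly flagged as the point needing care, since sharp restriction is not bounded on $L^\infty$), you bound the operator $S((g_U)_U)=\sum_U\tilde P_Ug_U$ from $\ell^2(L^2)$ to $L^2$ by bounded overlap of the symbols and from $\ell^2(L^\infty)$ to $L^\infty$ by Young plus Cauchy--Schwarz over the $M$ pieces, and then apply vector-valued complex interpolation, using $[\ell^2(L^2),\ell^2(L^\infty)]_\vartheta=\ell^2(L^p)$ with $\vartheta=1-2/p$, together with $\tilde P_Uf_U=f_U$. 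Both arguments interpolate between $L^2$ orthogonality and an $L^\infty$ Cauchy--Schwarz bound and both give the constant $O_p(1)\,M^{1/2-1/p}$; yours is shorter and self-contained as a global statement but leans on the standard (slightly heavier) vector-valued interpolation identity, while the paper's is more elementary and yields the local weighted inequality that it actually uses downstream. Your parenthetical alternative (prove the $\ell^{p'}L^p$ bound with $\ell^1(L^\infty)$ as the second endpoint and then pass to $\ell^2$ by H\"older, since $p'\le 2$) is also a valid variant.
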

Flat decoupling inequality is well-known (see for example \cite{demeter2020Fourier}) but we include a proof here for the sake of completeness.
\begin{proof}
  Fix $p\geq 2.$ It suffices to prove that
  $$\|f\|_{L^p(B_{1})} \lesssim M^{1/2-1/p} \left( \sum_{U\in \mc{U}} \|f_{U}\|_{L^p(W_{B_1,100})}^2 \right)^{1/2}$$
  for $f$ with $\supp \hat{f} \subset [0,M).$ We calculate
  \begin{align*}
    \|f\|_{L^p(B_{1})}^p & \leq \|f\|^{p-2}_{L^\infty(B_1)} \|f\|^2_{L^2(B_1)} \\
     & \lesssim (\sum_{U} \|f_U\|_{L^\infty(B_1)})^{p-2} (\sum_{U} \|f_U\|^2_{L^2(W_{B_1,100})}) \\
     & \lesssim (\sum_U \|f_U\|_{L^p(W_{B_1,100})})^{p-2} (\sum_{U} \|f_U\|^2_{L^p(W_{B_1,100})}) \\
     & \lesssim M^{(p-2)/2} (\sum_U \|f_U\|^2_{L^p(W_{B_1,100})})^{(p-2)/2} (\sum_{U} \|f_U\|^2_{L^p(W_{B_1,100})}) \\
     & \lesssim M^{(p-2)/2} (\sum_U \|f_U\|^2_{L^p(W_{B_1,100})})^{p/2}.
  \end{align*}
  Here we used the locally constant property similar to Proposition \ref{locconstprop} and local $L^2$ orthogonality similar to Lemma \ref{localL^2orth}.
\end{proof}

Now we prove the the decoupling inequality in Theorem \ref{longdecthm}.
\begin{proof}[Proof of \eqref{334} in Theorem \ref{longdecthm}]
 Combining \eqref{333} with Proposition \ref{flatdec} we obtain
\begin{align*}
  \|f\|_{L^p(\R)} &
  \lesssim_\e N^\e  \left( \sum_{J\in \mc{J}}  \|f_J\|^2_{L^p(\R)} \right)^{1/2}  \\
  & \lesssim N^{1/4-1/(2p)+\e} \left( \sum_{J\in \mc{J}} \sum_{I\in \mc{I}_J} \|f_I\|^2_{L^p(\R)} \right)^{1/2}
\end{align*}
for $f$ with $\supp \hat{f} \subset \Omega'.$
\end{proof}

\subsection{An example and sharpness of \eqref{334}}

To prove the sharpness part, we construct a sequence $\{a_n\}_{n=1}^N$ satisfying \eqref{defa_n} (with $\theta=1$) and for which \eqref{334} is sharp.
We will use the function
\[ g(x)=\frac{4x+(N^{1/2}-\sqrt{N-4x})^2}{4N} \]
to define the sequence. For $n=0,\ldots,\frac{N}{8}$, let
\[ a_n=g(n). \]
Distinguish the subsequence $a_{n_k}$ where $n_k=kN^{1/2}-k^2$.

\begin{lemma} \label{longexample} There is an absolute constant $N_0>0$ such that for every $N\geq N_0,$ the sequence $\{a_n\}_{n=1}^{N/8}$ constructed above satisfies property \eqref{defa_n} (with $\theta=1$). Furthermore, there is an absolute constant $c>0$ so that \[\big\{\frac{j}{N^{1/2}}:j=1,\ldots,\lfloor{cN^{1/2}}\rfloor \big\} \]
is a subsequence of $\{a_n\}_{n=1}^{N/8}$.
\end{lemma}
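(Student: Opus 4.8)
The plan is to reduce the statement to elementary calculus by first rewriting $g$ in a transparent form. Expanding the square, $(N^{1/2}-\sqrt{N-4x})^2 = 2N - 4x - 2N^{1/2}\sqrt{N-4x}$, so that
\[ g(x) = \tfrac12 - \tfrac12\sqrt{1 - \tfrac{4x}{N}}, \qquad g'(x) = \tfrac{1}{N}\Bigl(1-\tfrac{4x}{N}\Bigr)^{-1/2}, \qquad g''(x) = \tfrac{2}{N^2}\Bigl(1-\tfrac{4x}{N}\Bigr)^{-3/2}. \]
Both $g'$ and $g''$ are positive and increasing, and on $[0,N/8]$ we have $1-4x/N\in[1/2,1]$, so $g'$ and $g''$ stay within a bounded factor of $1/N$ and $1/N^2$ respectively.

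To verify \eqref{defa_n} I would express the finite differences as averages of derivatives: $a_2-a_1=\int_1^2 g'(x)\,dx$, while $(a_{i+2}-a_{i+1})-(a_{i+1}-a_i)=\int_{-1}^1(1-|t|)\,g''(i+1+t)\,dt$ is a probability average of $g''$ over $[i,i+2]$. As $x\to 0$ one has $g'(x)\to 1/N$ and $g''(x)\to 2/N^2$, and these grow by only a bounded factor on the whole range; so for $N\ge N_0$ these quantities lie in $[\tfrac1{4N},\tfrac4N]$ and $[\tfrac1{4N^2},\tfrac4{N^2}]$. Checking the numerical constants is the only delicate point here (the tightest being the upper bound on $g''$, which is largest near $x=N/8$ where $1-4x/N=1/2$); if needed it is harmless to replace the domain $[0,N/8]$ by $[0,N/C]$ for a suitable absolute constant $C$, since this does not affect anything downstream.

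For the second assertion the key identity is $N-4n_k = N - 4kN^{1/2}+4k^2 = (N^{1/2}-2k)^2$. Hence for $k\le N^{1/2}/2$ we have $\sqrt{N-4n_k}=N^{1/2}-2k$ and
\[ a_{n_k} = g(n_k) = \tfrac12 - \frac{N^{1/2}-2k}{2N^{1/2}} = \frac{k}{N^{1/2}}. \]
It then remains to check that, for $N$ a perfect square with $N\ge N_0$ and $c>0$ a small enough absolute constant, the indices $n_k = k(N^{1/2}-k)$, $k=1,\dots,\lfloor cN^{1/2}\rfloor$, are distinct integers in $\{1,\dots,N/8\}$: integrality is immediate, $n_1=N^{1/2}-1\ge 1$ and $n_k\le kN^{1/2}\le cN\le N/8$ once $c\le 1/8$, and $n_{k+1}-n_k=N^{1/2}-2k-1>0$ in this range, so the $n_k$ strictly increase. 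Thus $\{a_{n_k}\}_{k=1}^{\lfloor cN^{1/2}\rfloor}=\{j/N^{1/2}\}_{j=1}^{\lfloor cN^{1/2}\rfloor}$ is a monotone subsequence of $\{a_n\}_{n=1}^{N/8}$, as claimed.

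The main obstacle is not conceptual but bookkeeping: keeping both windows in \eqref{defa_n} satisfied over the entire index range $n\le N/8$ and choosing $N_0$ and $c$ consistently, together with the (implicit) assumption that $N^{1/2}\in\N$ so that the indices $n_k$ are meaningful.
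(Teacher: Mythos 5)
Your proof is correct, and for the subsequence claim it is the same argument as the paper's: the identity $N-4n_k=(N^{1/2}-2k)^2$ giving $a_{n_k}=k/N^{1/2}$, plus monotonicity of $n_k=kN^{1/2}-k^2$ for $k\lesssim N^{1/2}/2$. For the spacing property the paper instead computes the first and second differences algebraically (rationalizing, e.g.\ obtaining the exact expression $\frac{16}{N^{1/2}(\sqrt{N-4n}+\sqrt{N-4n-4})(\sqrt{N-4n+4}+\sqrt{N-4n})(\sqrt{N-4n+4}+\sqrt{N-4n-4})}$ for the second difference), whereas you use the closed form $g(x)=\tfrac12-\tfrac12\sqrt{1-4x/N}$ and represent the differences as averages of $g'$ and $g''$; these are equivalent elementary verifications, and your reformulation is arguably cleaner. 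One point worth noting: your worry about the upper constant is genuine and in fact applies to the paper's own write-up. At $n$ near $N/8$ one has $1-4n/N\approx 1/2$, so the second difference is about $2^{5/2}/N^2\approx 5.66/N^2$, which exceeds $4/N^2$; the paper's final assertion that the displayed quantity lies in the required window for all $n\le N/8$ (where, incidentally, $[\frac{1}{4N},\frac{4}{N}]$ should read $[\frac{1}{4N^2},\frac{4}{N^2}]$) is therefore loose at the top of the range. Your remedy of shrinking the index range to $N/C$ for a somewhat larger absolute constant $C$ is exactly right and costs nothing downstream, since the sequence is extended arbitrarily to length $N$ anyway and only the presence of the AP $\{j/N^{1/2}\}_{j\le cN^{1/2}}$ (with $c$ an unspecified absolute constant) is used; likewise your explicit flag that $N^{1/2}\in\N$ is implicitly assumed so that the $n_k$ are integer indices is a fair bookkeeping point that the paper leaves tacit.
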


\begin{proof} First we verify the presence of the subsequence: Let $n_k$ and $a_{n_k}$ be as above. Calculate directly that
\begin{align*}
    a_{n_k}&=g(n_k)=\frac{4n_k+(N^{1/2}-\sqrt{N-4n_k})^2}{4N} \\
    &= \frac{4(kN^{1/2}-k^2)+(N^{1/2}-\sqrt{N-4(kN^{1/2}-k^2)})^2}{4N} \\
    &= \frac{4(kN^{1/2}-k^2)+(N^{1/2}-(N^{1/2}-2k))^2}{4N}\\
    &= \frac{4kN^{1/2}-4k^2+4k^2}{4N}=\frac{k}{N^{1/2}}.
\end{align*}
This calculation holds as long as $k\le \frac{N^{1/2}}{2}$. Also note that $n_k=kN^{1/2}-k^2$ is increasing as a function of $k$ as long as $k\le \frac{N^{1/2}}{2}$, so the $n_k$ define a subsequence $a_{n_0},\ldots,a_{n_K}$ where $K=\lfloor\frac{N^{1/2}}{2}\rfloor$.

To verify property (\ref{defa_n}), it suffices to check that for $N$ large enough
\begin{equation}\label{prop1}
    a_1-a_0\in [\frac{1}{2N}, \frac{2}{N}]
\end{equation}
and that
\begin{equation}\label{prop2} (a_{n+1}-a_n)-(a_n-a_{n-1}) \in [\frac{1}{4N^2},\frac{4}{N^2}] \end{equation}
whenever $1\le n\le \frac{N}{8}-1,$ since \eqref{prop1} together with \eqref{prop2} will imply $a_2-a_1\in [\frac{1}{4N},\frac{4}{N}]$ for $N$ large enough.

First we check \eqref{prop1}. Note that $a_0=0$ and
\[ a_1=g(1)=\frac{4+(N^{1/2}-\sqrt{N-4})^2}{4N} . \]
Then
\begin{align*}
    a_1-a_0&= \frac{1}{4N}\left(4+\frac{16}{(N^{1/2}+\sqrt{N-4})^2}\right)\in [\frac{1}{2N},\frac{2}{N}]
\end{align*}
if $N$ is large enough.

Next we check \eqref{prop2}. First calculate
\begin{align*}
g(x+1)-g(x)&= \frac{4+(N^{1/2}-\sqrt{N-4x-4})^2-(N^{1/2}-\sqrt{N-4x})^2}{4N} \\
&= \frac{4+2N^{1/2}(\sqrt{N-4x}-\sqrt{N-4x-4})-4}{4N}\\
&= \frac{\sqrt{N-4x}-\sqrt{N-4x-4}}{2N^{1/2}}\\
&= \frac{2}{N^{1/2}(\sqrt{N-4x}+\sqrt{N-4x-4})}
\end{align*}
Use this formula to calculate the difference
\begin{align*}
&(a_{n+1}-a_n)-(a_n-a_{n-1})\\
&\qquad =\frac{2}{N^{1/2}}\Big(\frac{1}{\sqrt{N-4n}+\sqrt{N-4n-4}}-\frac{1}{\sqrt{N-4n+4}+\sqrt{N-4n}}\Big)\\
&= \frac{2}{N^{1/2}}\frac{\sqrt{N-4n+4}-\sqrt{N-4n-4}}{(\sqrt{N-4n}+\sqrt{N-4n-4})(\sqrt{N-4n+4}+\sqrt{N-4n})}\\
&= \frac{16}{N^{1/2}(\sqrt{N-4n}+\sqrt{N-4n-4})(\sqrt{N-4n+4}+\sqrt{N-4n})(\sqrt{N-4n+4}+\sqrt{N-4n-4})}.
\end{align*}
As long as $n\le \frac{N}{8},$ and $N$ is sufficiently large, this lies in $[\frac{1}{4N},\frac{4}{N}]$ and we are done.

\end{proof}

Now we can finish the sharpness part of Theorem \ref{longdecthm}.

\begin{proof}[Proof of the sharpness part of Theorem \ref{longdecthm}]
  For $N\geq N_0,$ we take $\{a_n\}_{n=1}^{N/8}$ to be the sequence constructed in Lemma \ref{longexample}, extended arbitrarily to $\{a_n\}_{n=1}^N$ so that \eqref{defa_n} is satisfied with $\theta=1.$ We take $f=\sum_I f_I$ to be the function
  $$ \phi_{N^2/L^2} (x)\sum_{n=1}^{\lfloor{cN^{1/2}}\rfloor} e^{ixa_n}$$
  where $c$ is the constant in lemma \ref{longexample}, and $\phi_{N^2/L^2}(x)$ is an $L^\infty-$normalized Schwartz function whose Fourier transform is a smooth bump adapted to $B_{L^2/N^2}(0).$ Then we have
  $$\|f\|_{L^p(\R)}\gtrsim N^{1/2} \left(\frac{N^{3/2}}{L^2}\right)^{1/p}$$
  since $|f(x)|\sim N^{1/2}$ on $P_{N^{1/2}}^{C}(0)\cap B_{CN^2/L^2}(0).$ Since $|f_I|=\phi_{N^2/L^2},$ we have
  $$\left( \sum_{J\in \mc{J}} \sum_{I\in \mc{I}_J} \|f_I\|^2_{L^p(\R)} \right)^{1/2} \sim N^{1/4}\left( \frac{N^2}{L^2} \right)^{1/p}.$$
  Therefore \eqref{334} is sharp up to $N^\e.$
\end{proof}

\subsection{Some discussions}


If we take $L=1$ and $p=4$ in Theorem \ref{longdecthm}, we get \begin{equation}\label{2001}
    \|\sum_{n=1}^{N} b_ne^{ia_nx}\|_{L^4(B_{N^2})} \lesssim_\e N^{1/2+1/8+\e} \|b_n\|_{\ell^2},
\end{equation}
On the other hand, for the Dirichlet polynomial we have, by unique factorization in $\Z$ and local $L^2$  orthogonality, that
\begin{equation}\label{2000}
    \|\sum_{n=N+1}^{2N} b_ne^{ix\log n}\|_{L^4(B_{N^2})} = \|\sum_{m=N+1}^{2N}\sum_{n=N+1}^{2N} b_mb_ne^{ix\log (nm)}\|_{L^2(B_{N^2})}^{1/2} \lesssim_\e N^{1/2+\e} \|b_n\|_{\ell^2}.
\end{equation}
Comparing \eqref{2001} with \eqref{2000} we see that while we can construct a generalized Dirichlet sequence that contains an AP with about $N^{1/2}$ many terms and common difference $N^{-1/2}$ so that \eqref{2001} is sharp for that sequence, the Dirichlet sequence $\{\log n\}_{n=N+1}^{2N}$ does not contain such ($N^{-2}$-approximate) AP and therefore allows a better estimate \eqref{2000}. 


However we notice that the example $D_0(x)=\sum\limits_{j=1}^{cN^{1/2}} e^{ixj/N^{1/2}}$ does not exclude the possibility that Montgomery's conjecture may hold for generalized Dirichlet polynomials. By Montgomery's conjecture for generalized Dirichlet polynomials we mean for every $\e>0,$
\begin{equation}\label{MCgeneral}
    \|\sum_{n=1}^{N} b_ne^{ixa_n}\|_{L^p(B_{T})}  \lesssim_\e T^\e N^{1/2} (N^{p/2}+T)^{1/p} \|b_n\|_{\ell^\infty}
\end{equation}
for every generalized Dirichlet sequence $\{a_n\}_{n=1}^N$ with $\theta= 1.$
Indeed we know $|D_0(x)| \gtrsim N^{1/2}$ on $P^C_{N^{1/2}}(0),$ so
\[ \|D_0\|_{L^p(B_T)} \gtrsim T^{\frac{1}{p}} N^{\frac{1}{2}-\frac{1}{2p}}. \]
On the right hand side of \eqref{montconjmeaneqn} we have $C_\e N^{1/2+\e}(N^{p/2}+T)^{1/p} \geq N^{1/2}T^{1/p}.$ So there is no contraction to \eqref{MCgeneral}. Note that if we apply H\"older's inequality $\|b_n\|_{\ell^2} \leq N^{1/2} \|b_n\|_{\ell^\infty}$ to \eqref{2000} then we obtain
\[\|\sum_{n=N+1}^{2N} b_ne^{ix\log n}\|_{L^4(B_{N^2})}  \lesssim_\e N^{1+\e} \|b_n\|_{\ell^\infty},\]
which is exactly \eqref{montconjmeaneqn} with $p=4,T=N^2.$ However although we know \eqref{2001} is sharp (up to $C_\e N^\e$) for our example $D_0(x),$ the H\"older step $\|b_n\|_{\ell^2} \leq N^{1/2} \|b_n\|_{\ell^\infty}$ is not sharp because $D_0(x)$ has only $N^{1/2}$ many nonzero coefficients.

On the other hand we may construct a periodic generalized Dirichlet polynomial $f=\sum_{n=1}^N e^{it \frac{(N+n)}{N^2}}$ which contradicts \eqref{MCgeneral} for $p> 4,$ $T> N^{2+\e_0}$ with any $\e>0.$ We notice that $|f|\gtrsim N$ on $\mc{N}_{C} (N^2\Z).$ So 
\[\|f\|_{L^p(B_T)}\gtrsim N (\frac{T}{N^2})^{\frac{1}{p}}=N^{1-\frac{2}{p}}T^{\frac{1}{p}}.\]
Under the condition $p>4$ we have \[N^{1-\frac{2}{p}}T^{\frac{1}{p}} \gtrsim_{\e_0} N^{\e_1} N^{\frac{1}{2}} T^{\frac{1}{p}} \]
for some $\e_1>0$ depending on $p.$
Under the condition $T>N^{2+\e_0}$ we have
\[N^{1-\frac{2}{p}}T^{\frac{1}{p}} > N^{\e_2} N,\]
for some $\e_2>0$ depending on $p.$ 
Therefore when $p>4$ and $T> N^{2+\e_0}$ with any $\e_0>0,$ \eqref{MCgeneral} fails for the generalized Dirichlet polynomial $f.$


At the end of this section we discuss briefly what makes $N^{1/2}$ special. Suppose we consider the sequence $\{\log (N+n)\}_{n=1}^{N^\alpha}$ for some $\alpha\in (\frac{1}{2},1].$ For simplicity we will omit constants $C$ in the following discussion. Still we look at $\frac{L^2}{N^2}-$neighborhood of $\{\log (N+n)\}_{n=1}^{N^\alpha}$ with $L\geq 1.$ For $L\geq N^{1/2},$ the $\frac{L^2}{N^2}-$neighborhood is essentially the same as the $\frac{1}{N}-$neighborhood (as long as $L\leq N$), which is an interval of length about $1.$ So the induction scheme in this paper fails to work for $L\geq N^{1/2}.$ 

Another difficulty is about the ``bush'' structure of $\bigcup_I (I-I)$ in the frequency space. To illustrate this, we define $I,P_I$ as before so now there are $\frac{N^\alpha}{L}$ many $I,$ $v_I\sim \frac{1}{N}$ are $\frac{L}{N^2}$ separated, and the maximal separation of $v_I$ is $\frac{1}{N^{2-\alpha}}.$ For $\alpha>1/2,$ we no longer have an essentially linear decaying pattern of the bush $\bigcup_{I} (I-I)$ if $L\geq N^{1-\alpha},$ which is exploited in the proof of Lemma \ref{hilem}. To be precise, we consider the function $\sum_{I} 1_{I-I}(t),$ which counts the number of overlap of the sets $I-I$ at $t.$ If $\alpha \leq 1/2$ then we can verify that 
\begin{equation}\label{bushdecay}
    |\sum_{I} 1_{I-I}(t)| \lesssim \frac{N/L}{|t|} \quad \text{when  } \frac{1}{N}\lesssim |x| \lesssim \frac{L}{N}.
\end{equation}
See Figure \ref{fig:bush} for a rough graph of the function $\sum_{I} 1_{I-I}(t).$
However if $\alpha >1/2$ then we no longer have \eqref{bushdecay}. 
This is because $1/2$ is the largest value for $\alpha$ such that for every $L\leq N^{1/2},$ the $k-$th intervals in all $I-I$ are within about $N^{-1}$ distance from each other, for every $1\leq k\leq L.$ For comparison, we note that for $R^{-1/2}\times R^{-1}$ caps $\theta$ that tile the $R^{-1}$-neighborhood of the truncated parabola, the bush $\{\theta-\theta\}$ has a similar linear decay pattern:
\[|\sum_{\theta} 1_{\theta-\theta}(x)| \lesssim \frac{R^{-1/2}}{|x|} \quad \text{when  } R^{-1}\lesssim |x| \lesssim R^{-1/2}.\]

On the physical side, how $P_I$ interact also becomes more complicated when $\alpha >2$. One important property we used in the $\alpha=1/2$ case is that the maximal separation of $v_I^{-1}$ (which is about $N^{1/2}$) is less than the thickness of $P_I$ (which is about $N/L$) for every $1\leq L\leq N^{1/2}.$ However for $\alpha >1/2,$ the maximal separation is about $N^{1-\alpha}$ which is greater than the thickness $N/L$ for $L\geq N^{1-\alpha}.$ In particular this makes the pattern of the intersection $P_I\cap P_J$ more complicated and the notion of transversal less clear.

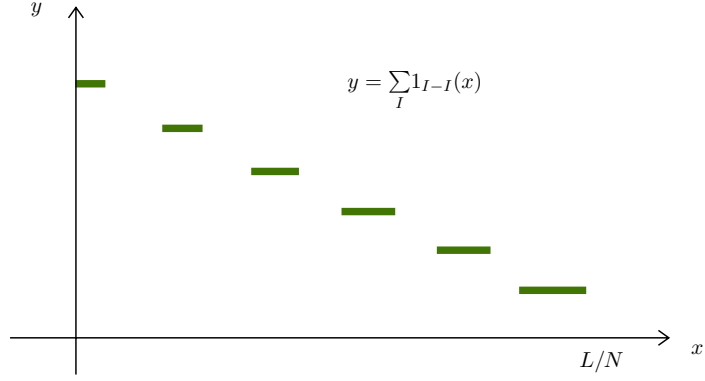
\begin{figure}
\begin{tabular}{ cc }
\scalebox{0.75}{

\tikzset{every picture/.style={line width=0.75pt}} 

\begin{tikzpicture}[x=0.75pt,y=0.75pt,yscale=-1,xscale=1]

\draw [color={rgb, 255:red, 65; green, 117; blue, 5 }  ,draw opacity=1 ][line width=3.75]    (72.5,99.33) -- (92.6,99.33) ;
\draw [color={rgb, 255:red, 65; green, 117; blue, 5 }  ,draw opacity=1 ][line width=3.75]    (130.85,129.33) -- (157.95,129.33) ;
\draw [color={rgb, 255:red, 65; green, 117; blue, 5 }  ,draw opacity=1 ][line width=3.75]    (190.67,158.33) -- (222.77,158.33) ;
\draw [color={rgb, 255:red, 65; green, 117; blue, 5 }  ,draw opacity=1 ][line width=3.75]    (251.4,185.33) -- (287.5,185.33) ;
\draw [color={rgb, 255:red, 65; green, 117; blue, 5 }  ,draw opacity=1 ][line width=3.75]    (315.5,211.33) -- (351.6,211.33) ;
\draw [color={rgb, 255:red, 65; green, 117; blue, 5 }  ,draw opacity=1 ][line width=3.75]    (370.85,238.33) -- (415.95,238.33) ;
\draw  (28.5,270.3) -- (471.5,270.3)(72.8,48) -- (72.8,295) (464.5,265.3) -- (471.5,270.3) -- (464.5,275.3) (67.8,55) -- (72.8,48) -- (77.8,55)  ;

\draw (254,89.4) node [anchor=north west][inner sep=0.75pt]    {$y=\underset{I}{\sum} 1_{I-I}( x)$};
\draw (41,43.4) node [anchor=north west][inner sep=0.75pt]    {$y$};
\draw (485,273.4) node [anchor=north west][inner sep=0.75pt]    {$x$};
\draw (410,277.4) node [anchor=north west][inner sep=0.75pt]    {$L/N$};

\end{tikzpicture}
}\\
\end{tabular}
    \caption{The overlap number of the $I-I$ has a linear decay pattern provided $L/N^{2-\alpha} \lesssim N^{-1}.$ This condition is guaranteed as long as $\alpha \leq 1/2.$  Controlling the overlap number of the $I-I$ outside of a certain neighborhood of the origin is a central step in Lemma \ref{hilem}. }
    \label{fig:bush}
\end{figure}

\section{Small-cap type decoupling}\label{smallcapsec}

In this section we prove Theorem \ref{smallcapthm_intro}, which is about small-cap type decoupling inequalities in the spirit of \cite{demeter2020small}.

First we restate Theorem \ref{smallcapthm_intro} but with the more general definition of generalized Dirichlet sequence.
Let $\{a_n\}_{n=1}^{N^{1/2}}$ be a short generalized Dirichlet sequence with parameter $\theta\in (0,1]$ as defined in Definition \ref{defnDirichlet}. Let $L,L_1$ be two integers such that $1\leq L_1\leq  L\leq N^{1/2}.$ Denote by $\Omega$  the $\theta L^2/N^2-$neighborhood of $\{a_n\}_{n=1}^{N^{1/2}}.$ We let $\{J\}_{J\in \mc{J}}=\{J_k\}_{k=0}^{\lfloor N^{1/2}/L_1 \rfloor}$ be the partition of $\Omega$ into unions of $L_1$ many consecutive intervals, that is,
\[J_k= \bigcup_{i=1}^{L_1} B_{\theta L^2/N^2} (a_{kL_1+i}). \] 
Let $\{I\}_{I\in \mc{I}}$ be the partition of $\Omega$ into unions of $L$ many consecutive intervals, which we called the canonical partition. 

A more general version of Theorem \ref{smallcapthm_intro} is the following, which we prove in the rest of this section.

\begin{theorem}\label{smallcapthm} Let $\{J\}_{J\in \mc{J}}$ be defined as in the above paragraph. 
  Suppose $p\geq 4,$ $\frac{1}{q}+\frac{3}{p}\leq 1.$
  If either of the following two conditions is satisfied
  \begin{enumerate}[label=(\alph*) ]
      \item $L_1=1,$
      \item $p=q,$
  \end{enumerate}
  then for every $\e>0,$
  \begin{equation}\label{s1}
    \|\sum_{J\in \mc{J}}f_J\|_{L^p(\R)} \lesssim_\e N^\e \log^C(\theta^{-1}+1) \left( \frac{N^{\frac{1}{2}-\frac{1}{2q}-\frac{3}{2p}}L^{\frac{2}{p}}}{L_1^{1-\frac{1}{p}-\frac{1}{q}}} +\left( \frac{N^{1/2}}{L_1} \right)^{\frac{1}{2}-\frac{1}{q}} \right)\left( \sum_{J\in \mc{J}} \|f_J\|_{L^p(\R)}^q \right)^{1/q}
  \end{equation}
  for every functions $f_J:\R\rightarrow \C$ with  $\supp \widehat{f_J}\subset J.$
\end{theorem}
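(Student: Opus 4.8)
The plan is to follow the three-ingredient strategy from \cite{demeter2020small}, Theorem 3.1: refined decoupling into the canonical partition $\mc{I}$, refined flat decoupling from each $I$ down to the small caps $J$ sitting inside it, and an incidence/counting step that balances the two contributions. First I would reduce \eqref{s1} to a local statement on a single ball $P(L)$ by the now-standard averaging argument (as in Proposition \ref{localdecsec}); this costs nothing since $p\ge 2$. Then I would apply the refined decoupling inequality Theorem \ref{refineddecthm} (strictly, the $(\ell^q,L^p)$-version of it, which is what the ``more general version of Theorem \ref{smallcapthm_intro}'' alluded to in the introduction provides, and which the paper must establish separately as Theorem \ref{smallcapbilthm} referenced in the intro) to write
\[
\|\sum_J f_J\|_{L^p(X)}\lessapprox \log^C(\theta^{-1}+1)\,(\sup_{x}\sum_I\|f_I\|_{\stkout{L}^2(W_{P_I(x),100})}^2)^{1/2-1/p}(\sum_I\|f_I\|_{L^2(W_{P(L),100})}^2)^{1/p},
\]
where $f_I=\sum_{J\subset I}f_J$. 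The key point is to dyadically pigeonhole, exactly as in the proof of Theorem \ref{localdecouplingthm}, so that all nonzero wave packets $\phi_{P_I}f_I$ have comparable $L^2$-norm $\sim\lambda$ and each surviving $I$ carries $\sim A$ of them, and similarly pigeonhole the small caps $J$ so that each $J$ has $\sim \mu$ of its wave packets active and $\sim B$ small caps are active inside each active $I$.

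Next I would run the flat-decoupling step. For each fixed $I\in\mc{I}$, the caps $J\subset I$ are roughly equally spaced: there are $L/L_1$ of them, and $I$ is essentially a $\theta L^2/N^2$-fat AP, so $f_I=\sum_{J\subset I}f_J$ with the $J$'s tiling $I$. Applying flat decoupling (Proposition \ref{flatdec}, after identifying $I$ with an interval of $L/L_1$ unit pieces, which is legitimate on the relevant spatial scale via the locally constant property) gives
\[
\|f_I\|_{L^p}\lesssim (L/L_1)^{1/2-1/p}(\sum_{J\subset I}\|f_J\|_{L^p}^2)^{1/2},
\]
and I would need the \emph{refined} form of this — controlling $\|f_I\|$ on a subset by the number of active small caps $B$ rather than the full count $L/L_1$ — which is where the hypothesis $p\ge 4$ enters (via the $L^2$-based interpolation in the flat-decoupling proof, that only gives the correct exponent for large $p$). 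Combining the refined canonical decoupling bound with this refined flat bound, and translating the various pigeonholed quantities into the natural ``number of tiles'' counts, reduces everything to an incidence estimate: an upper bound for the number of pairs (tile of $P_I$, translate of a small-cap dual) that are incident, i.e. a count of how active small-cap wave packets can cluster inside the fat AP $P(L)$. This incidence count is where the two terms in the parenthetical factor of \eqref{s1} come from — the first term is the ``spread out'' regime and the second is the ``concentrated'' (flat, Khintchine-type) regime — and I would prove it by a direct double-counting using the disjointness/transversality structure of the $P_I$ (Lemma \ref{balllem}, Proposition \ref{BKprop}) together with the trivial volume bound.

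The main obstacle I expect is the incidence estimate and getting its exponents to match \eqref{s1} exactly, together with correctly setting up the refined flat decoupling so that the two conditions (a) $L_1=1$ and (b) $p=q$ appear naturally. In case (a) the small caps degenerate to single arithmetic-progression points and the flat step is essentially trivial, so the incidence count is cleaner; in case (b) the $\ell^q=\ell^p$ norm on the right makes the pigeonholing self-consistent (no loss in passing between $\ell^q$ and $\ell^p$ at intermediate scales), which is exactly what is needed to close the induction-free argument. Outside these two cases one would need an $(\ell^q,L^p)$ refined flat decoupling at an intermediate scale that simply is not available with the right constant, which is why the theorem is stated only under (a) or (b). A secondary technical nuisance will be bookkeeping the weights $W_{P(L),k}$ and the $\log^C(\theta^{-1}+1)$ factors through the pigeonholing, but this is routine given Section \ref{localconstsec}.
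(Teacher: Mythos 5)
Your outline names the same three ingredients that the paper's strategy advertises (refined decoupling for the canonical partition, refined flat decoupling, an incidence estimate), but as written it has a genuine gap: the plan is purely linear and omits the bilinear reduction that the actual argument cannot do without. The paper first proves a bilinear small-cap estimate for transversal families $\mc{J}_1,\mc{J}_2$ (Theorem \ref{smallcapbilthm}) and then deduces Theorem \ref{smallcapthm} by a broad--narrow iteration over scales $l=N^{1/2}/K^a$ with rescaling ($N\mapsto N/s^2$, $\theta\mapsto \theta/s^2$), the narrow contribution at the bottom scale being absorbed by Proposition \ref{flatprop}. The bilinear structure is what gives access to the bilinear restriction-type estimate (Proposition \ref{bilresprop}), which supplies an $L^4$ bound on each $P(L')$ inside the rich sets $Q_{r_1,r_2}$; interpolating that against the $L^6$ bound coming from Theorem \ref{refineddecthm} is exactly where the hypothesis $p\ge 4$ (and the working range $4\le p\le 6$, with $p>6$ recovered afterwards by interpolation with trivial estimates) enters. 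Your suggestion that $p\ge 4$ comes from ``the $L^2$-based interpolation in the flat-decoupling proof'' is not correct---Proposition \ref{flatprop} is valid for all $2\le q\le p$---and in a purely linear argument there is no substitute source for the $L^4$ input, so the two-term bound in \eqref{s1} cannot be reached by refined decoupling plus flat decoupling alone.

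The second problem is the incidence step. It is not a direct double counting via bilinear Kakeya and a trivial volume bound: Proposition \ref{incidenceprop} decomposes the (smoothed) counting function $\sum_P 1_P$ in frequency into dyadic annuli $|\xi|\sim \frac{L}{s}v$, uses almost orthogonality of the pieces attached to the coarser fat APs $P_{I_s}$, and tracks the multiplicity $M_s$ through \eqref{s105}; this frequency parameter $s$ has no counterpart in your sketch. Moreover the hypotheses (a) and (b) arise precisely in the regime $s\ge L/L_1$ of the ensuing exponent bookkeeping (Case 1.2 in the proof of Theorem \ref{smallcapbilthm}): closing the estimate with $s\le L$ forces $L_1^{\frac12-\frac1q}\le L^{1-\frac3p-\frac1q}$, which is what $L_1=1$ guarantees when $\frac1q+\frac3p\le 1$, while closing it with $s\le N^{1/2}/L$ works only when $p=q$. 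So the conditions are not explained by ``the flat step being trivial'' or ``self-consistent pigeonholing''; they are quantitative constraints produced by the incidence exponents. To repair the plan you would need to add the broad--narrow/bilinear reduction with rescaling, insert Proposition \ref{bilresprop} as the $L^4$ ingredient, and replace the double-counting step by the $s$-decomposed incidence estimate.
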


As a corollary we have a more general version of Corollary \ref{discretesmallcor_intro}.

\begin{corollary}\label{discretesmallcor}
  Let $\{a_n\}_{n=1}^{N^{1/2}}$ be a short generalized  Dirichlet sequence with parameter $\theta\in (0,1].$
  Suppose $p\geq 4,$ $\frac{1}{q}+\frac{3}{p}\leq 1,$ and $N\theta^{-1}\leq T \leq N^2\theta^{-1}.$ We have for every $\e>0,$ 
  \begin{equation}\label{s120}
    \|\sum_{n=1}^{N^{1/2}}b_n e^{ita_n}\|_{L^p(B_T)} \lesssim_\e  N^{\e} \log^C(\theta^{-1}+1)  \left(N^{\frac{1}{2}(1+\frac{1}{p}-\frac{1}{q})}\theta^{-\frac{1}{p}} +T^{\frac{1}{p}} N^{\frac{1}{4}-\frac{1}{2q}} \right) \|b_n\|_{\ell^q}.
  \end{equation}
  for every $B_T,$ and every $\{b_n\}_{n=1}^{N^{1/2}}\subset \C,$ 
\end{corollary}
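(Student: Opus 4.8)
The plan is to derive Corollary~\ref{discretesmallcor} directly from Theorem~\ref{smallcapthm} by a standard ``exponential sum as a decoupling example'' argument, exactly as indicated in the paragraph following Corollary~\ref{discretesmallcor_intro}. First I would fix the parameters: given $p\geq 4$, $q$ with $\frac1q+\frac3p\leq 1$, and $T$ with $N\theta^{-1}\leq T\leq N^2\theta^{-1}$, choose the integer $L\in[1,N^{1/2}]$ so that $N^2\theta^{-1}/L^2\sim T$, i.e. $L\sim (N^2\theta^{-1}/T)^{1/2}$; the constraint on $T$ guarantees $L$ lies in the admissible range $[1,N^{1/2}]$. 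Then take $L_1=1$ so that condition (a) of Theorem~\ref{smallcapthm} holds and the partition $\mc{J}$ is into single intervals $B_{\theta L^2/N^2}(a_n)$, one per $n$.

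Next I would introduce a Schwartz function $\phi$ adapted to the ball $B_T$ in physical space with $\supp\widehat\phi\subset B_{T^{-1}}(0)$, normalized so $|\phi|\sim 1$ on $B_T$ and $\phi$ decaying rapidly off $B_T$. Since $T^{-1}\lesssim \theta L^2/N^2$ (because $T\sim N^2\theta^{-1}/L^2$), the function $f_n(t):=b_n e^{ita_n}\phi(t)$ has $\supp\widehat{f_n}\subset B_{2\theta L^2/N^2}(a_n)$, so it is a legitimate input to \eqref{s1} with $\mc{J}=\{J_n\}$. Applying Theorem~\ref{smallcapthm} with these choices gives
\[
\Big\|\sum_{n=1}^{N^{1/2}} b_n e^{ita_n}\,\phi(t)\Big\|_{L^p(\R)}
\lesssim_\e N^\e\log^C(\theta^{-1}+1)\Big(\frac{N^{\frac12-\frac1{2q}-\frac3{2p}}L^{2/p}}{1} + \big(N^{1/2}\big)^{\frac12-\frac1q}\Big)\Big(\sum_n \|f_n\|_{L^p(\R)}^q\Big)^{1/q}.
\]
On the left, since $|\phi|\gtrsim 1$ on $B_T$, the left side dominates $\|\sum_n b_n e^{ita_n}\|_{L^p(B_T)}$. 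On the right, $\|f_n\|_{L^p(\R)} = |b_n|\,\|\phi\|_{L^p(\R)}\sim |b_n|\, T^{1/p}$, so $(\sum_n\|f_n\|_{L^p}^q)^{1/q}\sim T^{1/p}\|b_n\|_{\ell^q}$. Substituting $L\sim (N^2\theta^{-1}/T)^{1/2}$ into $N^{\frac12-\frac1{2q}-\frac3{2p}}L^{2/p}$ yields $N^{\frac12-\frac1{2q}-\frac3{2p}}\cdot (N^2\theta^{-1}/T)^{1/p} = N^{\frac12-\frac1{2q}-\frac3{2p}+\frac2p}\theta^{-1/p}T^{-1/p} = N^{\frac12+\frac1{2p}-\frac1{2q}}\theta^{-1/p}T^{-1/p}$, and after multiplying by the $T^{1/p}$ from $\|f_n\|_{L^p}$ this becomes exactly $N^{\frac12(1+\frac1p-\frac1q)}\theta^{-1/p}$. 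Similarly $(N^{1/2})^{\frac12-\frac1q}\cdot T^{1/p} = T^{1/p}N^{\frac14-\frac1{2q}}$, matching the second term in \eqref{s120}. Combining these gives \eqref{s120}.

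I do not anticipate a serious obstacle here; the proof is essentially bookkeeping of exponents, with the only genuine checks being (i) that the chosen $L$ is an integer in $[1,N^{1/2}]$ — which follows from $N\theta^{-1}\le T\le N^2\theta^{-1}$, up to harmless rounding and constants absorbed into the $\lesssim$ — and (ii) that $\supp\widehat{f_n}$ is genuinely contained in $J_n$, which needs $T^{-1}\lesssim \theta L^2/N^2$, i.e. $T\gtrsim N^2\theta^{-1}/L^2$, consistent with the choice of $L$. One should also note that $L_1=1$ makes condition (a) automatic so no further hypothesis beyond $p\ge 4,\ \frac1q+\frac3p\le1$ is needed. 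The mild rounding issue (that $N^2\theta^{-1}/L^2$ may only be comparable to, not equal to, $T$) only affects constants, since enlarging $\phi$'s support by a bounded factor and the ambiguity $L\sim\cdot$ are both absorbed into $\lesssim_\e$; if one wants $L$ to be exactly the integer with $N^2/L^2=T$ as in the statement of Corollary~\ref{discretesmallcor_intro}, one simply takes the nearest integer and checks the neighborhood inclusion still holds with a larger absolute constant.
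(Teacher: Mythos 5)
Your proposal is correct and is essentially the paper's own argument: the paper derives the corollary from Theorem \ref{smallcapthm} with $L_1=1$ and $L$ chosen so that $N^2\theta^{-1}/L^2\sim T$, applied to $f_J(t)=b_ne^{ita_n}\phi(t)$ with $\supp\widehat{\phi}\subset B_{T^{-1}}(0)$, exactly as you do. Your exponent bookkeeping (including the $\theta^{-1/p}$ factor and the range check $1\le L\le N^{1/2}$ from $N\theta^{-1}\le T\le N^2\theta^{-1}$) matches the stated bound.
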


To prove results of the form \eqref{s1}, we may use the small cap decoupling method for $\mathbb{P}^1$ developed in \cite{demeter2020small}, which is based on refined decoupling for the canonical partition, refined flat decoupling and an incidence estimate for tubes with spacing conditions. We have three analogous results in the short generalized  Dirichlet sequence setting. Theorem \ref{refineddecthm} is the analogy of the refined canonical cap decoupling for $\mathbb{P}^1.$ Now we state and prove the other two.

\subsection{An incidence estimate for fat APs}
We start with the incidence estimate. First we introduce some notations. Suppose $P,P'$ are fat APs such that $P=P_I(y)$ and $P'=P_{I'}(y')$ for some $I,I'\in \mc{I}.$ We say $P,P'$ are parallel if $I=I'.$ For a collection $\mc{P}=\{P\}$ of fat APs, we say $x\in \R$ is an $r$-rich point if $r$ many $P$ contain it.

\begin{proposition}\label{incidenceprop}
  Let $1\leq L_1\leq L \leq N^{1/2}$ and let $\{J\}_{J\in \mc{J}},$ $\{I\}_{I\in \mc{I}}$ be defined as in the beginning of Section \ref{smallcapsec}.
  Suppose we have a collection of fat AP $\mc{P}=\{P\}$ inside a fixed $P(L),$ where each $P=P_I$ for some $I\in \mc{I}.$ Assume for every $J\in \mc{J}$ and every $P_J\subset P(L),$ $P_J$ contains either $M$ or $0$ parallel $P\in \mc{P}.$ Denote by $Q_r$ the set of $r$-rich points of $\mc{P}.$ Suppose $Q_r \neq \emptyset.$ Then one of the two cases below happens:
  \begin{enumerate}
    \item There exists a dyadic $s\in [1,\min\{L,N^{1/2}/L\}]$ and $M_s \in \N$ such that
    \begin{equation} \label{s103}
        |Q_r|\lessapprox \frac{M_s}{sr^2}(\#P)|P|
    \end{equation}
    \begin{equation}\label{s104}
        r\lessapprox \frac{M_sN^{1/2}}{s^2L}
    \end{equation}
    \begin{equation} \label{s105}
        M_s \lesssim sM\max\left\{1,s\frac{L_1}{L}\right\}.
    \end{equation}
    \item  \begin{equation} \label{s106}
        |Q_r|\leq |P(L)|
    \end{equation}
    \begin{equation}\label{s107}
        r\lesssim (\#P)\frac{|P|}{|P(L)|}.
    \end{equation}
  \end{enumerate}
  Here $\#P$ denotes the cardinality of $\mc{P}.$
\end{proposition}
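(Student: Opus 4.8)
\textbf{Proof proposal for Proposition \ref{incidenceprop}.}

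The plan is to mimic the incidence estimate for tubes in \cite{demeter2020small}, adapting it to fat APs. The starting point is a double-counting of incidences between $\mc{P}$ and the $r$-rich set $Q_r$. Since every point of $Q_r$ lies in at least $r$ members of $\mc{P}$, and each $P\in\mc{P}$ has measure $|P|$ (all $P_I$ have the same measure for $I\in\mc{I}$), we have the trivial bound $r|Q_r|\lesssim \sum_{P\in\mc{P}}|P\cap Q_r|\leq (\#P)|P|$, which already gives a weak form of \eqref{s103}; the whole point is to improve it by exploiting the spacing structure. First I would fix a point $x\in Q_r$ and analyze the set of $I\in\mc{I}$ such that some translate $P_I(y)\in\mc{P}$ passes through $x$. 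The transversality/separation of common differences $v_I^{-1}$ (which are $cL/N^2$-separated, with maximal separation $\sim N^{-3/2}\theta$ after inversion $\sim N^{1/2}\theta$) means that two non-parallel $P_I, P_{I'}$ through a common point $x$ can only share $\mc{O}(N^{3/2}/L^2)=\mc{O}(|P_I|)$ many of their constituent intervals before they separate — this is exactly the computation in Lemma \ref{balllem} and Proposition \ref{BKprop}. So locally around $x$, the members of $\mc{P}$ passing through $x$ behave like an arrangement of ``lines'' with distinct slopes, and a bush/fan argument applies.

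The key steps, in order: (1) Perform a dyadic pigeonholing on the ``richness parameter'' $s$, which measures how concentrated the family of directions $\{v_I^{-1}\}$ of $P\in\mc{P}$ through a typical rich point is; here $s$ ranges over $[1,\min\{L, N^{1/2}/L\}]$ because $L$ bounds the number of intervals in a single $P_I$ and $N^{1/2}/L$ bounds the number of distinct $I\in\mc{I}$. For each dyadic $s$, let $M_s$ be the resulting popularity count of a constituent sub-fat-AP at that scale. (2) Using the bilinear-Kakeya-type geometry (Proposition \ref{BKprop}, applied to the indicator functions $\sum_{P\in\mc{P}}1_P$), bound $|Q_r|$ by $\frac{M_s}{sr^2}(\#P)|P|$ up to $N^\e$ losses — this is \eqref{s103}, obtained by squaring the incidence count and invoking transversality to gain the factor $sr^2$ in the denominator instead of just $r$. (3) The constraint \eqref{s104} comes from the pigeonhole/counting bound on how rich a point can be: a point in $s$ ``directions'' at scale forcing each direction's tube to have thickness $\sim N^{3/2}/(s L)$ inside $P(L)$ can be covered at most $\frac{M_s N^{1/2}}{s^2 L}$ times. (4) The bound \eqref{s105} on $M_s$ is a consequence of the hypothesis that each $P_J$ contains exactly $M$ or $0$ parallel members of $\mc{P}$: unpacking how a scale-$s$ sub-AP sits inside the $P_J$'s (which partition at the coarser $L_1$-scale) gives $M_s\lesssim sM\max\{1, sL_1/L\}$, where the $\max$ distinguishes whether the scale-$s$ structure is finer or coarser than the ratio $L/L_1$. (5) Finally, the dichotomy in the statement: if the pigeonholed $s$ and $M_s$ satisfy all of \eqref{s103}--\eqref{s105}, we are in case (1); otherwise the arrangement degenerates — essentially all of $\mc{P}$ overlaps on a set comparable to $P(L)$ — and the trivial bounds $|Q_r|\leq|P(L)|$ and $r\lesssim(\#P)|P|/|P(L)|$ of case (2) take over.

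The main obstacle I anticipate is step (4), controlling $M_s$ via the $M$-or-$0$ hypothesis on the $P_J$'s. The issue is that $\mc{J}$ partitions $\Omega$ at scale $L_1$ while $\mc{I}$ partitions at scale $L$, and $P_J$, $P_I$ are fat APs dual to these — so a $P_J$ is a \emph{thinner but longer} fat AP than a $P_I$ (roughly, $P_J$ has thickness $\sim N/L_1 \geq N/L$... actually one must be careful about which is dual to which). Tracking how many parallel $P_I$'s fit inside a given $P_J$, and how this interacts with the scale-$s$ refinement, requires a careful geometric bookkeeping of nested fat APs, using the transitivity inequalities \eqref{weighttransitive}--\eqref{weighttransitive2}. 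A secondary technical point is that throughout one must absorb $\log(\theta^{-1}+1)$ and $N^\e$ factors (hence the $\lessapprox$ in \eqref{s103}--\eqref{s104}) coming from the dyadic pigeonholing over both $s$ and the heights of wave packets; these are harmless but need to be stated carefully so they match the budget in Theorem \ref{smallcapthm}. I would model the bookkeeping closely on the proof of the analogous incidence statement for the parabola in \cite{demeter2020small}, substituting ``common difference'' for ``slope'' and ``fat AP'' for ``tube'' throughout, and using Proposition \ref{BKprop} wherever the parabola argument uses bilinear Kakeya for tubes.
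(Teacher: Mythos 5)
Your outline has the right scaffolding (a dyadic scale parameter $s$, a popularity count $M_s$ controlled through the $M$-or-$0$ hypothesis on the $P_J$, and a fallback case), but the two load-bearing steps are missing or do not work as described. First, the mechanism behind \eqref{s103}--\eqref{s104}: you propose to get the factor $M_s/(sr^2)$ by ``squaring the incidence count'' and applying Proposition \ref{BKprop} to $\sum_{P}1_P$. Bilinear Kakeya is a statement about two transversal families, and after any broad--narrow reduction it only yields bounds of the shape $r^2|Q_r|\lesssim |P(L)|^{-1}\big((\#P)|P|\big)^2$, i.e.\ the average-overlap bound of case (2); it says nothing about the nearly-parallel contribution, which is exactly where the gain $M_s/s$ must come from. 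In the paper's proof this gain is produced by a frequency decomposition of the counting function: one takes $g=\sum_P v_P$ with $\widehat{v_P}$ supported in $C(I-I)$, splits $g=g*\check{\eta}_0+\sum_s g*\check{\eta}_s$ over dyadic annuli $|\xi|\sim Lv/s$, groups the directions into $\sim N^{1/2}/(sL)$ clusters $I_s$ whose annulus-localized contributions are almost orthogonal in $L^2$, and uses the pointwise bound $|v_P*\check{\eta}_s|\lesssim s^{-1}W_{P_{I_s},100}$; then \eqref{s103} follows from this orthogonality and \eqref{s104} from evaluating the dominant piece at a rich point, $r\lesssim |\mc{I}_s|\,M_s/s\lesssim \frac{N^{1/2}}{sL}\cdot\frac{M_s}{s}$. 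Your physical-space ``bush'' heuristic would have to reproduce this multi-scale overlap information, and nothing in the sketch does so; in particular your $s$ and $M_s$ are never given a definition precise enough to run the count.

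Second, the dichotomy. You decide between the cases by checking whether the pigeonholed $s,M_s$ satisfy \eqref{s103}--\eqref{s105}, which is circular: those are the conclusions, not a testable hypothesis. You also describe case (2) as ``the trivial bounds taking over,'' but \eqref{s107} is not trivial: $r\lesssim (\#P)|P|/|P(L)|$ says the richness never exceeds the average multiplicity, which is false for general configurations (all $P$ passing through a common point gives $r=\#P$, far above the average). In the paper, case (2) is the structural event that the low-frequency piece dominates at most rich points, $g(x)\lessapprox |g*\check{\eta}_0(x)|$ on a subset of $Q_r$ of comparable measure, and then \eqref{s107} follows from $|g*\check{\eta}_0|\le \|g\|_{L^1}\|\check{\eta}_0\|_{L^\infty}\lesssim (\#P)|P|/|P(L)|$; without that input the bound has no justification. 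Your treatment of \eqref{s105} is in the right spirit and matches the paper (for $s\le L/L_1$ each scale-$s$ fat AP $P_{I_s}$ lies in a single $P_J$ and so holds $\lesssim sM$ members of $\mc{P}$; for $s\ge L/L_1$ it meets $\lesssim sL_1/L$ many $P_J$), so that counting step can be salvaged, but the core of the argument --- where $M_s$, the factor $1/s$, and the case distinction actually come from --- requires the high-low/annulus decomposition of $\sum_P v_P$ (or an equivalent substitute), not Proposition \ref{BKprop}.
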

\begin{proof}
  For each dyadic $1\leq s\leq \min\{L,N^{1/2}/L\},$ we let $\eta_s$ denote a smooth bump with height $1$ adapted to the annulus $|\xi|\sim \frac{L}{s}v$ in the frequency space, and let $\eta_0$ denote a smooth bump with height $1$ adapted to $P^{C\theta L^2/N^2}_{v_1}(0) \cap B_{CL^2/N^{3/2}}(0)$ (which degenerates to $B_{C\theta L^2/N^2}(0)$ when $L\leq N^{1/4}$) such that 
  \[\eta_0+\sum_{ \substack{1\leq s\leq \min\{L,N^{1/2}/L\}, \\ s:\, \text{dyadic} }  }\eta_s=1 \quad \text{on } \bigcup_{I} (I-I).\]
  For each $P\in \mc{P}$ we let $v_P(x)$ be a positive smooth function (with height $1$) adapted to $P$ in the physical space with frequency support in $C(I-I),$ where $P=P_I.$
  If we define $g=\sum_P v_P,$ then we can write
  $$g=g*\check{\eta_0}+\sum_{1\leq s\leq \min\{L,N^{1/2}/L\}} g*\check{\eta_s} .$$
  
  Fix $s\in [1, \min\{L,N^{1/2}/L\}].$  There exists a collection of fat APs $\mc{I}_s$ consisting of $I_s=P_{v_{I_s}}^{C\theta sL^2/N^2}(0)\cap B_{CL/N}(0)$ with the properties that $v_{I_s}\sim N^{-1}$ and $v_{I_s}$ are $\sim \frac{s\theta L}{N^2}$ separated, such that for every $I\in \mc{I},$ $I-I$ is contained in one and only one $I_s\in \mc{I}_s.$ 
  In fact we may let $v_{I_s}=v_I$ for any $I$ with $(I-I)\subset I_s.$ The cardinality of $\mc{I}_s$ is $N^{1/2}/(sL).$ For $I_s \in \mc{I}_s$ we let $\mc{P}_{I_s}$ be the tiling of $\R$ by fat APs of the form $P_{v_{I_s}^{-1}}^{\theta CsN/L} \cap B_{CN^2/(L^2\theta)}.$
  
  For every $P=P_I\in \mc{P}$ there exists a unique $I_s\in \mc{I}_s$ and $P_s\in \mc{P}_{I_s}$ such that $I-I \subset I_s$ and $P\subset P_s.$ 
  For every $1\leq M \lesssim s^2,$
  we define $\mc{P}_{s,M}$ be the sub-collection of $\mc{P}$ consisting of $P$ such that $P_s$ contains $\sim M$ many $P'\in \mc{P}.$ For $1\leq s \leq \min\{L,N^{1/2}/L\}$ let 
  \[g_{s,M}= \sum_{P\in \mc{P}_{s,M}} v_P* \check{\eta}_s.\]

  By the pigeonhole principle, for every $x\in Q_r$ there either exist an $s$ and $M_s$ such that 
  $g(x) \lessapprox |g_{s,M_s}(x)|.$
  or 
  $g(x) \lessapprox |g_0(x)|.$
  Again by the pigeonhole principle either we can find $s,M_s$ such that for $x$ in a subset $E$ of $Q_r$ with measure $\gtrapprox |Q_r|,$
  \[g(x) \lessapprox |g_{s,M_s}(x)|\]
  or for $x$ in a subset $E$ of $Q_r$ with measure $\gtrapprox |Q_r|,$
  \[g(x) \lessapprox |g_0(x)|.\]
  We consider these two cases separately.
  
  \vspace{3mm} 
  \noindent 
  {\bf Case 1. } Suppose $g(x) \lessapprox |g_{s,M_s}(x)|$ for $x$ in a subset $E$ of $Q_r$ with measure $\gtrapprox |Q_r|.$  We write 
  \[g_{s,M_s}=\sum_{I_s} \sum_{P_{I_s}} \sum_{P\subset P_{I_s},\, P\in \mc{P}_{s,M_s}} v_{P}* \check{\eta}_s=: \sum_{I_s} \sum_{P_{I_s}} g_{P_{I_s}}. \]
  Here the sum over $P_{I_s}$ is over $P_{I_s}\in \mc{P}_{I_s}$ such that $g_{P_{I_s}}$ is nonzero.
  
  We note that $\sum_{P_{I_s}} g_{P_{I_s}}$ with $I_s$ varying are almost orthogonal (meaning that the Fourier support of them has $\mc{O}(1)$-overlap). This is because $\supp \widehat{g_{P_{I_s}}}\subset (\cup_{I\subset I_s} (I-I)) \cap \{\xi: |\xi|\sim \frac{Lv}{s}\},$ and for every distinct $I_s,I'_s\in\mc{I}_s,$ and every $I,I'\in\mc{I}$ with $I\subset I_s, I'\subset I'_s,$ the distance $d_{I,I'}$ between the $\frac{L}{s}$-th term in $I$ and $I'$ satisfies
  \[ \frac{\theta L^2}{N^2} =\frac{s\theta L}{N^2} \frac{L}{s} \lesssim d_{I,I'}\lesssim \frac{N^{1/2}\theta}{N^2}\frac{L}{s} \lesssim \frac{1}{N}. \]
  Therefore 
  $\supp \widehat{\sum_{P_{I_s}} g_{P_{I_s}}}$ are $\mc{O}(1)$-overlapping.
  
  Hence
  \begin{align*}
      |Q_r|r^2 & \lessapprox \int_{E} g^2 \\
      & \lessapprox \int_{\R} |g_{s,M_s}|^2 \\
      & \lesssim \sum_{I_s}  \int_{\R} |\sum_{P_{I_s}} g_{P_{I_s}}|^2.
  \end{align*}
  
  We note that for $P\subset P_{I_s},$
  \[|v_P*\check{\eta}_s|\lesssim \frac{1}{s} W_{P_{I_s},100},\]
  so
  \[\int_{\R} | \sum_{P_{I_s}} g_{P_{I_s}}|^2\lesssim \int_{\R} (\sum_{P_{I_s}} \sum_{P\subset P_{I_s},\, P\in \mc{P}_{s,M_s}}  \frac{1}{s}  W_{P_{I_s},100} )^2 \lesssim \sum_{P_{I_s}} \frac{M_s^2}{s^2} |P_{I_s}|. \]
  Hence 
  \[|Q_r|r^2 \lessapprox \sum_{I_s} \sum_{P_{I_s}} |P_{I_s}| \left( \frac{M_s}{s}\right)^2.\]
  Since $\frac{|P_{I_s}|}{s}\sim |P|$ and $\sum_{I_s} \sum_{P_{I_s}} M_s \leq (\#P),$ we obtain
  \[r^2|Q_r|\lessapprox (\#P)|P| \frac{M_s}{s},\]
  which is \eqref{s103}.
  
  Now we show \eqref{s104}. We choose $x\in E.$ Then 
  \[r\lesssim g(x) \lessapprox |g_{s,M_s}(x)| \leq \sum_{I_s} \sum_{P_{I_s}} |g_{P_s}(x)| \lesssim |\mc{I}_s| \frac{M_s}{s} \lesssim \frac{N^{1/2}}{sL} \frac{M_s}{s} .\]
  
  Finally we prove \eqref{s105}. When $s\leq \frac{L}{L_1},$  every $P_{I_s}$ is contained in a single $P_J$ and therefore can contain $\lesssim M$  parallel $P\in \mc{P}.$ For every $P_{I_s},$  there are  $\lesssim s$ many $I\in \mc{I}$ such that there could exist $P_{I}$ such that $P_I\subset P_{I_s},$ so we conclude $P_{I_s}$ contain $\lesssim sM$ many $P\in \mc{P}.$ When $s\geq \frac{L}{L_1},$ every $P_{I_s}$ is contained in at most $s\frac{L_1}{L}$ many $P_J$ and therefore can contain $\lesssim sMs\frac{L_1}{L}$ many $P\in \mc{P}.$ Hence we obtain \eqref{s105}.
  
  \vspace{3mm} 
  \noindent 
  {\bf Case 2. } Suppose $g(x) \lessapprox |g_0(x)|$ for $x$ in a subset of $Q_r$ with measure $\gtrapprox |Q_r|.$ \eqref{s106} is trivial since $Q_r\subset P(L).$ To show \eqref{s107} we choose $x\in E.$ Then
  \[r\lesssim g(x) \lessapprox |g_0(x)| \lesssim (\#P)\frac{|P|}{|P(L)|}, \]
  where the last inequality is because
  \[|g_0(x)|=|g*\check{\eta}_0(x)|\leq \|g\|_{L^1} \|\check{\eta}_0\|_{L^\infty} \lesssim (\#P)|P| \frac{1}{|P(L)|}= (\#P)\frac{|P|}{|P(L)|}.\]
\end{proof}

\subsection{Refined flat decoupling for fat APs}
Next we have the following refined flat decoupling inequality for fat APs.

\begin{proposition}\label{flatprop}
  Suppose $2\leq q \leq p,$ and let $\{J\}_{J\in \mc{J}},$ $\{I\}_{I\in \mc{I}}$ be defined as in the beginning of Section \ref{smallcapsec}. Fix $I\in \mc{I}.$ Write $f_I=\sum_{P_I} f_{I,P_I}$ for the wave packet decomposition of $f_I.$ Suppose for non-zero wave packets $f_{I,P_I},$ $\|f_{I,P_I}\|_{L^\infty(\R)}$ are roughly constant, and for every $J\subset I,$ and every $P_J$ (in a tiling of $\R$), $P_J$ contains either $\sim M$ or $0$ wave packets $f_{I,P_I}$ (in the sense that $P_I \subset P_J$). Then
  \begin{equation}\label{s21}
    \|f_I\|_{L^p(\R)} \lesssim M^{\frac{1}{p}-\frac{1}{2}}
 \left(\frac{L}{L_1} \right)^{1-\frac{1}{p}-\frac{1}{q}}  \left(\sum_{J\subset I} \|f_J\|_{L^p(\R)}^q \right)^{1/q}.
  \end{equation}
\end{proposition}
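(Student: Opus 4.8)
The plan is to exploit the two flatness hypotheses to reduce everything to counting tiles. Write $n=L/L_1$ for the number of $J\in\mc{J}$ contained in $I$; this is also the number of tiles $P_I$ contained in a single tile $P_J$, so $|P_J|=n|P_I|$ and $M\le n$. Let $\beta$ be the common value of $\|f_{I,P_I}\|_{L^\infty(\R)}$ over the non-negligible wave packets and let $A$ be their number. Since every $P_J$ contains $M$ or $0$ wave packets, exactly $A/M$ of the tiles $P_J$ are active (contain a wave packet). Because $\widehat{f_J}=1_J\widehat{f_I}$, the function $f_J$ equals $f_I$ convolved with $(1_J)^\vee$, which (as in Section \ref{localconstsec}) is concentrated on a single dual fat AP of the shape of $P_J$; hence, up to the weights $W_{P,100}$, $f_I$ is essentially supported on the union of the non-negligible $P_I$ (of measure $\lesssim A|P_I|$) and $f_J$ on the union of the active $P_J$ (of measure $\lesssim (A/M)\,n|P_I|$).

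Next I would record a two-sided control of $f_I$. For the upper bound, since $\{\phi_{P_I}\}$ is a partition of unity subordinate to a tiling, the tile $P_I$ containing a given $x$ satisfies $\phi_{P_I}(x)\gtrsim 1$, whence $|f_I(x)|=|f_{I,P_I}(x)|/\phi_{P_I}(x)\lesssim\beta$; together with the support statement this gives $\|f_I\|_{L^p(\R)}^p\lesssim\beta^p A|P_I|$. In particular $|f_{I,P_I}|=\phi_{P_I}|f_I|\lesssim\beta W_{P_I,200}$, so for a non-negligible $P_I$ the profile $f_{I,P_I}$, which is locally constant on $P_I$ by its Fourier support, attains its $L^\infty$-norm on $P_I$ and hence $|f_{I,P_I}|\sim\beta$ on a definite fraction of $P_I$; since $|f_I|\ge|f_{I,P_I}|$ there, summing over the essentially disjoint non-negligible tiles yields the lower bound $\|f_I\|_{L^2(\R)}^2\gtrsim\beta^2 A|P_I|$.

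To finish, for each $J\subset I$ I would apply H\"older's inequality on the essential support of $f_J$ (of measure $\lesssim (A/M)\,n|P_I|$), using $\tfrac1p-\tfrac12<0$, to obtain $\|f_J\|_{L^p(\R)}\gtrsim\|f_J\|_{L^2(\R)}\big((A/M)\,n|P_I|\big)^{1/p-1/2}$; then square, sum over the $n$ choices of $J$, and invoke Plancherel $\sum_{J\subset I}\|f_J\|_{L^2(\R)}^2=\|f_I\|_{L^2(\R)}^2$ together with the lower bound above, giving $\sum_{J\subset I}\|f_J\|_{L^p(\R)}^2\gtrsim\big((A/M)\,n|P_I|\big)^{2/p-1}\beta^2 A|P_I|$. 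The power mean inequality (here $q\ge 2$) gives $\big(\sum_J\|f_J\|_{L^p(\R)}^q\big)^{1/q}\ge n^{1/q-1/2}\big(\sum_J\|f_J\|_{L^p(\R)}^2\big)^{1/2}$. Dividing the upper bound $\|f_I\|_{L^p(\R)}\lesssim\beta(A|P_I|)^{1/p}$ by this quantity, every power of $\beta$, $A$ and $|P_I|$ cancels and what remains is exactly $M^{1/p-1/2}\,n^{1-1/p-1/q}=M^{1/p-1/2}(L/L_1)^{1-1/p-1/q}$, which is \eqref{s21}.

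The hard part is the first paragraph: establishing that each active $P_J$ carries exactly $M$ wave packets (so that there are $A/M$ of them) and that $f_J$ stays essentially supported on those $P_J$ after convolving with $(1_J)^\vee$ — this is precisely where the correct power of $M$ is produced, and it relies on $P_J$ being the $n$-fold thickening of $P_I$ and on the concentration of $(1_J)^\vee$ on a single dual fat AP. The remainder is the routine but necessary bookkeeping of replacing the sharp cutoffs $1_P$ by the weights $W_{P,100}$ throughout, exactly as in Section \ref{localconstsec}.
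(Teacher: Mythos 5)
Your bookkeeping is essentially correct: the pointwise bound $|f_I|\lesssim\beta$ (and $f_I\equiv 0$ on tiles with zero wave packet), the lower bound $\|f_I\|_{L^2(\R)}^2\gtrsim\beta^2A|P_I|$ via the locally constant property plus bounded overlap of the level sets, Plancherel in $J$, the $\ell^q$--$\ell^2$ H\"older with $n=L/L_1$ terms, and the final cancellation of $\beta$, $A$, $|P_I|$ all go through and produce exactly $M^{1/p-1/2}(L/L_1)^{1-1/p-1/q}$. The problem is the step you yourself flag as ``the hard part'': you need, for every $J\subset I$, the reverse H\"older inequality $\|f_J\|_{L^2(\R)}\lesssim |E|^{1/2-1/p}\|f_J\|_{L^p(\R)}$ with $E$ the union of the $\sim A/M$ active $P_J$'s, i.e.\ that the sharp Fourier projection $f_J=\big(1_J\widehat{f_I}\big)^{\vee}$ keeps all but a bounded fraction of its $L^2$ mass on $E$. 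The justification offered --- that $(1_J)^{\vee}$ is ``concentrated on a single dual fat AP of the shape of $P_J$'' --- does not prove this: the sharp-cutoff kernel decays only like $1/|x|$ off the dual fat AP, so it is not integrable off any bounded dilate, Young's inequality gives no smallness for the far part, and with $\sim A$ wave packets the tails can a priori interfere (or the on-support contributions cancel) so that a nonnegligible fraction of $\|f_J\|_{L^2}$ sits on inactive tiles. None of the weight machinery of Section \ref{localconstsec} (Lemma \ref{psilem}, Proposition \ref{locconstprop}, Lemma \ref{localL^2orth}) yields such a \emph{global} spatial-concentration statement for sharp projections, and since this is precisely the step that produces the power of $M$, the proof is incomplete as written.

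The paper's proof is arranged so that this question never arises. It works on one tile $P_J$ at a time: on an active tile, $\|f_I\|_{L^p(P_J)}\lesssim H(M|P_I|)^{1/p}$ and $H(M|P_I|)^{1/2}\lesssim\|f_I\|_{L^2(P_J)}\lesssim\big(\sum_{J\subset I}\|f_J\|^2_{L^2(W_{P_J,100})}\big)^{1/2}$ by \emph{local} $L^2$ orthogonality with weights, then H\"older in the spatial variable and in $J$ produces the factors $|P_J|^{1/2-1/p}$ and $(L/L_1)^{1/2-1/q}$, and finally one sums $p$-th powers over the tiling using $\sum_{P_J}W_{P_J,100}\lesssim 1$ and Minkowski (this is where $q\le p$ enters). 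No information about where the $f_J$ live is ever needed, because the $f_J$ only appear through weighted norms on the same tile. Your counting argument can be repaired by running it tile by tile in exactly this way: your upper and lower bounds localize to an active $P_J$, and the weighted local orthogonality replaces both your global Plancherel step and the essential-support claim.
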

\begin{proof}
  Fix a  $P_J$ that contains $\sim M$ many wave packets $f_{I,P_I}.$ We first show
  \begin{equation}\label{s22}
    \|f_I\|_{L^p(P_J)} \lesssim M^{\frac{1}{p}-\frac{1}{2}} \left(\frac{L}{L_1} \right)^{1-\frac{1}{p}-\frac{1}{q}} \left(\sum_{J\subset I} \|f_J\|_{L^p(W_{P_J,100})}^q \right)^{1/q}.
  \end{equation}
  Assume $\|f_{I,P_I}\|_{L^\infty(\R)} \sim H$ for every non-zero $f_{I,P_I}.$ By assumption we have
  \[\|f_I\|_{L^p(P_J)}\lesssim H(M|P_I|)^{1/p}.\]
  On the other hand by local $L^2$ orthogonality we have
  \[H(M|P_I|)^{1/2} \lesssim \|f_I\|_{L^2(P_J)} \lesssim (\sum_{J\subset I} \|f_J\|_{L^2(W_{P_J,100})}^2)^{1/2},\]
  and by H\"{o}lder's inequality the right hand side is bounded by
  \[(\frac{L}{L_1})^{\frac{1}{2}-\frac{1}{q}}|P_J|^{\frac{1}{2}-\frac{1}{p}} (\sum_{J\subset I} \|f_J\|_{L^p(W_{P_J,100})}^q)^{1/q}.\]
  Noting that $\frac{|P_I|}{|P_J|}=\frac{L_1}{L},$ we conclude
  \begin{align*}
      \|f_I\|_{L^p(P_J)} & \lesssim H(M|P_I|)^{\frac{1}{2}} (M|P_I|)^{\frac{1}{p}-\frac{1}{2}} \\
      & \lesssim M^{\frac{1}{p}-\frac{1}{2}} (\frac{L}{L_1} )^{1-\frac{1}{p}-\frac{1}{q}} (\sum_{J\subset I} \|f_J\|_{L^p(W_{P_J,100})}^q )^{1/q}.
  \end{align*}
  So \eqref{s22} holds.
  
  Since $q\leq p,$ \eqref{s21} follows from \eqref{s22} by raising \eqref{s22} to the $p$-th power, summing over $P_J$ in a tiling of $\R,$ and applying Minkovski's inequality (see Proposition \ref{localdecsec}).
\end{proof}

\subsection{Proof of Theorem \ref{smallcapthm}}
Now we are ready to prove Theorem \ref{smallcapthm}.
We first show a bilinear version of Theroem \ref{smallcapthm} and then conclude Theorem \ref{smallcapthm} by a broad-narrow argument. Still let $\{J\}_{J\in \mc{J}}$ be defined as in the beginning of Section \ref{smallcapsec}. We say two sub-collections of $\mc{J},$ $\mc{J}_1$ and $\mc{J}_2$, are transversal if $d(J_1,J_2)\gtrsim N^{-1/2}$ for every $J_1\in\mc{J}_1,J_2\in \mc{J}_2.$

\begin{theorem}\label{smallcapbilthm} 
  Suppose $4\leq q\leq p\leq 6,$ $\frac{1}{q}+\frac{3}{p}\leq 1.$
  If either of the following two conditions is satisfied
  \begin{enumerate}[label=(\alph*)]
      \item $L_1^{\frac{1}{2}-\frac{1}{q}} \leq L^{1-\frac{3}{p}-\frac{1}{q}},$ 
      \item $p=q,$
  \end{enumerate}
  then for every $\e>0,$
  \begin{multline}
      \label{s2}
    \|\prod_{i\in \{1,2\}}|\sum_{J\in \mc{J}_i}f_J|^{1/2}\|_{L^p(\R)} \lesssim_\e N^\e \log^C(\theta^{-1}+1) \left( \frac{N^{\frac{1}{2}-\frac{1}{2q}-\frac{3}{2p}}L^{\frac{2}{p}}}{L_1^{1-\frac{1}{p}-\frac{1}{q}}} +\left( \frac{N^{1/2}}{L_1} \right)^{\frac{1}{2}-\frac{1}{q}} \right)  
    \\
    \prod_{i\in \{1,2\}} ( \sum_{J\in \mc{J}_i} \|f_J\|_{L^p(\R)}^q )^{1/(2q)}
  \end{multline}
  for every  transversal sub-collections $\mc{J}_1,\mc{J}_2$  of $\mc{J},$ and every functions $f_J:\R\rightarrow \C$ with $\supp \widehat{f_J}\subset J.$
\end{theorem}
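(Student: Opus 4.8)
The plan is to follow the three-ingredient scheme from \cite{demeter2020small} (refined decoupling, refined flat decoupling, incidence estimate), which in our setting means combining Theorem \ref{refineddecthm}, Proposition \ref{flatprop}, and Proposition \ref{incidenceprop}. First I would set up the standard reductions: by the locally constant property (Proposition \ref{locconstprop}) and Minkowski's inequality it suffices to prove \eqref{s2} with $L^p(\R)$ replaced by $L^p(P(L))$ on the left and weighted norms on the right, and by pigeonholing we may assume that all non-zero wave packets $\phi_{P_I} f_I$ (for $I$ the canonical caps refining each $J$) have comparable $L^\infty$-norms $\sim H$, that each relevant $P_J$ contains either $\sim M$ or $0$ parallel $P_I$'s, and that the value $|\prod_i |\sum_{J\in\mc{J}_i} f_J|^{1/2}|$ is $\sim$ some constant on the super-level set we integrate over. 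After normalizing $(\sum_{J\in\mc{J}_i}\|f_J\|_{L^p}^q)^{1/q}=1$, the right-hand side of \eqref{s2} becomes a pure power of $N, L, L_1$, and the goal is an $L^p$ bound on the bilinear expression restricted to $Q_r$, the set of $r$-rich points for the collection $\mc{P}$ of wave packet tubes with $r\sim$ (number of caps) $\times$ (height contribution).

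Second I would estimate the bilinear expression on $Q_r$ by splitting into the cross terms $|f_{I'}|^2 |f_{I''}|^2$ with $I'\subset J'\in\mc{J}_1$, $I''\subset J''\in\mc{J}_2$ transversal, and apply the refined decoupling Theorem \ref{refineddecthm} at the canonical scale to pass from $\|\sum_{I\subset J} f_I\|_{L^p}$ down to the square-function-type quantity $(\sup_x \sum_I \|f_I\|_{\stkout{L}^2(W_{P_I(x),100})}^2)^{1/2-1/p}(\sum_I \|f_I\|_{L^2(W_{P(L),100})}^2)^{1/p}$ — exactly as is done in the proof of Theorem \ref{localdecouplingthm}, but keeping track of the restriction to $Q_r$. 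The first factor is controlled by $H$ and $M$ and the rich-point multiplicity, while the second factor is controlled by flat decoupling, Proposition \ref{flatprop}, which converts $\sum_{I\subset J}\|f_I\|^2$ into $\|f_J\|_{L^p}^q$-type quantities at a cost of the explicit power $M^{1/p-1/2}(L/L_1)^{1-1/p-1/q}$. Then one feeds in the incidence estimate Proposition \ref{incidenceprop} to bound $|Q_r|$ and $r$: in Case 1 of that proposition one gets $|Q_r|\lessapprox \frac{M_s}{sr^2}(\#P)|P|$ together with $r\lessapprox \frac{M_sN^{1/2}}{s^2 L}$ and $M_s\lesssim sM\max\{1, sL_1/L\}$, and one optimizes over the dyadic parameter $s$; in Case 2 one uses $|Q_r|\le |P(L)|$ and $r\lesssim (\#P)|P|/|P(L)|$, which gives the second term $(N^{1/2}/L_1)^{1/2-1/q}$ in \eqref{s2}. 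The transversality of $\mc{J}_1,\mc{J}_2$ is what licenses applying the refined decoupling to the two factors separately and the bilinear Kakeya input that underlies Proposition \ref{bilresprop}.

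The main obstacle I expect is the bookkeeping that matches the three estimates so that the exponents combine into precisely $\frac{N^{1/2-1/(2q)-3/(2p)}L^{2/p}}{L_1^{1-1/p-1/q}}$ after optimizing over $s$ — in particular, the two alternatives in $M_s\lesssim sM\max\{1,sL_1/L\}$ create a case split, and it is exactly here that the hypotheses (a) $L_1^{1/2-1/q}\le L^{1-3/p-1/q}$ or (b) $p=q$ enter: condition (a) guarantees that the branch $sL_1/L\le 1$ (i.e. $s\le L/L_1$) delivers the claimed bound, while condition (b) collapses the flat-decoupling loss $(L/L_1)^{1-1/p-1/q}$ to $(L/L_1)^{1-2/p}$ and removes the obstruction uniformly. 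One must also verify that the parameter $r$ produced by pigeonholing is consistent with the constraint \eqref{s104}, i.e. that the dominant contribution genuinely comes from $r$-rich points in the admissible range, discarding the low-multiplicity part into the normalization. A secondary technical point is carrying the $\log^C(\theta^{-1}+1)$ factors and the $N^\e$ losses through the finitely many ($\mc{O}_\e(1)$ many) pigeonholing steps, which is routine given the conventions already fixed in the paper. Once the bilinear Theorem \ref{smallcapbilthm} is in hand, the full Theorem \ref{smallcapthm} follows by a standard broad-narrow argument over $\mc{J}$, splitting into the case where two well-separated sub-collections each carry a comparable share of $|\sum_J f_J|$ (handled by \eqref{s2}) and the narrow case where all the mass concentrates in $\mc{O}(1)$ adjacent $J$'s (handled by triangle inequality plus the trivial single-$J$ bound).
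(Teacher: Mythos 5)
Your plan follows the paper's proof in essentially the same way: the same three ingredients (refined decoupling, Proposition \ref{flatprop}, and the incidence estimate Proposition \ref{incidenceprop}), the same wave-packet pigeonholing onto rich sets inside a fixed $P(L)$, and the same endgame in which the dyadic parameter $s$ and the bound $M_s\lesssim sM\max\{1,sL_1/L\}$ create the case split where hypotheses (a)/(b) are used. Two corrections to the bookkeeping, though: the paper does not apply Theorem \ref{refineddecthm} at exponent $p$ directly on $Q_{r_1,r_2}$; it applies it only at exponent $6$ and interpolates with the $L^4$ bound coming from the bilinear restriction estimate (Proposition \ref{bilresprop}) applied on each $P(L')\subset Q_{r_1,r_2}$, and it is exactly this interpolation that produces the factor $|Q_{r_1,r_2}|^{3/p-1/2}$ in \eqref{s101} — without it the incidence bound \eqref{s103} on $|Q_r|$ has nothing to act on and the numerology does not close near $p=4$, so this step must be made explicit rather than left implicit in ``the bilinear Kakeya input.'' Also, the branch $s\le L/L_1$ (Case 1.1) yields the first term of \eqref{s2} unconditionally; conditions (a) and (b) are needed only in the complementary branch $s\ge L/L_1$ (Case 1.2), where one uses $s\le L$ under (a) and $s\le N^{1/2}/L$ under (b), while Case 2 of the incidence estimate gives the second term $(N^{1/2}/L_1)^{1/2-1/q}$.
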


\begin{proof}
  By a local to global argument similar to Proposition \ref{localdecsec}, to show \eqref{s2} it suffices to show for every $P(L),$
  \begin{multline}
      \label{s2loc}
    \|\prod_{i\in \{1,2\}}|\sum_{J\in \mc{J}_i}f_J|^{1/2}\|_{L^p(P(L))} \lesssim_\e N^\e \log^C(\theta^{-1}+1) \left( \frac{N^{\frac{1}{2}-\frac{1}{2q}-\frac{3}{2p}}L^{\frac{2}{p}}}{L_1^{1-\frac{1}{p}-\frac{1}{q}}} +\left( \frac{N^{1/2}}{L_1} \right)^{\frac{1}{2}-\frac{1}{q}} \right) \\ 
    \prod_{i\in \{1,2\}} ( \sum_{J\in \mc{J}_i} \|f_J\|_{L^p(\R)}^q )^{1/(2q)}.
  \end{multline}
  
  We fix a $P(L).$
  Write $F_1=\sum_{J\in \mc{J}_1}f_J$ and $F_2=\sum_{J\in \mc{J}_2}f_J.$ For $i\in \{1,2\}$ We write $F_i=\sum_{P\in \mc{P}_i} F_{i,P}$ for the wave packet decomposition with respect to $\{I\}_{I\in \mc{I}}.$ So 
  \[F_i=\sum_{I\in \mc{I}}F_{i,I}=\sum_{I\in \mc{I}}\sum_{P_I}F_{i,I,P_I} =:\sum_{P\in\mc{P}_i} F_{i,P}.\]
  Write $\mc{I}_1=\{I\in \mc{I}: I\subset \cup_{J\in \mc{J}_1} J\}$ and  $\mc{I}_2=\{I\in \mc{I}: I\subset \cup_{J\in \mc{J}_2} J\}.$ Let $F=F_1+F_2.$
  
  By a dyadic pigeonholing argument and rescaling we may assume that for every non-zero $F_{i,P},$ $\|F_{i,P}\|_{L^\infty}\sim 1.$ We assume $\mc{P}_i$ contains only non-zero $F_{i,P}.$ By a further dyadic pigeonholing argument we may assume that
  for every $P_J$ (in a tiling of $\R$), $P_J$ either contains $M_i$ or $0$ many wave packets $F_{i,P},$ for $i\in \{1,2\}.$ Lastly, by one more dyadic pigeonholing argument we may assume that for each $i\in\{1,2\},$ $\|F_I\|_{L^p(\R)}$ are comparable for nonzero $F_I$ with $I\in \mc{I}_i.$
  
  For dyadic $1\leq r_1,r_2 \leq N^{1/2}/L$ we let $Q_{r_1,r_2}$ denote the collection of $P(L')$ (in the tiling of $P(L)$) that intersect $\sim r_1$ many $P\in \mc{P}_1,$ and $\sim r_2$ many $P\in \mc{P}_2.$ Recall that $L'=(N^{1/2}L)^{1/2}$ and  the square function $\sum_{I} |f_I|^2$ is locally constant on $P(L').$
  
  From the refined decoupling inequality (Theorem \ref{refineddecthm}) we have
  \[\|(F_1F_2)^{1/2}\|_{L^6(Q_{r_1,r_2})} \leq \|F_1\|_{L^6(Q_{r_1,r_2})}^{1/2}\|F_2\|_{L^6(Q_{r_1,r_2})}^{1/2} \lesssim_\e N^{\e} \log^C(\theta^{-1}+1) r_1^{1/6}r_2^{1/6} \prod_{i\in \{1,2\}} (\sum_{I\in \mc{I}_i} \int |F_I|^2)^{\frac{1}{12}}. \]
  On the other hand from bilinear restriction (Proposition \ref{bilresprop}) we have for every $P(L')\subset Q_{r_1,r_2}$
  \[\|(F_1F_2)^{1/2}\|_{L^4(P(L'))} \lesssim_\e N^\e r_1^{1/4}r_2^{1/4} |P(L')|^{1/4}\]
  and thus
  \[\|(F_1F_2)^{1/2}\|_{L^4(Q_{r_1,r_2})} \lesssim_\e N^\e r_1^{1/4}r_2^{1/4} |Q_{r_1,r_2}|^{1/4}.\]
  Therefore by the interpolation inequality we obtain
  \begin{equation}\label{s101}
      \|(F_1F_2)^{1/2}\|_{L^p(Q_{r_1,r_2})} \lesssim_\e N^\e \log^C(\theta^{-1}+1) r_1^{1/p}r_2^{1/p} |Q_{r_1,r_2}|^{\frac{3}{p}-\frac{1}{2}} \prod_{i\in \{1,2\}} (\sum_{I\in \mc{I}_i} \|F_I\|_{L^2}^2)^{\frac{1}{4}-\frac{1}{p}}.
  \end{equation}
  We assumed each wave packet $F_{i,P}$ satisfies $\|F_{i,P}\|_{L^\infty}=1,$ so
  \[\sum_{I\in \mc{I}_i} \|F_I\|_{L^2}^2 \sim (\#P_i)|P| \sim \sum_{I\in \mc{I}_i} \|F_I\|_{L^p}^p \]
  where $\#P_i$ denote the total number of nonzero wave packets in $F_i,$ that is, $|\mc{P}_i|.$ 
  Hence we may rewrite \eqref{s101} as
  \[ \|(F_1F_2)^{1/2}\|_{L^p(Q_{r_1,r_2})} \lesssim_\e N^\e \log^C(\theta^{-1}+1)  |Q_{r_1,r_2}|^{\frac{3}{p}-\frac{1}{2}} \prod_{i\in \{1,2\}} \Big( r_i^{\frac{2}{p}} (\sum_{I\in \mc{I}_i} \|F_I\|_{L^p}^q)^{1/q} ((\#P_i) |P|)^{\frac{1}{2}-\frac{3}{p}} (\#I_i)^{\frac{1}{p}-\frac{1}{q}} \Big)^{\frac{1}{2}},  \]
  where $\#I_i$ denotes the total number of $I\in \mc{I}_i$ such that $F_{I}$ is nonzero. By Proposition \ref{flatprop} we have
  \[\sum_{I\in \mc{I}_i} \|F_I\|_{L^p}^q \lesssim M_i^{\frac{q}{p}-\frac{q}{2}} \left(\frac{L}{L_1}\right)^{q-\frac{q}{p}-1} (\sum_{J\in \mc{J}_i} \|f_J\|^q_{L^p}). \]
  Therefore we conclude
  \begin{multline*}
      \|(F_1F_2)^{1/2}\|_{L^p(Q_{r_1,r_2})} \lesssim_\e N^\e \log^C(\theta^{-1}+1) |Q_{r_1,r_2}|^{\frac{3}{p}-\frac{1}{2}} \\
      \prod_{i\in \{1,2\}} \Big( r_i^{\frac{2}{p}}  ((\#P_i) |P|)^{\frac{1}{2}-\frac{3}{p}} (\#I_i)^{\frac{1}{p}-\frac{1}{q}}  M_i^{\frac{1}{p}-\frac{1}{2}} (\frac{L}{L_1})^{1-\frac{1}{p}-\frac{1}{q}} (\sum_{J\in \mc{J}_i} \|f_J\|^q_{L^p})^{\frac{1}{q}} \Big)^{\frac{1}{2}}.
  \end{multline*}
  So \eqref{s2} follows if we may show for $i\in \{1,2\},$
  \begin{equation}\label{s102}
       |Q_{r_1,r_2}|^{\frac{3}{p}-\frac{1}{2}} r_i^{\frac{2}{p}}  ((\#P_i) |P|)^{\frac{1}{2}-\frac{3}{p}} (\#I_i)^{\frac{1}{p}-\frac{1}{q}}  M_i^{\frac{1}{p}-\frac{1}{2}} \left(\frac{L}{L_1}\right)^{1-\frac{1}{p}-\frac{1}{q}} \lessapprox   \frac{N^{\frac{1}{2}-\frac{1}{2q}-\frac{3}{2p}}L^{\frac{2}{p}}}{L_1^{1-\frac{1}{p}-\frac{1}{q}}} +\left( \frac{N^{1/2}}{L_1} \right)^{\frac{1}{2}-\frac{1}{q}}.
  \end{equation}
  We show \eqref{s102} using Proposition \ref{incidenceprop}. Fix $i\in \{1,2\}.$ We split the proof into two cases depending on which case happens in Proposition \ref{incidenceprop} when applied to $\{P\}_{P\in \mc{P}_i}$ with $r=r_i.$
  
  \vspace{3mm}
  \noindent
  {\bf Case 1. (1) in Proposition \ref{incidenceprop} happens.} Let $s,M_s$ be the $s,M_s$ given in case (1) of Proposition \ref{incidenceprop}.
  By \eqref{s103} we have
  \[ \text{LHS of }  \eqref{s102} \lessapprox  r_i^{1-\frac{4}{p}}s^{\frac{1}{2}-\frac{3}{p}}M_s^{\frac{3}{p}-\frac{1}{2}}(\#I_i)^{\frac{1}{p}-\frac{1}{q}}M_i^{\frac{1}{p}-\frac{1}{2}}\left(\frac{L}{L_1}\right)^{1-\frac{1}{p}-\frac{1}{q}}.\]
  
  {\bf Case 1.1. 
  $s\leq \frac{L}{L_1}.$} Then \eqref{s105} reads $M_s \lesssim sM_i.$ Note that we have 
  \[(\#I) \gtrsim r_i\]
  since we have assumed $\|F_{i,P}\|_{L^\infty} \sim 1.$ Therefore by \eqref{s104} and \eqref{s105} we have
  \begin{align*}
      \text{LHS of }  \eqref{s102} &  \lessapprox \left(\frac{M_sN^{1/2}}{s^2L}\right)^{1-\frac{3}{p}-\frac{1}{q}}s^{\frac{1}{2}-\frac{3}{p}}M_s^{\frac{3}{p}-\frac{1}{2}}M_i^{\frac{1}{p}-\frac{1}{2}}\left(\frac{L}{L_1}\right)^{1-\frac{1}{p}-\frac{1}{q}} \\
      & = M_s^{\frac{1}{2}-\frac{1}{q}} \left(\frac{N^{1/2}}{L}\right)^{1-\frac{3}{p}-\frac{1}{q}} \left(\frac{L}{L_1}\right)^{1-\frac{1}{p}-\frac{1}{q}} s^{-\frac{3}{2}+\frac{3}{p}+\frac{2}{q}}M_i^{\frac{1}{p}-\frac{1}{2}} \\
      & \lesssim (sM_i)^{\frac{1}{2}-\frac{1}{q}} \left(\frac{N^{1/2}}{L}\right)^{1-\frac{3}{p}-\frac{1}{q}} \left(\frac{L}{L_1}\right)^{1-\frac{1}{p}-\frac{1}{q}} s^{-\frac{3}{2}+\frac{3}{p}+\frac{2}{q}}M_i^{\frac{1}{p}-\frac{1}{2}} \\
      &= M_i^{\frac{1}{p}-\frac{1}{q}} s^{-1+\frac{3}{p}+\frac{1}{q}} \left(\frac{N^{1/2}}{L}\right)^{1-\frac{3}{p}-\frac{1}{q}} \left(\frac{L}{L_1}\right)^{1-\frac{1}{p}-\frac{1}{q}}.
  \end{align*}
  
  Since $p\geq q, \frac{1}{q}+\frac{3}{p}\leq 1,$ and $s,M_i\geq 1,$ we conclude 
  \begin{align*}
      \text{LHS of }  \eqref{s102} &  \lessapprox \left(\frac{N^{1/2}}{L}\right)^{1-\frac{3}{p}-\frac{1}{q}} \left(\frac{L}{L_1}\right)^{1-\frac{1}{p}-\frac{1}{q}}  \\
      & = \frac{N^{\frac{1}{2}-\frac{1}{2q}-\frac{3}{2p}}L^{\frac{2}{p}}}{L_1^{1-\frac{1}{p}-\frac{1}{q}}}.
  \end{align*}
  
  {\bf Case 1.2. 
  $s\geq \frac{L}{L_1}.$} 
  This is the case where we see the two conditions in Theroem \ref{smallcapbilthm}.
  Now \eqref{s105} reads $M_s \lesssim s^2M_i\frac{L_1}{L}.$ 
  By $(\#I) \gtrsim r_i$ and \eqref{s104} we have
  \begin{align*}
     \text{LHS of }  \eqref{s102} & \lessapprox r_i^{1-\frac{3}{p}-\frac{1}{p}}  s^{\frac{1}{2}-\frac{3}{p}}M_s^{\frac{3}{p}-\frac{1}{2}}M_i^{\frac{1}{p}-\frac{1}{2}}\left(\frac{L}{L_1}\right)^{1-\frac{1}{p}-\frac{1}{q}}
     \\ &  \lesssim \left(\frac{M_s}{s^2}\frac{N^{1/2}}{L} \right)^{1-\frac{3}{p}-\frac{1}{q}}s^{\frac{1}{2}-\frac{3}{p}}M_s^{\frac{3}{p}-\frac{1}{2}}M_i^{\frac{1}{p}-\frac{1}{2}}\left(\frac{L}{L_1}\right)^{1-\frac{1}{p}-\frac{1}{q}}  \\
     & = M_s^{\frac{1}{2}-\frac{1}{q}} \left(\frac{N^{1/2}}{L}\right)^{1-\frac{3}{p}-\frac{1}{q}} \left(\frac{L}{L_1}\right)^{1-\frac{1}{p}-\frac{1}{q}} s^{-\frac{3}{2}+\frac{3}{p}+\frac{2}{q}}M_i^{\frac{1}{p}-\frac{1}{2}}
  \end{align*}
  Plugging in \eqref{s105} we obtain
  \begin{align*}
      \text{LHS of }  \eqref{s102} & \lessapprox \left(s^2M_i \frac{L_1}{L}\right)^{\frac{1}{2}-\frac{1}{q}} \left(\frac{N^{1/2}}{L}\right)^{1-\frac{3}{p}-\frac{1}{q}} \left(\frac{L}{L_1}\right)^{1-\frac{1}{p}-\frac{1}{q}} s^{-\frac{3}{2}+\frac{3}{p}+\frac{2}{q}}M_i^{\frac{1}{p}-\frac{1}{2}} \\
      & =M_i^{\frac{1}{p}-\frac{1}{q}} s^{-\frac{1}{2}+\frac{3}{p}} \left(\frac{N^{1/2}}{L} \right)^{1-\frac{3}{p}-\frac{1}{q}} \left(\frac{L}{L_1}\right)^{\frac{1}{2}-\frac{1}{p}}.
  \end{align*}
  Since $M_i\geq 1$ and $q\leq p,$ we conclude
  \[\text{LHS of }  \eqref{s102}  \lessapprox s^{-\frac{1}{2}+\frac{3}{p}} \left(\frac{N^{1/2}}{L} \right)^{1-\frac{3}{p}-\frac{1}{q}} \left(\frac{L}{L_1}\right)^{\frac{1}{2}-\frac{1}{p}}.\]
  
  If we use $s\leq L,$ then
  \begin{align*}
      s^{-\frac{1}{2}+\frac{3}{p}} \left(\frac{N^{1/2}}{L} \right)^{1-\frac{3}{p}-\frac{1}{q}} \left(\frac{L}{L_1}\right)^{\frac{1}{2}-\frac{1}{p}} & \leq L^{-\frac{1}{2}+\frac{3}{p}} \left(\frac{N^{1/2}}{L} \right)^{1-\frac{3}{p}-\frac{1}{q}} \left(\frac{L}{L_1}\right)^{\frac{1}{2}-\frac{1}{p}} 
  \end{align*}
  We may then verify that
  \[L^{-\frac{1}{2}+\frac{3}{p}} \left(\frac{N^{1/2}}{L} \right)^{1-\frac{3}{p}-\frac{1}{q}} \left(\frac{L}{L_1}\right)^{\frac{1}{2}-\frac{1}{p}} \leq \frac{N^{\frac{1}{2}-\frac{1}{2q}-\frac{3}{2p}}L^{\frac{2}{p}}}{L_1^{1-\frac{1}{p}-\frac{1}{q}}}\]
  if and only if 
  \[L_1^{\frac{1}{2}-\frac{1}{q}} \leq L^{1-\frac{3}{p}-\frac{1}{q}}.\]
  
  On the other hand if we use $s\leq \frac{N^{1/2}}{L},$
  then
  \begin{align*}
     \text{LHS of }  \eqref{s102} & \lessapprox \left(\frac{N^{1/2}}{L} \right)^{-\frac{1}{2}+\frac{3}{p}} \left(\frac{N^{1/2}}{L} \right)^{1-\frac{3}{p}-\frac{1}{q}} \left(\frac{L}{L_1}\right)^{\frac{1}{2}-\frac{1}{p}} \\ &
     =\left(\frac{N^{1/2}}{L} \right)^{\frac{1}{2}-\frac{1}{q}} \left(\frac{L}{L_1}\right)^{\frac{1}{2}-\frac{1}{p}}.
  \end{align*}
  The last line equals to 
  \[\left(\frac{N^{1/2}}{L} \right)^{\frac{1}{2}-\frac{1}{p}} \left(\frac{L}{L_1}\right)^{\frac{1}{2}-\frac{1}{p}}\]
  if $p=q.$
  
  In conclusion we have shown \eqref{s102} holds in this case if either condition (a) or (b) is satisfied.

  \vspace{3mm}
  \noindent
  {\bf Case 2. (2) in Proposition \ref{incidenceprop} happens. } 
  By \eqref{s106}, \eqref{s107} we have 
  \begin{equation}\label{s110}
      \text{LHS of }  \eqref{s102} \lessapprox  |P(L)|^{\frac{3}{p}-\frac{1}{2}} \left(\frac{(\#P_i)|P|}{|P(L)|}\right)^{\frac{2}{p}}(\#I_i)^{\frac{1}{p}-\frac{1}{q}}M_i^{\frac{1}{p}-\frac{1}{2}}\left(\frac{L}{L_1}\right)^{1-\frac{1}{p}-\frac{1}{q}} ((\#P_i)|P|)^{\frac{1}{2}-\frac{3}{p}}.
  \end{equation}
   Note that we have 
   \[(\#P_i) \lesssim (\#I_i)M_i \frac{|P(L)|}{|P_J|}\sim (\#I_i)M_i \frac{|P(L)|}{|P|}\frac{L_1}{L}\]
   since the right hand side is the maximal number of $P$ one can fit into a $P(L)$ under the assumption that each $P_J$ can contain $\lesssim M_i$ many $P\in \mc{P}_i.$ Substituting the above for $M_i$ in \eqref{s110} and simplifying the algebra we obtain
   \[ \text{LHS of }  \eqref{s102} \lessapprox (\#I_i)^{\frac{1}{2}-\frac{1}{q}} \left( \frac{L}{L_1} \right)^{\frac{1}{2}-\frac{1}{q}}.\]
   Since $\#I_i\leq \frac{N^{1/2}}{L}$ and $q\geq 2,$ we conclude 
   \[\text{LHS of }  \eqref{s102} \lessapprox \left( \frac{N^{1/2}}{L_1} \right)^{\frac{1}{2}-\frac{1}{q}}.\]
   Hence \eqref{s102} holds in this case.
   
   In conclusion we have shown \eqref{s102} and therefore \eqref{s2loc} and \eqref{s2}.
\end{proof}


\begin{proof}[Proof of Theorem \ref{smallcapthm} using Theorem \ref{smallcapbilthm}]
  The proof resembles Section 5.1 in \cite{demeter2020small}. First we fix $(p,q)$ with $4\leq p\leq 6,$ and either  $\frac{1}{q}+\frac{3}{p}=1$ or $p=q.$ Note that under such assumption we always have $p\geq q$ and $q\geq 2.$
  
  Recall that $\Omega$ is the $\theta L^2/N^2$-neighborhood of $\{a_n\}_{n=1}^{N^{1/2}},$ which is a union of $N^{1/2}$ many intervals of length $C\theta L^2/N^2.$ We let $\tau$ denote the union of $l$ many consecutive intervals in $\Omega,$ and write $\ell(\tau)=l,$ so in this notation $\ell(I)=L$ and $\ell(J)=L_1.$
  Let $F=\sum_{J\in \mc{J}} f_J,$ and denote by $F_\tau$ the Fourier projection of $F$ to $\tau,$ that is, $(1_{\tau}\hat{F})\,\check{\, }.$
  
  Fix $K>1.$  We have the following inequality
  \[|F(x)| \leq \sum_{\ell(\tau)=\frac{N^{1/2}}{K}}|F_{\tau}(x)| \leq C\max_{\ell(\tau)=\frac{N^{1/2}}{K}}|F_\tau(x)| + K^C \max_{\substack{ \ell(\tau_1)=\ell(\tau_2)=\frac{N^{1/2}}{K},\\ d(\tau_1,\tau_2) \gtrsim \frac{1}{KN^{1/2}}}} |F_{\tau_1}F_{\tau_2}|^{1/2}.\]
  Iterating this (for the first term) we obtain
  \begin{multline}\label{ss1}
      \|F\|^p_{L^p(\R)} \lesssim C^m \sum_{\ell(\tau)=L} \|F_{\tau}\|^p_{L^p(\R)} \\
      +C^mK^C \sum_{\substack{l=\frac{N^{1/2}}{K^a} \text{ for } a\in \Z,  \\ KL \leq l\leq N^{1/2}} } \sum_{\tau: \ell(\tau)=l} \sum_{\substack{\tau_1,\tau_2\subset \tau, \\ \ell(\tau_1)=\ell(\tau_2)=K^{-1}l, \\ d(\tau_1,\tau_2) \gtrsim K^{-1} l}} \|(F_{\tau_1}F_{\tau_2})^{1/2}\|^p_{L^p(\R)}.
  \end{multline}
  Here $m$ satisfies $N^{1/2}/K^m=L.$ 
  
  By Proposition \ref{flatprop} we have
  \[\sum_{\ell(\tau)=L} \|F_{\tau}\|^p_{L^p(\R)} \lesssim \sum_{\ell(\tau)=L} \left(\frac{L}{L_1} \right)^{\frac{p}{2}-\frac{p}{q}} \left( \sum_{J\subset \tau}\|F_J\|^q_{L^p(\R)}\right)^{\frac{p}{q}}. \]
  Since $\frac{p}{q}\geq 1,$ we obtain
  \begin{equation}\label{sss1}
      \sum_{\ell(\tau)=L} \|F_{\tau}\|^p_{L^p(\R)} \lesssim  \left(\frac{L}{L_1} \right)^{\frac{p}{2}-\frac{p}{q}} \left( \sum_{J\in \mc{J}}\|F_J\|^q_{L^p(\R)}\right)^{\frac{p}{q}} \leq \left(\frac{N^{1/2}}{L_1} \right)^{\frac{p}{2}-\frac{p}{q}} \left( \sum_{J\in \mc{J}}\|F_J\|^q_{L^p(\R)}\right)^{\frac{p}{q}}.
  \end{equation}

  Now we estimate the second term on the right hand side of \eqref{ss1}. Let $s=\frac{N^{1/2}}{l}.$ Then using the change of variable $x\mapsto s^2x$ as in the proof of Proposition \ref{narrowprop}, and by Theorem \ref{smallcapbilthm} we have
  \[\|(F_{\tau_1}F_{\tau_2})^{1/2}\|_{L^p(\R)} \lesssim_\e N^\e \log^C(\tilde{\theta}^{-1}+1) \left( \frac{\tilde{N}^{\frac{1}{2}-\frac{1}{2q}-\frac{3}{2p}}\tilde{L}^{\frac{2}{p}}}{\tilde{L_1}^{1-\frac{1}{p}-\frac{1}{q}}} +\left( \frac{\tilde{N}^{1/2}}{\tilde{L_1}} \right)^{\frac{1}{2}-\frac{1}{q}} \right)  ( \sum_{J\subset \tau} \|f_J\|_{L^p(\R)}^q )^{1/q},\]
  where $\tilde{N}=\frac{N}{s^2},$ $\tilde{\theta}=\frac{\theta}{s^2},$ $\tilde{L_1}=L_1,$ $\tilde{L}=L.$
  Plugging in the expressions for $\tilde{N},\tilde{\theta},\tilde{L_1},\tilde{L}$ we obtain
  \begin{multline}\label{sss2}
       \|(F_{\tau_1}F_{\tau_2})^{1/2}\|_{L^p(\R)} \lesssim_\e N^\e \log^C({\theta}^{-1}+1)  \left(s^{-1+\frac{1}{q}+\frac{3}{p}} \frac{N^{\frac{1}{2}-\frac{1}{2q}-\frac{3}{2p}}L^{\frac{2}{p}}}{L_1^{1-\frac{1}{p}-\frac{1}{q}}} +s^{-\frac{1}{2}+\frac{1}{q}} \left( \frac{N^{1/2}}{L_1} \right)^{\frac{1}{2}-\frac{1}{q}} \right) \\ 
       ( \sum_{J\subset \tau} \|f_J\|_{L^p(\R)}^q )^{1/q}.
  \end{multline}
  
  We let $K=N^{\e'}$ for some $\e'>0$ which will be chosen depending on $\e.$ Then from \eqref{sss1} and \eqref{sss2} we conclude
  \begin{align*}
      \|F\|_{L^p(\R)} & \lesssim_{\e,\e'} N^{\e+C\e'} \log^C(\theta^{-1}+1) \Bigg( (\sum_{\substack{s=K^a \text{ for } a\in \Z   \\ 1\leq s\leq \frac{N^{1/2}}{KL}  }} s^{-1+\frac{1}{q}+\frac{3}{p}}) \frac{N^{\frac{1}{2}-\frac{1}{2q}-\frac{3}{2p}}L^{\frac{2}{p}}}{L_1^{1-\frac{1}{p}-\frac{1}{q}}}  \\ 
      & \quad 
      + (\sum_{\substack{s=K^a \text{ for } a\in \Z  \\ 1\leq s\leq \frac{N^{1/2}}{KL}  }} s^{-\frac{1}{2}+\frac{1}{q}}) \left( \frac{N^{1/2}}{L_1} \right)^{\frac{1}{2}-\frac{1}{q}}  \Bigg) (\sum_{J\in \mc{J}} \|f_J\|_{L^p(\R)}^q )^{1/q} \\
      & \lesssim_{\e, \e'} N^{\e+C\e'} \log^C(\theta^{-1}+1) \left( \frac{N^{\frac{1}{2}-\frac{1}{2q}-\frac{3}{2p}}L^{\frac{2}{p}}}{L_1^{1-\frac{1}{p}-\frac{1}{q}}} + \left( \frac{N^{1/2}}{L_1} \right)^{\frac{1}{2}-\frac{1}{q}}  \right) (\sum_{J\in \mc{J}} \|f_J\|_{L^p(\R)}^q )^{1/q}.
  \end{align*}
  
  Therefore we have shown Theorem \ref{smallcapthm} under condition (a) and the extra condition $\frac{1}{q}+\frac{3}{p}=1,$ $p\leq 6,$ or under condition (b) with the extra condition $p\leq 6.$
  
  First assume (a) and we want to remove the condition $\frac{1}{q}+\frac{3}{p}=1,$ $p\leq 6.$
  First we note that it suffices to show \eqref{s1} for every $(p,q)$ with $p\geq 4,$ $\frac{1}{q}+\frac{3}{p}=1.$ This is because for a general $(p,q)$ with $p\geq 4,$ $\frac{1}{q}+\frac{3}{p}\leq 1$ we may consider \eqref{s1} with $(p,q)$ replaced by $(p,q_0)$ where $\frac{1}{q_0}+\frac{3}{p}=1.$ Then \eqref{s1} with $(p,q)$ follows from H\"{o}lder's inequality applied in the index $J$ to the right hand side of \eqref{s1} with $(p,q_0),$ since $|\mc{J}| \lesssim \frac{N^{1/2}}{L_1}.$ 
  Second we note that it suffices to show \eqref{s1} for every $(p,q)$ with $4\leq p\leq 6,$ $\frac{1}{q}+\frac{3}{p}=1.$ This is because when $p\geq 6,$ we always have
  \[\frac{N^{\frac{1}{2}-\frac{1}{2q}-\frac{3}{2p}}L^{\frac{2}{p}}}{L_1^{1-\frac{1}{p}-\frac{1}{q}}} \geq \left( \frac{N^{1/2}}{L_1} \right)^{\frac{1}{2}-\frac{1}{q}}\]
  and \eqref{s1} reduces to
  \[\|\sum_{J\in \mc{J}}f_J\|_{L^p(\R)} \lesssim_\e N^\e \log^C(\theta^{-1}+1)  \frac{N^{\frac{1}{2}-\frac{1}{2q}-\frac{3}{2p}}L^{\frac{2}{p}}}{L_1^{1-\frac{1}{p}-\frac{1}{q}}} \left( \sum_{J\in \mc{J}} \|f_J\|_{L^p(\R)}^q \right)^{1/q}.\]
  So \eqref{s1} with $q>6,$ $\frac{1}{q}+\frac{3}{p}=1$ follows from interpolating \eqref{s1} with $(p,q)=(6,2),$ and with $(p,q)=(\infty,1).$ When $p=\infty,q=1,$ \eqref{s1} becomes the triangle inequality which holds trivially. Hence we have shown Theorem \ref{smallcapthm} under condition (a).
  
  Now assume (b) and we want to remove the condition $p\leq 6.$ As in the previous paragraph, when $p\geq 6$ we always have 
  \[\frac{N^{\frac{1}{2}-\frac{1}{2p}-\frac{3}{2p}}L^{\frac{2}{p}}}{L_1^{1-\frac{1}{p}-\frac{1}{p}}} \geq \left( \frac{N^{1/2}}{L_1} \right)^{\frac{1}{2}-\frac{1}{p}},\]
  and therefore \eqref{s1} with $q>6,$ $p=q$ follows from interpolating \eqref{s1} with $(p,q)=(6,6),$ and with $(p,q)=(\infty,\infty).$ So Theorem \ref{smallcapthm} holds under condition (b) as well.
\end{proof}

\section{Appendix} \label{appsec}
Corollary \ref{discretesmallcor_intro} can be derived from small-cap decoupling inequalities for the parabola in \cite{demeter2020small}.    This is through a transference method which we learned from James Maynard. We record a detailed proof here. The same argument would also imply Corollary \ref{discretesmallcor} if the corresponding $\ell^q L^p$ small cap decoupling inequalities for the parabola are known. 

We first recall the small-cap decoupling inequalities in \cite{demeter2020small}.
\begin{theorem}[\cite{demeter2020small}]\label{smallcapparathm}
  Suppose $\alpha\in [\frac{1}{2},1],$ and let $\Gamma=\{\gamma\}$ be the partition of $\mc{N}_{R^{-1}}(\mb{P}^1)$ into $R^{\alpha}$ many $R^{-\alpha}\times R^{-1}$ rectangles $\gamma.$ Assume $p=2+\frac{2}{\alpha}.$ Then for every $\e>0$ we have
  \begin{equation}\label{smallcapparaeqn}
      \|\sum_{\gamma\in \Gamma} f_\gamma \|_{L^p(\R^2)} \lesssim_\e R^{\alpha(\frac{1}{2}-\frac{1}{p})+\e}(\sum_{\gamma} \|f_\gamma\|_{L^p(\R^2)}^p)^{\frac{1}{p}}
  \end{equation}
  for every $f_\gamma:\R^2\rightarrow \C$ with $\supp \widehat{f_\gamma} \subset \gamma.$
\end{theorem}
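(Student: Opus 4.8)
The cleanest route is to deduce Theorem \ref{smallcapparathm} by the same three‑ingredient scheme that underlies \cite{demeter2020small} and that we have already exploited above in the proof of Theorem \ref{smallcapbilthm}: (i) refined $\ell^2L^6$ decoupling for $\mb{P}^1$ into the canonical $R^{-1/2}\times R^{-1}$ caps $\theta$ (the parabola analogue of Theorem \ref{refineddecthm}), (ii) refined flat decoupling of a single cap $\theta$ into the small caps $\gamma\subset\theta$ (the analogue of Proposition \ref{flatprop}), and (iii) an incidence estimate for the $R^{1/2}\times R$ tubes dual to the $\theta$, carrying the spacing induced by the small‑cap partition (the analogue of Proposition \ref{incidenceprop}). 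One should note first that a naive combination of (i) and (ii) is lossy: applying $\ell^2L^p$ decoupling into the $\theta$, then flat decoupling inside each $\theta$ into its $R^{\alpha-1/2}$ small caps, then $\ell^2\to\ell^p$, produces the exponent $(2\alpha-\tfrac12)(\tfrac12-\tfrac1p)$, which is strictly larger than the sharp $\alpha(\tfrac12-\tfrac1p)$ whenever $\alpha>\tfrac12$. Hence the bilinear/incidence input is genuinely needed.

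First I would make a local‑to‑global reduction and then a broad–narrow reduction: iterating the pointwise inequality $|\sum_\gamma f_\gamma(x)|\lesssim \max_{\tau}|F_\tau(x)|+K^{C}\max_{\tau_1,\tau_2\ \mathrm{non\text{-}adj.}}|F_{\tau_1}F_{\tau_2}|^{1/2}$ over arcs $\tau$ of $\mb{P}^1$ of length $K^{-1}R^{1/2}$, the linear estimate \eqref{smallcapparaeqn} follows from rescaled copies of itself (the narrow terms) together with a transversal bilinear estimate for $(F_{\tau_1}F_{\tau_2})^{1/2}$, exactly as Theorem \ref{smallcapthm} is derived from Theorem \ref{smallcapbilthm}. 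For the bilinear estimate, localize to a ball $B_R$, perform a wave‑packet decomposition with respect to the caps $\theta$, and sort the sub‑balls of $B_R$ at the intermediate scale $R^{1/2}$ according to how many dual tubes $T$ of each of the two transversal families meet them, obtaining rich sets $Q_{r_1,r_2}$. On each $Q_{r_1,r_2}$ interpolate between the $L^6$ refined decoupling bound (applied to $F_{\tau_1}$ and $F_{\tau_2}$ separately) and the $L^4$ bilinear restriction bound for $\mb{P}^1$ coming from multilinear Kakeya; this produces a bound in terms of $r_1,r_2$, $|Q_{r_1,r_2}|$, the cap counts, and per‑cap $L^2$ masses. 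One then converts the single‑cap $L^2$ masses into $L^p$ masses of the $\gamma$ via refined flat decoupling, and uses the incidence estimate — relating $r_i$, $|Q_{r_1,r_2}|$, the number of tubes, and the multiplicity $M$ with which the tubes pack into the coarser dual tubes at scale $R^{\alpha}$ — to collapse everything to a power of $R$. Summing over the dyadic choices of $r_i$, $|Q_{r_1,r_2}|$ and $M$ costs only $R^\e$.

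The main obstacle is precisely the balancing performed in the incidence step: one must track the competing parameters (the tube richness $r_i$, the packing multiplicity $M$, and the number $R^{\alpha-1/2}$ of small caps per canonical cap) and verify that the worst case of the incidence estimate meets the target $R^{\alpha(1/2-1/p)}$ with no loss. This is exactly where the identity $p=2+\tfrac{2}{\alpha}$ — equivalently $\tfrac1q+\tfrac3p=1$ with $q=p$ in the notation of Theorem \ref{smallcapbilthm} — is used: it is what makes the two regimes of the incidence estimate (the analogues of Case 1.1 and Case 1.2 in the proof of Theorem \ref{smallcapbilthm}) coincide in the endpoint, for every intermediate $\alpha\in(\tfrac12,1)$; the endpoints $\alpha=\tfrac12$ ($p=6$) and $\alpha=1$ ($p=4$) can be checked directly or by interpolation. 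Since \eqref{smallcapparaeqn} is exactly the theorem established in \cite{demeter2020small}, one may of course simply invoke it; I record the scheme here because it is the blueprint for the generalized‑Dirichlet results of the previous section.
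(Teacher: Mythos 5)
Your proposal is, in substance, what the paper does: Theorem \ref{smallcapparathm} is not proved here but quoted directly from \cite{demeter2020small}, and your fallback of simply invoking that reference — together with a sketch of its three-ingredient scheme (refined canonical decoupling, refined flat decoupling, incidence estimate for dual tubes), which is exactly the blueprint this paper adapts in Section \ref{smallcapsec} — matches the paper's treatment. One small inaccuracy in your side remark: the identity $p=2+\tfrac{2}{\alpha}$ is \emph{not} equivalent to ``$\tfrac1q+\tfrac3p=1$ with $q=p$'' (that would force $p=4$, i.e.\ $\alpha=1$ only); the correct statement is that $p=2+\tfrac{2}{\alpha}$ is precisely where the two exponents $\alpha(\tfrac12-\tfrac1p)$ and $\alpha(1-\tfrac1p)-\tfrac{1+\alpha}{p}$ in the interpolated bound \eqref{smallcapparaeqn2} coincide, which is why it is the critical exponent.
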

Theorem \ref{smallcapparathm} continues to hold, by essentially the same proof, with $\mb{P}^1$ replaced by a $C^2$ curve of the form $\{(x,g(x)):x\in [0,1]\}$ with $g'(0)=0,$ $g''(x)\sim 1$ for $x\in [0,1].$  See Appendix of \cite{guth2020improved}. Additionally we may interpolate between \eqref{smallcapparaeqn} and the elementary inequalities
\[\|\sum_{\gamma\in \Gamma} f_\gamma \|_{L^2(\R^2)} \lesssim(\sum_{\gamma} \|f_\gamma\|_{L^2(\R^2)}^2)^{\frac{1}{2}}\]
\[\|\sum_{\gamma\in \Gamma} f_\gamma \|_{L^\infty(\R^2)} \lesssim R^{\alpha}(\sup_{\gamma} \|f_\gamma\|_{L^\infty(\R^2)})\]
to obtain the following version of Theorem \ref{smallcapparathm}.

\begin{theorem}[\cite{demeter2020small}]\label{smallcapparathm2}
  Suppose $G$ is a $C^2$ convex curve of the form $\{(x,g(x)):x\in [0,1]\}$ where $g'(0)=0,$ $g''(x)\sim 1$ for $x\in [0,1].$ Suppose $\alpha\in [\frac{1}{2},1],$ and let $\Gamma=\{\gamma\}$ be the partition of $\mc{N}_{R^{-1}}(G)$ into $R^{\alpha}$ many $R^{-\alpha}\times R^{-1}$ rectangles $\gamma.$ Assume $p\geq 2.$ Then for every $\e>0$ we have
  \begin{equation}\label{smallcapparaeqn2}
      \|\sum_{\gamma\in \Gamma} f_\gamma \|_{L^p(\R^2)} \lesssim_\e R^\e (R^{\alpha(\frac{1}{2}-\frac{1}{p})}+R^{\alpha(1-\frac{1}{p})-(1+\alpha)\frac{1}{p}})(\sum_{\gamma} \|f_\gamma\|_{L^p(\R^2)}^p)^{\frac{1}{p}}
  \end{equation}
  for every $f_\gamma:\R^2\rightarrow \C$ with $\supp \widehat{f_\gamma} \subset \gamma.$
\end{theorem}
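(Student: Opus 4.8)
The plan is to derive Theorem \ref{smallcapparathm2} from Theorem \ref{smallcapparathm} by transferring small-cap decoupling to the curve $G$, recording two elementary endpoint estimates, and interpolating. First, the proof of Theorem \ref{smallcapparathm} for $\mb{P}^1$ in \cite{demeter2020small} uses only the curvature of the curve and the local constancy of each $f_\gamma$ on the box dual to $\gamma$; both features are preserved, up to absolute constants, under the affine change of variables that straightens $G$, and this reduction is carried out in the appendix of \cite{guth2020improved}. So with $p_0:=2+\tfrac{2}{\alpha}$, inequality \eqref{smallcapparaeqn} holds for the partition $\Gamma$ of $\mc{N}_{R^{-1}}(G)$. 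I would also use the standard ``room'' version: fixing a smooth Fourier multiplier $S_\gamma$ equal to $1$ on $\gamma$ and supported in $2\gamma$, one still has the same bound with $f_\gamma$ replaced by $S_\gamma g_\gamma$ for arbitrary $g_\gamma$ (split $2\gamma$ into $\mc{O}(1)$ translates of $\gamma$, apply \eqref{smallcapparaeqn}, and use that $S_\gamma$ has an $L^1$ kernel of norm $\lesssim 1$ by affine invariance, hence is bounded on every $L^p$ uniformly in $\gamma$).

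Next I would record the two endpoints. At $p=2$: the $\gamma$ tile $\mc{N}_{R^{-1}}(G)$, so the dilates $2\gamma$ have $\mc{O}(1)$ overlap, and $L^2$-orthogonality gives $\|\sum_\gamma S_\gamma g_\gamma\|_{L^2}^2\lesssim\sum_\gamma\|g_\gamma\|_{L^2}^2$; restricted to $g_\gamma$ with $\supp\widehat{g_\gamma}\subset\gamma$ this is the $\ell^pL^p$ inequality at $p=2$ with constant $\mc{O}(1)$. At $p=\infty$: the triangle inequality, $\#\Gamma=R^\alpha$, and $\|S_\gamma\|_{L^\infty\to L^\infty}\lesssim 1$ give $\|\sum_\gamma S_\gamma g_\gamma\|_{L^\infty}\le R^\alpha\sup_\gamma\|g_\gamma\|_{L^\infty}$, the formal $\ell^pL^p$ inequality at $p=\infty$ with constant $R^\alpha$.

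Finally I would interpolate. The map $T:(g_\gamma)_{\gamma\in\Gamma}\mapsto\sum_\gamma S_\gamma g_\gamma$ is a single linear operator from $\ell^p(L^p(\R^2))$ (counting measure on $\Gamma$) to $L^p(\R^2)$, not depending on $p$, whose operator norm is $\lesssim 1$, $\lesssim_\e R^{\alpha(\frac12-\frac1{p_0})+\e}$, $\lesssim R^\alpha$ at $p=2,\ p_0,\ \infty$ respectively (on inputs constrained by $\supp\widehat{g_\gamma}\subset\gamma$ one has $S_\gamma g_\gamma=g_\gamma$, so these bounds give \eqref{smallcapparaeqn2} once the exponents are computed). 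Riesz--Thorin between $p=2$ and $p=p_0$ gives exponent $\theta\,\alpha(\tfrac12-\tfrac1{p_0})$ with $\tfrac1p=\tfrac{1-\theta}{2}+\tfrac{\theta}{p_0}$, which equals $\alpha(\tfrac12-\tfrac1p)$; interpolation between $p=p_0$ and $p=\infty$ gives, using $p_0=2+\tfrac2\alpha$, exponent $\alpha(1-\tfrac1p)-(1+\alpha)\tfrac1p$. Since $\alpha(\tfrac12-\tfrac1p)\ge\alpha(1-\tfrac1p)-(1+\alpha)\tfrac1p$ precisely when $p\le p_0$, in all cases the exponent is $\max\bigl(\alpha(\tfrac12-\tfrac1p),\,\alpha(1-\tfrac1p)-(1+\alpha)\tfrac1p\bigr)$, and the $R^\e$ loss at $p_0$ is only diminished by interpolation.

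I do not expect a genuine obstacle; the two points needing care are the (cited) transference of small-cap decoupling from $\mb{P}^1$ to $G$ and the elementary bookkeeping identifying the Riesz--Thorin exponents with the stated maximum.
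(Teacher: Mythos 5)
Your proposal is correct and follows essentially the same route the paper indicates: transfer the small-cap decoupling of Theorem \ref{smallcapparathm} from $\mb{P}^1$ to the $C^2$ convex curve $G$ (via the appendix of \cite{guth2020improved}) and interpolate with the elementary $\ell^2L^2$ orthogonality and $\ell^\infty L^\infty$ triangle-inequality endpoints, with your exponent bookkeeping at $p_0=2+\tfrac2\alpha$ matching the stated bound. The smooth-multiplier operator $T:(g_\gamma)\mapsto\sum_\gamma S_\gamma g_\gamma$ is just the standard device for making the paper's one-line ``interpolate'' rigorous for Fourier-support-restricted inequalities, so there is no substantive difference in approach.
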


For the rest of this section we work under the assumption of Corollary \ref{discretesmallcor_intro}. In particular $\theta=1.$ For simplicity we assume $a_1=0,$ and $v:=a_2-a_1=N^{-1}.$ Let $1\leq L \leq N^{1/2}.$  
It suffices to show \eqref{s120} for $4\leq p\leq 6$ and we assume that (since the $p>6$ case follows from interpolating between $p=6$ and $p=\infty$).


By \eqref{defa_n} we may write $a_n=\frac{n-1}{N}+e_n$ where $e_n=a_n-\frac{n-1}{N} \sim \frac{(n-1)^2}{N^2}.$ For every $t\in \R$ we may write it as $t_1+t_2$ where $t_1 \in 2\pi N\Z$ and $t_2\in [0,2\pi N).$ Without loss of generality we assume $2\pi N$ divides $T,$ so $(2\pi)^{-1}N^{-1} T \in \Z.$  Now we may write
  \begin{align*}
      \int_{0}^T |\sum_{n=1}^{N^{1/2}} b_n e^{ita_n}|^p dt & = \sum_{t_1\in 2\pi N\Z \cap [0,T-2\pi N]} \int_{0}^{2\pi N}  |\sum_{n=1}^{N^{1/2}} b_n e^{i(t_1+t_2)(\frac{n-1}{N}+e_n)}|^p dt_2 \\
      & = \sum_{t_1\in 2\pi N\Z \cap [0,T-2\pi N]} \int_{0}^{2\pi N}  |\sum_{n=1}^{N^{1/2}} b_n e^{i(t_1 e_n+t_2\frac{n-1}{N}+t_2e_n)}|^p dt_2.
  \end{align*}
  We write $e(n)=e_n$ and let $e:[1,N^{1/2}]\rightarrow \R$ be the piece-wise linear function such that for every $n\in \Z \cap [1,N^{1/2}-1],$ $e(x)$ is linear on $[n,n+1]$ and $e(n)=e_n.$ Since $e_{n+1}-e_n \sim \frac{n}{N^2},$ we have $|e'(x)|\lesssim \frac{1}{N^{3/2}}$ for $x\in [1,N^{1/2}]\setminus \Z.$
  
  By Abel's summation formula we have 
  \begin{align}
      |\sum_{n=1}^{N^{1/2}} b_n e^{i(t_1 e_n+t_2\frac{n-1}{N}+t_2e_n)}| & \leq |\sum_{n=1}^{N^{1/2}} b_n e^{i(t_1 e_n+t_2\frac{n-1}{N}})|+ \int_{0}^{N^{1/2}} |\sum_{n=1}^u  b_n e^{i(t_1 e_n+t_2\frac{n-1}{N})}| |t_2e'(u)|du \nonumber \\
      & \lesssim |\sum_{n=1}^{N^{1/2}} b_n e^{i(t_1 e_n+t_2\frac{n-1}{N}})|+ \frac{1}{N^{1/2}}\int_{0}^{N^{1/2}} |\sum_{n=1}^u  b_n e^{i(t_1 e_n+t_2\frac{n-1}{N})}|du. \label{s133}
  \end{align}
  The last inequality uses $t_2\lesssim N.$
  
  We first estimate 
  \[A:=\sum_{t_1\in 2\pi N\Z \cap [0,T-2\pi N]} \int_{0}^{2\pi N}  |\sum_{n=1}^{N^{1/2}} b_n e^{i(t_1 e_n+t_2\frac{n-1}{N})}|^p dt_2.\]
  Since $e_n\lesssim \frac{1}{N}$ for  every $1\leq n \leq N^{1/2},$  $\sum_{n=1}^{N^{1/2}} b_n e^{i(t_1 e_n+t_2\frac{n-1}{N})}$ is locally constant on intervals of length $N$ in $t_1$ (in the sense of Proposition \ref{locconstprop}). Therefore we have
  \begin{equation}\label{s131}
      A \lesssim \frac{1}{N} \int_\R \int_{0}^{2\pi N}  |\sum_{n=1}^{N^{1/2}} b_n e^{i(t_1 e_n+t_2\frac{n-1}{N})}|^p  dt_2 W_{[0,T],100}(t_1) dt_1.
  \end{equation}
  Recall that here $W_{[0,T],100}(t_1)$ means a weight function adapted to $[0,T]$ with decay rate $100.$ 
  
  We consider two cases, $T\geq N^{3/2}$ and $T\leq N^{3/2}.$
  
  \vspace{3mm}
  \noindent
  {\bf Case 1. $T\geq N^{3/2}.$ }
   We observe that  $\sum_{n=1}^{N^{1/2}} b_n e^{i(t_1 e_n+t_2\frac{n-1}{N})}$ is $2\pi N$-periodic in $t_2,$ so  we have
  \begin{align*}
      A & \lesssim \frac{1}{N}\frac{N^{3/2}}{T} \int_\R \int_{0}^{TN^{-1/2}}  |\sum_{n=1}^{N^{1/2}} b_n e^{i(t_1 e_n+t_2\frac{n-1}{N})}|^p dt_2 W_{[0,T],100}(t_1) dt_1 
  \end{align*}
  By a change of variable $t_1 \mapsto N^{1}t_1,$ $t_2\mapsto N^{1/2}t_2,$ we obtain
  \[A \lesssim N^{1/2}\frac{N^{3/2}}{T} \int_\R \int_\R |\sum_{n=1}^{N^{1/2}} b_n e^{i(t_1 e_nN+t_2\frac{n-1}{N^{1/2}})}|^p W_{B_{TN^{-1}}(0),100}(t_1,t_2) dt_2  dt_1.\]
  
  Now we let $g(x)$ be a $C^2$ strictly convex function defined on $[0,1]$ such that $|g(\frac{n-1}{N^{1/2}})-e_nN|\leq N^{-1}/4$ for $n=1,\ldots,N^{1/2}.$ (See Lemma \ref{quadlem} below.) Since $N^{-1} \leq T^{-1}N,$ we have for every $n,$ the ball of radius $T^{-1}N/4$ centered at $(\frac{n-1}{N^{1/2}},e_nN)$ fits in exactly one of the $\gamma$ in the partition of the $T^{-1}N$ neighborhood of $G=\{(x,g(x)):x\in [0,1]\}$ by $N^{-1/2} \times T^{-1}N$ rectangles. Under our assumption that $T\in [N^{3/2},N^2]$ we have $\frac{\log (N^{-1/2})}{\log (T^{-1}N)} \in [\frac{1}{2},1].$ Therefore we may apply Theorem \ref{smallcapparathm2} with $R=TN^{-1},$ $R^\alpha=N^{1/2}$ to the curve $G,$ which yields for every $T\in [N^{3/2},N^2],$
  \begin{multline}\label{s132}
       \int_\R \int_\R |\sum_{n=1}^{N^{1/2}} b_n e^{i(t_1 e_nN+t_2\frac{n-1}{N^{1/2}})}|^p W_{B_{TN^{-1}}(0),100}(t_1,t_2) dt_2  dt_1 \\
       \lesssim_\e N^\e \left(T^{\frac{1}{p}}N^{\frac{1}{2}-\frac{2}{p}}+T^{\frac{2}{p}} N^{\frac{1}{4}-\frac{5}{2p}} \right)^p \|b_n\|_{\ell^p}^p.
  \end{multline}
  
  Hence 
  \[A\lesssim_\e N^\e  \left(N^{\frac{1}{2}}+T^{\frac{1}{p}} N^{\frac{1}{4}-\frac{1}{2p}} \right)^p \|b_n\|_{\ell^p}^p.\]
  
  \vspace{3mm}
  \noindent
  {\bf Case 2. $T\leq N^{3/2}.$ } From \eqref{s131} and a change of variable we have
  \[A \lesssim N^{1/2} \int_\R \int_{0}^{2\pi N^{1/2}}  |\sum_{n=1}^{N^{1/2}} b_n e^{i(t_1 e_nN+t_2\frac{n-1}{N^{1/2}})}|^p  dt_2 W_{[0,TN^{-1}],100}(t_1) dt_1.\]
  Since $T\leq N^{3/2},$ we may bound the right hand side  trivially by
  \[N^{1/2} \int_\R \int_\R  |\sum_{n=1}^{N^{1/2}} b_n e^{i(t_1 e_nN+t_2\frac{n-1}{N^{1/2}})}|^p   W_{B_{N^{1/2}}(0),100}(t_1,t_2) dt_2 dt_1,\]
  so by \eqref{s132} with $T=N^{3/2}$ we have
  \[A \lesssim_\e N^\e N^{\frac{1}{2}} \left(N^{\frac{3}{2p}}N^{\frac{1}{2}-\frac{2}{p}}+N^{\frac{3}{2}\frac{2}{p}} N^{\frac{1}{4}-\frac{5}{2p}} \right)^p \|b_n\|_{\ell^p}^p. \]
  Since $p\geq 4$ we may verify
  \[N^{\frac{3}{2p}}N^{\frac{1}{2}-\frac{2}{p}}\geq N^{\frac{3}{2}\frac{2}{p}} N^{\frac{1}{4}-\frac{5}{2p}}.\]
  Hence 
  \[A\lesssim_\e N^\e \left(N^{\frac{1}{2}} \right)^p \|b_n\|_{\ell^p}^p.\]
  
  In conclusion we have shown 
  \begin{equation}\label{s134}
      A\lesssim_\e N^\e  \left(N^{\frac{1}{2}}+T^{\frac{1}{p}} N^{\frac{1}{4}-\frac{1}{2p}} \right)^p \|b_n\|_{\ell^p}^p.
  \end{equation}
  
  Next we estimate the second term in \eqref{s133}. We define 
  \[B:= \sum_{t_1\in 2\pi N\Z \cap [0,N^2/L^2-2\pi N]} \int_{0}^{2\pi N}  \left|\frac{1}{N^{1/2}}\int_{0}^{N^{1/2}} |\sum_{n=1}^u  b_n e^{i(t_1 e_n+t_2\frac{n-1}{N}})|du\right|^p dt_2.\]
  By Minkowski's inequality we have
  \begin{align*}
      B^{\frac{1}{p}} & \leq  \frac{1}{N^{1/2}}\int_{0}^{N^{1/2}} \bigg(\sum_{t_1\in 2\pi N\Z \cap [0,T-2\pi N]} \int_{0}^{2\pi N}   |\sum_{n=1}^u  b_n e^{i(t_1 e_n+t_2\frac{n-1}{N})}|^p dt_2 \bigg)^{\frac{1}{p}} du.
  \end{align*}
  Then applying \eqref{s134} to the expression in the brackets we obtain
  \begin{align*}
      B^{\frac{1}{p}} & \lesssim_\e N^\e \frac{1}{N^{1/2}}\int_{0}^{N^{1/2}} \left(N^{\frac{1}{2}}+T^{\frac{1}{p}} N^{\frac{1}{4}-\frac{1}{2p}} \right) \|b_n\|_{\ell^p} du \\
      & = N^\e  \left(N^{\frac{1}{2}}+T^{\frac{1}{p}} N^{\frac{1}{4}-\frac{1}{2p}} \right) \|b_n\|_{\ell^p}.
  \end{align*}
  
  Combining the estimates for $A$ and $B$ we conclude
  \[\|\sum_{n=1}^{N^{1/2}}b_n e^{ita_n}\|_{L^p(B_T)} \lesssim_\e  N^{\e}  \left(N^{\frac{1}{2}}+T^{\frac{1}{p}} N^{\frac{1}{4}-\frac{1}{2p}} \right) \|b_n\|_{\ell^p}.\]



We used the following lemma in the proof above.
\begin{lemma}\label{quadlem}
  Suppose $\{a_n\}_{n=1}^{N^{1/2}}$ is a short generalized Dirichlet sequence with $\theta=1,$ $ a_2-a_1=N^{-1},$ $a_1=0.$ Let $e_n=a_n-\frac{n-1}{N}.$ Then for every $c>0,$ there exists a $C^2$ curve $g:[0,1]\rightarrow \R$ with $g''(x) \sim 1$ for $x\in [0,1]$ such that $|g(\frac{n-1}{N^{1/2}})-e_{n}N|\leq cN^{-1}$ for every $n=1,\ldots,N^{1/2}.$
\end{lemma}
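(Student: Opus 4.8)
The plan is to build $g$ explicitly by interpolating the data points $\{(\frac{n-1}{N^{1/2}}, e_nN)\}_{n=1}^{N^{1/2}}$ with a smooth convex curve and then checking the required quantitative bounds. First I would record the relevant size information about the $e_n$ coming from \eqref{defa_n} with $\theta=1$: since $a_1=0$, $a_2-a_1=N^{-1}$, and $(a_{i+2}-a_{i+1})-(a_{i+1}-a_i)\in[\frac1{4N^2},\frac4{N^2}]$, a double summation gives $e_n = a_n-\frac{n-1}{N} = \sum_{m=1}^{n-2}(n-1-m)\big((a_{m+2}-a_{m+1})-(a_{m+1}-a_m)\big)$, so $e_n \sim \frac{(n-1)^2}{N^2}$, and moreover $e_{n+1}-2e_n+e_{n-1}=(a_{n+1}-a_n)-(a_n-a_{n-1})\in[\frac1{4N^2},\frac4{N^2}]$. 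Rescaling by setting $x_n=\frac{n-1}{N^{1/2}}\in[0,1]$ and $y_n=e_nN$, the step in $x$ is $h=N^{-1/2}$ and the discrete second difference of $y_n$ in the variable $x$ is $\frac{y_{n+1}-2y_n+y_{n-1}}{h^2} = N\cdot(e_{n+1}-2e_n+e_{n-1})\in[\frac14,4]$. So the data $\{(x_n,y_n)\}$ is a $\sim 1$-convex sampling at spacing $h=N^{-1/2}$, and what I want is a genuine $C^2$ function through (or very near) it with $g''\sim 1$.

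The cleanest construction is to take $g$ to be a mollified piecewise-quadratic (or piecewise-linear-second-derivative) interpolant. Concretely: let $\ell(x)$ be the piecewise-linear function on $[0,1]$ with $\ell(x_n)=\frac{y_{n+1}-2y_n+y_{n-1}}{h^2}$ at the nodes (this is the discrete second derivative, which lies in $[\frac14,4]$ everywhere), define $g''$ to be a smooth mollification of $\ell$ at scale $\sim h$, so that $g''$ is $C^\infty$ and still satisfies $g''\sim 1$ on $[0,1]$, and then let $g$ be the double antiderivative of $g''$ with the two integration constants chosen so that $g(x_1)=y_1$ and $g(x_2)=y_2$ (i.e. matching value and first difference at the left endpoint). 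Then $g$ is automatically $C^2$ with $g''\sim 1$, so it only remains to check the interpolation error $|g(x_n)-y_n|\le cN^{-1}$. For this I would run a discrete Gronwall / telescoping argument: the error $E_n := g(x_n)-y_n$ satisfies $E_1=E_2=0$, and the second difference $E_{n+1}-2E_n+E_{n-1} = h^2 g''(\xi_n) - (y_{n+1}-2y_n+y_{n-1})$ for some mean-value point $\xi_n$; since $g''$ is the $O(h)$-mollification of a piecewise-linear function whose value at $x_n$ is exactly the discrete second difference divided by $h^2$, one gets $|h^2g''(\xi_n)-(y_{n+1}-2y_n+y_{n-1})| \lesssim h^2\cdot(\text{oscillation of }g''\text{ over an }O(h)\text{ window}) \lesssim h^3 \|(g'')'\|_\infty \lesssim h^3\cdot h^{-1} = h^2$. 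Wait — I should be more careful: the Lipschitz constant of $\ell$ is the discrete third difference of $y$ over $h$, which is $O(h^{-1})$ times $O(h^2/N^{1/2})$... in any case the per-step error in the second difference of $E$ is $o(h^2)$, and summing twice over $\le N^{1/2}=h^{-1}$ steps gives $|E_n| \lesssim h^{-2}\cdot o(h^2) = o(1)$ — but I need it to beat $cN^{-1}$, not just $o(1)$.

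So the main obstacle, and the point where I would spend the real care, is getting the interpolation error all the way down to $cN^{-1}$ rather than merely $o(1)$. This forces a more refined choice: instead of mollifying, I would directly prescribe $g''$ to be the $C^2$ (e.g. cubic-spline) interpolant of the nodal values $\frac{y_{n+1}-2y_n+y_{n-1}}{h^2}$, whose interpolation error against the "true" discrete second derivative is bounded by the fourth difference — and critically, choose a quadratic baseline $q(x)=\frac{e_{N^{1/2}}N}{(1-h)^2}(x-...)^2$-type function absorbing the bulk, so that $g''$ only needs to correct a small remainder. Alternatively, and perhaps more robustly, I would reverse the logic: first exhibit an actual $C^2$ convex $g_0$ with $g_0''\sim 1$ and $|g_0(x_n)-y_n|\lesssim 1$ (easy, e.g. $g_0(x)=cx^2$ with $c$ chosen from $e_{N^{1/2}}$), then add a correction $h(x)$ which is a smooth function interpolating the residuals $y_n-g_0(x_n)$ at the nodes with $\|h''\|_\infty$ small enough not to destroy $g''\sim 1$; the residuals are themselves a discretely-convex-ish sequence of size $O(1)$ sampled at spacing $h$, so a spline fit gives nodal error $0$ and in-between error $O(h^2\|h''\|)=O(h^2)=O(N^{-1})\le cN^{-1}$ for $N$ large. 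I expect the bookkeeping of derivative bounds for the spline correction — ensuring $\|h''\|_\infty$ stays below the implied constant in $g_0''\sim 1$ while still nailing the nodes — to be the only genuinely delicate part; everything else is routine. Since the statement only needs existence for each fixed $c>0$ and large $N$, and allows absolute-constant slack in $g''\sim 1$, this is comfortably enough room.
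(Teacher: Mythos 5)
There is a genuine gap: none of the three constructions you sketch actually reaches the conclusion. You concede yourself that the first (mollifying the piecewise-linear second derivative at scale $N^{-1/2}$ and integrating twice) does not get the nodal error down to $cN^{-1}$; in fact, since \eqref{defa_n} gives no control on third differences, the per-step mismatch is of size $h^2$ and double summation only yields an $O(1)$ error. The second route is unfounded for the same reason: its error is claimed to be ``bounded by the fourth difference,'' but a generalized Dirichlet sequence has no constraint whatsoever on third or fourth differences. The third route, which you present as the robust one, fails at exactly the point you dismiss as bookkeeping. After subtracting a fixed quadratic baseline, the residuals still have discrete second differences (in the rescaled variable) of size comparable to $1$, because the hypothesis only pins the second differences of $a_n$ into $[\tfrac1{4N^2},\tfrac4{N^2}]$ and they may oscillate freely within that window; by the mean value theorem for second differences, any $C^2$ interpolant of the residuals then has second derivative of size $\gtrsim 1$ somewhere, so $\|h''\|_\infty$ cannot be made small. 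Worse, cubic splines do not preserve discrete convexity: if the second differences alternate between the two extremes $\tfrac1{4N^2}$ and $\tfrac4{N^2}$ (an admissible sequence), the tridiagonal system for the spline's nodal second derivatives forces values that oscillate with amplitude several times the data's and become strongly negative, so $g_0''+h''\sim 1$ is destroyed, not merely endangered. (Also note the lemma only constrains $g$ at the nodes, so your ``in-between error'' estimate is not the relevant quantity.)

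The missing idea, which is the paper's proof, is to exploit that the allowed nodal error $cN^{-1}$ is much smaller than the node spacing $N^{-1/2}$, and to mollify at that tiny scale rather than at the spacing. Take $g_0$ to be the $C^1$ piecewise-quadratic interpolant with $g_0'(0)=0$ and $g_0(\tfrac{n-1}{N^{1/2}})=e_nN$ exactly; its piecewise second derivative is comparable to the rescaled discrete second differences, hence $g_0''\sim1$ off the nodes, and integrating from $g_0'(0)=0$ gives $\|g_0'\|_{L^\infty([0,1])}\lesssim1$. Then set $g=g_0*\phi$ with $\phi$ an $L^1$-normalized bump at scale $c'N^{-1}$: averaging a function that is $\sim1$ a.e.\ keeps $g''\sim1$, and the nodal values move by at most $c'N^{-1}\|g_0'\|_\infty\le cN^{-1}$ once $c'$ is chosen small in terms of $c$. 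In other words, one gives up exact interpolation (which your spline attempts insisted on at the cost of losing $g''\sim1$) and instead pays an error of exactly the size the lemma permits, using only the Lipschitz bound on $g_0$ rather than any uncontrolled higher-order regularity of the data.
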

\begin{proof}
  We first define $g_0:[0,1]\rightarrow \R$ to be a $C^1$ piece-wise quadratic polynomial with $g_0'(0)=0$ such that  $g_0$ restricted to $[\frac{n}{N^{1/2}},\frac{n+1}{N^{1/2}}]$ is a quadratic polynomial for every $n=0,\ldots,N^{1/2}-1,$ and 
  \[g_0(\frac{n-1}{N^{1/2}})=e_nN.\]
  Since 
  \[\frac{N(e_{n+1}-2e_n+e_{n-1})}{N^{-1}} \sim 1 \]
  we have $g_0'' \sim 1$ on $[0,1]\setminus N^{-1/2}\Z,$ and consequently $\|g_0\|_{L^\infty([0,1])} \lesssim 1$ because $g_0'(0)=0.$ 
  Now we let $g=g_0* \phi$ be the $c'N^{-1}$ mollification of $g_0.$ Here $\phi$ is an $L^1$-normalized smooth bump adapted to $B_{c'N^{-1}}(0)$ and $c'>0$ is sufficiently small depending on $c.$ Then we have for every $x\in [0,1],$
  \[g''(x)=\int_\R g_0''(y)\phi(x-y)dy \sim 1 ,\]
  and 
  \begin{align*}
      |g(\frac{n-1}{N^{1/2}})-e_nN| &  \leq  \int_{\R} |g_0(y)-g_0(\frac{n-1}{N^{1/2}})| \phi(\frac{n-1}{N^{1/2}}-y) dy \\
      & \leq c'N^{-1} \sup_{ y\in [0,1]}|g_0'|  \\
      & \leq cN^{-1}
  \end{align*}
  if $c'= \frac{c}{\|g_0'\|_{L^\infty([0,1])}+1}.$  
\end{proof}

We can use the same approach to transfer an $L^p$ estimate for a longer generalized Dirichlet  polynomial to an $L^p$ estimate on an exponential sum with frequency support near a $C^2$ convex curve. 

Suppose $\{a_n\}_{n=1}^N$ is a generalized Dirichlet sequence with $\theta=1,$ $a_2-a_1=\frac{1}{N},$ $a_1=0,$ and let $\alpha \in (\frac{1}{2},1].$ As before we write $e_n=\frac{n-1}{N} \sim \frac{(n-1)^2}{N^2}.$ The same calculation as above shows that
\[\int_{[0,T]} |\sum_{n=1}^{N^{\alpha}}b_ne^{ita_n}|^p dt  \lesssim \sum_{t_1\in 2\pi N\Z \cap [0,T-2\pi N]} \int_{0}^{2\pi N}  |\sum_{n=1}^{N^{\alpha}} b_n e^{i(t_1 e_n+t_2\frac{n-1}{N}+t_2e_n)}|^p dt_2. \]

One difficulty that appears is that we cannot treat $e^{it_2e_n}$ as an error term as before. This is because when we apply the partial summation formula we get
\[ |\sum_{n=1}^{N^{\alpha}} b_n e^{i(t_1 e_n+t_2\frac{n-1}{N}+t_2e_n)}| 
      \lesssim |\sum_{n=1}^{N^{\alpha}} b_n e^{i(t_1 e_n+t_2\frac{n-1}{N}})|+ \frac{1}{N^{1-\alpha}}\int_{0}^{N^{\alpha}} |\sum_{n=1}^u  b_n e^{i(t_1 e_n+t_2\frac{n-1}{N})}|du.\]
However now $N^{1-\alpha}>N^{\alpha}$ and we cannot estimate the second term on the right hand side as before using the estimate for the first term and Minkowski's inequality. We could still find a $C^2$ convex curve such that $(\frac{n-1}{N}+e_n,e_n)$ lies in an $N^{-1}$-neighborhood of it, but the extra $e_n$ disallows us to use the $2\pi N$-periodicity in the $t_2$ variable.

Another difficulty we find is the integrand is locally constant on intervals of length $N^{2-2\alpha}$ in the $t_1$ variable, and since $N<N^{2-2\alpha},$ that prevents us from transferring the discrete summation into $\sum_{t_1\in 2\pi N\Z \cap [0,T-2\pi N]}$ into $\int_{[0,T]}.$ We may though transfer the discrete sum into an integral over a fat AP  $\int_{P_{2\pi N}^{N^{2-2\alpha}}\cap B_{[0,T]}},$ and that might suggest some new decoupling problems in $\R^2$ that might be helpful for estimating longer generalized Dirichlet polynomials.

Finally we remark that for the Dirichlet sequence $\{\log n\}_{n=N+1}^{2N},$ we may implement this transference method to higher order approximations of $\log n.$ For examples we can write 
\[|\sum_{n=N+1}^{N+N^\alpha} b_ne^{it\log n}|=|\sum_{n=1}^{N^\alpha} b_{n+N}e^{it\log (1+\frac{n}{N})}|=|\sum_{n=1}^{N^\alpha} b_{n+N}e^{it(\frac{n}{N}-\frac{n^2}{2N^2}+e'_n)}|\]
where $e'_n:=\log (1+\frac{n}{N})-\frac{n}{N}+\frac{n^2}{2N^2} \sim \frac{n^3}{N^3}.$
If we write $t=t_1+t_2+t_3$ with $t_1\in 2\pi N^2\Z,$ $t_2\in 2\pi N\Z,$ $t_3\in [0,2\pi N),$ then we could transfer $L^p$ estimates on $\sum_{n=N+1}^{N+N^\alpha} b_ne^{it\log n}$ to $3$-dimensional $L^p$ estimates on exponential sums with frequency supported on a non-degenerate curve in $\R^3.$ More generally one can exploit more terms in the Taylor expansion and get higher dimensional estimates. We do not know how much this would help with estimates on Dirichlet polynomials using decoupling techniques.


\bibliographystyle{alpha}
\bibliography{dirichletrefs}

\end{document}